    \newcommand{\rn}[2]{
    \tikz[remember picture,baseline=(#1.base)]\node [inner sep=0] (#1) {$#2$};%
}
\newcommand{\ignore}[1]{}
\definecolor{lightred}{rgb}{1, 0.5, 0.5}
\definecolor{darkred}{rgb}{0.8, 0.3, 0.3}
\definecolor{darkerred}{rgb}{0.6, 0.1, 0.1}
\definecolor{lightblue}{rgb}{0.4, 0.5, 1}
\definecolor{darkblue}{rgb}{0.3, 0.3, 0.8}
\definecolor{darkerblue}{rgb}{0.1, 0.1, 0.6}
\definecolor{lightpurple}{rgb}{0.8, 0.5, 1}
\definecolor{darkpurple}{rgb}{0.8, 0.3, 0.8}
\definecolor{darkerpurple}{rgb}{0.6, 0.1, 0.6}
\definecolor{darkgreen}{rgb}{0.1, 0.6, 0.1}
\newcommand{\lrcell}{\cellcolor{lightred}}
\newcommand{\lbcell}{\cellcolor{lightblue}}
\newcommand\blcell{\cellcolor{Blue}}
\newcommand\pblcell{\cellcolor{lightblue}}
\newcommand\lblcell{\cellcolor{LightSkyBlue}}
\newcommand{\bmcell}{\cellcolor{darkblue}}
\newcommand{\gmcell}{\cellcolor{darkred}}
\newcommand{\llrcell}{\cellcolor{lightred!50}}
\newcommand{\llgcell}{\cellcolor{lightgray!40}}
\newcommand{\lpcell}{\cellcolor{lightpurple}}
\newcommand\ocell{\cellcolor{BurntOrange}}
\newcommand\apcell{\cellcolor{Apricot}}
\newcommand\tqcell{\cellcolor{Turquoise}}
\newcommand\lgcell{\cellcolor{lightgray}}
\global\long\def\rtext#1{\textcolor{red}{#1}}%
\global\long\def\gtext#1{\textcolor{Green}{#1}}%
\global\long\def\otext#1{\textcolor{Orange}{#1}}%
\newcommand{\wtext}[1]{\textcolor{white}{#1}}
\newcommand{\bbracks}[1]{\textcolor{lightblue}{\{} #1 \textcolor{lightblue}{\}} }
\newcommand{\gbracks}[1]{\textcolor{lightred}{(} #1 \textcolor{lightred}{)} }
\newcommand{\minus}{\scalebox{0.75}[1.0]{$-$}}
\newcommand\pcoord[1]{ {\scriptstyle  (#1)} }
\newcommand\up[1]{\left\langle #1 \right\rangle}
\newcommand\down[1]{\downarrow #1}
\global\long\def\into{\hookrightarrow}%
\newcommand{\ito}[1]{
\mathrel{\lhook\joinrel\xrightarrow{#1}}%
}
\global\long\def\onto{\twoheadrightarrow}%
\newcommand{\bigslant}[2]{{\raisebox{.2em}{$#1$}\left/\raisebox{-.2em}{$#2$}\right.}}
\newcommand{\stcomp}[1]{{#1}^{\mathsf{c}}}
\newcommand{\lmod}{{\operatorname{-Mod}}}
\newcommand{\N}{\mathbb{N}}
\newcommand{\Z}{\mathbb{Z}}
\newcommand{\F}{\mathbb{F}}
\newcommand{\R}{\mathbb{R}}
\newcommand{\B}{\mathscr{B}}
\newcommand{\Cs}{\mathscr{C}}
\newcommand{\Ds}{\mathscr{D}}
\newcommand{\Dec}{\mathbf{Dec}}
\newcommand{\Fc}{\mathcal{F}}
\newcommand{\Gr}{\mathbf{Gr}}
\newcommand{\GL}{\mathbf{GL}}
\newcommand{\K}{\mathbb{K}}
\newcommand{\T}{\mathscr{T}}
\newcommand{\Ic}{\mathcal{I}}
\newcommand{\Pc}{\mathcal{P}}
\newcommand{\Us}{\mathscr{U}}
\newcommand{\m}{\mathfrak{m}}
\newcommand{\Oc}{\mathcal{O}}
\newcommand{\vect}{\text{Vect}}
\newcommand{\grA}{A\text{-GrMod}}
\newcommand{\oto}[1]{\xrightarrow{#1}}
\newcommand{\iso}{\xrightarrow{\,\smash{\raisebox{-0.5ex}{\ensuremath{\scriptstyle\sim}}}\,}}
\newcommand{\incl}{\text{incl}}
\newcommand{\const}{\text{const}}
\newcommand{\sh}[1]{\text{sh}_{#1}}
\DeclareMathOperator\Hom{Hom}
\DeclareMathOperator{\Aut}{Aut}
\DeclareMathOperator{\End}{End}
\DeclareMathOperator{\Ext}{Ext}
\DeclareMathOperator{\fun}{Fun}
\DeclareMathOperator{\Tor}{Tor}
\DeclareMathOperator{\Id}{Id}
\DeclareMathOperator{\Ima}{im}
\DeclareMathOperator\coker{coker}
\DeclareMathOperator*{\colim}{colim}
\DeclareMathOperator{\rad}{rad}
\DeclareMathOperator\supp{supp}
\theoremstyle{plain}
\newtheorem{theorem}{\protect\theoremname}[section]
\theoremstyle{plain}
\newaliascnt{lemma}{theorem}
\newtheorem{lemma}[lemma]{\protect\lemmaname}
\theoremstyle{plain}
\newaliascnt{conjecture}{theorem}
\theoremstyle{plain}
\newaliascnt{proposition}{theorem}
\newtheorem{proposition}[proposition]{\protect\propositionname}
\theoremstyle{plain}
\newaliascnt{corollary}{theorem}
\newtheorem{corollary}[corollary]{\protect\corollaryname}
\theoremstyle{definition}
\newaliascnt{definition}{theorem}
\newtheorem{definition}[definition]{\protect\definitionname}
\theoremstyle{definition}
\newaliascnt{example}{theorem}
\newtheorem{example}[example]{\protect\examplename}
\theoremstyle{definition}
\newaliascnt{algorithm}{theorem}
\newtheorem{algorithm}[algorithm]{\protect\algorithmname}
\theoremstyle{definition}
\newaliascnt{construction}{theorem}
\newtheorem{construction}[construction]{\protect\constructionname}
\theoremstyle{remark}
\newaliascnt{remark}{theorem}
\newtheorem{remark}[remark]{\protect\remarkname}
\theoremstyle{remark}
\newaliascnt{question}{theorem}
\newtheorem{question}[question]{\protect\questionname}
\theoremstyle{remark}
\newaliascnt{observation}{theorem}
\newtheorem{observation}[observation]{\protect\observationname}
\providecommand{\constructionname}{Construction}
\providecommand{\corollaryname}{Corollary}
\providecommand{\definitionname}{Definition}
\providecommand{\examplename}{Example}
\providecommand{\lemmaname}{Lemma}
\providecommand{\propositionname}{Proposition}
\providecommand{\questionname}{Question}
\providecommand{\remarkname}{Remark}
\providecommand{\theoremname}{Theorem}
\providecommand{\observationname}{Observation}
\providecommand{\conjecturename}{Conjecture}
\providecommand{\algorithmname}{Algorithm}
\newcommand{\CC}{C\nolinebreak\hspace{-.05em}\raisebox{.4ex}{\tiny\bf +}\nolinebreak\hspace{-.10em}\raisebox{.4ex}{\tiny\bf +}}
\def\CC{{C\nolinebreak[4]\hspace{-.05em}\raisebox{.4ex}{\tiny\bf ++}}}
\tikzset{
    circled/.style={circle, draw, minimum size=2.5em, inner sep=0pt, align=center, font=\sffamily},
    circled blue/.style={circled, fill=lightblue},
    circled red/.style={circled, fill=lightred},
    circled gray/.style={circled, fill=gray}
}
\title{Decomposing Multiparameter Persistence Modules}
\author{Tamal K. Dey, Jan Jendrysiak, Michael Kerber}
\begin{document}

\maketitle

\begin{abstract}

We describe an algorithm, \textsc{AIDA}, to decompose any finitely presented multiparameter persistence module. We do this by extending the work of Dey and Xin (J.Appl.Comput.Top., 2022), a matrix reduction algorithm for modules whose generators and relations are distinctly graded. 
We also introduce several improvements to their approach that lead to significant speed-ups
in practice.
Our algorithm is fixed parameter tractable with respect to the maximal number of relations
of the same degree, and with further optimisation we obtain 
an $O(n^3)$ algorithm for interval-decomposable modules. In particular, \textsc{AIDA} decides interval-decomposability in this time.

To prove the correctness of the algorithms, we develop a theory of parameter restriction for persistence modules.
We have implemented \textsc{AIDA} in a software library, which is the first to enable the decomposition
of large multiparameter persistence modules. 
We demonstrate its capabilities through extensive experimental evaluation.
\end{abstract}

\begin{figure}[H]\label{aida_painting}
\centering
    \includegraphics[width = 0.93\textwidth]{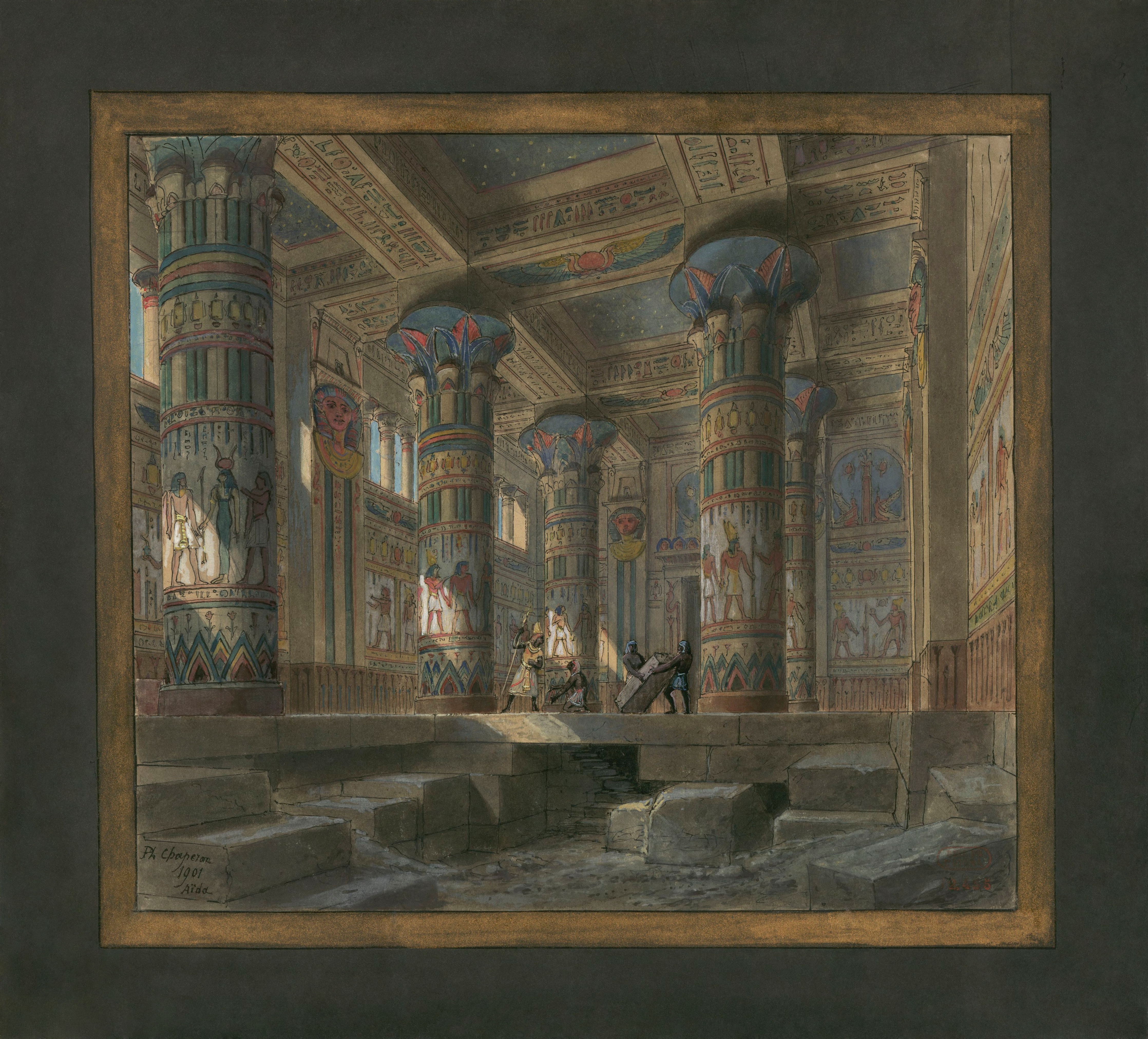}
\caption{Philippe Chaperon's set design for Giuseppe Verdi's opera Aida, 1880}
\label{fig:aida_sketch}
\end{figure}

\textbf{Acknowledgements} We want to thank Klaus Lux for his insights and discussions about the Meat-Axe algorithm and both him and Martin Kalck for helpful discussions about Representation Theory.

\section{Introduction}

Multiparameter Persistence Modules (MPM) as introduced by Carlsson and Zomorodian \cite{CZMP,CZMP-socg} make it possible to capture topological features of data with respect to more than one parameter. This, for example, allows detection of features in different scale-regions stably with respect to outliers \cite{blumberg_lesnick}. One of the main challenges of dealing with MPM is that their indecomposable summands do not provide a discrete invariant, as in the case of finitely generated $1$-parameter persistence modules, where this decomposition leads to the \emph{barcode}. 

In contrast, indecomposable MPM can be arbitrarily complex because the representation theory of the partially ordered set $\N^d$ is wild (\cite{Kleiner1975, Loupias}). In light of this difficulty, researchers in Topological Data Analysis have come up with various invariants with varying degree of discrimination powers and ease of computability (\cite{botnan2021signed, corbet2019kernel, dkm-computing, FEVT, loiseaux2023stable, mccleary2022edit, morozov2021output, vipond2018multiparameter, xin2023gril}). Nevertheless, the begging question remains how to compute the indecomposable summands of MPM. 

A fast algorithm for this task has numerous benefits, since splitting an MPM in parts is useful for many algorithmic tasks.
Primarily, every additive invariant of persistence module can, by definition, be computed from its indecomposable summands, possibly reducing computation time. For example, the computation of the matching distance~\cite{italian,bk-asymptotic} becomes more tractable for decomposed modules as the complexity of the barcode template~\cite{lw-interactive} reduces, which we show empirically in \autoref{sub:barcode_template}. 

\subsection*{Contributions}
We present an algorithm inspired by the Dey-Xin algorithm \cite{DeyXin}, speeding up various steps and lifting the assumption that the module needs to be distinctly graded. Our methods are collected in the software package \textsc{aida} (\autoref{fig:aida_sketch}), which allows the decomposition of presentations arising from large data sets with hundreds of thousands of generators and relations in seconds. We demonstrate the usefulness of our improvements in extensive experimental evaluation. Since the (linear) algebraic methods which are used in \textsc{aida} are of more general interest we implemented them in a stand-alone, header-only \CC library \textsc{Persistence-Algebra}. Both this library and the \textsc{aida} code are publicly available.\footnote{\url{https://github.com/JanJend/AIDA}  and \url{https://github.com/JanJend/Persistence-Algebra}}

While the distinctly graded case allows for an incremental column-by-column approach to compute a decomposition, the general case requires considering all columns of the same degree at once. Denote by $\alpha_i \in \Z^d$ this degree and by $k_i$ the number of columns. It does not suffice to consider all permutations of these columns \cite{ABENY}; instead, arbitrary changes of basis within the columns could be part of a correct solution. Our first approach requires a brute-force iteration over these changes of basis (which remains finite over a finite field) and is therefore not polynomial in $k_i$. Nevertheless, through a careful analysis, we limit the number of cases and provide a tractable solution if $k_i$ is small. Since we can then use the poly-time strategy of \cite{DeyXin}, this yields a decomposition algorithm, which is fixed-parameter tractable with respect to $k_{\max} \coloneqq \max_i k_i$.

We explain that the row operations one performs on the matrix actually correspond to homomorphisms between summands in the partial decomposition that has been achieved up to the $i$-th step.
To further reduce the number of iterations -- or even avoid it altogether -- we analyse between which summands in the partial there are no homomorphisms at all. This leads to another FPT-algorithm in a parameter $\kappa_{\max}$, which is smaller than $k_{\max}$ and promises to accelerate the computation for modules where $k_i$ is too large for the brute-force strategy to work. 

We observe that those homomorphisms which are zero at the degree $\alpha_i$ cannot actually contribute to the decomposition. 
This idea yields another extension of our algorithm. Combining this approach with a fast computation of homomorphisms for interval modules \cite{dey_xin} we arrive at another variant which runs in $\Oc(n^3)$ time for interval-decomposable modules, independent of $k_{\max}$. In particular, we can decide interval-decomposability in this time.

Most of our results and algorithms generalise to representation over \emph{any} poset by embedding them into $\Z^d$ for large enough $d$.

\subsection*{Related work}
Multi-parameter persistent homology is an active research area with
many theoretic, computational, and applied results in recent years ~\cite{BotnanLesnick}. In particular, efficient algorithms have been devised for the computation of a presentation
of standard bifiltrations~\cite{akll-delaunay,akp-filtration,bdk-sparse,bdk-sparse-journal,cklo-computing,ks-localized}, for minimizing presentations~\cite{DeyXin,fkr-compression,lw-computing} and for efficient distance computation~\cite{italian,bk-asymptotic,dey_xin}. Our decomposition algorithm complements this pipeline.

\subparagraph{Classical Techniques.} In theory we can decompose any module over a finite dimensional algebra (over a finite field) in polynomial time by decomposing its endomorphism algebra \cite{CGK97}. But this algorithm is not practical for the large sparse matrices appearing in TDA. Instead, the algorithms in the C MeatAxe Package (\cite{LuxSzoke}, \cite{Szoke}), in the MAGMA system \cite{MAGMA}, in the GAP system \cite{GAP4}, \cite{QPA}, rely on the Holt-Rees extension \cite{HoltRees92} of the MeatAxe algorithm \cite{Parker84}, both of which are \emph{Las Vegas} algorithms. The first step is always the computation of the endomorphism algebra, which is in practice already too expensive for large presentations. We also remark that that the run-time complexity of these algorithms is sometimes not precisely cited in the TDA literature. For example the bounds in  \cite[Section 8.5]{BotnanLesnick} and \cite[Introduction]{DeyXin}) are a misinterpretation of the usage of the MeatAxe algorithm in this pipeline and use the \emph{lower} bound given in \cite{Holt_1998} for the non-practical algorithm described in \cite{Ronyai} as an upper bound.

Our algorithm phrases the decomposition problem as a matrix reduction problem, where some operations are not allowed. This phrasing has a long history in representation theory and goes back to Rojter \cite{Rojter1980}.

\subparagraph{Previous Results for Multiparameter Persistence Modules.}
The only algorithm specifically designed for MPM is the aforementioned Dey-Xin algorithm~\cite{DeyXin} which has a time complexity of $\Oc (n^{2\omega+1})$, where $\omega<2.373$ is the matrix multiplication constant. It only works for uniquely graded modules, a property that is satisfied by some, but not all modules, that appear in practice. Our algorithm is a direct extension of - and heavily uses - their work.

A similar matrix reduction technique has been used by Asashiba, Escolar, Hiraoka, and Takeuchi in \cite{ladderdecomp} to decompose persistence modules on the commutative ladder of length $\leq 4$, the representation finite case, although we cannot directly infer its running time. This can be seen as the first intermediate step between the single parameter and multiparameter regime.

\subparagraph{Previous Results for Interval-Decopmposability.}
In \cite[Proposition 4.5, 4.15]{dkm-computing}, Dey, Kim, and Mémoli show that their algorithm to compute generalized rank invariants via zig-zags also decides interval-decomposability in $\Oc(n^4)$ and can be used to compute the interval-decomposition in $\Oc(n^{\omega +2})$.

In \cite[Theorem 44]{ABENY}, Asashiba, Buchet, Escolar, Nakashima, and Yoshiwaki provide an algorithm which decides interval-decomposability in time $\Oc\left( (n^\omega (\overline{ \dim} X)^\omega + n^{\omega +2}) \# I_{n,n} \right)$, where $\overline{ \dim}  X$ is the total dimension of the input module $X$ over the grid induced by its generators and relations and $\# I_{n,n}$ is the number of interval modules on an $n \times n$ grid. In the worst case $\overline{ \dim X} \sim n^3$ and $\# I_{n,n}$ is at least exponential in $n$.

\subparagraph{On Decompositions.}
Decompositions are not stable with respect to the interleaving distance;
every persistence module is even arbitrarily close to an indecomposable module by the work of Bauer and Scoccola~\cite{BL22}, but geometric bifiltrations typically do not create too many of such pathologies. Moreover, Bjerkevik~\cite[Thm 5.4]{Bjerkevik} constructs a module, which captures all decompositions of an entire $\epsilon$-interleaving-neighbourhood in a precise sense. 
This module has many generators and relations of the same degree,
so its decomposition requires an efficient algorithm, which we provide with \textsc{AIDA}.

Complicated indecomposables can arise already in relatively simple geometric examples ~\cite{BE22,BE22-socg} and most modules in practice are not interval-decomposable (e.g., most examples from our benchmark set contain non-interval indecomposables; see also \cite{AKS24}). 
However, we expect that most modules in practice will mainly consist of smaller indecomposables (see \cite{AlonsoKerber23}), and we designed our algorithm to work fast in such scenarios.

\subsection*{Outline.}
We want to direct the reader who is only interested in the algorithmic content to the relevant sections:
\begin{itemize}
\item An informal review of persistence modules and their presentations is found in \autoref{persistence_introduction} and  the basics about decompositions in \autoref{sec:decompositions}.  
\item The algorithm by Dey and Xin~\cite{DeyXin}, which is the basis of our work, is reviewed and improved in Sections ~\ref{sec:deyxin} and \ref{sec:modifications}.
\item In Sections~\ref{sec:obstructions} and \ref{sec:brute_force}, we describe how to decompose non-distinctly graded modules using an exhaustive approach.
\item Sections~\ref{sec:submodules} and \ref{sec:decomp_algo} introduce an extension of this algorithm which reduces the potentially restrictive iteration of the exhaustive algorithm. \item In \autoref{sec:localisation} we explain an idea, which lets us avoid this iteration in more cases. This enables us to decompose interval-decomposable modules in $\Oc(n^3)$ time, which we explain how to do in \autoref{sec:interval_decomp}.
\item All experiments are to be found in \autoref{sec:experiments}.
\end{itemize}

This paper is mostly self-contained, but to be able to read all proofs, we assume the reader to be familiar with homological algebra in the scope of e.g. the first three chapters of Weibel's book \cite{Weibel}.
\begin{itemize}
\item A thorough treatment of presentations is in Sections~\ref{formal_introduction} and \ref{sec:minimal_pres}.
\item  \autoref{sec:projectivity} will explain how forgetting relations of a module is the same as forming a projective cover for a different exact structure.
\item In Sections \ref{sec:effective} and \ref{sec:decomposing_pres} we prove the correctness of the exhaustive algorithm by relating decompositions with presentation matrices.
\item In \autoref{sec:restriction_and_localisation} we put the concepts introduced during the algorithm on firm mathematical ground, explore their homological features, and prove the correctness of \textsc{AIDA}.
\end{itemize}

\section{Persistence Modules and their Presentations}
\label{sec:persistence_modules}


\subsection{Introduction}\label{persistence_introduction}

\subparagraph{Notation}

\begin{itemize}

  \item $A \coloneqq \K[x_1, \dots, x_d]$ denotes the $\Z^d$-graded commutative polynomial algebra.
  \item $X, Y, \dots$ denote (persistence) modules.
  \item $\mathfrak{m}=(x_1, \dots, x_d) \subset A$ is the maximal homogeneous ideal.
  \item Greek minuscules $\alpha, \beta, \dots, \omega \in \Z^d$ denote points in the \emph{poset} $\Z^d$. 
  \item For a set $S \subset \Z^d$ we denote by $\up{S}$ the upper set and $\down{S}$ the lower set generated by $S$.
  \item We use multi-index notation; for $\alpha \in \Z^d$,  $x^\alpha \coloneqq \prod_{i \in [d]} x_i^{\alpha_i}$.
  \item  $M, N, \dots$ denote matrices and $f,g, \dots$ for non-matrix homomorphisms.

\end{itemize}

\subparagraph{Simplicial filtrations.}
Consider a simplicial complex $S$, discretely filtered with respect to $d$ parameters. That is for each $\alpha \in \N^d$ there is a subcomplex $S_{\alpha} \subset S$ and for each pair $\alpha \leq \beta$ there is an inclusion $S_{\alpha} \subset S_\beta$. Applying the simplicial homology functor $H_*(-; \K)$, for a field $\K$ results in a commutative diagram of $\K$-vector spaces over $\N^d$. For $d=2$, it has the following shape:
\[\begin{tikzcd}[ampersand replacement=\&]
	\vdots \& \vdots \& \vdots \\
	{H_*(S_{0,1} \, ; \, \K)} \& {H_*(S_{1,1} \, ; \, \K)} \& {H_*(S_{2,1} \, ; \, \K)} \& \cdots \\
	{H_*(S_{0,0} \, ; \, \K)} \& {H_*(S_{1,0} \, ; \, \K)} \& {H_*(S_{2,0} \, ; \, \K)} \& \cdots
	\arrow[from=2-1, to=1-1]
	\arrow[from=3-1, to=3-2]
	\arrow[from=3-1, to=2-1]
	\arrow[from=3-2, to=2-2]
	\arrow[from=2-1, to=2-2]
	\arrow[from=3-2, to=3-3]
	\arrow[from=2-2, to=2-3]
	\arrow[from=2-2, to=1-2]
	\arrow[from=3-3, to=3-4]
	\arrow[from=2-3, to=2-4]
	\arrow[from=3-3, to=2-3]
	\arrow[from=2-3, to=1-3]
\end{tikzcd}\]

The maps encode how classes in $H_*(S_{\alpha},\K)$ \emph{persist} when parameters are increased. \\
Typically the parameters will actually be real numbers, but for the finitely presented modules which we want to decompose with our algorithm we can work in the discrete setting (\autoref{discretisation}). We also imagine that the diagram is extended with $0$s to the left and bottom so that instead of $\N^d$ we can use $\Z^d$ to avoid some case-distinctions.

\begin{definition}
 A $d$-parameter \emph{persistence module} $X$ over $\K$ is a commutative diagram of $\K$-vector spaces over $\Z^d$. The category of persistence modules is denoted $\fun(\Z^d, \vect_\K)$.
\end{definition}

Although the ideas presented in this paper should work for any poset, we are mostly interested in the $\Z^d$ case. This allows a translation to commutative algebra: We may also view the evolution along a parameter $i$ as an action of a formal variable $x_i$. This makes a persistence module a $\Z^d$-graded modules over the commutative $\Z^d$-graded polynomial algebra $A \coloneqq \K[x_1, \dots, x_d]$. 

\begin{proposition}\label{equ_persistence}\cite[General version of Theorem 1]{CZMP}
 Let $\grA$ be the category of $\Z^d$-graded $A$-modules and linear maps. The direct sum of all vector spaces in a persistence module and the functor which sends a graded $A$-module to its homogeneous summands define an equivalence of categories
 \[ \bigoplus \colon \fun \left( \Z^d, \vect_\K \right) \rightleftarrows \grA \colon \left(\left( - \right)_\alpha \right)_{\alpha \in \Z^d}. \]
\end{proposition}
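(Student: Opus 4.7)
The plan is to construct the two functors explicitly on objects and morphisms, and then verify they are mutually inverse up to natural isomorphism. The underlying observation is that a persistence module and a $\Z^d$-graded $A$-module are two encodings of the same combinatorial data: a family of vector spaces indexed by $\Z^d$ together with compatible transition maps generated by the shifts $\alpha \mapsto \alpha + e_i$, where $e_i$ denotes the $i$-th standard basis vector.

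First I would define $\bigoplus$ on objects: for a persistence module $X$, form the graded $\K$-vector space $\bigoplus_{\alpha \in \Z^d} X_\alpha$ and let $x_i$ act on the homogeneous piece $X_\alpha$ via the structure map $X_\alpha \to X_{\alpha+e_i}$. Commutativity of the transition square $\alpha \to \alpha+e_i \to \alpha+e_i+e_j$ versus $\alpha \to \alpha+e_j \to \alpha+e_j+e_i$ is exactly the relation $x_i x_j = x_j x_i$, so this extends uniquely to an $A$-action. A morphism of persistence modules is by definition a family $(f_\alpha)_\alpha$ commuting with all transitions, which is precisely the data of a graded $A$-linear map. Conversely, for a graded $A$-module $M$, assign the functor $\alpha \mapsto M_\alpha$ with transition $M_\alpha \to M_\beta$ given by multiplication by $x^{\beta-\alpha}$ for $\alpha \leq \beta$; functoriality follows from $x^{\gamma-\beta} \cdot x^{\beta-\alpha} = x^{\gamma-\alpha}$, which uses commutativity of $A$.

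Then I would observe that the two compositions are in fact equal to the identity on the nose, not merely naturally isomorphic. Starting from $X$, reassembling and then taking the degree-$\alpha$ part returns $X_\alpha$, and the transition from $\alpha$ to $\beta$ recovered as multiplication by $x^{\beta-\alpha}$ coincides with the iterated structure map of $X$ by the $A$-action defined above. The symmetric statement holds starting from a graded $A$-module. Naturality in morphisms is immediate because both functors split and reassemble along the grading without touching the $\K$-linear data.

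The only subtle point is matching the defining relations of the poset $\Z^d$ (commuting squares) to the defining relations of $A$ (the variables commute); once this identification is made, the proof is a direct unpacking of definitions rather than a computation. An alternative phrasing is to view $\Z^d$ as the free commutative category on the generators $e_i$, so that functors out of it into $\vect_\K$ are the same as representations of the corresponding monoid algebra, which is exactly $A$; but the hands-on verification above seems more transparent here.
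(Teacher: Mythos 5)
The paper does not supply its own proof of this proposition: it is stated with a citation to Carlsson--Zomorodian and used as a black box. Your direct verification --- construct both functors on objects and morphisms, match the commuting-square relations of the poset $\Z^d$ with commutativity of the $x_i$, and check the two composites are the identity (up to the usual internal/external direct-sum identification) --- is exactly the standard argument and is correct; the only thing I would tighten is that the paper's phrase ``linear maps'' should be read as degree-preserving $A$-linear maps, which you implicitly assume. One small caveat on your closing remark: viewing $\Z^d$ as ``the free commutative category on the generators $e_i$'' conflates two different constructions. The free commutative monoid on $d$ generators is $\N^d$, and as a \emph{one-object} category its representations are plain (ungraded) $A$-modules; by contrast $\Z^d$ here is a \emph{poset category} with one object per degree, and functors out of it are precisely the \emph{graded} modules. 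The correct slogan is that the poset $\Z^d$ is generated by the elementary moves $e_i$ subject only to thinness (all parallel composites agree), and thinness is what matches $x_ix_j=x_jx_i$. Since you explicitly set that alternative phrasing aside in favor of the hands-on check, this does not affect the validity of your proof.
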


A class of modules that is especially easy to describe is the following.
\begin{definition}[Interval Module]\label{def:interval}
    Let $U \subset \Z^d$ be an \emph{interval}, i.e. a connected convex sub-poset. Define the module $\K_U$ by $\left( \K_U \right)_\alpha \simeq \K$ iff $\alpha \in U$ and having $\Id_\K$ as the structure map wherever possible. A persistence module is called \emph{Interval module} if it is isomorphic to a module of the form $\K_U$.
\end{definition}

\subparagraph{Presentations:}
A persistence module can be succinctly represented by a \emph{presentation}. The theory of presentations goes back to Hilbert \cite{Hilbert1890} and a modern treatment for graded modules is found in \cite{Peeva}. \\
Informally, a presentation consists of: 
\begin{itemize} 
\item Generators: a set of vectors such that all other vectors are $A$-linear combinations of these.
\item Relations: a set of linear combinations of generators which map to 0.
\end{itemize}
If these two sets are finite, they form a \emph{presentation matrix}, where the row indices correspond to the generators, the column indices to the relations and an entry at $(i,j)$ tells us which scalar coefficient the $i$-th generator appears with in the $j$-th relation.

\begin{definition}
 A \emph{graded matrix} is a matrix $M \in \K^{m \times n}$ together with functions
 $G \colon [m] \to \N^d$ and $R \colon [n] \to \N^d$ which decorate the rows and columns with degrees such that
 \[ \forall (i,j) \in [m] \times [n]: \ \text{ If } R(j) \not\geq G(i) \text{ then } M_{i,j}=0 .\] 
 Given two functions $G,R$ as above we write $\K^{G \times R}$ for the corresponding space of graded matrices. 
\end{definition}

\begin{figure}[H]
    \centering
    \begin{minipage}[l]{0.37\textwidth}
        \begin{tikzcd}[scale cd=1]
            \vdots & \vdots & \vdots \\
            \K & \K^2 &  \gbracks{\K}  & \cdots \\
            {\bbracks{\K}} & {\K^2} & {\K^2} & \cdots \\
            0 & {\bbracks{\K}} & \K & \cdots
            \arrow["{\left(\begin{smallmatrix}0 \\ 1 \end{smallmatrix}\right)}"', from=4-3, to=3-3]
            \arrow[from=3-2, to=3-3]
            \arrow[from=3-2, to=2-2]
            \arrow["{\left(\begin{smallmatrix}1 \\ 0 \end{smallmatrix}\right)}", from=2-1, to=2-2]
            \arrow[from=2-1, to=1-1]
            \arrow["{\left( 1 \ 1 \right)}"', from=3-3, to=2-3]
            \arrow["{\left( 1 \ 1 \right)}", from=2-2, to=2-3]
            \arrow[from=3-1, to=2-1]
            \arrow["0"', from=4-1, to=3-1]
            \arrow["0", from=4-1, to=4-2]
            \arrow[from=4-2, to=4-3]
            \arrow["{\left(\begin{smallmatrix}0 \\ 1 \end{smallmatrix}\right)}"', from=4-2, to=3-2]
            \arrow["{\left(\begin{smallmatrix}1 \\ 0 \end{smallmatrix}\right)}", from=3-1, to=3-2]
            \arrow[from=2-2, to=1-2]
            \arrow[from=2-3, to=1-3]
            \arrow[from=4-3, to=4-4]
            \arrow[from=3-3, to=3-4]
            \arrow[from=2-3, to=2-4]
        \end{tikzcd}
    \end{minipage}
    \hspace{0.5em}
    \begin{minipage}[c]{0.19\textwidth}
        \[
        \begin{array}{| c | c |}
            \hline 
             M & \lrcell (2,2) \\
            \hline
            \pblcell (0,1) &  1  \\
            \pblcell (1,0) &  -1  \\ 
         \hline
        \end{array}
        \]
    \end{minipage}
    \hspace{0.1em}
    \begin{minipage}[r]{0.35\textwidth}
        \begin{tikzpicture}[scale=0.8]
        
            \draw[->, thick] (0,0) -- (4,0) node[right] {$x$};
            \draw[->, thick] (0,0) -- (0,4) node[above] {$y$};

            \foreach \x in {0,1,2,3} {
                \node at (\x, -0.2) [below] {\x};
            }
            \foreach \y in {0,1,2,3} {
                \node at (-0.2, \y) [left] {\y};
            }
            
            \fill[shading=radial, inner color=lightblue, outer color=white, draw = white] (3,3) circle (0.5);           
            

            \fill[shading=axis, bottom color=lightblue, top color=white, draw=none, opacity=1] 
            (2,3) -- (2,3.5) -- (3,3.5) -- (3,3);
            
            \fill[shading=axis, left color=lightblue, right color=white, draw=none, opacity=1] 
            (3,2) -- (3.5,2) -- (3.5,3) -- (3,3);

            \fill[shading=axis, bottom color=darkblue, top color=white, draw=none, opacity=1]
            (1,3) -- (1,3.5) -- (2,3.5) -- (2,3) ;
            \fill[shading=axis, left color=darkblue, right color=white, draw=none, opacity=1]
            (3,1) -- (3.5,1) -- (3.5,2) -- (3,2);

            \fill[shading=axis, bottom color=lightblue, top color=white, draw=none, opacity=1] 
            (0,3) -- (0,3.5) -- (1,3.5) -- (1,3);
            
            \fill[shading=axis, left color=lightblue, right color=white, draw=none, opacity=1] 
            (3,0) -- (3.5,0) -- (3.5,1) -- (3,1);

            \draw[fill=lightblue, draw=none, opacity=1, thick] (0,3) -- (0,1) -- (1,1) -- (1,0) -- (3,0) -- (3,3);
            \draw[fill = darkblue, draw=none, opacity=1, thick]
            (1,3) -- (1,1) -- (3,1) -- (3,2) -- (2,2) -- (2,3);
            
            \draw[color = white]
            (0,3.5) -- (3,3.5);
            \draw[color = white]
            (3.5,3) -- (3.5,0);

            \draw[color = lightblue]
            (0,3) -- (1,3);
            \draw[color = lightblue]
            (2,3) -- (3,3);
            \draw[color = lightblue]
            (3,0) -- (3,1);
            \draw[color = lightblue]
            (3,2) -- (3,3);

            \draw[color = darkblue]
            (1,3) -- (2,3);
            \draw[color = darkblue]
            (3,1) -- (3,2);
            
            
            
            \foreach \x/\y in {0/1, 1/0} {
                \fill[lightblue, draw=black] (\x,\y) circle (3pt);
            }
        
            \foreach \x/\y in {2/2} {
                \node[rectangle, draw=black, fill=lightred, inner sep=2pt] at (\x,\y) {};
            }
        \end{tikzpicture}
    \end{minipage}
    
    \caption{The matrix in the middle presents the module on the left. The picture on the right indicates the dimensions of the vector spaces and locations of generators (blue points and brackets) and relations (red point and bracket) over a continuous plane.}
    \label{fig:module_example}
\end{figure}
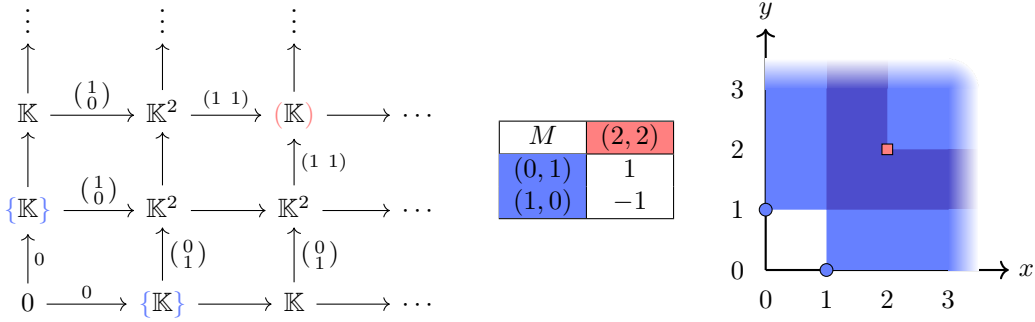

The structure of these matrices for $\Z^d$-graded modules was first noticed by Miller in \cite{miller00} where they are introduced as \emph{monomial matrices}.
We can alter presentation matrices with elementary row- or column-operations, but only some operations will preserve the isomorphism type of the presented module.

\begin{definition}[{\cite[Section 4.1]{DeyXin} }]For $M \in \K^{G \times R}$, an elementary row- (column-) operation from index $i$ to index $j$ is called \emph{admissible} if $G(i) \geq G(j)$ ($R(i) \leq R(j)$).
\end{definition}

Informally, we will say that a graded matrix is \emph{minimal} if there is no other graded matrix with fewer columns or rows that presents the same persistence module. This means that there is no sequence of admissible operations that either zeroes out an entire column or creates a column with a non-zero entry in a row of the same degree.

\begin{remark}
Every finitely generated ($fg$) persistence module over $\Z^d$ has a finite presentation (Hilbert's syzygy theorem \cite{Hilbert1890}). 
Furthermore, there are efficient algorithms to compute (minimal) presentations of persistence modules from simplicial bi-filtrations (\cite{cohomology,mpfree,lw-computing}). For $3$ or more parameters, computing a presentation from simplicial bi-filtrations can be done using Schreyer's algorithm (\cite{schreyer}) and minimising a presentation can always be done by a simple matrix reduction. Therefore, we assume from now on that modules are given to us via their minimal presentations.  
\end{remark}

\subsection{Presentations}\label{formal_introduction}

In this subsection we formalise and connect the notions just introduced. We will need to translate some linear algebra to our graded setting. 

\begin{definition}
 Let $\alpha \in \Z^d$. For a persistence module $X$ we denote by
  $X(\alpha)$
 the \emph{$\alpha$-shift} (-twist) of $X$ defined by $X(\alpha)_{\beta}\coloneqq X_{\beta+\alpha}$ for every $\beta \in  \Z^d$ while preserving the structure maps. \\
 For $G:[n]\rightarrow \Z^d$ a tuple of degrees, we define the free module generated by $G$ as 
 \[ A^G \coloneqq \bigoplus\limits_{i \in [n]} A\left(-G(i)\right).
  \] 
  
\end{definition}
In fact this defines a functor from $\Z^d$-graded sets to the category of persistence modules and it is left adjoint to the forgetful functor as expected.

\begin{definition}
 Let $X$ be a persistence module. If there is a surjection
 \[\varphi \colon A^G \onto X\]
 with $G$ a finite set of degrees ("generators") then $X$ is called \emph{finitely generated} (fg). 

 If $\varphi$ is an isomorphism then $X$ is called \emph{graded free} (gf) and $\varphi$ is called (ordered) \emph{basis} of $X$. 
\end{definition}

\begin{proposition}\label{free}
 Let $\alpha \in \Z^d$ and $X$ be a persistence module. The natural $\K$-linear map
 \[
  \Hom\left(A(-\alpha), X\right) \to X_\alpha \]
  given by considering the image of $1 \in A(-\alpha)$ is an isomorphism.
\end{proposition}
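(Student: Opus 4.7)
The plan is to show that $A(-\alpha)$ is the free graded $A$-module on one generator placed in degree $\alpha$, and then the statement is just the universal property of such a free module. First I note that the map is well-defined: the unit $1 \in A$ sits in degree $0$ of $A$, which by the shift convention $A(-\alpha)_\beta = A_{\beta-\alpha}$ means $1$ lies in degree $\alpha$ of $A(-\alpha)$. Since any graded homomorphism $f$ preserves degrees, $f(1) \in X_\alpha$. The $\K$-linearity of $f \mapsto f(1)$ is immediate.

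For injectivity, I would argue that $1$ generates $A(-\alpha)$ as an $A$-module: every homogeneous component $A(-\alpha)_\beta$ is either zero (when $\beta \not\geq \alpha$) or $\K\cdot x^{\beta-\alpha}$, and $x^{\beta-\alpha} = x^{\beta-\alpha}\cdot 1$. Thus if $f(1)=0$, then $f(x^{\beta-\alpha}) = x^{\beta-\alpha}\cdot f(1) = 0$ for every $\beta \geq \alpha$, so $f=0$.

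For surjectivity, given $v \in X_\alpha$ I would define $f \colon A(-\alpha) \to X$ componentwise: set $f_\beta = 0$ if $\beta \not\geq \alpha$, and otherwise send the basis element $x^{\beta-\alpha} \in A(-\alpha)_\beta$ to the image of $v$ under the structure map $X_\alpha \to X_\beta$ of the persistence module. Translating back to the graded-module picture via \autoref{equ_persistence}, this is exactly multiplication by $x^{\beta-\alpha}$ applied to $v$. To check that $f$ is an $A$-module homomorphism it suffices to verify compatibility with multiplication by each monomial $x^\gamma$, which reduces to the identity $X_{\beta \to \beta+\gamma} \circ X_{\alpha \to \beta} = X_{\alpha \to \beta+\gamma}$, i.e.\ the functoriality of the persistence module. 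By construction $f(1) = v$, giving surjectivity.

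There is no real obstacle; the proof is essentially bookkeeping with the shift convention and an appeal to the fact that a graded $A$-module map out of a free module on one generator is determined by, and can be prescribed arbitrarily via, the image of that generator.
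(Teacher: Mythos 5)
Your proof is correct. The paper states Proposition~\ref{free} without proof, treating it as the standard universal property of the graded free module on one generator in degree $\alpha$; your argument is exactly the expected verification of that property, with well-definedness, injectivity, and surjectivity all handled via the shift convention $A(-\alpha)_\beta = A_{\beta-\alpha}$ and the functoriality of~$X$.
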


\begin{corollary}\label{matrix_representation}
 Let $\alpha, \beta \in \Z^d$, then there is a canonical isomorphism
\[
  \Hom\left(A\left(-\alpha\right), A\left(-\beta\right)\right) \iso \begin{cases}
                                      \K \text{ if } \alpha \geq \beta \\
                                      0 \text{ else.}
                                     \end{cases} \]
It follows that if $f \colon X \to Y$ is a homomorphism of fg graded free persistence modules and  \[ \varphi \colon A^R \iso X, \ \ \psi \colon A^{G} \iso Y \] are ordered bases then $\psi \circ f \circ \varphi^{-1}$ is  graded matrix in $\K^{G \times R}$.
\end{corollary}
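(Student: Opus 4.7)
The plan is to derive both assertions directly from the preceding \autoref{free}, which identifies homomorphisms out of $A(-\alpha)$ with degree-$\alpha$ elements of the target.

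First I would establish the Hom-space computation by plugging $X = A(-\beta)$ into \autoref{free}. This gives a canonical $\K$-linear isomorphism
\[ \Hom(A(-\alpha), A(-\beta)) \iso A(-\beta)_\alpha = A_{\alpha - \beta}. \]
Since $A = \K[x_1,\dots,x_d]$ is the polynomial ring with the standard $\Z^d$-grading, the homogeneous component $A_\gamma$ is one-dimensional, spanned by the monomial $x^\gamma$, precisely when $\gamma \in \N^d$, and is zero otherwise. Taking $\gamma = \alpha - \beta$ yields the stated dichotomy, and canonicity is inherited from \autoref{free}.

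For the second claim I would unfold the bases: writing $A^R = \bigoplus_{j \in [n]} A(-R(j))$ and $A^G = \bigoplus_{i \in [m]} A(-G(i))$, the composite (read in the order $A^R \to X \to Y \to A^G$ that makes it a map between free modules) decomposes into components
\[ f_{i,j} \colon A(-R(j)) \longrightarrow A(-G(i)) \]
since $\Hom$ turns the finite direct sum in the source into a product and in the target into a sum. By the first part of the corollary, each $f_{i,j}$ is either zero, or, when $R(j) \geq G(i)$, is multiplication by a unique scalar $M_{i,j} \in \K$ times the canonical monomial $x^{R(j) - G(i)}$. Assembling these scalars yields a matrix $M \in \K^{m \times n}$ with the property that $M_{i,j} = 0$ whenever $R(j) \not\geq G(i)$, which is exactly the defining condition of $\K^{G \times R}$.

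I do not expect any real obstacle here; the statement is essentially a bookkeeping consequence of \autoref{free}. The only subtlety worth spelling out is the compatibility of the identification with the direct-sum decompositions of $A^R$ and $A^G$, i.e.\ that $\Hom\bigl(\bigoplus_j A(-R(j)), \bigoplus_i A(-G(i))\bigr) \cong \prod_{i,j} \Hom(A(-R(j)), A(-G(i)))$, which is standard for finite biproducts in an additive category and requires no further argument beyond naming it.
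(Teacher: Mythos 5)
Your proof is correct and follows essentially the same route as the paper's: both apply \autoref{free} to compute $\Hom(A(-\alpha),A(-\beta))\iso A(-\beta)_\alpha = A_{\alpha-\beta}$, which is $\K$ iff $\alpha\geq\beta$, and then distribute over the direct-sum decompositions of $A^R$ and $A^G$ to read off the graded-matrix condition. You are slightly more explicit than the paper in spelling out the intermediate identification with $A_{\alpha-\beta}$ and the biproduct compatibility, but the content and structure are the same.
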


\begin{proof}
    Using \autoref{free} we construct a canonical isomorphism
    \begin{align*} \Hom \left( A^R, \, A^G \right) 
    & \ = \ 
    \Hom \left( \bigoplus_{i =1}^{|R|} A(-R(i)), \, \bigoplus_{j =1}^{|G|} A(-G(j)) \right)  \\
    & \ \iso \ \bigoplus_{i =1}^{|R|} \bigoplus_{j =1}^{|G|} \Hom \left( A(-R(i)), A(-G(j)) \right)  
    \ \iso \ \bigoplus_{i =1}^{|R|} \bigoplus\limits_{\substack{j= 1 \text{ st. } \\ R(i) \geq G(j)}}^{|G|}
     \K \ = \ \K^{R \times G}. 
    \end{align*}
\end{proof}

\begin{remark}
 Let $G$ be a tuple of degrees, then $G$ is in particular pre-ordered and we denote this pre-ordered set by $\mathcal{P}_G$. By \autoref{matrix_representation} the endomorphism algebra $\End(A^{G})$ is the $\K$-algebra of graded matrices $\K^{G \times G}$. This algebra is usually called the \emph{incidence algebra} of $\mathcal{P}_G$ and denoted $\K \mathcal{P}_G$ (\cite{Assem} 1.1d).
\end{remark}

\begin{definition}\label{presentation}
 A \emph{presentation} of a persistence module $X$ is an exact sequence
 \[
  F_1 \oto{f} F_0 \oto{p} X \oto{} 0
 \]
- or only the map $f$ - where $F_1$ and $F_0$ are graded free $A$-modules.  It is called \emph{finite} if $F_1$ and $F_0$ are finitely generated. \\
A choice of ordered bases $\varphi, \, \psi$ for $F_0$ and $F_1$ is called \emph{generators} and \emph{relations} of $X$. 
\end{definition}

\autoref{matrix_representation} says that every choice of generators and relations (for an implicit presentation) induces a graded matrix $\varphi^{-1} \circ f \circ \psi$ from $f$ which presents $X$. 

\begin{example}
The presentation from \autoref{fig:module_example} as an exact sequence in a picture.
    \begin{align*}
    &
    \begin{tikzpicture}[baseline=-2ex, scale = 0.7]
            \draw[fill=lightblue, draw=none, opacity=1, thick] (2,2) -- (2,3) -- (3,3) -- (3,2);
            \foreach \x/\y in {2/2} {
                \node[rectangle, draw=black, fill=lightblue, inner sep=2pt] at (\x,\y) {};
            }
    \end{tikzpicture}
        \raisebox{3em}{$ \quad  
        \oto{\begin{bmatrix} 1 \\ -1\end{bmatrix}} \quad $ }
    \begin{tikzpicture}[baseline=-2ex, scale = 0.7]
            \draw[fill=lightblue, draw=none, opacity=1, thick] (0,3) -- (0,1) -- (1,1) -- (1,0) -- (3,0) -- (3,3);
            \draw[fill = darkblue, draw=none, opacity=1, thick]
            (1,3) -- (1,1) -- (3,1) -- (3,3);
            \foreach \x/\y in {0/1, 1/0} {
                \fill[lightblue, draw = black] (\x,\y) circle (3pt);
            }
    \end{tikzpicture}
        \raisebox{3em}{$ \quad \oto{\hspace{2em}} \quad$}
    \begin{tikzpicture}[baseline=-2ex, scale = 0.7]
            \draw[fill=lightblue, draw=none, opacity=1, thick] (0,3) -- (0,1) -- (1,1) -- (1,0) -- (3,0) -- (3,3);
            \draw[fill = darkblue, draw=none, opacity=1, thick]
            (1,3) -- (1,1) -- (3,1) -- (3,2) -- (2,2) -- (2,3);
            \foreach \x/\y in {0/1, 1/0} {
                \fill[lightblue, draw = black] (\x,\y) circle (3pt);
            }
            \foreach \x/\y in {2/2} {
                \node[rectangle, draw=black, fill=lightblue, inner sep=2pt] at (\x,\y) {};
            }
    \end{tikzpicture}
        \raisebox{3em}{$ \quad \oto{\hspace{2em}} \quad 0$}
    \end{align*}
\end{example}

\begin{definition}\label{morphism_presentation}
A morphism $\mathscr{F} \colon f \to g$ of presentations is defined to be a pair of homomorphisms $\mathscr{F}_1, \mathscr{F}_0$ which form a commutative diagram
\[\begin{tikzcd}[ampersand replacement=\&]
	{F_1} \& {F_0} \& X \& 0 \\
	{G_1} \& {G_0} \& Y \& 0
	\arrow["f", from=1-1, to=1-2]
	\arrow["g", from=2-1, to=2-2]
	\arrow["p", from=1-2, to=1-3]
	\arrow["q", from=2-2, to=2-3]
	\arrow["{\mathscr{F}_0}", from=1-2, to=2-2]
	\arrow["{\mathscr{F}_1}", from=1-1, to=2-1]
	\arrow[from=1-3, to=1-4]
	\arrow[from=2-3, to=2-4]
	\arrow[dashed, from=1-3, to=2-3]
\end{tikzcd}\]
\end{definition}

A morphism of presentations induces a homomorphism of persistence modules (dashed arrow). On the other hand, we can lift any homomorphism of persistence modules (not necessarily uniquely) to a morphism of presentations by the fundamental theorem of homological algebra (\cite{maclane} III. Theorem 6.1). Therefore this correspondence is surjective but generally not injective. 

\begin{definition}\label{min_pres}
A presentation is called \emph{minimal}, if it has no proper sub-presentation which presents the same module, that is, there is no injective morphism to it from another non-isomorphic presentation of the same module.
\end{definition}

\begin{proposition}\label{admissible_transformation}
Let $M \in \K^{R \times G}$ be a graded matrix. If a sequence of admissible operations turns $M$ into another graded matrix $M'$, then they present isomorphic modules.
\end{proposition}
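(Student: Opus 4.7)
The plan is to recast each admissible operation as the action of an invertible graded matrix on one side of $M$, and then invoke the basis-independence of the cokernel. Concretely, by \autoref{matrix_representation} the data of $M \in \K^{G \times R}$ is equivalent to the data of a homomorphism $f \colon A^R \to A^G$ together with the chosen ordered bases; the presented module is $X \cong \coker f$, and this cokernel depends only on $f$ as an abstract map of graded free modules, not on the bases used to write it down.

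First I would enumerate the admissible operations --- rescaling a row (resp.\ column) by a nonzero scalar, permuting two rows (resp.\ columns) of equal degree, and adding a scalar multiple of row $i$ to row $j$ when $G(i) \leq G(j)$ (resp.\ column $k$ to column $\ell$ when $R(k) \geq R(\ell)$) --- and check that each one can be written as $M \mapsto T M$ for some $T \in \K^{G \times G}$ or as $M \mapsto M S$ for some $S \in \K^{R \times R}$. The degree hypothesis attached to each operation is precisely what ensures that the corresponding elementary matrix lives in the incidence algebra $\K^{G \times G}$ or $\K^{R \times R}$ (i.e.\ is a graded matrix), and in each case the inverse elementary matrix is also graded and is of the same elementary type, so $T$ and $S$ are units in the respective endomorphism algebras $\End(A^G)$, $\End(A^R)$.

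Next I would translate left/right multiplication into change of basis. If the original bases are $\varphi \colon A^G \iso F_0$ and $\psi \colon A^R \iso F_1$, so that $M = \varphi^{-1} \circ f \circ \psi$, then setting $\varphi' \coloneqq \varphi \circ T^{-1}$ and $\psi' \coloneqq \psi \circ S$ gives new ordered bases of $F_0$ and $F_1$, and a direct computation yields $(\varphi')^{-1} \circ f \circ \psi' = T M S = M'$. Hence $M$ and $M'$ are two matrix representations of the \emph{same} map $f$, and so present the same cokernel $X$. Iterating over a sequence of admissible operations, the transformed matrix $M'$ still represents a map with cokernel isomorphic to $X$.

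The main thing to be careful about is the bookkeeping of degree conditions for each elementary operation --- i.e.\ verifying that ``admissible'' has been defined precisely so that the associated elementary matrix is graded and hence invertible inside $\K^{G \times G}$ or $\K^{R \times R}$ --- and checking the three types (scaling, transposition, elimination) separately. Once that is done, the conclusion is immediate from the fact that the cokernel of $f$ does not depend on the chosen bases of $F_0$ and $F_1$.
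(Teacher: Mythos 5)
Your proposal is correct and follows essentially the same route as the paper: both identify an admissible operation with an invertible graded (elementary) matrix, absorb it as a change of basis on $A^G$ (or $A^R$), and conclude that the cokernel is unchanged. The paper simply reduces "without loss of generality" to a single row-addition and exhibits the isomorphism $X \to X'$ via the commutative diagram $(\mathrm{Id}, L_{i,j}(c))$, whereas you enumerate the operation types and phrase the conclusion in terms of $M$ and $M'$ representing the same map $f$; these are the same argument.
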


\begin{proof}
Without loss of generality, consider a single row-addition from index $i$ to $j$ with coefficient $c \in \K$. Then the invertibleelementary matrix $L_{i,j}(c) \in \K^{|G| \times |G|}$ is actually a graded matrix in $\K^{G \times G}$ if and only if this row-addition is admissible. \\
In this case we can form a commutative diagram showing that the induced modules are isomorphic and this isomorphism is just a change of basis in the sense of \autoref{presentation}.
\[\begin{tikzcd}[ampersand replacement=\&]
	{A^R} \& {A^G} \& X \& 0 \\
	{A^R} \& {A^G} \& {X'} \& 0
	\arrow["M", from=1-1, to=1-2]
	\arrow["\Id", from=1-1, to=2-1]
	\arrow["{d_0}", from=1-2, to=1-3]
	\arrow["{L_{i,j}(c)}", from=1-2, to=2-2]
	\arrow[from=1-3, to=1-4]
	\arrow[dashed, from=1-3, to=2-3]
	\arrow["{M'}", from=2-1, to=2-2]
	\arrow["q", from=2-2, to=2-3]
	\arrow[from=2-3, to=2-4]
\end{tikzcd}\]
\end{proof}

The reverse is not always true if the matrices involved are not minimal, because in this case they do not even have to have the same dimension. To see that the reverse statement is true for minimal presentations we introduce some homological algebra.

\subsection{Minimal Presentations}\label{sec:minimal_pres}
We need a definition of minimality which is easier to handle than \autoref{min_pres}. 
 If $X$ is a persistence module, $\alpha \in \Z^d$ and $a \in X_\alpha$ is a homogeneous element, then we want to find a notion which tells us if $a$ is in the image of a structure map or not; i.e. if there is $b \in X, c \in \K$ st. there is $\beta \in \N^d$ with $a = cx^\beta \cdot b$. 
 
 We can see that this would allow us to find a minimal set of generators for $X$: Ignore all elements which are such images and find a pointwise basis for the remaining elements.

 To do this, recall that $A$ has a maximal homogeneous ideal $\mathfrak{m}=(x_1, \dots, x_d) \subset A$ consisting of all polynomials without a constant term. Consider the functor $- \otimes_A A/\mathfrak{m} \colon \grA \to \grA  $. For any $X$, the module $X \otimes_A A/\mathfrak{m}$ is also a graded module over $A/ \m \simeq \K$, so the image of this functor is actually  the category of $\Z^d$-graded vector spaces.  One may also visualize this as ignoring all arrows and their images in the grid $\Z^d$. 
 
If $a \in X$ is an image of a structure map, then in the tensor product $X \otimes A / \m$ we calculate $a \otimes_A 1 = cx^\beta \cdot b \otimes_A 1 =  b \otimes_A cx^\beta = 0$ because $cx^\beta \in \m$. 

To understand what the functor does on maps, consider first as a special case a graded matrix $M \in \K^{R \times G}$. To compute $M \otimes A/ \m$, consider an entry $M_{i,j} \in \K$ where $R(j) > G(i)$. It indicates that $M$ maps the homogeneous element $1 \in \K = A[-R(j)]_{R(j)}$ associated to the column $j$ to the homogeneous element $M_{i,j} \cdot x^{R(j)-G(i)} \in A[-G(i)]_{R(j)}$, which is an image of the generator $1 \in A[-G(i)]_{G(i)}$. But after applying the functor this last element is zero, so $(M \otimes A/ \m)_{i,j} = 0 $.

Summarising, $M \otimes A/ \m$ is again a graded matrix which contains only the entries of $M$ with  $G(i) = R(j)$ and is $0$ elsewhere.

\begin{proposition}\label{prop:minimal_pres}\cite[7.1]{Peeva}
Let  $ P_1 \oto{f} P_0 \to X$ be a presentation and $\ker f \oto{i}  P_1 $ its kernel.

The presentation $f$ is $\emph{minimal}$ if and only if
 \[f \otimes_A A/\mathfrak{m} = 0 \ \text{ and }  \ i \otimes_A A/\mathfrak{m}= 0.\]
\end{proposition}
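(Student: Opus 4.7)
My plan is to translate both tensor conditions into submodule statements using the matrix description of $-\otimes_A A/\m$ from the paragraphs preceding the statement. Namely, $f\otimes_A A/\m = 0$ is equivalent to $\Ima(f)\subseteq \m F_0$ (no entry of $f$ sits between a generator and a relation of equal degree), and $i\otimes_A A/\m = 0$ is equivalent to $\ker(f)\subseteq \m F_1$ (no syzygy of $f$ lies outside $\m F_1$). I would then prove the two directions separately.

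For the $(\Rightarrow)$ direction I would argue the contrapositive. If $f$ has a nonzero scalar entry $c = f_{ij}$ with $G(i) = R(j)$, the degree constraints on graded matrices force every nonzero $f_{lj}$ to satisfy $G(l) \leq G(i)$ and every nonzero $f_{ik}$ to satisfy $R(k) \geq R(j)$; these are exactly the admissibility conditions needed by \autoref{admissible_transformation} to clear the rest of row $i$ and column $j$. This yields $f \cong f' \oplus (c\cdot\Id_{A(-G(i))})$, and $f'$ with the evident inclusions is a proper sub-presentation of $X$, contradicting minimality. If instead some $s \in \ker f$ lies outside $\m F_1$, then at its own degree $\delta$ it has a nonzero scalar coordinate $c$ on some basis vector $e_j$ of $F_1$ with $R(j) = \delta$; the identity $f(e_j) = -c^{-1} f(s - c\,e_j)$ exhibits the $j$-th column as a $\K$-combination of the other degree-$\delta$ columns, and deleting it provides a proper sub-presentation.

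For the $(\Leftarrow)$ direction, assume both vanishings and let $\mathscr{F}\colon g\to f$ be any morphism of presentations with $\mathscr{F}_0, \mathscr{F}_1$ injective and inducing the identity on $X$. Tensoring the ladder with $A/\m$ and using right-exactness, the hypothesis $f\otimes A/\m = 0$ makes $p\otimes A/\m$ an isomorphism, so by commutativity $\mathscr{F}_0 \otimes A/\m$ is surjective. Graded Nakayama applied to the finitely generated $\coker(\mathscr{F}_0)$ forces $\mathscr{F}_0$ surjective, hence an isomorphism. Identifying $F_0'$ with $F_0$ via $\mathscr{F}_0$, for any $y \in F_1$ we have $f(y)\in \Ima(g) = f(\mathscr{F}_1(F_1'))$, so $y - \mathscr{F}_1(x) \in \ker f \subseteq \m F_1$ for some $x$. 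This yields $F_1 = \mathscr{F}_1(F_1') + \m F_1$, and a second application of graded Nakayama concludes that $\mathscr{F}_1$ is also an isomorphism, so the sub-presentation was not proper.

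The main obstacle is the reverse direction, since it relies on graded Nakayama both to upgrade ``surjective after $\otimes A/\m$'' to ``surjective'' and to exploit the second vanishing condition; one also has to verify that the two cokernels in question are finitely generated, which follows from the finite generation of $F_0$ and $F_1$. The forward direction is essentially a careful bookkeeping of the admissible matrix operations already developed in \autoref{admissible_transformation}.
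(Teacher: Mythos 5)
Your proof is correct and, in fact, more complete than the fragment the paper prints (the paper mostly delegates to Peeva, and its displayed proof only addresses part of the $(\Leftarrow)$ direction: using the essential-surjection characterization of \autoref{prop:essential} to show that $f\otimes A/\m = 0$ forces $F_0$ to have as few generators as possible, never touching the hypothesis $i\otimes A/\m=0$). Your $(\Leftarrow)$ shares the same underlying mechanism -- graded Nakayama (\autoref{nakayama_surjection}), which is also what drives essential surjections -- but you apply it twice and explicitly to both $\mathscr{F}_0$ and $\mathscr{F}_1$, and you supply the $(\Rightarrow)$ direction via admissible matrix reduction (\autoref{admissible_transformation}), which the paper omits entirely. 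Two small imprecisions worth flagging. First, in the syzygy case of $(\Rightarrow)$, the identity $f(e_j) = -c^{-1}f(s - ce_j)$ expresses column $j$ as an $A$-linear combination of the other columns, not a $\K$-linear combination of the degree-$\delta$ ones (the coordinates of $s - ce_j$ on basis vectors of strictly lower degree lie in $\m$, not $\K$); this does not affect your conclusion that $\Ima f$ survives deleting column $j$, but the phrasing should be fixed. Second, your $(\Leftarrow)$ silently assumes the injective morphism $\mathscr{F}\colon g\to f$ induces an \emph{isomorphism} on $X$ -- this is needed for the diagram argument that $\mathscr{F}_0\otimes A/\m$ is surjective and for $\Ima g = \Ima f$ after identifying $G_0$ with $F_0$. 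That is the intended reading of \autoref{min_pres}, but it should be stated, since the definition as literally written only says $g$ presents ``the same module.''
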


We postpone the proof and highlight the advantages of this  definition. From an algorithmic point of view, it means that all diagonal blocks of the presentation matrix and the previous matrix in the resolution must be 0. For the latter one does not actually need to compute the map $i$, since this simply means that after column reduction of $f$ there are no zero columns. This also means that minimising can be done by column-reduction and subsequent deletion of generator-relation pairs which violate the first equation. From an algebraic point of view, it will allow us to verify minimality of presentations in a quick and concise way using the right exactness of $- \otimes A/ \m$ and long exact sequences. 

To find a minimal presentation for a module $X$, choose a basis of $X/ \m X = X \otimes X/ \m$ and any set of preimages along the quotient map $X \onto X / \m X$. This will be a set of minimal generators (formally, this is a consequence of Nakayama's lemma). Repeat this process with the kernel of the induced map from the free module.

 \begin{proposition}[\cite{CZMP}, Theorem 6]\label{free_conservative}
 A map between graded free persistence modules $\varphi \colon X \to Y$ is an isomorphism if and only if $\varphi \otimes A/ \m$ is an isomorphism.
\end{proposition}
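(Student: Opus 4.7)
The forward implication is immediate: any functor carries isomorphisms to isomorphisms, so if $\varphi$ is an iso then so is $\varphi \otimes A/\m$. The plan is therefore to concentrate on the converse, where we assume $\varphi \otimes A/\m$ is an isomorphism and want to conclude that $\varphi$ itself is one.

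First I would establish surjectivity. Apply the right exact functor $-\otimes_A A/\m$ to the defining cokernel sequence $X \xrightarrow{\varphi} Y \to \coker \varphi \to 0$. Since $\varphi \otimes A/\m$ is surjective by hypothesis, we obtain $\coker \varphi \otimes A/\m = 0$, i.e.\ $\m \cdot \coker \varphi = \coker \varphi$. Because $Y$ is graded free (hence finitely generated if $H$ is finite, and in any case $\N^d$-bounded-below after shifts) the cokernel is a quotient of $Y$ and has the same boundedness property, so the graded Nakayama lemma applies and forces $\coker \varphi = 0$. Thus $\varphi$ is surjective.

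Next I would deduce injectivity by exploiting freeness of $Y$. Since graded free modules are projective, the short exact sequence
\[ 0 \to \ker \varphi \to X \oto{\varphi} Y \to 0 \]
splits, yielding an internal direct sum decomposition $X \cong Y \oplus \ker \varphi$ in which $\varphi$ acts as the projection onto the $Y$ summand. Because $-\otimes_A A/\m$ is additive, tensoring preserves this decomposition: $X \otimes A/\m \cong (Y \otimes A/\m) \oplus (\ker \varphi \otimes A/\m)$, and under this identification $\varphi \otimes A/\m$ is still the projection onto the first factor. As $\varphi \otimes A/\m$ is assumed to be an isomorphism, the complementary summand must vanish, i.e.\ $\ker \varphi \otimes A/\m = 0$. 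Since $\ker \varphi$ is a direct summand of $X$ and hence inherits the boundedness needed for graded Nakayama, we conclude $\ker \varphi = 0$.

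The main subtlety is ensuring that the graded Nakayama lemma applies in the $\Z^d$-graded setting: one needs the modules in question to be bounded below coordinate-wise (or finitely generated) so that a hypothetical nonzero homogeneous element of minimum degree contradicts $\m M = M$. For the persistence modules considered here this hypothesis is automatic from the free presentations, so the only real content of the proposition is the elegant use of projectivity to convert the right exact functor $-\otimes A/\m$ into a tool sharp enough to detect injectivity.
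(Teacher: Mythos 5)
Your proof is correct but proceeds along a genuinely different route from the paper. The paper's argument is a direct matrix computation: choose graded bases, group basis vectors by degree so that $\varphi$ becomes a block matrix indexed by $\Z^d$, observe that the off-diagonal blocks $\varphi_{\alpha,\beta}$ with $\alpha \neq \beta$ vanish under $-\otimes A/\m$ while the diagonal blocks are forced to be invertible, and then pick a linear extension of the product order to exhibit $\varphi$ as block-triangular with invertible diagonal blocks, hence invertible. Your argument is instead homological: right-exactness of $-\otimes A/\m$ plus graded Nakayama gives surjectivity, then freeness (hence projectivity) of $Y$ makes the kernel split off as a direct summand of $X$, so that additivity of $-\otimes A/\m$ identifies $\ker\varphi \otimes A/\m$ as the complementary summand of an isomorphism and hence zero, and Nakayama again kills $\ker\varphi$. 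The paper's proof is concrete and constructive (it effectively tells you how to invert the matrix), which fits its algorithmic aims; your proof is more conceptual and would transfer verbatim to any $\Z^d$-graded ring with a graded Nakayama lemma, without needing to set up the block-matrix bookkeeping. Both use Nakayama once; the key observation you add that the paper does not need is the splitting $X \cong Y \oplus \ker\varphi$ coming from projectivity of $Y$, which lets you trade the paper's triangularity argument for a second Nakayama application. Your caveat about boundedness below (needed for graded Nakayama over $\Z^d$) is correct and indeed automatic for the finitely generated free modules in play here; just note the stray ``$H$'' in your parenthetical should presumably be the indexing set $G$.
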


The computational content of this proposition is: Given a graded matrix, we can decide if it is invertible by only looking at the entries which have the same row and column degree.

\begin{proof}
Choose bases $A^G \iso X$ and $A^{G'} \iso  Y$ and let $\varphi$ be a map $A^{G} \to A^{G'} $. Group the basis vectors together in sets of the same degree. For any $\alpha \in \Z^d$ let $k_\alpha$ denote the number basis vectors of this degree. This makes $\varphi$ a graded matrix in block form. Apply $- \otimes A/ \m$ and assume that the result is an isomorphism:
 \[ \left(\varphi_{\alpha, \beta} \right)_{\alpha, \beta \in \Z^d} \colon \bigoplus_{\alpha \in \Z^d} A^{k_\alpha}(-\alpha) \to \bigoplus_{\beta \in \Z^d} A^{k'_\beta}(-\beta) \]
\[\quad \quad \quad \Downarrow (- \otimes A / \m )\] 
\[ \left(\varphi_{ \alpha, \beta} \otimes A / \m \right)_{\alpha, \beta \in \Z^d} \colon \bigoplus_{\alpha \in \Z^d} \K^{k_\alpha}(-\alpha) \iso \bigoplus_{\beta \in \Z^d} \K^{k'_\beta}(-\beta) \]

The non-diagonal terms $\left(\varphi_{ \alpha, \beta} \otimes A / \m \right)$ where $\alpha \neq \beta$ vanish because the vector spaces do not have the same degree. Then the blocks on the diagonal $\varphi_{ \alpha, \alpha} \otimes A/ \m$ are invertible matrices. \\
Choose any linear order $<_l$ on $\Z^d$ which extends the product order. If $\alpha <_l \beta$ then $\varphi_{\alpha, \beta}= 0$ so $\varphi_{\alpha, \beta}$ is block-triangular after reordering the rows and columns according to $<_l$. Since the diagonal blocks are invertible the whole matrix must be invertible, too.
\end{proof}

We are now ready to prove the reverse of \autoref{admissible_transformation}. To translate the statement to admissible operations, observe that the standard proof which shows that an invertible matrix is a product of elementary matrices by Gaussian Elimination translates 1-1 to the graded setting.

\begin{proposition}\label{minimal_iso}
 Consider a morphism of minimal presentations.
\[\begin{tikzcd}
	{P_1} & {P_0} & X & 0 \\
	{Q_1} & {Q_0} & Y & 0
	\arrow["\varphi", from=1-3, to=2-3]
	\arrow["{\varphi_0}", from=1-2, to=2-2]
	\arrow["{\varphi_1}", from=1-1, to=2-1]
	\arrow["{d_1}", from=1-1, to=1-2]
	\arrow["{d_0}", from=1-2, to=1-3]
	\arrow["{e_0}", from=2-2, to=2-3]
	\arrow["{e_1}", from=2-1, to=2-2]
	\arrow[from=2-3, to=2-4]
	\arrow[from=1-3, to=1-4]
\end{tikzcd}\]
If $\varphi$ is an isomorphism, then so are $\varphi_0$ and $\varphi_1$.
\end{proposition}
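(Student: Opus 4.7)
The plan is to apply the tensor functor $- \otimes_A A/\mathfrak{m}$ to the diagram and use the two characterisations of minimality from \autoref{prop:minimal_pres} together with \autoref{free_conservative}. The argument proceeds in two stages, handling $\varphi_0$ first and then $\varphi_1$.

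First I would prove that $\varphi_0$ is an isomorphism. Apply the right-exact functor $- \otimes_A A/\mathfrak{m}$ to the two rows of the diagram. For the top row, right-exactness yields an exact sequence $P_1 \otimes A/\mathfrak{m} \xrightarrow{d_1 \otimes} P_0 \otimes A/\mathfrak{m} \to X \otimes A/\mathfrak{m} \to 0$, and by \autoref{prop:minimal_pres} the map $d_1 \otimes A/\mathfrak{m}$ vanishes, so $P_0 \otimes A/\mathfrak{m} \iso X \otimes A/\mathfrak{m}$. The same applies to the bottom row. Commutativity forces $\varphi_0 \otimes A/\mathfrak{m}$ to agree, under these isomorphisms, with $\varphi \otimes A/\mathfrak{m}$, which is an isomorphism because $\varphi$ is. Since $P_0$ and $Q_0$ are graded free, \autoref{free_conservative} promotes this to $\varphi_0$ itself being an isomorphism.

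Next I would handle $\varphi_1$. Because $\varphi$ and $\varphi_0$ are both isomorphisms, a short diagram chase (or the five-lemma) shows that $\varphi_0$ restricts to an isomorphism $\ker d_0 \iso \ker e_0$. Now the map $d_1$ factors as a surjection $P_1 \twoheadrightarrow \ker d_0$ with kernel $\ker d_1$, and the corresponding short exact sequence after tensoring becomes $\ker d_1 \otimes A/\mathfrak{m} \xrightarrow{i \otimes} P_1 \otimes A/\mathfrak{m} \to \ker d_0 \otimes A/\mathfrak{m} \to 0$. The second minimality condition of \autoref{prop:minimal_pres} says $i \otimes A/\mathfrak{m} = 0$, so $P_1 \otimes A/\mathfrak{m} \iso \ker d_0 \otimes A/\mathfrak{m}$, and analogously $Q_1 \otimes A/\mathfrak{m} \iso \ker e_0 \otimes A/\mathfrak{m}$. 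Under these identifications $\varphi_1 \otimes A/\mathfrak{m}$ becomes the restriction of $\varphi_0 \otimes A/\mathfrak{m}$ to kernels, which we already know is an isomorphism. A second application of \autoref{free_conservative} gives that $\varphi_1$ is an isomorphism.

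The main subtlety is step two: $- \otimes A/\mathfrak{m}$ is only right exact, so in general $\ker(d_0) \otimes A/\mathfrak{m}$ is not the same as $\ker(d_0 \otimes A/\mathfrak{m})$, and one cannot naively compare kernels after tensoring. The clean way around this is to treat $P_1 \twoheadrightarrow \ker d_0$ as a minimal cover in its own right and use exactly the second minimality condition built into \autoref{prop:minimal_pres}; this is the only place where the hypothesis that the \emph{whole} presentation is minimal (and not just the generating map $d_0$) is used, and it is precisely what makes the statement true.
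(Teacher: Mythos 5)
Your proof is correct and follows essentially the same route as the paper's: apply $-\otimes_A A/\mathfrak{m}$, use right-exactness plus the first minimality condition to identify $P_0\otimes A/\mathfrak{m}$ with $X\otimes A/\mathfrak{m}$ (and likewise below), transfer the isomorphism $\varphi\otimes A/\mathfrak{m}$ to $\varphi_0\otimes A/\mathfrak{m}$ by commutativity, and conclude via \autoref{free_conservative}; the paper then handles $\varphi_1$ "analogously considering $\ker d_0$ and $\ker e_0$", which is exactly the second stage you spell out using the condition $i\otimes A/\mathfrak{m}=0$. Your closing remark about why one must use the minimality of $P_1\twoheadrightarrow\ker d_0$ rather than naively tensoring kernels is a correct and worthwhile elaboration of the step the paper leaves implicit.
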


\begin{proof}
 Apply the right-exact functor $- \otimes A / \m$ to the diagram. By exactness at $Q_0 / \m Q_0$ and $P_0 / \m P_0$ and using minimality, the maps $d_0 \otimes A/ \m$ and $e_0 \otimes A/ \m$ are isomorphisms. 
 
 Then by commutativity $\varphi_0 \otimes A / \m$ is an isomorphism and using \autoref{free_conservative} also $\varphi_0$ is an isomorphism. The statement for $\varphi_1$ follows analogously considering $\ker d_0$ and $\ker e_0$ instead of $X,Y$.
\end{proof}

\begin{remark}
 It is well known that every finitely generated projective persistence modules is free: 
 Let $P$ be projective.
 Choose a minimal free cover $F \to P$. This map splits by projectivity, but the elements in the kernel lie completely in $\mathfrak{m}F$ so by Nakayama's Lemma this kernel must be $0$.
\end{remark}

We will also need the following direct consequence of Nakayama's lemma:

\begin{proposition}\label{nakayama_surjection}
    Let $Y$ be finitely generated, then $f\colon X \to Y$ is surjective if and only if $f \otimes A/ \m$ is.
\end{proposition}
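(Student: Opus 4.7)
The plan is to prove both directions by reducing to the cokernel of $f$ and invoking (graded) Nakayama's lemma.

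For the easy direction, if $f$ is surjective then $f \otimes_A A/\mathfrak{m}$ is surjective because the functor $-\otimes_A A/\mathfrak{m}$ is right exact and therefore preserves surjections. This does not require any hypothesis on $Y$.

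For the interesting direction, I would form the cokernel $C \coloneqq \coker f$, which fits into an exact sequence $X \oto{f} Y \to C \to 0$. Applying the right-exact functor $-\otimes_A A/\mathfrak{m}$ and using the assumption that $f\otimes_A A/\mathfrak{m}$ is surjective, we obtain $C \otimes_A A/\mathfrak{m} = C/\mathfrak{m}C = 0$, i.e.\ $\mathfrak{m}C = C$. Since $Y$ is finitely generated and $C$ is a quotient of $Y$, $C$ is finitely generated as well. By the graded version of Nakayama's lemma applied to the $\Z^d$-graded finitely generated $A$-module $C$, the equality $C = \mathfrak{m}C$ forces $C = 0$, hence $f$ is surjective.

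The main point to be careful about is that Nakayama's lemma requires finite generation of $C$ — this is exactly where the hypothesis on $Y$ enters, and without it the statement would fail (one can take $X = 0$ and $Y$ an infinitely generated module supported only in degrees where $\mathfrak{m}$ acts nontrivially on each generator's image). Once finite generation of $C$ is established, the graded Nakayama lemma is standard and concludes the proof.
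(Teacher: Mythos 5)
The paper does not spell out a proof of this proposition—it is simply stated as a "direct consequence of Nakayama's lemma"—and your argument is exactly the intended one: pass to $C=\coker f$, use right-exactness of $-\otimes_A A/\m$ to conclude $\m C = C$, note $C$ is finitely generated as a quotient of $Y$, and apply graded Nakayama. Both directions are handled correctly and the remark about where the hypothesis on $Y$ is used is apt.
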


With this we can then prove that the two definitions of minimality are equivalent, via a third characterisation:

\begin{proposition}\label{prop:essential}
A homomorphism $f \colon X \to Y$ of fg persistence modules is an essential surjection if and only if $f \otimes A/ \m$ is an isomorphism.
\end{proposition}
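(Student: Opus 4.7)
The plan is to factor the biconditional through the intermediate condition that $f$ is surjective with $\ker f \subseteq \m X$, and then to show that each side of the claimed equivalence is equivalent to this condition. Both halves then reduce to Nakayama-style arguments.

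First I would handle the tensor-product side. Applying $- \otimes_A A/\m$ to the exact sequence $\ker f \into X \onto Y \to 0$ and using right-exactness, one gets $\ker(f \otimes A/\m) = (\ker f + \m X)/\m X$, which vanishes iff $\ker f \subseteq \m X$. Together with \autoref{nakayama_surjection} (applicable since $Y$ is finitely generated), this yields that $f \otimes A/\m$ is an isomorphism iff $f$ is surjective and $\ker f \subseteq \m X$.

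For the essential-surjection side, recall that an essential epimorphism is a surjection $f$ such that every submodule $X' \subseteq X$ with $f(X') = Y$ equals $X$; surjectivity of $f$ lets me rewrite this condition as ``$X' + \ker f = X$ forces $X' = X$''. If $\ker f \subseteq \m X$, then any such $X'$ satisfies $X' + \m X = X$ and Nakayama's lemma (using that $X$ is finitely generated) gives $X' = X$, so $f$ is essential. For the converse I would argue by contrapositive: suppose $\ker f \not\subseteq \m X$, so $K := \ker(f \otimes A/\m) \neq 0$. Pick a graded $\K$-linear complement $V$ of $K$ in $X/\m X$, lift a homogeneous basis of $V$ to elements $v_i \in X$, and let $X'$ be the graded submodule they generate. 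The restriction of $f \otimes A/\m$ to $V$ is an isomorphism onto $Y/\m Y$, so the $f(v_i)$ generate $Y$ modulo $\m Y$ and Nakayama (using $Y$ finitely generated) gives $f(X') = Y$. On the other hand, the image of $X'$ in $X/\m X$ is exactly $V \subsetneq X/\m X$, so $X' \neq X$, contradicting essentiality.

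The main subtlety is the construction of $X'$ in the last step: one must choose a \emph{graded} complement and \emph{homogeneous} lifts to guarantee that $X'$ is a graded submodule whose image in $X/\m X$ is precisely $V$. Once this is in place everything else is bookkeeping around the two applications of Nakayama's lemma, one to $X$ and one to $Y$.
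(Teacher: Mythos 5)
Your proposal is correct and follows essentially the same route as the paper: both directions come down to \autoref{nakayama_surjection} plus lifting a graded complement of $\ker(f\otimes A/\m)$ to a submodule of $X$ and applying Nakayama again. The only differences are organizational — you factor the equivalence through the explicit condition ``$f$ surjective and $\ker f\subseteq\m X$'' and run one implication by contrapositive, whereas the paper argues the essentiality direction via the composition $(f\circ i)\otimes A/\m$ and the other direction by pulling back a splitting subspace $E$ along $X\onto X/\m X$; these are cosmetic variants of the same argument.
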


\begin{proof}
"$\Leftarrow$": By \autoref{nakayama_surjection} $f$ is a surjection. Let $i \colon X' \into X$ be injective, but nor surjective, then by \autoref{nakayama_surjection} $i \otimes A/m $ is not surjective either. But then $f \circ i \otimes A/ \m$ is not surjective and so $f \circ i$ cannot be a surjection. \\
"$\Rightarrow$": By \autoref{nakayama_surjection} $f \otimes A/ \m$ is a surjection of (graded) vector spaces, so it splits and there is a graded sub-vector space $E \into X/ \m$ s.t. ${f \otimes A/ \m}_{|E}$ is an isomorphism. Lift $E$ to a submodule $\tilde E$ of $X$ by pulling back $E$ along $X \onto X/\m$. Then $f_{| \tilde E} \otimes A/\m = {f \otimes A/ \m}_{|E} $ is an isomorphism and so $f_{| \tilde E}$ is surjective. But since $f$ was essential, $\tilde E$ cannot have been a proper submodule. Then $E$ was not a proper graded sub-vector space either and $f \otimes A/ \m$ is an isomorphism.
\end{proof}

\begin{proof}[Proof of \autoref{prop:minimal_pres}]
Assume that a finite presentation $F_1 \oto{f} F_0 \oto{p} X \to 0$ satisfied $f \otimes A/\m = 0$, then $p \otimes A/\m$ is an isomorphism and $p$ an essential surjection by \autoref{prop:essential}. Now if there was another presentation $F'_1 \oto{f'} F'_0 \oto{p'} X \to 0$  of $X$, then the identity of $X$ lifts to a map $h\colon F'_0 \to F_0$. Since $p'$ is surjective we have $p(\Ima h) = X$ and since $p$ is an essential surjection $\Ima h$ could not have been a proper submodule, so $F'_0$ could not have a smaller basis than $F_0$.
\end{proof}

At last we can use the functor $-\otimes A/ \m$ to define the graded Betti numbers:

\begin{definition}[{\cite[Theorem 11.2.]{Peeva}}]
Let $X$ be a finitely generated persistence module, $\alpha \in \Z^d$, and $i \in \N$. Recall that the \emph{graded betti number} $b_{i,\alpha}(X)$ denotes the number of summands of the form $A(-\alpha)$ in the $i$-th part of a minimal resolution of $X$. By \autoref{prop:minimal_pres} we have
\[b_{i, \alpha}(X) \coloneqq \dim_\K \Tor_{i}(X, A/\m) _\alpha = \dim_\K  \Ext^{i}(X, A/\m)_\alpha\]
 and will say that a module $X$ \emph{has no generators at (above)} $\alpha$ if $b_{0,\alpha}=0$ ( $b_{0,\beta}=0 \ \forall \beta \geq \alpha$ ) and use the same language for subsets $U \subset \Z^d$ and relations.
\end{definition}

\begin{remark}
    To see that the $\Ext$-definition works, too, notice that we could just as well have used the functor $\Hom(-, A/ \m)$: Using the enrichment over $\grA$ and the $\otimes \dashv \Hom$-adjunction there is a natural equivalence:
    \begin{align*} 
    \Hom\left(-, A/ \m  \right)  \simeq \Hom_{A/ \m} \left( A/ \m, \Hom \left(-, A/ \m\ \right) \right) 
      \simeq \Hom_{A/ \m}(- \otimes A/ \m, A/ \m) = \left(- \otimes A/ \m \right)^{\vee}  \end{align*}
\end{remark}

\subsection{Decompositions}\label{sec:decompositions}

\begin{definition}
    Let $\Cs$ be an additive category and $X \in \Cs$. A decomposition is an isomorphism $\varphi \colon X \iso \bigoplus_{i \in I} X_i$ where $X_i \neq 0$ for each $i$. An object is called indecomposable if there is no decomposition with $|I| > 1$.
\end{definition}

As long as we are interested in finitely generated modules we can be sure that the indecomposable summands we find are unique up to isomorphism. 

\begin{theorem}[\autoref{krull-remak-schmidt}, Krull-Remak-Schmidt-Azumaya \cite{Azumaya}]
 Every fg persistence module $X$ has a finite decomposition into indecomposable submodules, $X \simeq \bigoplus_{i \in I} X_i$, and the isomorphism types of the submodules are independent of the chosen decomposition.
\end{theorem}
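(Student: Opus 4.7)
The plan is to prove the statement in two independent parts: existence of a finite indecomposable decomposition, and its essential uniqueness. The second part will be handled by invoking the classical theorem of Azumaya, so the real work consists of showing the hypothesis that every indecomposable finitely generated persistence module has a \emph{local} endomorphism ring.

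For existence I would induct on $n(X) \coloneqq \dim_\K(X \otimes_A A/\m) = \sum_{\alpha \in \Z^d} b_{0,\alpha}(X)$, which is finite since $X$ is finitely generated and equals the number of elements in any minimal generating set. If $n(X) = 1$ then $X$ is cyclic and any decomposition $X \cong Y \oplus Z$ would, by right-exactness of $-\otimes_A A/\m$ and \autoref{nakayama_surjection}, force one of $Y$ or $Z$ to have trivial reduction modulo $\m$ and hence be zero; so $X$ is indecomposable. For $n(X) > 1$, either $X$ is already indecomposable and we are done, or $X \cong Y \oplus Z$ with both summands nonzero. Applying $-\otimes_A A/\m$ gives $X/\m X \cong Y/\m Y \oplus Z/\m Z$, and by Nakayama both direct summands are nonzero graded vector spaces, so $n(Y), n(Z) < n(X)$. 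The inductive hypothesis gives finite indecomposable decompositions of $Y$ and $Z$, and concatenating yields one for $X$.

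For uniqueness I would quote Azumaya's theorem in the form: if $X \cong \bigoplus_{i \in I} X_i$ with each $X_i$ indecomposable and each $\End(X_i)$ local, then the multiset of isomorphism classes of $X_i$ is an invariant of $X$. Combined with the existence half this yields the claim. The hypothesis to verify is therefore:

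\textbf{Key claim.} For every indecomposable finitely generated persistence module $Y$, the endomorphism ring $\End(Y)$ is local.

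This is the step I expect to be the main obstacle. The slick route exploits that $A$ is graded local with unique graded maximal ideal $\m$, so $\End(Y)$ inherits a $\Z^d$-grading and has a well-defined graded Jacobson radical $J$. One first shows that the degree-$0$ component $\End(Y)_0$ is a finite-dimensional $\K$-algebra (using finite generation of $Y$ and finite-dimensionality of each graded piece of $Y$ restricted to a bounded window containing the generating degrees, via \autoref{matrix_representation} applied to a minimal presentation). Then a graded Fitting-type argument on the action of $\End(Y)_0$ modulo $J_0$ shows that this quotient is a product of matrix algebras over division rings, and indecomposability of $Y$ forces the product to consist of a single factor, with no nontrivial idempotents, hence a division ring. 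Lifting idempotents along the nilpotent radical then gives that $\End(Y)$ itself has only the trivial idempotents $0,1$ and that every non-invertible element sits in $J$, which is exactly locality. Once the Key Claim is in hand, Azumaya applies verbatim and completes the proof.
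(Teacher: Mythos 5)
Your proof is correct, but it takes a genuinely different route from the paper. The paper handles the entire statement by a single reduction: \autoref{fg_finite} (via the restriction--extension equivalence of \autoref{finitisation}) places a f.g.~persistence module inside $\grA_{[\alpha,\beta]} \simeq \fun([\alpha,\beta], \vect_\K) \simeq \mathscr{P}([\alpha,\beta])\lmod$ for a finite-dimensional path algebra, and then simply cites Assem--Simson--Skowro\'nski (Thms.~I.4.8, I.4.10) for both ``indecomposable $\Leftrightarrow$ local endomorphism ring'' and the Krull--Remak--Schmidt theorem; existence of a finite indecomposable decomposition is then implicit from finite length over that algebra. You instead work directly with the endomorphism ring, showing $\End(Y)$ is a finite-dimensional $\K$-algebra (endomorphisms are determined by their values on a minimal generating set, and each such value sits in a finite-dimensional graded piece), and then applying the ungraded finite-dimensional-algebra theory (semisimple quotient, idempotents, Fitting) to $\End(Y)$ directly; you also give a separate and clean induction on $\dim_\K(X/\m X)$ for existence. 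Both routes buy the same thing in the end, but yours avoids setting up the category equivalence at the cost of redoing the finite-dimensional-algebra lemmas by hand. One minor inefficiency: the excursion into ``$\End(Y)$ inherits a $\Z^d$-grading'' and a ``graded Jacobson radical $J$'' is unnecessary — only the degree-$0$ endomorphism algebra (the ordinary $\End$ in $\grA$) is used, and the plain ungraded theory suffices; and the ``lifting idempotents along the nilpotent radical'' phrasing slightly inverts the direction of the standard argument (one shows idempotents of $\End(Y)$ map to idempotents of $\End(Y)/J$, then uses that $J$ nilpotent forces idempotents in $J$ to vanish), but the content is right.
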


In fact by a result of Botnan and Crawley-Boevey \cite{CBBB} this is even true for pointwise finite dimensional persistence modules.

We need to understand how decompositions of modules relate to their presentations. 

\begin{definition}[{\cite[Definition 4.2]{DeyXin}}]\label{def:blockstructure}
 Let $M \in \K^{m \times n}$ be a (graded) matrix. 
 A \emph{block} $b$ of $M$ is a pair $(b_{\text{rows}}, \, b_{\text{cols}})$ where $b_{\text{rows}} \subset [m]$ and $ b_{\text{cols}} \subset [n]$ are index-sets and we write $M_b$ for the corresponding submatrix. 
 
 $M$ is called block \emph{decomposed} if there is a set of blocks $\B$ which partition the row and column indices such that $M$ is zero outside of the blocks. We write $M_\B = \bigoplus M_b$ for a matrix which is block-decomposed with $\B$ the set of blocks and if the matrix is graded, then $G_b$, $R_b$ denote the corresponding degrees of $b$.
\end{definition}
Equivalently, a block decomposition is a reordering of a direct sum of matrices. Therefore it induces a direct sum decomposition of the presented modules into the modules presented by the blocks.

\begin{example}\label{example:blockstructure} A matrix decomposed over $\left( b =  \left( \{1,2\}, \{1\} \right), \ c = \left( \{3\}, \{2\} \right), \ d = \left( \{4\}, \{\} \right) \right)$
 
 \begin{minipage}[c]{0.3\textwidth}
 \[ 
\begin{array}{| c | c  c | }
        \hline 
         M  & \lbcell (2,2) &  \lrcell (2,2)  \\
        \hline
         \lbcell (0,1)    &  \lbcell 1   &   0       \\
         \lbcell (1,0)    &  \lbcell 1   &   0       \\
        \lrcell (1,1)    &  0   & \lrcell 1         \\
        \lgcell (1,2)    &  0   &  0                \\
        \hline
    \end{array}
 \]
\end{minipage}
\hspace{3em}
 \begin{minipage}[c]{0.5\textwidth}
        \begin{tikzpicture}[scale = 0.9]
            \draw[->, thick] (0,0) -- (4,0) node[right] {$x$};
            \draw[->, thick] (0,0) -- (0,4) node[above] {$y$};
        
            \draw[fill=lightblue, draw=none, opacity=1, thick] (0,3) -- (0,1) -- (1,1) -- (1,0) -- (3,0) -- (3,3);
            \draw[fill = darkblue, draw=none, opacity=1, thick]
            (1,3) -- (1,1) -- (3,1) -- (3,2) -- (2,2) -- (2,3);
    
            \draw[fill = lightred, draw= none, opacity = 0.8, thick]
            (1.1,3.1) -- (1.1,1.1) -- (3.1,1.1) -- (3.1,2.1) -- (2.1,2.1) -- (2.1,3.1);

            \draw[fill = Gray!200, draw=none, opacity = 0.4, thick]
            (1.0,3.2) -- (1.0,2.0) -- (3.2, 2.0) -- (3.2, 3.2);
            
            \foreach \x/\y in {0/1, 1/0} {
                \fill[lightblue, draw = black] (\x,\y) circle (3pt);
            }
        
            \foreach \x/\y in {2/2} {
                \node[rectangle, draw=black, fill=lightblue, inner sep=2pt] at (\x,\y) {};
            }

            \foreach \x/\y in {1.1/1.1} {
                \fill[lightred, draw = black] (\x,\y) circle (3pt);
            }

            \foreach \x/\y in {2.1/2.1} {
                \node[rectangle, draw=black, fill=lightred, inner sep=2pt] at (\x,\y) {};
            }

            \foreach \x/\y in {1/2} {
                \fill[Gray, draw = black] (\x,\y) circle (3pt);
            }

            \foreach \x in {0,1,2,3} {
                \node at (\x, -0.2) [below] {\x};
            }
            \foreach \y in {0,1,2,3} {
                \node at (-0.2, \y) [left] {\y};
            }
        \end{tikzpicture}  \hfill
\end{minipage} \hfill \\
In the picture we have shifted the shapes a bit to the top right for better visibility.
\end{example}

The important consequence of the machinery in the preceding \hyperref[formal_introduction]{Subsection 2.1} which we need is the following.

\begin{proposition}\label{iso_lifting}
Consider a decomposition of a persistence module $\varphi \colon X \simeq \bigoplus_\B X_b$. If $M$ minimally presents $X$, then there is a sequence of admissible operations which transforms $M$ into a block decomposed matrix $M_\B$ such that $M_b$ minimally presents $X_b$.
\end{proposition}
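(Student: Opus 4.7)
The plan is to produce the target block-decomposed matrix $\bigoplus_\B M_b$ from two different directions: on one side, it is read off from a choice of minimal presentations for each summand $X_b$; on the other side, it must agree with $M$ up to admissible operations because both are minimal presentations of the same module.

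Concretely, I would first choose, for each $b \in \B$, a minimal presentation $F_1^b \oto{f_b} F_0^b \onto X_b$ with presentation matrix $M_b$. Taking direct sums yields
\[ \bigoplus_b F_1^b \; \oto{\,\bigoplus f_b\,} \; \bigoplus_b F_0^b \; \onto \; \bigoplus_b X_b, \]
and this is still minimal: the functor $- \otimes_A A/\m$ commutes with direct sums, so the criterion from \autoref{prop:minimal_pres} is preserved. Composing with $\varphi^{-1}$ produces a second minimal presentation of $X$ whose presentation matrix (in the obvious ordered basis) is exactly the block-decomposed matrix $\bigoplus_b M_b$. At this point we have two minimal presentations of $X$: the original one with matrix $M$, and the block-decomposed one.

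Next, I invoke the fundamental theorem of homological algebra to lift $\Id_X$ to a morphism of presentations $(\varphi_1,\varphi_0)$ between these two minimal presentations. By \autoref{minimal_iso}, since $\Id_X$ is an isomorphism, both $\varphi_0$ and $\varphi_1$ are isomorphisms of graded free modules. Before viewing them as graded matrices we need the degree data on the two sides to coincide: by the uniqueness of graded Betti numbers, the degree multisets of the generators (resp.\ relations) in any two minimal presentations of $X$ agree, so after an initial permutation of rows and columns of $M$ -- which is plainly admissible -- we may assume that the degree tuples $G$ and $R$ indexing $M$ are the same tuples used to index $\bigoplus_b M_b$. Then $\varphi_0 \in \K^{G \times G}$ and $\varphi_1 \in \K^{R \times R}$ are invertible graded matrices that intertwine $M$ and $\bigoplus_b M_b$.

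Finally, as noted in the discussion preceding \autoref{minimal_iso}, graded Gaussian elimination produces, for every invertible graded matrix, a factorisation into elementary matrices of the form $L_{i,j}(c)$ with $G(i) \leq G(j)$ (resp.\ $R(i) \leq R(j)$), each of which corresponds to an admissible row (resp.\ column) operation by \autoref{admissible_transformation}. Applying these operations in sequence transforms $M$ into $\bigoplus_b M_b$, which is block-decomposed over $\B$ with each block $M_b$ minimally presenting $X_b$ by construction. The main point requiring care is the degree-matching step at the start of the previous paragraph: without it, an invertible graded matrix between differently indexed free modules is not even a square endomorphism and the elementary-matrix factorisation would not make sense; once the indices are aligned, the rest is bookkeeping.
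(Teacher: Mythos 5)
Your proof is correct and follows essentially the same route as the paper's: build the block-decomposed minimal presentation $\bigoplus_b M_b$ from chosen minimal presentations of the summands, note it is still minimal because $-\otimes A/\m$ commutes with direct sums, lift the isomorphism to a morphism of minimal presentations, invoke \autoref{minimal_iso} to get invertible graded matrices on both levels, and factor these into graded elementary matrices corresponding to admissible operations. The one place you are more careful than the paper is the degree-matching remark (that the row/column degree tuples of $M$ and $\bigoplus_b M_b$ agree up to permutation, which one can read off from the uniqueness of graded Betti numbers or directly from $\varphi_0 \otimes A/\m$ being an isomorphism of graded vector spaces); the paper leaves this implicit, so your observation is a small but genuine tightening rather than a detour.
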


\begin{proof}
Choose minimal presentations $M_b$ for each $X_b$. Using \autoref{prop:minimal_pres} the block-decomposed matrix $M_\B = \bigoplus M_b$ minimally presents $\bigoplus X_b$. We lift $\varphi$ to a pair of graded matrices $(Q, \, P)$ such that $QMP^{-1} =  M_\B$. Via \autoref{minimal_iso} we know that $Q$ and $P^{-1}$ are invertible, so they can be written as a product of graded elementary matrices.
\end{proof}

\begin{remark}\label{remark:story}
\autoref{iso_lifting} points to the main difficulty in writing a matrix-elimination type algorithm. Assume that we want to bring a presentation $M$ into a certain shape, that is with zeroes in a block $b$. Then this is equivalent to finding invertible graded matrices $Q,P$ such that $(QMP)_b=0$. A priori this is a \emph{quadratic} system of equations in the entries of $Q$ and $P$. 
\end{remark}

 We want to conclude this section with a warning. Since maps between modules have many homotopic but non-isomorphic lifts to minimal presentations, the correspondence from decompositions of minimal presentations to decompositions of modules is only surjective. Theorem 3.1. in \cite{DeyXin} claims this correspondence to be bijective. Of course this does not change the correctness of the algorithm, but we give a counterexample.

 \begin{definition}\label{decomposition}
 Two decompositions $\varphi \colon X \iso \bigoplus_{i \in I} X_i$ and $\varphi' \colon X \iso \bigoplus_{i \in I} X'_i$ are called \emph{equivalent} if there is a permutation/bijection $\sigma \colon I \iso I$ and a commutative diagram
\[\begin{tikzcd}[ampersand replacement=\&]
	\& {\bigoplus_{i \in I} X_i} \\
	X \\
	\& {\bigoplus_{i \in I} X'_{\sigma(i)}}
	\arrow["\varphi", from=2-1, to=1-2]
	\arrow["{\varphi'}", from=2-1, to=3-2]
	\arrow["{\bigoplus_{i \in I} f_i}", from=1-2, to=3-2]
\end{tikzcd}.\]
\end{definition}

 \begin{example}
 Consider the indecomposable modules $X, \, Y$ over some field $\K$ presented by 
 \[
  \begin{array}{| c | c | }
    \hline 
     M_X & \lrcell (1,1) \\
    \hline
    \lrcell (0,0)&  1   \\
 \hline
\end{array} \quad \quad 
\begin{array}{| c | }
    \hline 
     M_Y \\
    \hline
    \blcell \wtext{(1,1)}    \\
 \hline
\end{array}
\quad  \quad 
\begin{tikzpicture}[baseline=8ex, scale = 0.9]
            \draw[fill=lightred, draw=none, opacity=1, thick] (0,3) -- (0,0) -- (3,0) -- (3,1) -- (1,1) -- (1,3);
            
            \foreach \x/\y in {0/0} {
                \fill[lightred, draw = black] (\x,\y) circle (3pt);
            }
            \foreach \x/\y in {1/1} {
                \node[rectangle, draw=black, fill=lightred, inner sep=2pt] at (\x,\y) {};
            }
    \end{tikzpicture}
    \quad \quad 
    \begin{tikzpicture}[baseline=8ex, scale = 0.9]
            \draw[fill=lightblue, draw=none, opacity=1, thick] (1,3) -- (1,1) -- (3,1) -- (3,3);
            
            \foreach \x/\y in {1/1} {
                \fill[lightblue, draw = black] (\x,\y) circle (3pt);
            }

    \end{tikzpicture}.
 \]
Consider the module $X \oplus Y$. As there are no maps $X \to Y$ nor $Y \to X$, the only decomposition of this module is given by the identity $X \oplus Y \iso X \oplus Y$. On the other hand, consider the following two isomorphisms of presentations
\[\varphi = \left( \begin{bmatrix}
                             1 & 0 \\
                             0 & 1
                            \end{bmatrix}, 1 \right), \ \varphi' = \left( \begin{bmatrix}
                             1 & 1 \\
                             0 & 1
                            \end{bmatrix}, 1 \right) \colon
    \begin{array}{| c | c | }
    \hline 
     M_X \oplus M_Y &  \lrcell (1,1) \\
     \hline
     \lrcell (0,0) & 1 \\
    \hline
    \blcell \wtext{(1,1)}  & 0  \\
 \hline
\end{array}  \iso 
\begin{array}{| c | c | }
    \hline 
     M_X \oplus M_Y &  \lrcell (1,1) \\
     \hline
     \lrcell (0,0) & 1 \\
    \hline
    \blcell \wtext{(1,1)}  & 0  \\
 \hline
\end{array} 
\]

Both induce the identity map on the module $X \oplus Y$, but they are non-equivalent as decompositions of presentation matrices. By \autoref{decomposition} they would be equivalent if there exist automorphisms $f \in \Aut(M_X), \, g \in \Aut(M_Y)$ such that $(f \oplus g) \circ \varphi = \varphi'$. A quick calculation shows that $\Aut(M_X)= \Aut(M_Y) = \K^*$. That is, for each $a, b \neq 0 \in \K$ there is exactly one map in $\Aut(M_X) \times \Aut(M_Y)$ of the form
\[f_a \oplus g_b = \left( \begin{bmatrix}
                             a & 0 \\
                             0 & b
                            \end{bmatrix},  a \right).\]
        But no choice of $a, b$ satisfies the equation $(f_a \oplus g_b) \circ \varphi = \varphi'$.                     

\end{example}

\section{Decomposing Uniquely Graded Presentations}

The Generalized Persistence (GP) Algorithm introduced by Dey and Xin~\cite{DeyXin} finds an indecomposable decomposition of a presentation matrix $M \in \K^{G \times R}$ assuming that all relations have pairwise distinct degrees. We review the core functionality of their algorithm. 

\subsection{The Generalized Persistence Algorithm}\label{sec:deyxin}


The GP-Algorithm loops over the relations in an order compatible with the product order on $\N^d$,
similarly to the Persistence Algorithm for 1-parameter persistence modules~\cite{eh-computational,phat}. 
It manipulates the matrix via admissible
row and column operations and
maintains the invariant that after
having handled the $i$-th column,
the matrix consisting of the first
$i$ columns is block-decomposed into indecomposables.
We must therefore decompose a graded matrix that is already decomposed
except for its last column. This is reflected in the next definition; 
note that the GP algorithm assumes $N$ to consist of a single column,
but we already define the structure for $N$ of arbitrary size
to prepare our extension of the algorithm.

\begin{definition}\label{def:alpha_decomposed}
Let $\alpha \in \N^d$ and $\left[ M_\B \ N \right] \in \K^{G \times ( R \cup k \cdot \alpha) }$ be a presentation, where $N$ contains $k$ columns, all of degree $\alpha$. It is called
$\alpha$-\emph{decomposed}, if it is \emph{minimal}, $M_\B$ is an indecomposable decomposition, and the degree of no column of $M_\B$ is larger than $\alpha$.
For $b \in \B$ we denote by $N_b$ the submatrix $N_{b_{\text{rows}} \times k \cdot \alpha}$ of $N$ with the row indices of $b$.
\end{definition}

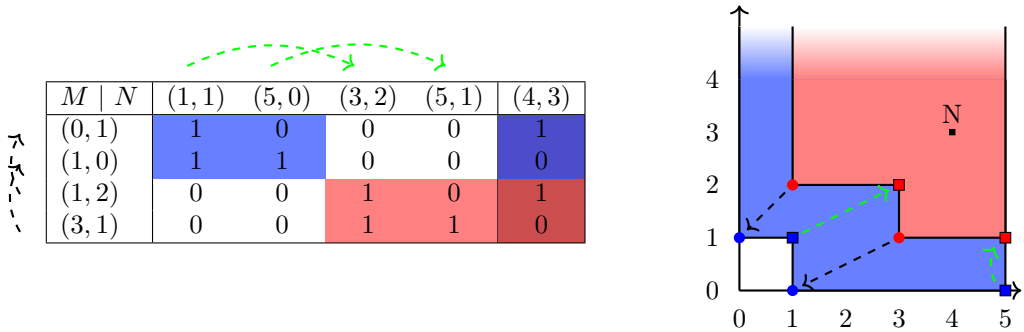
\begin{figure}[H]
    \centering
    \begin{minipage}[c]{0.57\textwidth}

    \[ \hspace{0.3em}
    \begin{array}{| l | c  c  c  c | c |}
        \hline 
        M \ | \ N & (1,1) & (5,0) & (3,2) & (5,1) & (4,3)   \\
        \hline
        (0,1)    &  \lbcell 1   &  \lbcell 0    &  0    & 0     & \bmcell 1 \\
        (1,0)    &  \lbcell 1   &  \lbcell 1    &  0    & 0     & \bmcell 0 \\
        (1,2)    &  0   &  0    & \lrcell 1    & \lrcell  0     & \gmcell 1 \\
        (3,1)    &  0   &  0    & \lrcell 1    & \lrcell  1     & \gmcell 0 \\
        \hline
    \end{array}
    \]
    \begin{tikzpicture}[overlay, remember picture]
        \draw[->, thick, black, bend left, dashed] (0.7, 0.4) to (0.7, 1.3); 
        \draw[->, thick, black, bend left, dashed] (0.7, 0.9) to (0.7, 1.7); 

        \draw[->, thick, green, bend left=25, dashed] (2.9, 2.6) to (5.1, 2.6);
        \draw[->, thick, green, bend left=25, dashed] (4.0, 2.6) to (6.3, 2.6);
    \end{tikzpicture}
\end{minipage} 
\hspace{0.3 cm}
\begin{minipage}[c]{0.32\textwidth}   
        \begin{tikzpicture}[scale=0.7]
        \draw[->, thick] (0,0) -- (5.3,0) ;
        \draw[->, thick] (0,0) -- (0,5.4) ;


            \fill[shading=axis, bottom color=lightblue, top color=white, draw=none, opacity=1] 
            (0,5) -- (0,4) -- (5,4) -- (5,5);

            \fill[shading=axis, bottom color=lightred, top color=white, draw=none, opacity=1] 
            (1,5) -- (1,4) -- (5,4) -- (5,5);
            
            \draw[color = white]
            (0,5) -- (5,5);

            \draw[color = lightblue]
            (0,4) -- (1,4);
            
            \draw[color = lightred]
            (1,4) -- (5,4);

        \draw[fill=lightblue, opacity=1, thick] (0,4) -- (0,1) -- (1,1) -- (1,0) -- (5,0) -- (5,4);
        \draw[thick]
        (0,4)--(0,5);
    
        \draw[fill=lightred, opacity=1, thick] (1,4) -- (1,2) -- (3,2) -- (3,1) -- (5,1) -- (5,4);
        \draw[thick]
        (1,4)--(1,5);
        \draw[thick]
        (5,4)--(5,5);
        
        \foreach \x/\y in {0/1, 1/0} {
            \fill[blue] (\x,\y) circle (3pt);
        }
    
        \foreach \x/\y in {1/1, 5/0} {
            \node[rectangle, draw=black, fill=blue, inner sep=2pt] at (\x,\y) {};
        }
    
        \foreach \x/\y in {1/2, 3/1} {
            \fill[red] (\x,\y) circle (3pt);
        }
    
        \foreach \x/\y in {3/2, 5/1} {
            \node[rectangle, draw=black, fill=red, inner sep=2pt] at (\x,\y) {};
        }
        
        \node[rectangle, draw= black, fill = black, inner sep=1pt] at (4,3) {};
    
        \node at (4,3) [above] {N};

        
        \draw[->, thick, black, dashed, shorten >=4pt, shorten <=4pt] (1,2) -- (0,1);
        \draw[->, thick, black, dashed, shorten >=4pt, shorten <=4pt] (3,1) -- (1,0);

        \draw[->, thick, green, dashed, bend left = 30, shorten >=4pt, shorten <=4pt] (4.9,0) to (4.9,1);
        \draw[->, thick,  green, dashed, shorten >=4pt, shorten <=4pt] (1,1) -- (3,2);

        \foreach \x in {0,1,2,3,4,5} {
            \node at (\x, -0.2) [below] {\x};
        }
        \foreach \y in {0,1,2,3,4} {
            \node at (-0.2, \y) [left] {\y};
        }
    \end{tikzpicture}
\end{minipage}
    \caption{A (4,3)-decomposed presentation. There are two blocks in $M_\B$ and they present the interval modules (\autoref{def:interval}) on the right. Some admissible operations have been indicated by arrows both in the matrix and between the corresponding generators and relations.}
    \label{fig:alpha_decomposed}
\end{figure}

\subparagraph{\texttt{BlockReduce}} \  If we can decompose $\alpha$-decomposed presentations, then by iterating over the relations we can decompose all finite presentations. If $N$ has only one column this means we have to zero out every sub-column $N_b$ of $N$ whenever possible to reach the block-decomposition with the most blocks. This is done by a subroutine which Dey and Xin called \texttt{BlockReduce}, a name we will keep. 

\begin{example}\label{example:blockreduce}
We illustrate \texttt{BlockReduce} on the example given in \autoref{fig:alpha_decomposed}. Let us call the (blue) upper block $b$ and the lower (red) block c. We will use two types of admissible operations to try to zero out $N_b$:
\begin{enumerate}[label=(\alph*)]
 \item Column operations from $M_b$ to $N$, e.g. adding the first column to the fifth column.
 \item Row operations from $M_c$ to $M_b$, e.g. adding the fourth row to the second row.
\end{enumerate}

Adding the third row to the first would delete all entries in $N_b$, but this would also destroy the block-decomposition of $M_\B$, placing a $1$ at position $M_{1,3}$: \vspace{1em}

\begin{minipage}[c]{0.45\textwidth}
\resizebox{\textwidth}{!}{ $
\begin{array}{| l | c  c  c  c | c |}
        \hline 
        M \ | \ N & (1,1) & (5,0) & (3,2) & (5,1) & (4,3)   \\
        \hline
        (0,1)    &  \lbcell 1   &  \lbcell 0    &  \otext{1}    & 0     & \bmcell \otext{0} \\
        (1,0)    &  \lbcell 1   &  \lbcell 1    &  0    & 0     & \bmcell 0 \\
        (1,2)    &  0   &  0    & \lrcell 1    & \lrcell  0     & \gmcell 1 \\
        (3,1)    &  0   &  0    & \lrcell 1    & \lrcell  1     & \gmcell 0 \\
        \hline
    \end{array}
    $
    \begin{tikzpicture}[overlay, remember picture]
        \draw[->, thick, black, bend left, dashed]
        (-3.2, -0.4) to (-3.2, 0.6); 

    \end{tikzpicture}
    }
\end{minipage} 
\begin{minipage}[c]{0.45\textwidth}
\resizebox{\textwidth}{!}{ 
    $
    \begin{array}{| l | c  c  c  c | c |}
        \hline 
        M \ | \ N & (1,1) & (5,0) & (3,2) & (5,1) & (4,3)   \\
        \hline
        (0,1)    &  \lbcell 1   &  \lbcell 0    &  \otext{1}    & 0     &  \otext{0} \\
        (1,0)    &  \lbcell 1   &  \lbcell 1    &  \otext{1}     & \otext{1}      &  0 \\
        (1,2)    &  0   &  0    & \lrcell 1    & \lrcell  0     & \gmcell 1 \\
        (3,1)    &  0   &  0    & \lrcell 1    & \lrcell  1     & \gmcell 0 \\
        \hline
    \end{array}
    $
    \begin{tikzpicture}[overlay, remember picture]

        \draw[->, thick, green, bend left=25, dashed] (-5.1, 0.6) to (-3.1, 0.6); 
        \draw[->, thick, green, bend left=25, dashed] (-4.0, 0.1) to (-2.0, 0.1); 

    \end{tikzpicture}
}
\end{minipage} \vspace{1em} \\
 However, if we also add the fourth to the second row and then use the two column operations indicated by the $2$ (green) arrows (right side), we can revert the alteration of $M_\B$.  \\ 
We thus need a third type of admissible operations: \textbf{\textcolor[gray]{0.3}{(c)}} Column operations from $b$ to $c$.
    
\end{example}

The operations of type $(a)$, $(b)$, and $(c)$ have a pleasant feature: their effect on the matrix does not change if the order of their application changes. Dey-Xin call such sets of operations \emph{independent}. In the setting of \autoref{remark:story}, when restricting the entries of $Q$ and $P$ to those belonging to independent operations, the system of equations becomes \emph{linear}. 

Dey and Xin observed that following this strategy, repeated application of the operations (a)-(c) actually suffices to decompose an $\alpha$-decomposed presentation with $k=1$ assuming that the generators are also uniquely graded. We prove a more general statement that removes this assumption. 

\begin{lemma}\label{lemma:principle}
Let $\left[ M_\B \ N \right]$ be $\alpha$-decomposed, then after a suitable sequence of elementary column operations on $N$, there is a sequence of independent admissible operations which finds an indecomposable decomposition of
$\left[M_\B \ N \right]$ without altering $M_\B$.
\end{lemma}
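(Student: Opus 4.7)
I would pass to the partial cokernel $X := \coker(M_\B) = \bigoplus_{b\in\B} X_b$, where each $X_b$ is indecomposable because $M_\B$ is an indecomposable decomposition. The $k$ columns of $N$, all of degree $\alpha$, specify homogeneous elements $n_1,\dots,n_k \in X_\alpha = \bigoplus_b (X_b)_\alpha$, and the full presented module $Y := \coker([M_\B\ N])$ equals $X$ modulo the submodule generated by $n_1,\dots,n_k$. Elementary column operations on $N$ act as an arbitrary invertible $\K$-linear change of basis on the tuple $(n_1,\dots,n_k)$, leaving $M_\B$ untouched and the module $Y$ unchanged. Hence the role of the preliminary column operations on $N$ will be precisely to rechoose this basis.

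Next I would translate the independent admissible operations into the language of $X$. Type-(a) operations only change the chosen lift of $n_j$ to the free module $A^G$, leaving $n_j \in X_\alpha$ fixed. A paired type-(b)+(c) operation---a row addition from block $c$ to block $b$ together with the unique compensating column addition from $b$ to $c$ that restores the block structure of $M_\B$---implements a scalar-coefficient summand of a graded homomorphism $f \in \Hom_A(X_c,X_b)_0$. Its effect on $N$ is to replace each column's $b$-component $n_j^b$ by $n_j^b + f(n_j^c)$, uniformly in $j$. The crucial point of independence is that the three operation types modify disjoint regions of $[M_\B\ N]$, so combining them produces a genuinely \emph{linear} map on $\bigoplus_b (X_b)_\alpha$ whose off-block-diagonal $(b,c)$-entry is the chosen $f_{bc}$ and which acts diagonally across the $k$ columns of $N$.

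With this dictionary, I would apply \autoref{iso_lifting} to an indecomposable decomposition of $Y$ to obtain invertible graded matrices $(Q,P)$ bringing $[M_\B\ N]$ to block-decomposed form. Because each $X_b$ is indecomposable, $\End(X_b)$ is a local ring, so any automorphism of $M_\B$ has block-diagonal parts lying in $\Aut(X_b)$ and off-block-diagonal parts lying in $\Hom_A(X_c,X_b)_0$. The off-diagonal parts are exactly realised by paired (b)+(c) operations, and the diagonal parts only change the internal basis of each indecomposable block, which does not alter $M_\B$ as a block decomposition. Combined with type-(a) operations to clean up lifts to $A^G$, this shows that for each single column $j$ the independent operations can zero out $(N_b)_j$ for every block $b$ except a unique ``host'' block $b(j)$---or else $n_j$ modulo the Hom-images constitutes a new indecomposable summand on its own.

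The main obstacle, and the reason the preliminary column ops on $N$ are needed at all, is the $k>1$ coupling: a single paired (b)+(c) operation affects the $b$-components of all $k$ columns simultaneously, so we cannot independently target one column's entries. This would be circumvented by using the elementary column operations on $N$ to put $(n_1,\dots,n_k) \in X_\alpha^k$ into a normal form---via a $\K$-linear change of basis and Fitting's lemma applied to the local rings $\End(X_b)$---that diagonalises the simultaneous action of the available $\Hom(X_c,X_b)$-maps across columns. After this normalisation, each column either admits a unique host block or contributes a new summand, and the single-column argument of the previous paragraph applies column by column, producing the indecomposable decomposition by independent operations alone while leaving $M_\B$ intact.
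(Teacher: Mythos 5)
Your early steps—passing to $X = \coker(M_\B)$, reading the columns of $N$ as elements $n_j \in X_\alpha$, and interpreting paired $(b)+(c)$ operations as off-diagonal graded homomorphisms—match the paper's conceptual picture. However, there are two genuine gaps in the second half.

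First, you assert that ``the diagonal parts only change the internal basis of each indecomposable block, which does not alter $M_\B$ as a block decomposition.'' But the lemma demands that $M_\B$ be literally unaltered, not merely that some block decomposition persist: any nontrivial diagonal automorphism $Q_{b,b} \in \Aut(M_b)$ changes the entries of $M_b$, and is \emph{not} one of the independent operations $(a)$--$(c)$. The paper handles this in the proof of \autoref{matrices_cd} by first applying \autoref{permutation} (the Krull--Remak--Schmidt permutation rectification) so that every diagonal block of the lifted isomorphism is itself an isomorphism, and then post-composing with the inverses of those diagonal automorphisms so that all $Q_{b,b}$ and $P^{-1}_{b,b}$ become identities—this is exactly condition $(\ast\ast)$ in \autoref{matrices_cd}. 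Your argument skips this normalization and therefore does not actually produce a sequence of operations that leaves $M_\B$ fixed.

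Second, and more seriously, the final paragraph about ``$k>1$ coupling'' is where the real content of the lemma lives, and it is not argued. You appeal to ``a $\K$-linear change of basis and Fitting's lemma applied to the local rings $\End(X_b)$'' to diagonalise the simultaneous action across columns, but you neither state the normal form nor prove it exists. In fact there is no column-by-column normal form of this kind in general; the point is subtler. The paper's proof produces the preliminary column operations ($T$) by lifting the chosen decomposition of $Y$ to the $\mathcal{P}_\alpha$-resolution
\[
0 \to A^k(-\alpha) \xrightarrow{\ \widetilde N\ } \bigoplus_{b\in\B} X_b \to Y \to 0,
\]
using \autoref{decomp_resolution} (decompositions lift to minimal $\Fc$-resolutions of covering classes). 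That is what supplies simultaneously the partition $\B = \B_1 \sqcup \B_2$, the column-space partition encoded by $T = [T_1\ T_2]$, and the compatibility needed so that the remaining reduction is a linear system solvable by independent operations. Without this resolution-level lift, the normal form you invoke is unsubstantiated, and the subsequent column-by-column reduction has no justification.

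So the overall route (take an indecomposable decomposition of $Y$, lift to the presentation via \autoref{iso_lifting}/\autoref{minimal_iso}, analyse the structure of $(Q,P)$) is the right one, but the proof omits both the diagonal-block normalization and the key step that makes the $k>1$ case work.
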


We will later prove a slightly upgraded version in the form of \autoref{matrices_cd}.

At last assume again that $k=1$. After having zeroed out as much as possible of $N$, the blocks $b$ for which $N_b$ are still non-zero must all \emph{merge} into a new block, since their non-zero entries now overlap in this last column. This finishes the loop of the GP Algorithm.

\subparagraph{Extending \texttt{BlockReduce}}\label{hom_algorithm}
Consider an $\alpha$-decomposed presentation $\left[M_\B \ N\right]$ and let $k\geq 1$ now be arbitrary, where we recall that $k$ is the number of columns of $N$.
At first, we are still interested in deleting the entire sub-matrices $N_b$ for every $b\in \B$ wherever this is possible. 
Consider now \emph{sets} of admissible operations of the types $(a)$-$(c)$ in \autoref{example:blockreduce}, encoded as graded matrices. Using $M_{b,c} \coloneqq M_{b_{\text{rows}} \times c_{\text{cols}}}$, their effect on $\left(M_\B \ N \right)$ can be described:
\begin{enumerate}[label=(\alph*)]
 \item A graded matrix $U \in \K^{R_b \times k \cdot \alpha}$ and $N_b \leftarrow N_b + M_b U$.
 \item A graded matrix $Q \in \K^{G_b \times G_c}$ and $N_b \leftarrow N_b + Q N_c, \quad  M_{b,c}  \leftarrow M_{b,c} + Q M_c $.
 \item A graded matrix $P \in \K^{R_b \times R_c}$ and $M_{b,c}  \leftarrow M_{b,c} + M_b P$.
 \end{enumerate}
Finding such matrices $Q$ and $P$ such that after performing $(b)$ and $(c)$ the whole matrix $M_\B$ is preserved is equivalent to demanding $M_{b,c} = 0$, which is a \emph{linear} system \hyperref[hom_computation]{($\ast$)}. Finding those which zero out $N_b$ is another one \hyperref[hom_computation]{($\ast \ast$)}.

\subparagraph{\texttt{BlockReduce} (General Version):}\label{subparagraph:blockreduce} \ \newline
\texttt{Input}: A $\alpha$-decomposed matrix $\left[M_\B \ N\right] \in K^{G \times (R \cup k \cdot \alpha)}$ with $k$ columns at $\alpha$, $b \in \B$.
\begin{align}
 \text{Find } U \in \K^{R_b \times  k \cdot \alpha} \text{ and } \forall c \neq b \in \B  & \colon 
 \quad  Q_{c} \in \K^{ G_b \times G_c}, \  P_c \in \K^{G_b \times R_c}, \text{ such that } \notag \\
 \ Q_c M_{c} + M_b P_c  = 0 \ \ (\ast) \ \ &
 \text{ and } 
  \ N_b + \sum Q_cN_{c} + M_bU = 0 \ \ (\ast \ast) \label{hom_computation}
\end{align}

In \cite{DeyXin}, Dey and Xin write these equations in a large matrix which they call $S$-matrix. 
 
\subparagraph{Run time of \hyperref[subparagraph:blockreduce]{\texttt{BlockReduce}.} }\label{deyxin_running_time}
Let $(M_\B \ N)$ be $\alpha$-decomposed with $M$ of size $m \times n$, then the size of every block is also bounded by $m \times n$. It follows that \hyperref[subparagraph:blockreduce]{($\ast$)} contains at most $(m^2 + n^2)$ variables and $mn$ equations and \hyperref[subparagraph:blockreduce]{($\ast \ast$)} contains at most $m^2 + nk$ variables and $mk$ equations. Let $\omega<2.373$ denote the exponent for matrix multiplication.

\begin{proposition}[cf {\cite[Algorithm 3]{DeyXin}}]\label{proposition:run_time_block}
 The time complexity of solving \\ \hyperref[hom_computation]{($\ast$)} is in 
 $\Oc \left( \left(n m\right)^{\omega-1} \left(m^2 + n^2 \right) \right)$ 
 and of solving \hyperref[hom_computation]{($\ast \ast $)} is in 
 $\Oc\left( \left(mk\right)^{\omega-1}\left(m^2 + nk\right) \right).$
\end{proposition}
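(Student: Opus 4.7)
The strategy is to rewrite $(\ast)$ and $(\ast\ast)$ as ordinary linear systems over $\K$ and invoke the classical bound that an $E\times V$ linear system can be solved in $O\!\left(\min(E,V)^{\omega-1}\max(E,V)\right)$ time via fast matrix multiplication (e.g.\ Bunch--Hopcroft, or Ibarra--Moran--Hui). Once this is in hand, the whole content of the proposition reduces to a careful accounting of how many scalar variables and equations each system contains.

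For $(\ast)$, the unknowns are the scalar entries of the $Q_c$'s and the $P_c$'s as $c$ ranges over $\B\setminus\{b\}$, and each scalar entry of $Q_cM_c+M_bP_c$ is one linear relation. Because $\{G_c\}_{c\in\B}$ and $\{R_c\}_{c\in\B}$ partition $[m]$ and $[n]$ respectively, $\sum_c|G_c|\le m$ and $\sum_c|R_c|\le n$; hence the total number of unknowns is at most $|G_b|\cdot m+|R_b|\cdot n\le m^2+n^2$, and the total number of equations is at most $|G_b|\cdot\sum_c|R_c|\le mn$. Since $mn\le m^2+n^2$ by the AM--GM inequality, the equations form the smaller side, and the generic bound yields $O\!\left((mn)^{\omega-1}(m^2+n^2)\right)$, matching the first claim.

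For $(\ast\ast)$, the matrices $P_c$ no longer appear, so the unknowns consist of the entries of the $Q_c$'s (still bounded by $m^2$ in total by the same partition argument) together with those of $U\in\K^{R_b\times k\alpha}$, contributing at most $|R_b|\cdot k\le nk$ further entries; thus $V\le m^2+nk$. The equation $N_b+\sum_cQ_cN_c+M_bU=0$ lives in $\K^{G_b\times k\alpha}$, producing $E\le mk$ scalar relations, and the generic bound yields $O\!\left((mk)^{\omega-1}(m^2+nk)\right)$, matching the second claim.

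The main bookkeeping obstacle is the graded-matrix constraint: the requirement that $Q_c,P_c,U$ be graded forces certain entries to vanish identically. This never inflates any of the counts above, since it only deletes unknowns and automatically-satisfied equations; the quoted complexity bounds therefore remain valid upper estimates. A related subtlety is that the $Q_c$'s appear in both systems, so in principle one might solve them jointly; the cleaner two-step option is to first solve $(\ast)$ to parametrize the admissible $(Q_c,P_c)$ and then substitute into $(\ast\ast)$, which leaves the dominant term in each step unchanged and thus reproduces the stated separate bounds.
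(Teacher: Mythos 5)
Your proof is correct and follows the same route the paper takes implicitly: the paper itself gives no explicit proof of this proposition, only the variable/equation count in the paragraph immediately preceding it (``($\ast$) contains at most $m^2+n^2$ variables and $mn$ equations, and ($\ast\ast$) contains at most $m^2+nk$ variables and $mk$ equations'') together with a citation to~\cite{DeyXin}, and your argument reproduces that count and then plugs it into the standard fast-elimination bound $\Oc(\min(E,V)^{\omega-1}\max(E,V))$, which is exactly what is being assumed. Your side remarks — that $mn \le m^2+n^2$ and $mk\le m^2+nk$ so the equations are always the smaller side, and that the graded-vanishing constraints only delete unknowns and trivially-satisfied equations and hence cannot inflate the bound — are both accurate and make the argument more self-contained than the paper's.
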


\subsection{Modifications of the GP-Algorithm}\label{sec:modifications} 
\textsc{aida} implements many modifications to \hyperref[subparagraph:blockreduce]{\texttt{BlockReduce}} which make this and following subroutines faster and enable the \emph{Automorphism-invariant} $\alpha$-decomposition which we introduce in \autoref{sec:aida}.

\subparagraph{Column-Sweeps.}\label{colsweeps}
When processing a new set of columns $N$ of degreee $\alpha_i$, we first try to zero out each $N_b$ only with column-operations (i.e., ignoring row-operations first), which is often sufficient and, if not, will maybe reduce the number of non-zero entries. To this end we reduce the sub-matrix $M_b^{\leq \alpha_i}$ of $M_b$ containing the columns of degree $\leq \alpha_i$. Importantly, this matrix can be stored and sometimes even updated when the next batch is considered, in the same way as in the
standard persistence algorithm for one parameter.

\subparagraph{Hom-Variant of \texttt{BlockReduce}:}\label{subparagraph:hom_variant} The following change is subtle but crucial:
 \begin{enumerate} \vspace{0.5em}
 \item Find a basis $(Q_i, -P_i)_{i \in I}$ of the solution-space of \hyperref[hom_computation]{($\ast$)} for every $c$ and store it.
 \item Search for the solution of \hyperref[hom_computation]{($\ast \ast$)} in the vector spaces spanned by $(Q_i, -P_i)_{i \in I}$ for each $c$.
 \end{enumerate}
 \vspace{0.5em} This split makes no computational difference asymptotically, and its overhead is negligible.
In subsequent algorithms, we will call {\texttt{BlockReduce}} recursively
without changing $M_\B$ (thanks to Lemma~\ref{lemma:principle}).
By solving and storing \hyperref[hom_computation]{($\ast$)}, we thus avoid recomputations during the recursion.

\subparagraph{Morphisms of Presentations.}

Let $M_c \in \K^{G_1 \times R_1}, \ M_b \in \K^{G_2 \times R_2}$ present persistence modules $X_c, \, X_b$. Up to sign, the solutions of {\texttt{BlockReduce}} \hyperref[hom_computation]{($\ast$)} form the $\K$-vector space
 \[ \Hom\left(M_c, \, M_b \right) \coloneqq \left\{ (Q, \,P)  \in \K^{G_2 \times G_1} \times  \K^{R_2 \times R_1} \  | \  Q M_c = M_b P \right\}\]
and there is a natural $\K$-linear surjection $\Hom\left(M_c, \, M_b \right) \onto \Hom(X_c, \, X_b)$.

 So when solving {\texttt{BlockReduce}} \hyperref[hom_computation]{($\ast$)} for each pair $c \neq b \in \B$, we are actually computing, akin to elementary matrices, the generators of a subgroup of $\Aut \left( M_\B \right)$. Additionally, the property of a morphism $(Q, \, P) \in \Hom\left(M_c, \, M_b \right)$ to be part of a solution of {\texttt{BlockReduce}} \hyperref[hom_computation]{($ \ast \ast$)} depends only on the map \\
 $\widetilde Q \colon X_c \to X_b$ which it induces. 

\subparagraph{Fast $\Hom$-computation.}\label{sub:fast_hom} Let $X, Y$ be persistence modules, then $\Hom(X, Y)$ can be computed faster if $Y$ is pointwise "small". This is because we can restrict the target space of each generator of $X$. 
We will explore this thoroughly in future research and also point to the following known result for the "smallest" modules, also due to Dey and Xin.

\begin{proposition}[\cite{dey_xin}, Proposition 16 and Algorithm \textsc{interleaving} in sections 4.1 and 4.2  \text]\label{prop:interval_hom}
Let $X$ and $Y$ be interval modules with $n$ generators and relations, then $\Hom(X,Y)$ can be computed in $\Oc(n)$ time for $d=2$ parameters an in $\Oc(n^2)$ time for $d >2$.
\end{proposition}

\section{Presentations and Projectivity}\label{sec:projectivity}

All persistence modules are from now on assumed to be finitely presented. 

\subsection{Restriction}

We have considered presentations where we wanted to forget a certain part of the module, say at a degree $\alpha$. We will first construct a crude way of restricting the parameters this way. This will be helpful later but especially we can use it to justify that we restricted ourselves to considering only discrete persistence modules.

\begin{definition}
For an induced subposet $V \subset \Z^d$ (by which we mean a \emph{full} subcategory) we denote the corresponding functor by $\iota_V \colon V \rightarrow \Z^d$ and the restriction functor of persistence modules to $V$-modules as 
  \[\iota_V^* \colon \fun \left( \Z^d, \vect_\K \right) \longrightarrow \fun \left( V, \vect_\K \right) \]
 For any property of a persistence module we say that it holds ``in $V$'', ``at $V$'', or ``above $\alpha$'' for $V = \langle \alpha \rangle$ if it holds after applying $\iota_V^*$.
\end{definition}

\subparagraph{Extending Modules} 

  Let $\iota \colon \Cs \to \Ds$ be any monotonic map of posets viewed as a functor. Since $\vect_\K$ has all small colimits the pullback $\iota^*$ has a left adjoint $\iota_!$ given by left Kan-extension.  The composition of restriction and extension $\left(\iota_{V}\right)_!\iota_{V}^*$ is a standard construction (\cite{lesnick2023multiparameter, fersztand}) and is also called pixelisation or discretisation in the literature if $V$ is a discrete subset of $\R^d$ (\cite{botnan2020, CCGGO}).

To explain its effect on a general module we compute the functor on free modules.

\begin{lemma}\label{planing}
 The functor $\left(\iota_{V}\right)_!\iota_{V}^*$ is right exact. 
 Let $\beta \in \Z^d$.
 If either the poset $\up{\beta} \cap V$ has a minimal element or it contains all joins in $\Z^d$ of its elements, then 
 \[ \left(\iota_{V}\right)_!\iota_{V}^* \left( A\left(-\beta \right) \right) \simeq \K_{ \up{ \up{\beta} \cap V} }
 \]
 In particular, if $\beta \in V$, then
\[ \left(\iota_{V}\right)_!\iota_{V}^* \left( A\left(-\beta \right) \right) \simeq A\left(-\beta \right).
 \]
\end{lemma}

\begin{proof}
 The functor $\iota_V^*$ is exact, because exactness can be checked pointwise in functor categories and $(\iota_V)_!$ is right-exact as a left adjoint. 
 
  Let $\gamma \in \Z^d \setminus \up{ \up{\beta} \cap V}$. Equivalently, for each $\alpha \in V$ such that $\alpha \leq \gamma$ we have $\alpha \not \geq \beta$. Then the slice category $ \iota_{V} \downarrow {\gamma}$ consisting of pairs $\alpha \to \gamma$ with $\alpha \leq \gamma$ and $\alpha \in V$ consists only of pairs with $\alpha \not \geq \beta$ and by definition of Kan-extensions we compute
\[ \left( \left(\iota_{V}\right)_!\iota_{V}^*  A\left(-\beta \right) \right)_{\gamma} = \colim\limits_{\alpha \to \gamma \ \in \ \left( \iota_{V} \downarrow {\gamma} \right)} A\left(-\beta \right)_\alpha  \simeq \colim\limits_{\alpha \to \gamma \ \in \ \left( \iota_{V} \downarrow {\gamma} \right)} 0 \simeq 0. \]

Notice that for every triple $\alpha \to \delta \to \gamma$ in $\Z^n$, if $\delta \not\geq \beta$, then $\alpha \not\geq \beta$. This implies that the diagram $A(-\beta)_{(-)} \colon \iota_{V} \downarrow {\gamma} \to \vect_\K$ contains only arrows of the form $\K \oto{\Id} \K$, $0 \to \K$, and $0 \to 0$, but never $\K \to 0$.
Consider the subcategory $\iota_{\up{\beta} \cap V} \downarrow {\gamma} \into \iota_{V} \downarrow {\gamma}$ of all objects $\alpha \to \gamma$ where $A(-\beta)_{(-)}$ evaluates to $\K$. Any cone over the diagram $A(-\beta)_{(-)} \colon \iota_{\up{\beta} \cap V} \downarrow {\gamma} \to \vect_\K$ can by the above observation be trivially extended to one over $\iota_{V} \downarrow {\gamma}$, so that there is an equivalence
\[ \colim\limits_{\alpha \to \gamma \ \in \ \left( \iota_{V} \downarrow {\gamma} \right)} A\left(-\beta \right)_\alpha 
\simeq
\colim\limits_{\alpha \to \gamma \ \in \ \left( \iota_{V \cap \up{\beta}} \downarrow {\gamma} \right)} A\left(-\beta \right)_\alpha 
\simeq
\colim\limits_{\alpha \to \gamma \ \in \ \left( \iota_{V \cap \up{\beta}} \downarrow {\gamma} \right)} \const \K. \]

If $V \cap \up{\beta}$ contains all joins of its elements in $\Z^d$, then there is a cofinal element $\omega \in V \cap \up{\beta} \cap \down{\gamma}$ and so the above evaluates to $\K$. If instead $V \cap \up{\beta}$ contains a minimal element, we can use the following formula to compute this colimit.

\[\colim\limits_{\alpha \to \gamma \ \in \ \left( \iota_{V \cap \up{\beta}} \downarrow {\gamma} \right)} \const \K \ \simeq \ \bigslant{\bigoplus\limits_{ \iota_{V \cap \up{\beta}} \downarrow {\gamma} } \K}{\sim},\]
where $\sim$ identifies, for each morphism $\delta \to \epsilon \in \iota_{V \cap \up{\beta}}\downarrow {\gamma}$, every pair of elements $x \in A(-\beta)_{\delta}$ and $\Id(x) \in A(-\beta)_{\epsilon}$. 
\end{proof}

\begin{remark}\label{extension_remark}
For each $F \colon V \to \vect_\K$, the unit of the adjunction induces an isomorphism $u \colon F \iso \iota_V^*(\iota_V)_!F$ via the same computation as in the preceding proposition.
\end{remark}

\begin{definition}
  If $V$ is a sub poset of $\Z^d$ we define the category $\grA_V$ to be the full subcategory induced by persistence modules with all generators and relations in $V$.
\end{definition}

\begin{proposition}\label{finitisation}
The adjunction $(\iota_{V})_! \dashv \iota_{V}^*$ restricts to an equivalence of categories
\[ \iota_{V}^* \colon \grA_V \leftrightarrows \fun \left( V, \vect_\K \right) \colon (\iota_{V})_! . \]
\end{proposition}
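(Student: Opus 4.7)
The plan is to verify that the unit and counit of the adjunction $(\iota_V)_! \dashv \iota_V^*$ are both isomorphisms when we restrict the domain of $(\iota_V)_!$ to $\fun(V, \vect_\K)$ and the domain of $\iota_V^*$ to $\grA_V$. This is the standard criterion for an adjunction to cut down to an equivalence on reflective/coreflective subcategories.

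First I would dispatch the unit: for any $F \in \fun(V, \vect_\K)$, the map $u\colon F \to \iota_V^* (\iota_V)_! F$ is an isomorphism. This is precisely the observation made in \autoref{extension_remark}, which follows from the same finality argument used in \autoref{planing} (the comma category $\iota_V \downarrow \gamma$ has a terminal object whenever $\gamma \in V$, namely $\gamma$ itself, since $V$ is a full subposet).

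The substantive step is showing that the counit $\varepsilon_X\colon (\iota_V)_! \iota_V^* X \to X$ is an isomorphism for every $X \in \grA_V$. I would proceed by choosing a finite presentation
\[
A^R \xrightarrow{f} A^G \xrightarrow{p} X \to 0,
\]
where the degree functions $G$ and $R$ take values in $V$ (this is possible precisely because $X \in \grA_V$, which was defined as the full subcategory of modules admitting generators and relations in $V$). Applying the right-exact functor $(\iota_V)_! \iota_V^*$ and using naturality of $\varepsilon$, I obtain a commutative diagram
\[\begin{tikzcd}
(\iota_V)_! \iota_V^* A^R \ar[r] \ar[d, "\varepsilon_{A^R}"'] & (\iota_V)_! \iota_V^* A^G \ar[r] \ar[d, "\varepsilon_{A^G}"'] & (\iota_V)_! \iota_V^* X \ar[r] \ar[d, "\varepsilon_X"'] & 0 \\
A^R \ar[r, "f"'] & A^G \ar[r, "p"'] & X \ar[r] & 0
\end{tikzcd}\]
with exact rows. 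By \autoref{planing} and the additivity of $(\iota_V)_! \iota_V^*$, the left and middle vertical maps are isomorphisms (they are identities on each summand $A(-\beta)$ with $\beta \in V$, and the triangle identities ensure the counit identifies with the identity here). The five-lemma, or equivalently a direct comparison of cokernels, then forces $\varepsilon_X$ to be an isomorphism.

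The main subtlety — and really the only one — is verifying that the counit is the identity on the free summands, which amounts to unfolding the triangle identity $\iota_V^*\varepsilon \circ u_{\iota_V^*} = \mathrm{id}$ together with the explicit description in \autoref{planing}; the rest is formal. Once the unit and counit are isomorphisms on the respective subcategories, the functors $\iota_V^*|_{\grA_V}$ and $(\iota_V)_!$ are quasi-inverse, establishing the equivalence.
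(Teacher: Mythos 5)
Your proposal is correct and follows essentially the same route as the paper: the unit is dispatched by \autoref{extension_remark}, and the counit is handled by applying the right-exact functor $(\iota_V)_!\iota_V^*$ to a presentation with all generators and relations in $V$ and invoking \autoref{planing}. You merely spell out the cokernel comparison (five lemma) and the triangle-identity check that the paper leaves implicit.
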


\begin{proof}
By \autoref{extension_remark} we have $\iota_{V}^* \circ (\iota_{V})_! = \Id$. For the other direction consider a persistence module with all generators and relations in $V$ and any presentation $A^R \oto{f} A^G \to X$.  \autoref{planing} tells that the sequence
 $\left(\iota_{V}\right)_!\iota_{V}^* A^R \oto{\left(\iota_{V}\right)_!\iota_{V}^* f} \left(\iota_{V}\right)_!\iota_{V}^* A^G \to \left(\iota_{V}\right)_!\iota_{V}^* X$ is a presentation of $\left(\iota_{V}\right)_!\iota_{V}^* X$ as well as that since all generators and relations are in $V$ we have $\left(\iota_{V}\right)_!\iota_{V}^* f = f$.
 \end{proof}

\begin{remark}\label{fg_finite}
    For every finitely generated module we can find $\alpha$ and $\beta$ such that all of its generators and relations are in the interval $[\alpha, \beta]$ and we can consider the module as living in $\grA_{[\alpha, \beta]} \simeq \fun\left( [\alpha, \beta], \vect_K \right) \simeq \mathscr{P}([\alpha, \beta]) \lmod$ using \autoref{finitisation}. The path-algebra $\mathscr{P}([\alpha, \beta])$ is finite dimensional with the following nice consequences.
\end{remark}

\begin{proposition}\label{krull-remak-schmidt}
Let $X$ be a f.g. persistence module. Then $X$ is indecomposable if and only if its endomorphism ring $\End(X)$ is local. Its maximal ideal consists of the nilpotent endomorphisms. In particular Azumaya's theorem applies to fg persistence modules.
\end{proposition}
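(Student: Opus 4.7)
The plan is to leverage \autoref{fg_finite}: since $X$ is finitely generated, we may find an interval $[\alpha,\beta]$ containing all its generators and relations, and work equivalently with $X$ as a module over the finite-dimensional incidence algebra $\mathscr{P}([\alpha,\beta])$. Because this algebra is finite-dimensional over $\K$ and $X$ is finitely generated, $\End(X)$ is finite-dimensional over $\K$ as well, hence Artinian (and Noetherian). This reduction brings us to a classical finite-dimensional setting where Fitting's lemma applies, and the statement becomes an instance of a well-known structure theorem for modules over Artinian rings.

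The core technical step is Fitting's lemma. For any $f\in\End(X)$, the chains $\Ima(f^n)$ and $\ker(f^n)$ stabilize because $X$ is both Noetherian and Artinian over the finite-dimensional path algebra. For large $n$ one obtains a decomposition $X = \Ima(f^n) \oplus \ker(f^n)$. If $X$ is indecomposable, one of these summands is zero: either $f$ is nilpotent, or $f$ is injective and surjective (hence an automorphism). Thus every endomorphism is either invertible or nilpotent. Conversely, a non-trivial idempotent in $\End(X)$ would yield a non-trivial decomposition, and a local ring has only the trivial idempotents $0,1$, so the ``$\Leftarrow$'' direction follows.

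The main obstacle is showing that the set of nilpotent endomorphisms actually forms a (two-sided) ideal, so that $\End(X)$ is local with exactly this ideal as its maximal ideal. The standard argument runs as follows: suppose $r,s\in \End(X)$ are non-units (hence nilpotent by Fitting) and $r+s$ were a unit $u$. Then $u^{-1}r + u^{-1}s = 1$. Now $u^{-1}r$ is again a non-unit (otherwise $r$ would be a unit), so it is nilpotent, and hence $1-u^{-1}r = u^{-1}s$ is a unit, forcing $s$ to be a unit, a contradiction. Closure under multiplication by arbitrary endomorphisms is handled similarly: a product involving a non-unit is a non-unit, hence nilpotent. This shows non-units form an ideal, which characterizes $\End(X)$ as local with maximal ideal the nilpotent endomorphisms.

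Finally, Azumaya's theorem \cite{Azumaya} asserts that if a module admits a (possibly infinite) decomposition into submodules whose endomorphism rings are all local, then the isomorphism types of the summands are uniquely determined. Since every finitely generated persistence module decomposes into finitely many indecomposables (by Noetherianity of $\mathscr{P}([\alpha,\beta])$), and each indecomposable summand has a local endomorphism ring by the previous steps, Azumaya's theorem applies directly, yielding the uniqueness statement.
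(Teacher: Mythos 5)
Your proof is correct and is precisely the standard argument (reduction to a finite-dimensional algebra, Fitting's lemma, then the unit-or-nilpotent dichotomy) that the paper delegates to its reference \cite{Assem}, sections 4.8 and 4.10, rather than spelling out. The only step you pass over a bit quickly is the closure of the nilpotents under multiplication by arbitrary endomorphisms: you should note that if $r$ is nilpotent with $r^n=0$ and $hr=1$ for some $h$, then by induction $h^n r^n = 1$, contradicting $r^n=0$, so $gr$ (and symmetrically $rg$) cannot be a unit; once that is said, the proof is complete.
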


\begin{proof}
For example in \cite{Assem} 4.8 and 4.10. 
\end{proof}

\begin{remark}
 In \cite{CBBB} Theorem 1.1 the authors show that a Krull-Schmidt Theorem even holds for all pointwise finite-dimensional functors $\Cs \to \vect$ for $\Cs$ a small category. This is a more general result, but we do not need it here.
\end{remark}

\subparagraph{Discretisation of Continuous Persistence Modules}\label{discretisation}

In practice persistence modules arise as functors not from $\Z^d$ but from $\R_+^d$ to $ \vect_\K$ which are still finitely presented. We can choose a grid $g\colon\N^d \to \R^d$ on the generators and relations and by an argument analogous to \autoref{finitisation} do not lose any algebraic information (\cite[Definition 2.1]{botnan2020}, \cite[Theorem 11.22]{lesnick2023multiparameter}).

\subsection{Projectivity over a Sub Poset}

When ignoring the last columns in our definition of $\alpha$-decomposition we have constructed a presentation for a module which did not have any more relations at $\alpha$.
Notice that we could not have done this with relations at a degree which is not maximal since then the module being presented would not be unique any more but depend on the concrete presentation chosen. In fact the module being presented after omission of relations is unique also if we chose a whole set of degrees, as long as it is maximal, i.e. there are no other relations in the upper set generated by their degrees. \\
For the rest of this subsection let $U \subset \Z^d$ be an upper set and $V \subset \Z^d$ be its complement.

\begin{definition}
A persistence module $P$ is called \emph{$U$-projective} if it has no relations in $U$, i.e. its first graded Betti number satisfies $b_{1, \beta}(X) = 0$ for each $\beta \in U$. \\
We denote by $\Pc_U$ the class of $U$-projective modules.
\end{definition}

The name is justified because these modules are then projective/free when restricted to $U$. We can make this precise:

\begin{proposition}\label{alpha_projective}
 A persistence module $P$ is $U$-projective if and only if every diagram
\[\begin{tikzcd}
	& X \\
	P & Y
	\arrow["p", two heads, from=1-2, to=2-2]
	\arrow["f", from=2-1, to=2-2]
	\arrow["\xi", curve={height=-6pt}, dashed, from=2-1, to=1-2]
\end{tikzcd}\]
where $p$ is an isomorphism in $V$, has a lift $\xi$. In particular $\Z^d$-projectivity is just projectivity. We say $\alpha$-projective instead if $U = \up{\alpha}$.
\end{proposition}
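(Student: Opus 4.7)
The plan is to work throughout with a minimal presentation $F_1 \xrightarrow{d_1} F_0 \xrightarrow{\pi} P \to 0$ and split $F_1 = F_1^V \oplus F_1^U$ into its $V$-graded and $U$-graded summands according to the basis degrees.

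For $(\Rightarrow)$, assume $P$ is $U$-projective, so $F_1 = F_1^V$. Given $f \colon P \to Y$ and $p \colon X \twoheadrightarrow Y$ which is pointwise an isomorphism at every degree $\gamma \in V$, I would build a lift $\tilde f \colon F_0 \to X$ by specifying it on a homogeneous basis $\{g_i\}$ of $F_0$: at a generator of degree $\gamma \in V$ set $\tilde f(g_i) \coloneqq p_\gamma^{-1}(f \pi(g_i))$, and at a generator of degree $\gamma \in U$ pick any preimage along the surjection $p_\gamma$. Extending $A$-linearly gives $p \tilde f = f \pi$. To descend $\tilde f$ to a map $\xi \colon P \to X$ it suffices to show $\tilde f \circ d_1 = 0$; for each generator $r$ of $F_1$ at degree $\beta \in V$, the element $\tilde f(d_1 r) \in X_\beta$ is mapped by $p_\beta$ to $f \pi (d_1 r) = 0$, and injectivity of $p_\beta$ on $V$ forces it to vanish.

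For $(\Leftarrow)$, I would probe the lifting property against a canonically built surjection. Set $P' \coloneqq F_0 / d_1(F_1^V)$, with induced map $p \colon P' \twoheadrightarrow P$. The key observation is that $d_1(F_1^U) \subset F_0$ is generated in degrees from $U$; since $U$ is closed upward, $d_1(F_1^U)$ is supported entirely in $U$, so $p_\gamma$ is an isomorphism for every $\gamma \in V$. By hypothesis $\mathrm{id}_P$ lifts through $p$, so $p$ splits and $P$ is a direct summand of $P'$. Now $F_0 \onto P'$ is a minimal cover, since $d_1(F_1^V) \subseteq \m F_0$ was inherited from $d_1 \otimes A/\m = 0$; and $d_1(F_1^V)$ is generated in degrees from $V$, so its minimal free cover sits at degrees in $V$. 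Hence $b_{1, \beta}(P') = 0$ for all $\beta \in U$, and additivity of graded Betti numbers across direct sums transfers this vanishing to the summand $P$.

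The last clause is immediate: when $U = \Z^d$ the set $V$ is empty, the condition on $p$ becomes vacuous, and the lifting property collapses to the usual definition of projectivity (equivalently, "no relations" forces $P$ to be free). The main obstacle lies in the $(\Leftarrow)$ direction and is really a single subtle point: recognising that the graded support of $d_1(F_1^U)$ is confined to $U$, which is exactly what makes $p \colon P' \twoheadrightarrow P$ an isomorphism on $V$ and lets the hypothesis bite. After this, splitting via the lift of $\mathrm{id}_P$ and additivity of Betti numbers finish the argument formally.
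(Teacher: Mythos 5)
Your proof is correct and fundamentally the same as the paper's: both directions hinge on the auxiliary module $P' = F_0/d_1(F_1^V)$ (the paper's $U$-cover $P^{U}$) and the observation that the induced surjection $p \colon P' \onto P$ is a $V$-isomorphism. The presentational differences are small. In the direction ``$U$-projective implies lifting'', you build the lift $\tilde f \colon F_0 \to X$ explicitly on a homogeneous basis and verify by hand that it kills $\Ima d_1$ (using that a relation at $\beta \in V$ only involves generators at degrees $\leq \beta$, all in $V$ since $V$ is a lower set); the paper instead first splits off the free summands of $P$ generated in $U$, then restricts to $V$ via $\iota_V^*$, lifts there, and transports back with the adjunction $(\iota_V)_! \dashv \iota_V^*$ and the equivalence $\grA_V \simeq \fun(V,\vect_\K)$. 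Your element-level route is a bit more elementary and avoids that preliminary reduction. In the converse direction, once the lifting hypothesis makes $p$ split you finish with additivity of graded Betti numbers across the direct sum $P' \simeq P \oplus Q$; the paper instead notes $\ker p \subseteq \m P'$ and applies Nakayama to conclude $\ker p = 0$, hence $P \simeq P'$. Both endings are immediate once $p$ splits and yield the same conclusion.
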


\begin{proof}
 ``$\Leftarrow$'': $P$ has a minimal presentation without relations in $U$. It follows that every generator in $U$ defines a free summand of $X$ which can always be lifted, so wlog. assume that $P$ also has no generators in $U$.
 Apply $\iota_V^*$ to the diagram above to get a lift, because $p$ is an isomorphism in $V$:
\[\begin{tikzcd}[ampersand replacement=\&]
	\& {\iota^*_VX} \\
	{\iota^*_V P} \& {\iota^*_VY}
	\arrow["{\iota^*_V p }", from=1-2, to=2-2]
	\arrow["{(\iota^*_V p)^{-1}\circ \tilde f}", curve={height=-6pt}, dashed, from=2-1, to=1-2]
	\arrow["{\iota^*_V f}", from=2-1, to=2-2]
\end{tikzcd}\]
Now use the $(\iota_V)_! \dashv \iota^*_V $ and that $P \simeq (\iota_V)_! \iota^*_V P$ by \autoref{finitisation}. \\
 ``$\Rightarrow$'': Let $P$ be $U$-projective. Consider a minimal presentation $P_1 \to P_0 \to P$ and chose any decomposition of the relations $P_1 \iso P_1^{U} \oplus P_1^V$, splitting off all relations whose degree is in $U$. Notice that the image of the inclusion $P_1^{V} \into P_1$ is independent of the choice of decomposition. Therefore this induces a presentation $P_1^{V} \to P_0 \to P^{U}$ and a surjection $p \colon P^{U} \to P$ which is an isomorphism in $V$ and thus splits since $P$ is $U$-projective. The kernel of $p$ lies in $\m P^{U}$ since the presentation was minimal so by Nakayama's lemma $p$ induces an isomorphism. 
\end{proof}

\begin{definition}\label{U_cover}
 Let $X$ be a persistence module. A map $p \colon P \to X$ is a \emph{$U$-cover} (or $\alpha$-cover for $U=(\alpha)$) if $P$ is $U$-projective and $p$ is both an essential surjection and a $V$-isomorphism.
\end{definition}

\begin{proposition}\label{alpha_cover}
 The construction in the second half of the proof of \autoref{alpha_projective} produces a $U$-cover. If $X$ is a persistence module without generators in $U$, then the counit  $c \colon (\iota_V)_! \iota_V^*  X \to X$ is even a functorial $U$-cover.
\end{proposition}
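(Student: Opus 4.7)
The plan is to verify the three defining properties of a $U$-cover from \autoref{U_cover} for the surjection $p \colon P^U \to P$ constructed in the proof of \autoref{alpha_projective}. That $P^U$ is $U$-projective is essentially by construction: its presentation $P_1^V \to P_0 \to P^U$ contains only relations at degrees in $V$, and this presentation is still minimal because it arises from the minimal presentation of $P$ by restricting to a subset of the relations, so the diagonal blocks still vanish on applying $-\otimes A/\m$. Hence $b_{1,\beta}(P^U)=0$ for every $\beta \in U$.

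For the essential surjection property I would invoke \autoref{prop:essential} and show $p \otimes A/\m$ is an isomorphism; by right-exactness this reduces to showing that $K := \ker(p) \subseteq \m P^U$. But $K$ is the image in $P^U$ of the submodule $P_1^U \subset P_1$, and minimality of the original presentation says the map $P_1 \to P_0$ sends its generators into $\m P_0$. Restricting to $P_1^U$ and projecting to $P^U$ gives $K \subseteq \m P^U$. For the $V$-isomorphism, I would observe that $K$ is generated as an $A$-module by elements whose degrees lie in $U$; since $U$ is upper-closed, the $A$-submodule they span is supported entirely in $U$, so $K_\gamma = 0$ for every $\gamma \in V$, and $\iota_V^* p$ is an isomorphism.

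For the second claim, when $X$ has no generators in $U$, I would choose a minimal presentation $P_1 \to P_0 \to X$ with $P_0 \in \grA_V$. Then the corresponding $P^U$ also lies in $\grA_V$, and \autoref{finitisation} yields $P^U \simeq (\iota_V)_!\iota_V^* P^U \simeq (\iota_V)_!\iota_V^* X$, where the second step applies $(\iota_V)_!$ to the $V$-isomorphism $\iota_V^* p$ from the first part. Under these identifications $p$ is precisely the counit $c$, and functoriality in $X$ follows for free from naturality of the counit of the adjunction $(\iota_V)_! \dashv \iota_V^*$. The main obstacle is really the containment $K \subseteq \m P^U$: this is the step where minimality of the original presentation is genuinely used, and everything else reduces to careful bookkeeping via \autoref{finitisation} and standard properties of adjunctions.
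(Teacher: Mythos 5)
Your proposal is correct and follows essentially the same route as the paper: the key facts (the $V$-isomorphism, the containment $\ker p \subseteq \m P^U$ forced by minimality of the original presentation, and the reduction of essentiality to \autoref{prop:essential}) are exactly the ones the paper reuses from the proof of \autoref{alpha_projective}. The only cosmetic difference is that for the second claim you identify $p$ with the counit via \autoref{finitisation}, whereas the paper cites \autoref{planing} directly; these amount to the same computation on the presentation.
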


\begin{proof}
The construction produces a $V$-isomorphism and surjection $p: X^{U} \to X$ from a $U$-projective module. To see that it is an essential surjection, recall that by \autoref{prop:essential} this is equivalent to $ p \otimes A/ \m$ being an isomorphism, but in the proof we already saw that $\ker p \subset \m X^{U}$. \\
In the case where $X$ has no generators in $U$, the counit is isomorphic to the first construction by \autoref{planing}.
\end{proof}

\begin{remark}\label{cover_minimality}
The standard definition of a cover (eg. \cite[5.1.1]{EnochsJenda}), requires another notion of minimality: Every endomorphism of the map $p$ is actually an automorphism. This property is recovered by considering the image of this endomorphism, which must be all of $P$ as $p$ is an essential surjection. So the endomorphism is a surjection, but then it must be a bijection. 

Together with \autoref{alpha_cover} this means that $\Pc_U$ is a \emph{covering} class.
\end{remark}

In the preceding terminology an $\alpha$-decomposed presentation \autoref{def:alpha_decomposed} is the presentation of an indecomposable decomposition of the $\alpha$-cover of a module together with its relations at $\alpha$.

\subsection{Presentations of Short Exact Sequences}

After having introduced the covering class $\mathcal{P}_U$, every module has a $\mathcal{P}_U$-resolution and in particular a $\mathcal{P}_U$-presentation, which we will later decompose separately. To see what this does to the normal presentation of a module, we need to understand the relationship between presentations (or more generally resolutions) and short exact sequences. 

 \begin{proposition}\label{pres_cd}
 Let $0 \to X \oto{f} Y \oto{g} Z \to 0$ be a short exact sequence of persistence modules. 
 
 If $X,Y$ are presented by graded matrices $O,M$, then for every lift $N$ of $f$, the following diagram commutes and supplies a presentation for $Z$.
\[\begin{tikzcd}[ampersand replacement=\&]
	\& {A^{R_1}} \& {A^{R_2}} \& {A^{R_2} \oplus A^{G_1}} \\
	\& {A^{G_1}} \& {A^{G_2}} \& {A^{G_2}} \\
	0 \& X \& Y \& Z \& 0
	\arrow["{\bar N}", from=1-2, to=1-3]
	\arrow["O", from=1-2, to=2-2]
	\arrow["{\incl_1}", from=1-3, to=1-4]
	\arrow["M", from=1-3, to=2-3]
	\arrow["{[M \ N]}", from=1-4, to=2-4]
	\arrow["N", from=2-2, to=2-3]
	\arrow[from=2-2, to=3-2]
	\arrow["\Id", from=2-3, to=2-4]
	\arrow["p_Y", from=2-3, to=3-3]
	\arrow[from=2-4, to=3-4]
	\arrow[from=3-1, to=3-2]
	\arrow["f", from=3-2, to=3-3]
	\arrow["g", from=3-3, to=3-4]
	\arrow[from=3-4, to=3-5]
\end{tikzcd}\]
Vice versa, any graded matrix $[M \ N]$ induces a diagram like this.

If $M$ is a minimal presentation and $O$ satisfies at least $O \otimes A/ \m = 0$ then the presentation $[M \ N]$ is minimal if and only if 
\[ N \otimes A/ \m = 0 \quad \text{ and } \quad \bar N \otimes A/ \m = 0\]

\end{proposition}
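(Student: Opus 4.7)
The plan is to split the statement into two parts: (i) produce the presentation $[M\ N]$ of $Z$ from the diagram and its converse, and (ii) determine when this presentation is minimal using the criterion from \autoref{prop:minimal_pres}.

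For part (i), commutativity is built in: $N$ is chosen to lift $f$, so $p_Y \circ N = f \circ p_X$, and the equality $NO = M \bar N$ is exactly the defining property of $\bar N$ as a lift of $N \circ O$ through $M$ (which exists because $p_Y \circ N \circ O = f \circ p_X \circ O = 0$, so $\Ima(NO) \subseteq \ker p_Y = \Ima M$). That $[M\ N]$ is a presentation of $Z$ then follows from a standard diagram chase: columns of $M$ map to $0$ in $Z$ because they already do in $Y$, and columns of $N$ map to $0$ in $Z$ because $g \circ f = 0$; conversely, if $y \in A^{G_2}$ satisfies $g(p_Y(y)) = 0$, then $p_Y(y) = f(x)$ for some $x \in X$, one lifts $x$ to $\tilde{x} \in A^{G_1}$ under $p_X$ and observes $y - N\tilde{x} \in \ker p_Y = \Ima M$. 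The reverse direction is symmetric: given $[M\ N]$, define $Z$ as the cokernel and $X$ as the image of $p_Y \circ N$; then $O$ arises as the syzygies witnessing $NO = M\bar N$.

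For part (ii), by \autoref{prop:minimal_pres} minimality of $[M\ N]$ is equivalent to the two conditions $[M\ N] \otimes A/\m = 0$ and $i \otimes A/\m = 0$ where $i \colon \ker[M\ N] \hookrightarrow A^{R_2} \oplus A^{G_1}$. The first splits into $M \otimes A/\m = 0$ (automatic since $M$ is minimal) and $N \otimes A/\m = 0$, which gives one of the claimed conditions. For the second, the key step is to describe a generating set of $\ker[M\ N]$. I claim the map
\[ \phi \colon \ker M \oplus A^{R_1} \longrightarrow \ker[M\ N], \qquad (k, s) \mapsto (k + \bar N s, \, -Os) \]
is well-defined (using $M\bar N = NO$) and surjective: given $(r, g) \in \ker[M\ N]$, the element $Mr = -Ng$ in $A^{G_2}$ projects to $p_Y(Mr) = 0$ and simultaneously to $-f(p_X(g))$; injectivity of $f$ forces $p_X(g) = 0$, hence $g = Os$ for some $s$, and then $r + \bar N s \in \ker M$.

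Since $\phi$ is surjective, so is $\phi \otimes A/\m$, and therefore $i \otimes A/\m = 0$ holds if and only if $(i \circ \phi) \otimes A/\m = 0$. The composition sends $(k, s)$ to $(k + \bar N s, -Os)$, so after $\otimes A/\m$ vanishing is equivalent to the simultaneous vanishing of $\ker M \hookrightarrow A^{R_2}$, of $\bar N$, and of $O$ modulo $\m$. The first is granted by minimality of $M$ (second clause of \autoref{prop:minimal_pres} applied to the presentation $M$), the third is the hypothesis on $O$, and what remains is exactly $\bar N \otimes A/\m = 0$. Combining both parts yields the claimed equivalence. The main obstacle is verifying the surjectivity of $\phi$, and this is precisely where the exactness of the short exact sequence (specifically the injectivity of $f$) enters the argument; the rest is the right-exactness of $-\otimes A/\m$ applied to a split-off $\ker M \oplus A^{R_1}$.
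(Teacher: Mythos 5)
Your proof is correct, and it takes a genuinely different route for the minimality criterion. The paper establishes part (ii) homologically: it applies $-\otimes A/\m$ to the whole diagram, identifies $\Tor_1(X,A/\m)$, $\Tor_1(Y,A/\m)$, and $X\otimes A/\m$, $Y\otimes A/\m$ with (quotients of) the free terms of the presentations $O$ and $M$ via the minimality hypotheses, and then reads off the equivalence from exactness of the resulting long exact $\Tor$-sequence for $0\to X\to Y\to Z\to 0$. You instead make the first syzygy module of $Z$ explicit: the map $\phi\colon \ker M\oplus A^{R_1}\to \ker[M\ N]$, $(k,s)\mapsto(k+\bar N s,\,-Os)$, is well-defined because $M\bar N=NO$, and its surjectivity follows by a direct chase using precisely the injectivity of $f$. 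Right-exactness of $-\otimes A/\m$ then carries surjectivity across, reducing $i\otimes A/\m=0$ to the vanishing of the three matrix components of $i\circ\phi$ modulo $\m$, two of which are already killed by the standing hypotheses on $M$ and $O$. Both arguments ultimately rest on \autoref{prop:minimal_pres} and on the same two hypotheses; the trade-off is that the paper's version delegates the bookkeeping to the standard long exact sequence machinery (and so more visibly names the homological invariants being compared), whereas yours is self-contained and elementary, at the mild cost of verifying well-definedness and surjectivity of $\phi$ by hand. Your proof also makes the role of $f$'s injectivity explicit and localizes it in the surjectivity of $\phi$, which is a small expository gain. One minor wording slip in the converse direction of part (i): you write ``define $Z$ as the cokernel and $X$ as the image of $p_Y\circ N$'' without first declaring $Y=\coker M$; the paper does set $Y$ first, and you should too since $p_Y$ is used.
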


\begin{proof}
    To see that $[M \ N]$ is a presentation just chase the diagram. Given a presentation $[M \ N]$, define $Y$ as $\coker M$ and $X$ as $\Ima ( p_Y \circ N)$. \\
    For the second part, apply $-\otimes A/ \m$ to the whole diagram. Then computing the homology produces the long exact sequence
\[\begin{tikzcd}[ampersand replacement=\&]
	{A^{R_1} \otimes A / \m} \& {A^{R_2} \otimes A/\m} \\
	{\Tor_1(X, \, A/ \m)} \& {\Tor_1(Y, \, A/ \m)} \& {\Tor_1(Z, \, A/ \m)} \\
	\\
	{X \otimes A/ \m} \& {Y \otimes A/ \m} \& {Z \otimes A/ \m} \& 0 \\
	{A^{G_1} \otimes A/ \m} \& {A^{G_2} \otimes A/ \m}
	\arrow["{\bar N \otimes A/\m}", from=1-1, to=1-2]
	\arrow[two heads, from=1-1, to=2-1]
	\arrow["\wr"', from=1-2, to=2-2]
	\arrow["i", from=2-1, to=2-2]
	\arrow[from=2-2, to=2-3]
	\arrow[out=-30, in=150, looseness=1, from=2-3, to=4-1]
	\arrow["p", from=4-1, to=4-2]
	\arrow["h", from=4-2, to=4-3]
	\arrow[from=4-3, to=4-4]
	\arrow["\wr"', from=5-1, to=4-1]
	\arrow["{N \otimes A/ \m}", from=5-1, to=5-2]
	\arrow["\wr"', from=5-2, to=4-2]
\end{tikzcd}\]

Our assumptions on $M$ and $O$ give us the three isomorphisms. The surjection to the first term comes also from the fact that $O \otimes A / \m=0$ but here the homology at the term $A^{R_1}$ can be an actual quotient.\\
We use \autoref{prop:minimal_pres} to conclude that the presentation $[M \ N ]$ is minimal if and only if 
\[Z \otimes A / \m \simeq A^{G_2} \otimes A/ \m \quad \text{ and } \quad \Tor_1(Z, A / \m) \simeq \left( A^{R_2} \oplus A^{G_1} \right) \otimes A / \m .\]
By exactness of the long exact sequence this in turn is equivalent to $p = 0$ and $i = 0$ which in turn is equivalent to $N \otimes A/ \m = 0$  $\bar N \otimes A/ \m$ by the identifications.
\end{proof}

 \begin{proposition}\label{pres_ext}
Let $X \oto{f} Y \oto{g} Z \to 0$ be an exact sequence of persistence modules. If $X,Z$ are presented by graded matrices $M,N$, then there are maps $\pi, \ C$ making the following diagram of short exact sequences commute 


\[\begin{tikzcd}[ampersand replacement=\&]
	0 \& {A^{R_1}} \& {A^{R_1} \oplus A^{R_2}} \& {A^{R_2}} \& 0 \\
	0 \& {A^{G_1}} \& {A^{G_1} \oplus A^{G_1}} \& {A^{G_2}} \& 0 \\
	\& X \& Y \& Z \& 0
	\arrow[from=1-1, to=1-2]
	\arrow[from=1-2, to=1-3]
	\arrow["M", from=1-2, to=2-2]
	\arrow[no head, from=1-3, to=1-4]
	\arrow["{\begin{bmatrix} M & C \\ 0 & N \end{bmatrix}}", from=1-3, to=2-3]
	\arrow[from=1-4, to=1-5]
	\arrow["N", from=1-4, to=2-4]
	\arrow[from=2-1, to=2-2]
	\arrow[from=2-2, to=2-3]
	\arrow["{\pi_X}", from=2-2, to=3-2]
	\arrow[no head, from=2-3, to=2-4]
	\arrow["\pi", from=2-3, to=3-3]
	\arrow[from=2-4, to=2-5]
	\arrow["{\pi_Z}", from=2-4, to=3-4]
	\arrow["f", from=3-2, to=3-3]
	\arrow["g", from=3-3, to=3-4]
	\arrow[from=3-4, to=3-5]
\end{tikzcd}\]

Additionally, $f$ is injective if and only if
$C$ satisfies the cocycle condition

\begin{equation}\tag{$\dag$}\label{cocycle}
\forall t \in A^{R_2} \colon \ Nt=0 \Rightarrow \exists s \in A^{R_1} \colon \ Ct=Ms.
\end{equation}

\end{proposition}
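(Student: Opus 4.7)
The plan is to construct $\pi$ and $C$ using projectivity of the top free modules, and then extract the cocycle characterization from the snake lemma applied to the top two rows.

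First, since $A^{G_2}$ is projective and $g$ is surjective, I lift $\pi_Z$ to some $\tilde\pi\colon A^{G_2}\to Y$ with $g\tilde\pi=\pi_Z$, and then define $\pi(a,b):=f\pi_X(a)+\tilde\pi(b)$ on $A^{G_1}\oplus A^{G_2}$. A short diagram chase using $\ker g=\Ima f$ and surjectivity of $\pi_X$ shows that $\pi$ is surjective. For $C$, the equality $g\tilde\pi N=\pi_Z N=0$ forces $\tilde\pi N$ into $\Ima f$; projectivity of $A^{R_2}$ combined with the surjection $f\pi_X\colon A^{G_1}\twoheadrightarrow\Ima f$ then yields $C\colon A^{R_2}\to A^{G_1}$ with $f\pi_X C=-\tilde\pi N$. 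Commutativity of every square is immediate from these defining identities; in particular, $\pi\circ\bigl[\begin{smallmatrix}M&C\\0&N\end{smallmatrix}\bigr]=0$ follows from $\pi_X M=0$ together with $f\pi_X C=-\tilde\pi N$.

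For the equivalence, I apply the snake lemma to the two split short exact sequences on top with vertical maps $M$, $\bigl[\begin{smallmatrix}M&C\\0&N\end{smallmatrix}\bigr]$, and $N$. Identifying the outer cokernels with $X$ and $Z$, and the middle cokernel with $Y$ via the surjection induced by $\pi$ (compatibly with the bottom-row exactness at $Y$), the six-term exact sequence terminates in
\[
\ker N \xrightarrow{\partial} X \oto{f} Y \oto{g} Z \to 0.
\]
A direct zig-zag calculation gives $\partial(t)=\pi_X(Ct)$ for $t\in\ker N$: lift $t$ to $(0,t)\in A^{R_1}\oplus A^{R_2}$, apply the block matrix to obtain $(Ct,0)$, which comes from $Ct\in A^{G_1}$, and project to $X$. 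By exactness at $X$, the map $f$ is injective exactly when $\partial=0$, i.e., when $\pi_X(Ct)=0$ for every $t$ with $Nt=0$; since $\ker\pi_X=\Ima M$, this is precisely the cocycle condition~\eqref{cocycle}.

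The step I expect to require the most care is the identification of $\coker\bigl[\begin{smallmatrix}M&C\\0&N\end{smallmatrix}\bigr]$ with $Y$ through the induced map from $\pi$, so that the snake's map $X\to\coker$ really tracks $f$ and its exactness statement at $X$ translates into the desired statement about $\ker f$. Once this identification is pinned down, using surjectivity of $\pi$, the vanishing $\pi\circ\bigl[\begin{smallmatrix}M&C\\0&N\end{smallmatrix}\bigr]=0$, and the bottom-row exactness, the snake lemma's exactness clause at $X$ delivers the equivalence cleanly.
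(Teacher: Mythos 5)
Your construction of $\pi$ and $C$ is the same as the paper's: both lift $\pi_Z$ through $g$ by projectivity of $A^{G_2}$, set $\pi = f\pi_X + \tilde\pi$, observe that $\tilde\pi N$ lands in $\ker g=\Ima f$, and lift through $\pi_X$ using projectivity of $A^{R_2}$ (the paper lifts in two visible steps, through $X$ then through $\pi_X$; you collapse these into one lift along the surjection $f\pi_X\colon A^{G_1}\twoheadrightarrow\Ima f$, which is equivalent). Your snake-lemma reduction and the explicit computation $\partial(t)=\pi_X(Ct)$ also match the paper's terse appeal to the snake lemma.

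The step you flag — identifying $\coker\bigl[\begin{smallmatrix}M&C\\0&N\end{smallmatrix}\bigr]$ with $Y$ — is a genuine gap, and you should notice it cuts deeper than ``requires care''. What the snake lemma on the two split rows actually gives is
\[
\ker N \xrightarrow{\ \partial\ } X \xrightarrow{\ \bar f\ } \coker\!\begin{bmatrix} M & C \\ 0 & N \end{bmatrix} \longrightarrow Z \longrightarrow 0,
\]
together with a surjection $q\colon\coker[\cdots]\twoheadrightarrow Y$ induced by $\pi$ satisfying $q\bar f = f$ and $gq=\bar g$. A direct check shows $\ker q = \bar f(\ker f)\cong \ker f/\Ima\partial$, so the identification $\coker[\cdots]\cong Y$ is \emph{equivalent} to $\ker f=\Ima\partial$ — which, once $\Ima\partial\subseteq\ker f$ (always true, since $f\partial=0$) is known, is exactly what the ``$\Leftarrow$'' direction is trying to prove. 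In other words, your argument for ``cocycle $\Rightarrow$ $f$ injective'' assumes the conclusion in the guise of the cokernel identification. The ``$\Rightarrow$'' direction is unaffected: from $f$ injective and $f\pi_X Ct=-\tilde\pi Nt=0$ for $t\in\ker N$ one gets $\pi_X Ct=0$ directly, with no reference to the middle cokernel. To be fair, the paper's one-line proof sketch glosses over the same point by writing the snake lemma output as $\ker N\to X\to Y\to\cdots$ without justifying the substitution of $Y$ for $\coker[\cdots]$; you have at least recognised where the weight of the argument sits.
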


\begin{remark}
 The presentation of $Z$ can be extended with a map $\ker N \oto{i} A^{R_2}$ to compute $\Ext^1(Z,-)$. Then $\pi_X \circ C$ is a cocycle in $\Hom(A^{R_2}, X)$ in the usual sense precisely if $\pi_X \circ C \circ i = 0 \Leftrightarrow \text{(\ref{cocycle})}$  and the class $[C]$ corresponds to this short exact sequence. We have only rephrased the condition to emphasise that in our case it can be checked by solving linear systems over $\K$. 
\end{remark}

\begin{proof}
This is a version of the horseshoe lemma (\cite{Weibel} 2.2.8) for presentations. Still, let us briefly explain how to construct $\pi$ and $C$. \\
Since $g$ is surjective and $A^{G_2}$ projective we can factor $\pi_Z$ through $g$ to find a map $\pi_2 \colon A^{G_2} \to Y$. Then set $\pi \coloneqq f \circ \pi_X + \pi_2$. Notice that $g \circ \pi \circ \incl_2 \circ N = g \circ \pi_2 \circ N = \pi_Z \circ N = 0$, so $\pi \circ \incl_2 \circ N$ factors through $X$ and this in turn through $\pi_X$, again because $A^{R_2}$ is projective and $\pi_X$ surjective, and we get a map $C' \colon A^{R_2} \to A^{G_1}$. Set $C \coloneqq - C'$. \\
For the second statement use the snake lemma to get an exact sequence
$\ker N \oto{\pi_X \circ C \circ i} X \oto{f} Y \to \dots$.
\end{proof}

\begin{lemma}\label{min_pres_criterion}
In the setting of \autoref{pres_ext}, let f be injective and $M, N$ minimal presentations, then the following are equivalent: \begin{enumerate}
\item The presentation of $Y$ is minimal.
\item $C \otimes A/\m = 0$.
\item $f \otimes A/\m$ is injective.
\end{enumerate}
\end{lemma}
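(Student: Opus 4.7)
The strategy is to translate all three conditions into statements about the presentation matrix $D = \begin{bmatrix} M & C \\ 0 & N \end{bmatrix}$ after applying $-\otimes A/\m$, exploiting minimality of $M$ and $N$ to put $D \otimes A/\m$ into a particularly simple form. By \autoref{prop:minimal_pres}, $M \otimes A/\m = 0$ and $N \otimes A/\m = 0$, so
\[
D \otimes A/\m = \begin{bmatrix} 0 & C \otimes A/\m \\ 0 & 0 \end{bmatrix},
\]
and its image inside $(A^{G_1} \oplus A^{G_2}) \otimes A/\m$ equals $\Ima(C \otimes A/\m)$, sitting in the first summand. This immediately reduces the matrix-part of the minimality criterion from \autoref{prop:minimal_pres} to the condition $C \otimes A/\m = 0$, giving $(1)\Leftrightarrow(2)$ once we also check the accompanying syzygy condition $i \otimes A/\m = 0$ for the kernel inclusion of $D$.

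For $(2) \Leftrightarrow (3)$, I would apply $-\otimes A/\m$ to the diagram of \autoref{pres_ext}. By right-exactness and minimality of $M, N$, we identify $X \otimes A/\m \cong A^{G_1} \otimes A/\m$, $Z \otimes A/\m \cong A^{G_2} \otimes A/\m$, and $Y \otimes A/\m \cong \coker(D \otimes A/\m)$. Under these identifications, $f \otimes A/\m$ is the composite of the canonical inclusion $A^{G_1} \otimes A/\m \hookrightarrow (A^{G_1} \oplus A^{G_2}) \otimes A/\m$ with the quotient by $\Ima(D \otimes A/\m)$; its kernel is therefore exactly $\Ima(C \otimes A/\m)$, so injectivity of $f \otimes A/\m$ is equivalent to $C \otimes A/\m = 0$.

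The main obstacle is the syzygy half of (1): showing that $C \otimes A/\m = 0$ forces $\ker D \subset \m A^{R_1 \cup R_2}$. The snake lemma applied to the horseshoe-style diagram of \autoref{pres_ext} produces a short exact sequence $0 \to \ker M \to \ker D \to \ker N \to 0$, reducing the claim to showing that each minimal generator $\tau$ of $\ker N$ admits a lift $\sigma \in A^{R_1}$ satisfying $M\sigma = -C\tau$ with coefficients in $\m$. Since $\tau \in \m A^{R_2}$ by minimality of $N$ and $C$ has entries in $\m$ by (2), we get $C\tau \in \m^2 A^{G_1}$, and combining this with the minimality of $M$ and a careful degree-by-degree analysis lets us choose $\sigma$ modulo $\ker M$ to lie in $\m A^{R_1}$.
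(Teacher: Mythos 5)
Your first two paragraphs are correct and essentially reproduce the paper's own argument: reducing $D\otimes A/\m$ to the single block $C\otimes A/\m$ gives the matrix half of $(1)\Leftrightarrow(2)$, and your identification of $\ker(f\otimes A/\m)$ with $\Ima(C\otimes A/\m)$ is a more explicit version of the paper's connecting-homomorphism argument for $(2)\Leftrightarrow(3)$.

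The gap is exactly where you flagged it, and it cannot be closed: the ``careful degree-by-degree analysis'' does not exist, because the implication $(2)\Rightarrow(1)$ is false. Knowing $C\tau\in\m^2A^{G_1}$ says nothing about the lift $\sigma$ when the entries of $M$ themselves lie in high powers of $\m$. Concretely, over $A=\K[x,y]$ take $M=(x^2y)$ presenting $X=A/(x^2y)$, $N=(x^2\ \ xy\ \ y^2)$ presenting $Z=A/\m^2$, and $C=(x^2\ \ 0\ \ 0)$. Both $M$ and $N$ are minimal and $C\otimes A/\m=0$; moreover $C$ satisfies the cocycle condition, hence $f$ is injective, since the generators $(y,-x,0)$ and $(0,y,-x)$ of $\ker N$ are sent by $C$ to $x^2y$ and $0$, both in $\Ima M$. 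Yet for $\tau=(y,-x,0)$ the lifting equation $M\sigma=-C\tau$ reads $x^2y\,\sigma=-x^2y$ and forces $\sigma=-1\notin\m A^{R_1}$; equivalently, $(1,-y,x,0)$ is a homogeneous element of $\ker\begin{bmatrix}M&C\\0&N\end{bmatrix}$ of degree $(2,1)$ not contained in $\m\left(A^{R_1}\oplus A^{R_2}\right)$, so the relation of $X$ at $(2,1)$ becomes redundant and the presentation of $Y$ is not minimal ($Y$ has only $3$ minimal relations, not $4$). Be aware that the paper's published proof of this direction contains the same flaw: it asserts that the syzygy condition holds ``regardless of $C$'' by claiming $N\circ\iota=0$ forces $\Ima\iota\subset A^{R_1}$, which minimality of $N$ does not give (it only gives $\pr_2(\Ima\iota)\subset\m A^{R_2}$). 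The honest statement is $(1)\Rightarrow(2)\Leftrightarrow(3)$, and recovering $(1)$ requires the additional vanishing of the connecting map $\Tor_2(Z,A/\m)\to\Tor_1(X,A/\m)$, not a sharper degree count.
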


\begin{proof}
Apply $-\otimes A/\m$ to the whole diagram. \\
$(1) \Leftrightarrow(2) \colon$
     If the presentation of $Y$ is minimal then by \autoref{prop:minimal_pres} $\begin{bmatrix} M & C \\ 0 & N \end{bmatrix} \otimes A/\m = 0 \ \Leftrightarrow \ M \otimes A/\m + C \otimes A/\m = 0 \Leftrightarrow M \otimes A/\m = 0$ using minimality of $M$ and $N$. \\
     For the other direction, we need to see that there is no split injection $\iota \colon A(-\alpha) \into A^{R_1} \oplus A^{R_2}$ such that $\begin{bmatrix} M & C \\ 0 & N \end{bmatrix} \circ \iota = 0$. But this is true regardless of $C$: If $N \circ \iota = 0$, then since $N$ is minimal $\Ima \iota \subset A^{R_1}$, but this contradicts the minimality of $M$. \\
     
$(2) \Leftrightarrow(3) \colon$
     Consider the homology of the resulting diagram and observe that $C \otimes A/\m$ induces the connecting homomorphism in the long exact sequence
     \[ \dots \to  \Tor_1(Z, A/\m) \oto{d_0} X \otimes A/m \oto{f \otimes A/ \m} Y \otimes A/\m \to \dots \quad .\]
     By a diagram chase $C \otimes A/\m = 0$ if and only if $d_0 = 0$.
\end{proof}

\section{Decomposing Non-uniquely Graded Presentations}
\label{sec:brute_force_decomposer}

We have seen how the repeated use of the \texttt{blockreduce} subroutine of Dey and Xin decomposes $\alpha$-decomposed presentations with a single column of degree $\alpha$. \\
We set out to find a subroutine that also decomposes $\alpha$-decomposed presentations with arbitrary number of relations. If we have it, then the following framework will decompose any presentation.

\begin{algorithm}[\textsc{AIDA} Main Routine]\label{main_routine} \ \newline \texttt{Input}: Any presentation $M \in \K^{G \times R}$ 

Partition the columns of $M$ into $\bar n$ maximal sets of equal degree. For each $i \in [\bar n]\colon$
\begin{enumerate}
 \item By induction, $\left(M_{<i} \, M_{i} \right)$ is $\alpha$-decomposed and $M_{<i}$ has a maximal block-structure $\B$.
 \item For each $b \in \B$ \begin{enumerate}
     \item Perform a \hyperref[colsweeps]{column-sweep}
     \item use the optimised Hom-variant of \hyperref[subparagraph:hom_variant]{\texttt{BlockReduce}} and store the $\Hom$-vector spaces.
 \end{enumerate}
 \item Gather the blocks $b \in \B$ for which $N_b$ is nonzero in a set $\B_{pie}$.
 \item\label{passing} Pass the $\Hom$-vector spaces and the presentation $(M_{\B_{pie}} \ N_{\B_{pie}})$ to a subroutine.
\end{enumerate}
\end{algorithm}

\subsection{Obstructions to extending the Generalized Persistence Algorithm}\label{sec:obstructions}

One might first hope that one could process the columns of $N$ in some order and just use \hyperref[subparagraph:hom_variant]{\texttt{BlockReduce}}. Unfortunately this does not work: A first counterexamples was given in  \cite[Example 5.14]{ABENY}. We construct a 2-parameter module over $\F_2$ which is smallest for presentations over $\F_2$ which cannot be decomposed with this method.

\begin{example}\label{example:no_permutation}
Consider the presentation $M$ on the left where all rows have incomparable degree. The invertible matrix in the middle is a decomposition of $M$.
\[
\begin{array}{| c | c  c | c  c |}
    \hline  
     M & \cellcolor{pink} \alpha  & \cellcolor{pink} \alpha  & \cellcolor{pink} \alpha  & \cellcolor{pink} \alpha  \\
    \hline
    \blcell \wtext{(0,5)} & \ocell 0 & \ocell 1 &  \tqcell 0 &  \tqcell 1 \\
    \blcell \wtext{(1,4)} & \ocell 1 & \ocell 0 &  \tqcell 1 &  \tqcell 0 \\
    \blcell \wtext{(2,3)} & \ocell 0 & \ocell 1 &  \tqcell 1 &  \tqcell 1 \\
    \blcell \wtext{(3,2)} & \apcell 0 & \apcell 1 & \lblcell 1 & \lblcell 0 \\ 
    \blcell \wtext{(4,1)} & \apcell 1 & \apcell 1 & \lblcell 0 & \lblcell 1 \\ 
    \blcell \wtext{(5,0)} & \apcell 1 & \apcell 0 & \lblcell 1 & \lblcell 1 \\
    \hline
\end{array}
\quad \cdot \quad 
\begin{array}{| c  c | c  c |}
\hline  
  1 &  1 &  1 &  0 \\
  0 &  1 &  0 &  1 \\
  0 &  1 &  1 &  0 \\
  1 &  0 &  0 &  1 \\
 \hline
\end{array}
\quad =
\quad 
\begin{array}{| c | c  c | c  c |}
    \hline  
     M & \cellcolor{pink} \alpha & \cellcolor{pink} \alpha  & \cellcolor{pink} \alpha  & \cellcolor{pink} \alpha  \\
    \hline
    \blcell \wtext{(0,5)} & \ocell 1 & \ocell 1 &  0 &  0 \\
    \blcell \wtext{(1,4)} & \ocell 1 & \ocell 0 &  0 &  0 \\
    \blcell \wtext{(2,3)} & \ocell 0 & \ocell 1 &  0 &  0 \\
    \blcell \wtext{(3,2)} &  0 &  0 & \lblcell 1 & \lblcell 1 \\ 
    \blcell \wtext{(4,1)} &  0 &  0 & \lblcell 1 & \lblcell 0 \\ 
    \blcell \wtext{(5,0)} &  0 &  0 & \lblcell 0 & \lblcell 1 \\
    \hline
\end{array}
\]
This decomposition can also be reached after arbitrary change of basis of the two sides of the second matrix, which would yield an equivalent decomposition. But no such change of basis and permutation of the columns makes the matrix in the middle triangular. Notice that there is also no other decomposition of $M$ up to equivalence. \\
Therefore, there is no order on these the relations such that column operations only following this order can decompose the presentation.
\end{example}

If on the other hand, we permit column operations which do not follow some order but still want to proceed column-by-column, we run into another problem. Say we want to reduce a new column $N$ but there are more columns of the same degree. If we want to delete the sub-columns $N_b$ then the newly added column operations make the maximal set of blocks $b$, for which $N_b$ can be deleted non-unique (also up to permutation).

\begin{example}
Consider the following graded matrix, where we try to reduce the first  column.
\[
\begin{array}{| c | c |  c | }
        \hline 
         M  &  \cellcolor{pink} (2,2) &   (2,2)  \\
        \hline
        \lbcell (0,1)    &  0   &  1       \\
        \lrcell (1,1)    &  1   &  1         \\
        \lgcell (2,0)    &  1   &  0                \\
        \hline
\end{array} \quad \rightarrow  \quad \begin{array}{| c | c |  c | }
        \hline 
         M  &  \lrcell (2,2) &  \cellcolor{pink}  (2,2)  \\
        \hline
        \lbcell (0,1)    &  \rtext{1}   &  0       \\
        \lrcell (1,1)    & \lrcell 1   &  1         \\
        \lrcell (2,0)    & \lrcell 1   &  0                \\
        \hline
\end{array} \]

There are three blocks at the beginning of the decomposition and the first entry, belonging to the blue block, has already been deleted. No combination of admissible row and column operations can delete more entries, and as we can see above, if we continue from here, then in the next column, the admissible row operation from second to first row would create another 1 in the first column which we cannot avoid.

\[
\begin{array}{| c | c |  c | }
        \hline 
         M  &  \cellcolor{pink} (2,2) &   (2,2)  \\
        \hline
        \lbcell (0,1)    &  1   &  1       \\
        \lrcell (1,1)    &  0   &  1         \\
        \lgcell (2,0)    &  1   &  0                \\
        \hline
\end{array} \quad \rightarrow  \quad \begin{array}{| c | c |  c | }
        \hline 
         M  &  \lbcell (2,2) &  \cellcolor{pink}  (2,2)  \\
        \hline
        \lbcell (0,1)    &  \lbcell 1   &  \gtext{0}       \\
        \lrcell (1,1)    &  0   &  1         \\
        \lbcell (2,0)    & \lbcell 1   &  0                \\
        \hline
\end{array} \]

But adding the second to the first column first would make it so that the entry to the red block is deleted instead and from here we would find a decomposition. But a priori we did not know that, as both choices had a minimal set of blocks for which the entry in the first column was non-zero.
\end{example}

We conclude from these two examples that we will have to consider all new columns of the same degree at once. 

\subsection{A Brute-Force Solution}\label{sec:brute_force}

One immediate difference to the distinctly graded case is that a decomposition might introduce several new blocks. Instead of finding all of them at once, we will try to split the presentation into two summands and do this recursively. We invoke \autoref{lemma:principle} again to find invertible matrices, which leave $M_\B$ unchanged and such that
\[ Q \left[ M_\B \ N \right] \begin{bmatrix} 
        P & U \\
        0 & T
       \end{bmatrix} = 
       \left[ Q M_\B P \ \ Q \left( M_\B U + NT \right) \, \right] =
       \left[ \begin{array}{ c c c c} 
        M_{\B_1} & 0 & N_1 & 0 \\
        0 & M_{\B_2} & 0 & N_2 \\
       \end{array} \right]. 
       \]
It follows that there is an invertible graded matrix $T \in \K^{k \cdot \alpha \times k \cdot \alpha}$ and a partition $[k] = \{1, \dots, j \} \cup \{ j+1, \dots, k\}$ of the columns of $N$, which allow us to decompose the whole presentation by zeroeing out submatrices with \hyperref[subparagraph:hom_variant]{\texttt{BlockReduce}}: After multiplying $N$ with $T$ from the right, for every block $b \in \B$ we can delete either $N_{b, [j]}$ or $N_{b, {[j]^\complement}}$. Here we see that we would have to solve \hyperref[hom_computation]{($\ast$)} again, but since we stored its solution this is not necessary and \hyperref[hom_computation]{($\ast \ast$)} is efficiently solved. Since we work over a finite field, this immediately implies a brute-force algorithm:

\subparagraph{Sketch: Exhaustive $\alpha$-decomposition}\label{subparagraph:exhaustive}  \
For each invertible matrix $T$ over $\F_q$ and $j \in [k]$ set $N_1 =  \left(NT\right)_{\{1, \dots, j \}}$, $\ N_2 =\left(NT\right)_{\{j+1, \dots, k \}}$ and decompose each separately with \hyperref[subparagraph:hom_variant]{\texttt{BlockReduce}}.

\vspace{0.5em}
This is computationally unfeasible even for smaller $k$: 
The maximal number of iterations of this loop for $q=2$ and $k \in \{2, \dots, 6\}$ are $6, \, 342, \, 60 \, 822, \, 40 \,  058 \, 262, \, 100 \, 833 \, 607 \, 062 $ and more generally $| \GL_k(\F_q)| \in q^{k^2 + \Oc\left(k \right)}$.
Fortunately, we can be much smarter about the matrices and partitions we use: Assume that $T$ is as above so that $N T$ decomposes into two parts, supported on $[j]$ and ${[j]^\complement}$, after using \hyperref[subparagraph:hom_variant]{\texttt{BlockReduce}} \hyperref[hom_computation]{($\ast \ast$)}. 
The two parts also induce a partition of $T = \left[T_1 \ T_2\right]$. If $T_1', \, T_2'$ are matrices with the same column-spaces as $T_1, \, T_2$ respectively then $N\left[T_1' \ T_2' \right]$ could be brought to the same decomposition with \hyperref[subparagraph:hom_variant]{\texttt{BlockReduce}}. That is, finding a decomposition with this strategy depends only on the \emph{decomposition} of $\F_q^k$ as a vector space which $T = \left[ T_1 \ T_2 \right]$ represents. Additionally, once we have deleted as many parts of $NT_1$ as possible, we can update the block structure and do the same again for $NT_2$, but now also being able to use column-operations from the columns of $NT_1$ to those of $NT_2$ without breaking linearity of the system. That is, we also do not need to consider a decomposition $\left[ T_1 \ T_2 \right]$, if a decomposition has been considered already that is of the form $\left[ T_1 \ T_2' + T_1 S \right]$, for any matrix $S$ and any $T_2'$ with the same column-space as $T_2$. But this is the case for \emph{every} other invertible matrix of the form $\left[ T_1 \ T_2' \right]$, so that we only need to iterate over all proper subspaces $\text{span}\left( T_1 \right)$ of $\F^k_q$, i.e. the Grassmanian $\Gr_l(\F_q^k)$ for every $l < k$. These matrices can be generated fast by leveraging the Schubert cell decomposition \cite{Schubert}:

\subparagraph{Algorithm \texttt{Generate} $\Dec_{q}(k, \, l)$:}\label{sub:generate_decompositions}
For each set $P \in \binom{[k]}{l}$ construct a  $k \times l$ matrix $T_1$ in column-echelon form with pivots given by $P$. Then construct all subspaces in the corresponding cell by changing the entries that are irrelevant for the echelon form and not in the same row as a pivot. For each $T_1$ generated this way, choose any $T_2$ with opposite pivots and store the pairs $\left[ T_1 \ T_2 \right]$ in a set $\Dec_{q}(k, \,  l)$.

\subparagraph{Avoiding Double Counting.}\label{sub:double_count}
Since exchanging $T_1$ and $T_2$ in $\Dec_{q}(k, \,  l)$ would not make a difference for the decomposition we can actually ignore the sets $\Dec_{q}(k, \,  l)$ for $l > k/2$. In the case where $k$ is even and $l = k/2$ we can avoid double counting even more. Consider the graph on $\Gr_{k/2}(\F_q^k)$ where $U \sim V$ if they form a decomposition (also called the $q$-Kneser graph). By symmetry we only need to find a minimal vertex cover of this graph and use these subspaces for the computation above. This can be done by considering in \hyperref[sub:generate_decompositions]{\texttt{Generate} $\Dec_{q}(k, \, k/2)$} only those pivot sets $P \in \binom{[k]}{k/2}$ which contain the first position. Let $\Dec_q(k) \coloneqq \bigcup\limits_{l \leq k/2} \Dec_q(k,l)$, then we can iterate over $\Dec_q(k)$ instead of $[k] \times \GL_k\left(\F_q\right)$ for the exhaustive decomposition. The first terms of $ | \Dec_2(k) |$ are $1, \, 2, \,  7, \, 43, \, 186, \, 1 965, \, 14  605, 297  181, \, \dots$ and the asymptotic growth is in $q^{k^2/4 + \Oc\left(k\right)}$.

There is a branch-and-bound strategy to avoid testing decomposition for a subspace again in the recursion. We can pull back decompositions we have already tried to the sub-spaces for which a decomposition has been found.

\begin{algorithm}[Exhaustive $\alpha$-decomposition]\label{exhaustive_full}  \ \\
\texttt{Input:} $\alpha$-decomposed presentation $\left[M_\B \ N \right]$ \\
For each $[T_1 \ T_2] \in {\Dec}_q(k)\colon$ 
\begin{enumerate}
    \item Set $N_1 \coloneqq NT_1$ and $N_2 \coloneqq  NT_2$.
    \item For each $b \in \B \colon $ try to clear $(N_1)_b$ with \hyperref[subparagraph:hom_variant]{\texttt{BlockReduce}} \hyperref[hom_computation]{($\ast \ast$)}.
    \item Gather the blocks where $(N_1)_b$ is non-zero in $\B_1$.
    \item Try to clear $(N_2)_{\B_1}$ with \hyperref[subparagraph:hom_variant]{\texttt{BlockReduce}} \hyperref[hom_computation]{($\ast \ast$)}, define $\B_2 \coloneqq \B_1^{\complement}.$
    \item If successful, recursively pass $[M_{\B_1} \ N_{\B_1}]$ and $[M_{\B_2} \ N_{\B_2}]$ to Exhaustive $\alpha$-decomp.\\
    Return $\begin{bmatrix} M_{\B_1} & 0 & N_{\B_1} & 0 \\
    0 & M_{\B_2} & 0 & N_{\B_2}
    \end{bmatrix}$.
\end{enumerate}
Return $\left[M_\B \ N \right]$. 

\end{algorithm}

\begin{theorem}
 Let $M$ be a minimal presentation of a persistence module $X$ over $\F_q$ with $n$ generators and relations and at most $k$ relations of the same degree. Then the exhaustive decomposition algorithm has a worst-case running time in
$\Oc \left( n^{2\omega +1} + n^{\omega+2}q^{k^2/4 + \Oc(k)}  \right)$.
\end{theorem}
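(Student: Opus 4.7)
The plan is to separate the total running time into a ``backbone'' cost (one-time \hyperref[subparagraph:hom_variant]{\texttt{BlockReduce}} per batch, storing the Hom solutions to $(\ast)$) and an ``exhaustive'' cost (the enumeration over $\Dec_q(k)$ triggered inside each batch). The first contribution yields the $n^{2\omega+1}$ term and the second yields the $n^{\omega+2} q^{k^2/4 + O(k)}$ term, which added give the claimed bound.

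For the backbone, I would partition the columns of $M$ by degree into at most $\bar n \le n$ batches and, for each batch, invoke \hyperref[subparagraph:hom_variant]{\texttt{BlockReduce}} once per block to solve and cache system $(\ast)$ for every pair. By \autoref{proposition:run_time_block} a single \hyperref[subparagraph:blockreduce]{\texttt{BlockReduce}} sweep across the whole $\alpha$-decomposed matrix costs $O(n^{2\omega})$, since $(\ast)$ is solved once per ordered pair of blocks and (using $\sum_b m_b \le n$ together with $\omega \ge 2$) the aggregate linear system has size $O(n^2)$. Summing over $O(n)$ batches produces the $O(n^{2\omega+1})$ term and recovers the Dey--Xin bound in the distinctly graded regime.

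For the exhaustive portion I would invoke \autoref{exhaustive_full} once per batch. By the Schubert-cell construction in \hyperref[sub:generate_decompositions]{\texttt{Generate} $\Dec_q(k,l)$} combined with the halving trick from \hyperref[sub:double_count]{the double-counting reduction}, $|\Dec_q(k)| \in q^{k^2/4 + O(k)}$. For each candidate $[T_1\ T_2]$, forming $NT_1, NT_2$ costs $O(nk^2)$, after which attempting to clear each $(N_i)_b$ via \hyperref[subparagraph:hom_variant]{\texttt{BlockReduce}} solves only $(\ast\ast)$ inside the \emph{already-stored} Hom bases and never re-enters $(\ast)$. The per-block clear amounts to a linear solve of size $O(m_b^2)$ against $O(m_b k)$ right-hand sides, costing $O(m_b^\omega k)$; by subadditivity of $x \mapsto x^\omega$ on a partition, summing over the blocks gives $O(n^\omega k)$, and since $k \le n$ this is $O(n^{\omega+1})$ per candidate. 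Multiplying by $|\Dec_q(k)|$ candidates and $O(n)$ batches yields $O(n^{\omega+2} q^{k^2/4 + O(k)})$.

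Finally, I need to check that the recursion inside \autoref{exhaustive_full} does not inflate these estimates. Every successful split strictly refines the block partition and therefore the total number of recursive invocations along a single batch is bounded by the number of indecomposables, which is $O(n)$. Because the recursive calls act on the disjoint sub-matrices $[M_{\B_1}\ N_{\B_1}]$ and $[M_{\B_2}\ N_{\B_2}]$ and the per-iteration cost is super-linear in block size, a telescoping argument shows the sum across all levels of recursion is dominated by the top-level cost already counted. The main obstacle in the argument, as foreshadowed in \autoref{remark:story}, is justifying the per-iteration bound $O(n^{\omega+1})$: this relies crucially on the Hom-variant caching from \hyperref[subparagraph:hom_variant]{Section 3.2}, so that each candidate $[T_1\ T_2]$ costs only one linear solve inside the precomputed $\Hom(M_c, M_b)$-bases instead of the quadratic-in-$n$ solve that would be required if $(\ast)$ were recomputed from scratch.
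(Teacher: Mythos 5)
Your proposal is correct and follows essentially the same approach as the paper's (very terse) proof: charge the one-time per-batch computation of $(\ast)$ (made possible by the Hom-variant caching) to the $n^{2\omega+1}$ term, charge the $|\Dec_q(k)|$-fold enumeration of $(\ast\ast)$ solves to the $n^{\omega+2}q^{k^2/4+\Oc(k)}$ term, and absorb the remaining polynomial-in-$k$ factors into the $q^{\Oc(k)}$. The paper simply cites \autoref{proposition:run_time_block} and the count of $\Oc(n)$ invocations; you supply the aggregation-over-blocks and recursion-telescoping arguments that the paper leaves implicit, and although your per-block $(\ast\ast)$ estimate $\Oc(m_b^\omega k)$ is looser in form than $\Oc((m_bk)^{\omega-1}(m_b^2+n_bk))$ from the proposition, the discrepancy is only a polynomial in $k$ and does not affect the stated asymptotic.
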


\ignore{
\[\Oc \left(\bar n \left(n m\right)^{\omega-1} \left(m^2 + n^2 \right) + \bar n  (mk)^{\omega-1}(m^2+nk) q^{k^2/4 + \Oc(k)}  \right)  . \]
}

\begin{proof}
For at most $n$ times we need to compute {\texttt{BlockReduce}} \hyperref[hom_computation]{($\ast$)} and then solve {\texttt{BlockReduce}} \hyperref[hom_computation]{($\ast \ast $)} at most $q^{k^2/4 + \Oc(k)}$ many times. Using  
\autoref{proposition:run_time_block} gives the term above, because the polynomial factors in $k$ are dominated by the quadratic term.
\end{proof}

\subsection{Effective Decompositions}\label{sec:effective}

The results in this subsection hold for arbitrary modules over finite dimensional algebras. We deduce from \autoref{fg_finite} that they also hold for fg persistence modules.

\begin{proposition}\label{iso_block}
 Let $X_i \in \Cs$ be a finite set of indecomposable objects with local endomorphism rings. \\
 Consider a direct sum $X \coloneqq \bigoplus_{i \in I} X_i^{n_i}$ with $X_i \not\simeq X_j$ for each $i \neq j \in I$. Then let $\varphi \colon X \iso X$ be any automorphism and write it as an $I \times I$ matrix with entries $\varphi_{i,j} \colon X_j^{n_j} \to X_i^{n_i}$. For every $i \in I$ the endomorphism $\varphi_{i,i}$ is an isomorphism and $\varphi_{i,j}$ belongs to $\rad(X_j^{n_j}, X_i^{n_i})$.
\end{proposition}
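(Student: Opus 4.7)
My plan is to reduce everything to the standard characterisation of the radical in a Krull--Remak--Schmidt setting. Recall that for objects with local endomorphism rings,
\[
\rad(X_j, X_i) = \begin{cases} \Hom(X_j, X_i) & \text{if } X_i \not\simeq X_j, \\ \text{unique maximal ideal of } \End(X_i) & \text{if } i=j, \end{cases}
\]
and this extends to direct powers: $\rad(X_j^{n_j}, X_i^{n_i})$ is exactly the set of $n_i \times n_j$ matrices with entries in $\rad(X_j, X_i)$. In particular $\End(X_i^{n_i}) \simeq M_{n_i}(\End(X_i))$ is a local matrix ring whose Jacobson radical consists of the matrices with entries in $\rad(\End(X_i))$. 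I would cite \cite{Assem} 4.8--4.10 together with \autoref{krull-remak-schmidt} for these facts; in the persistence setting they apply by \autoref{fg_finite}.

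With this machinery the off-diagonal claim is immediate: for $i \neq j$ the entries of the block $\varphi_{i,j} \colon X_j^{n_j} \to X_i^{n_i}$ are morphisms $X_j \to X_i$ between non-isomorphic indecomposables, hence lie in $\rad(X_j, X_i) = \Hom(X_j, X_i)$, so the whole block lies in $\rad(X_j^{n_j}, X_i^{n_i})$.

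The interesting step is proving that each diagonal block $\varphi_{i,i}$ is an isomorphism. For this I would compute the $(i,i)$ entry of the matrix identity $\varphi^{-1} \circ \varphi = \Id_X$, which yields
\[
\varphi^{-1}_{i,i} \circ \varphi_{i,i} \; + \; \sum_{k \neq i} \varphi^{-1}_{i,k} \circ \varphi_{k,i} \; = \; \Id_{X_i^{n_i}}.
\]
By the off-diagonal case already established, each summand with $k \neq i$ is a composition involving a radical morphism, and the radical is a (two-sided) ideal of the category, so the entire sum lies in $\rad(\End(X_i^{n_i}))$. Writing it as $r$, we obtain $\varphi^{-1}_{i,i} \circ \varphi_{i,i} = \Id - r$. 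Because $r$ lies in the Jacobson radical of the local ring $\End(X_i^{n_i})$, the element $\Id - r$ is a unit there, hence an isomorphism. The symmetric computation using $\varphi \circ \varphi^{-1} = \Id$ shows that $\varphi_{i,i} \circ \varphi^{-1}_{i,i}$ is also an isomorphism. Thus $\varphi_{i,i}$ has both a left and a right inverse (up to pre/post composition with isomorphisms), so it is itself an isomorphism.

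The main obstacle is simply to have the description of the radical of a Krull--Remak--Schmidt category (and of its matrix endomorphism rings) cleanly available; once that is in place, the argument is just matrix multiplication plus the fact that $\Id - r$ is invertible for $r$ in the Jacobson radical. If one wanted to avoid quoting the categorical radical wholesale, one could prove the key statement $\rad(\End(X_i^{n_i})) = M_{n_i}(\rad(\End(X_i)))$ by hand using that $\End(X_i)$ is local, but this detour seems unnecessary for the paper.
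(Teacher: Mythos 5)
Your proof is correct and takes essentially the same approach as the paper: identify the off-diagonal blocks as lying in the radical via the fact that $\rad(X_i,X_j)=\Hom(X_i,X_j)$ for non-isomorphic indecomposables, and then use Jacobson-radical properties to force the diagonal blocks to be isomorphisms. Where the paper argues at the level of the global ring $\End(X)$ (the off-diagonal part lies in $J\End(X)$, so the block-diagonal part of the unit $\varphi$ is still a unit), you read off the $(i,i)$ entries of $\varphi^{-1}\varphi = \Id$ and $\varphi\varphi^{-1} = \Id$ to extract one-sided inverses; both routes are sound and of comparable length.

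One small correction: you twice describe $\End(X_i^{n_i}) \cong M_{n_i}(\End(X_i))$ as a local ring, but a matrix ring over a local ring is never local for $n_i \geq 2$ (the idempotent matrix units are non-units whose sum is $\Id$). This does not break your argument, since the only fact you invoke is that $\Id - r$ is a unit for $r$ in the Jacobson radical, and that holds in \emph{any} ring; but the phrasing should be fixed to avoid a false statement.
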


\begin{proof}
 Recall that for any additive category $\Cs$ and $X, Y \in \Cs_0$ the radical $\rad(Y,Z)$ is defined to be the set of homomorphisms $f \colon Y \to Z$ such that for every $g \colon Z \to Y$ the homomorphism $\Id_{X_i} - gf$ is invertible (\cite{Assem} A.3.3-A.3.6). Since for $i \neq j \in I$ the modules $X_i, X_j$ are both indecomposable and non-isomorphic we have $\rad(X_i,Y_j) \simeq \Hom(X_i,Y_j)$ from where it follows that $\rad(X^{n_i}_i,Y^{n_j}_j) \simeq \Hom(X^{n_i}_i,Y^{n_j}_j)$ and finally that $( \varphi_{i,j})_{i \neq j \in I} \in \rad(X, X) = J \End(X)$. Then the diagonal $(\varphi_{i,i})_{i \in I}$ must be an isomorphism and so every endomorphism $\varphi_{i,i}$ is an isomorphism, too.
\end{proof}

\begin{lemma}\label{iso_perm}
 Let $X$ be an indecomposable object with local endomorphism algebra and $\varphi \colon X^k \iso X^k$ an isomorphism. Then there is a permutation $\sigma \in \Sigma_k$ such that the components $\varphi_{i, \sigma(i)}$ are all isomorphisms.
\end{lemma}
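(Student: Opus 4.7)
The plan is to reduce the question to a linear-algebraic statement over a division ring and then apply Hall's marriage theorem. Let $R \coloneqq \End(X)$, which by hypothesis is local, and let $\mathfrak{m} \subset R$ be its unique maximal ideal. Then $D \coloneqq R / \mathfrak{m}$ is a division ring (in the persistence-module case it is a finite-dimensional division $\K$-algebra, but we do not need this). The automorphism $\varphi$ corresponds to a matrix in $M_k(R)$, and I will abusively continue to call it $\varphi$. Since $R$ is local, invertibility of $\varphi$ in $M_k(R)$ is equivalent to invertibility of the reduction $\bar\varphi \in M_k(D)$, because the image of a unit is a unit and the lift of a matrix whose reduction is invertible can be inverted by Nakayama-type arguments. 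In particular $\bar\varphi$ is an invertible matrix over the division ring $D$.

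The key step is to show that any invertible matrix $A = (a_{ij}) \in M_k(D)$ admits a permutation $\sigma \in \Sigma_k$ with $a_{i,\sigma(i)} \neq 0$ for every $i$. I would prove this via Hall's marriage theorem on the bipartite graph $\Gamma$ with vertex sets $[k] \sqcup [k]$ and an edge between $i$ on the left and $j$ on the right iff $a_{ij} \neq 0$. A perfect matching in $\Gamma$ is exactly such a $\sigma$. Assume for contradiction that Hall's condition fails: there is $S \subset [k]$ with $|N(S)| < |S|$. Writing $T \coloneqq [k] \setminus N(S)$, this means the submatrix $A_{S \times T}$ is zero, and $|S| + |T| = |S| + k - |N(S)| > k$. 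I would then argue that a square matrix over a division ring containing a zero block of this size is singular: the $|T|$ columns indexed by $T$ all have zero entries in the rows indexed by $S$, hence lie in the right $D$-span of the $k - |S| < |T|$ standard basis vectors indexed by $[k] \setminus S$, so they are right-linearly dependent, contradicting invertibility of $A$.

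Applying the preceding step to $A = \bar\varphi$ yields a permutation $\sigma$ such that $\bar\varphi_{i,\sigma(i)} \neq 0$ in $D$ for each $i$. Lifting back to $R$, each entry $\varphi_{i,\sigma(i)}$ has nonzero image modulo $\mathfrak{m}$, hence lies outside $\mathfrak{m}$. Since $R$ is local, this means each $\varphi_{i,\sigma(i)}$ is a unit in $R = \End(X)$, i.e.\ an isomorphism $X \to X$, as required.

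I expect the only mildly delicate point to be the linear-algebra lemma in the second paragraph, because one cannot simply invoke the nonvanishing of a Leibniz determinant when $D$ is noncommutative. The bipartite/Hall argument circumvents this cleanly, using only the fact that over a division ring any $k - |S| + 1$ vectors in a $(k-|S|)$-dimensional right $D$-module are linearly dependent.
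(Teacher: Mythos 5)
Your proof is correct and follows essentially the same route as the paper: view $\varphi$ as a matrix over the local ring $\End(X)$, reduce modulo the maximal ideal to obtain an invertible matrix over the residue division algebra $D$, apply Hall's marriage theorem to extract a permutation with all selected entries nonzero in $D$, and lift back to conclude those entries are units in $\End(X)$. The only difference is that you also prove the Hall step in detail (via the bipartite nonzero-entry graph and the observation that a too-large zero block over a division ring forces the columns to be right-linearly dependent), whereas the paper simply cites Hall's theorem; this elaboration is sound and correctly avoids the pitfall of invoking determinants over a noncommutative $D$.
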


\begin{proof}
 Consider $\varphi$ as a $k \times k$ matrix over the local algebra $\End(X)$. Passing to the residue division algebra $D \coloneqq \End(X)/ \m$ sends precisely the non-invertible endomorphism to 0. Consider the induced matrix $\overline{\varphi} \in D^{k \times k}$ which is still invertible as a map $D^k \to D^k$. Then as a consequence of Hall's marriage theorem there is a permutation $\sigma \in \Sigma_k$ such that $\overline{\varphi}_{i, \sigma(i)}$ is non-zero.
\end{proof}

\begin{corollary}\label{permutation}
In the setting above, let $\varphi \colon \bigoplus\limits_{i \in I} X_i \to \bigoplus\limits_{i \in I} X_i$ be an automorphism between indecomposable decompositions. Then there is a permutation $\sigma \in \Sigma_I$ such that for each $i \in I$ the maps $\varphi_{i, \sigma(i)}$ are all isomorphisms.
\end{corollary}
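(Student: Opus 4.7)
The plan is to combine the two preceding results by first grouping the summands by isomorphism type. Partition the index set $I = \bigsqcup_{j \in J} I_j$ so that $X_i \simeq X_{i'}$ iff $i, i' \in I_j$ for some $j$, and pick representatives $Y_j$ with $Y_j \not\simeq Y_{j'}$ for $j \neq j'$. Then $\bigoplus_{i \in I} X_i \simeq \bigoplus_{j \in J} Y_j^{n_j}$ where $n_j = |I_j|$. This rewrites $\varphi$ as an automorphism of $\bigoplus_{j \in J} Y_j^{n_j}$ in the form required by \autoref{iso_block}.

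Next I would apply \autoref{iso_block} to this automorphism. Since each $Y_j$ is indecomposable and, by \autoref{krull-remak-schmidt}, has local endomorphism ring, the hypotheses hold. The proposition yields that the diagonal blocks $\varphi_{j,j} \colon Y_j^{n_j} \iso Y_j^{n_j}$ are isomorphisms for every $j \in J$, while the off-diagonal blocks lie in the radical.

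For each $j$ I would then apply \autoref{iso_perm} to $\varphi_{j,j}$: since $Y_j$ is indecomposable with local endomorphism ring, there exists a permutation $\sigma_j \in \Sigma_{I_j}$ of the $n_j$ copies such that the components $(\varphi_{j,j})_{i, \sigma_j(i)}$ are isomorphisms for each $i \in I_j$. Stitching the $\sigma_j$ together gives a permutation $\sigma \in \Sigma_I$, defined by $\sigma|_{I_j} = \sigma_j$. For $i \in I_j$, the component $\varphi_{i, \sigma(i)}$ is exactly the $(i,\sigma_j(i))$-entry of the block $\varphi_{j,j}$, now interpreted as a map $X_{\sigma(i)} \to X_i$ between two copies of $Y_j$, which is an isomorphism by construction.

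The only bookkeeping subtlety is the identification of the block decomposition by isomorphism type with the original indexing, so that ``$\varphi_{i,\sigma(i)}$ is an isomorphism'' as a map between the original summands $X_{\sigma(i)}$ and $X_i$ makes sense; but this is automatic because $\sigma$ only permutes indices within each $I_j$, so $X_i$ and $X_{\sigma(i)}$ are canonically identified with copies of the same $Y_j$. No serious obstacle is expected beyond this reindexing.
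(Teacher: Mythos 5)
Your proposal is correct and follows essentially the same route as the paper's own proof: group the summands by isomorphism class, invoke \autoref{iso_block} to see the diagonal blocks are isomorphisms, apply \autoref{iso_perm} to each, and paste the resulting permutations together. The extra bookkeeping you spell out (the reindexing within each $I_j$) is fine and matches what the paper leaves implicit.
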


\begin{proof}
 For each isomorphism class $[X]$ the morphism $\varphi_X$ from \autoref{iso_block} is an isomorphism. Then use \autoref{iso_perm} to get permutations for each such morphism and paste them together.
\end{proof}

 \begin{lemma}\label{decomp_resolution}
Let $\Cs$ be an abelian category and $\Fc$ be any covering class of objects (\cite{EnochsJenda} 5.1.1), so that minimal left $\Fc$-resolutions exist. If $p\colon P_* \to X$ is a left $\Fc$-resolution then any decomposition $\varphi \colon  X' \oplus X''  \iso X$ lifts to a decomposition of the resolution.
 \end{lemma}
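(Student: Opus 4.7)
The plan is to reduce the statement to the uniqueness of minimal left $\Fc$-resolutions, assuming $p \colon P_* \to X$ is minimal (the setting in which the paper uses this result). First, using the covering hypothesis, I would construct minimal left $\Fc$-resolutions $P'_* \to X'$ and $P''_* \to X''$ of each summand. Their termwise direct sum $Q_* \coloneqq P'_* \oplus P''_*$ is an exact complex with each $Q_n \in \Fc$, and composing the augmentation $Q_0 \to X' \oplus X''$ with $\varphi$ produces a second left $\Fc$-resolution of $X$.

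Next, I would show that $Q_* \to X$ is again minimal, i.e.\ that a direct sum of $\Fc$-covers is an $\Fc$-cover at every level. The precover property is immediate: any map $F \to X' \oplus X''$ from $F \in \Fc$ splits into two components, each of which lifts through $P'_n$ or $P''_n$ separately. The essential condition of a cover, that every endomorphism commuting with the augmentation is an automorphism, is a standard fact about covering classes closed under direct sums (cf.\ \cite[Sec.~5.1]{EnochsJenda}). With $Q_*$ minimal, I would invoke the uniqueness theorem for minimal $\Fc$-resolutions, itself proved by lifting $\Id_X$ to a chain map $P_* \to Q_*$ and applying condition (ii) of the cover definition inductively at each level. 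This produces a chain isomorphism $P_* \iso P'_* \oplus P''_*$ compatible with $\varphi$, which is the desired decomposition of the resolution.

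The main obstacle is the transfer of the cover minimality condition (ii) to direct sums. For an endomorphism $f = \begin{pmatrix} a & b \\ c & d \end{pmatrix}$ of $P'_n \oplus P''_n$ satisfying $(p' \oplus p'')f = p' \oplus p''$, the diagonal entries $a, d$ are automorphisms by condition (ii) applied to $P'_n, P''_n$ individually, but the off-diagonal entries are only constrained by $p' b = 0$ and $p'' c = 0$ and need not vanish. Establishing invertibility of $f$ then requires either a direct algebraic manipulation in the block matrix ring (using that $a,d$ are units to factor $f$ as a product of an invertible diagonal matrix and a perturbation of the identity whose square vanishes on $X' \oplus X''$), or a structural appeal to the Enochs--Jenda framework in which this closure property is formalised. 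Should the lemma be needed for non-minimal resolutions, the approach would instead be via idempotent lifting: lift the idempotent $e \colon X \to X$ corresponding to $\varphi$ to a chain endomorphism of $P_*$ and, as in the classical projective case, modify it to a genuine chain-level idempotent whose image and kernel supply the two summand resolutions.
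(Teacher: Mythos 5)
Your proof takes a genuinely different route from the paper's, which is more economical. The paper lifts the split pair $i \colon X' \into X$, $r \colon X \onto X'$ to chain maps between $P_*$ and a minimal resolution $P'_*$ of $X'$ only; the composite $r_n \circ i_n$ then commutes with the augmentation $p'$ and is therefore an automorphism by the cover condition, so $i_*$ is a split injection of resolutions. This needs no second auxiliary resolution, no direct-sum-of-covers lemma, no uniqueness theorem, and never uses minimality of $P_*$ itself. Your route instead builds minimal resolutions of both summands, proves that their termwise direct sum is again a minimal resolution, and invokes uniqueness of minimal resolutions -- so it only applies when $P_*$ is also minimal, a restriction you acknowledge and which does hold in the paper's applications, but which the paper's proof does not need.

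You correctly isolate the crux of your approach -- that a finite direct sum of $\Fc$-covers is an $\Fc$-cover -- but your sketch of the algebra is off in one detail. Writing $f = \begin{pmatrix} a & 0 \\ 0 & d\end{pmatrix}(I+N)$ with $N = \begin{pmatrix} 0 & a^{-1}b \\ d^{-1}c & 0\end{pmatrix}$, the square $N^2$ does \emph{not} vanish; it is the block-diagonal matrix with entries $a^{-1}bd^{-1}c$ and $d^{-1}ca^{-1}b$. What makes the argument work is that $I - N^2$ is block-diagonal and each of its blocks is an endomorphism commuting with the respective augmentation (since $p'b = 0$ and $p''c = 0$, hence $p'a^{-1}bd^{-1}c = 0$), so each block is an automorphism by the cover property applied componentwise; then $(I-N)(I+N) = (I+N)(I-N) = I - N^2$ is invertible, forcing $I+N$ (and thus $f$) to be invertible. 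Your idempotent-lifting fallback for non-minimal $P_*$ is shakier still: a lift of the idempotent $e = i r$ to $P_*$ is idempotent only up to chain homotopy, and rectifying it to a strict chain-level idempotent is not automatic in an arbitrary abelian category -- this is precisely the difficulty that the paper's lift-the-split-pair argument avoids.
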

 
 \begin{proof}
Let $i \colon X' \into X \colon r$ be the induced injection and its retract. Construct a minimal $\Fc$-resolution $p' \colon P'_* \to X'$ and choose lifts $i_0, r_0$ between $P_0$ and $P_0'$. By the diagram chase 
\[i \circ p' \circ r_0 \circ i_0 =  
p \circ i_0 \circ r_0 \circ i_0 = 
p \circ i_0 = i \circ p'\]
it follows that $p' \circ r_0 \circ i_0 = p'$ because $i$ is mono. Then by minimality, $r_0 \circ i_0 $ must be an isomorphism. Analogously for all higher lifts we find that $i$ lifts to a split injection of resolutions.
 \end{proof}

\subsection{Decomposing Presentation Matrices}\label{sec:decomposing_pres}

In this section we show how to get change-of-basis matrices from partial decompositions of modules which leave the decomposed part intact. 

\begin{lemma}\label{matrices_cd}
Consider an $\alpha$-decomposed presentation $\left[ M_\B \  N\right] \in \K^{G \times (R \cup k \cdot \alpha)}$ of a module $Y$. For any decomposition $\varphi \colon Y \iso Y_1 \oplus Y_2$ we can find invertible graded matrices
 \[Q \in \K^{G \times G}
  \quad \text{ and } \quad \begin{bmatrix} 
        P & U \\
        0 & T
       \end{bmatrix}
\in \K^{(R \cup k\cdot \alpha) \times (R \cup k\cdot \alpha) }\] 
with partitions $U = [U_1 \ U_2]$ and $T= [T_1 \ T_2]$ which induce a block decomposition of
\[ Q \left[M_\B \ N\right] \begin{bmatrix} 
        P & U_1 & U_2 \\
        0 & T_1 & T_2
       \end{bmatrix} = [ M_\B \ N_1 \ N_2 ] \]
where for every $b \in \B$ we have $(N_1)_b = 0$ or $(N_2)_b = 0$. Additionally, for every $b \in \B$, it holds that
 \[Q M_\B = M_\B P^{-1}  \quad (\ast) , \quad Q_{b,b} = \Id_{A^{G_b}}, \quad \text{and} \quad   P^{-1}_{b,b} = \Id_{A^{R_b}} \quad (\ast \ast ). \]
 
\end{lemma}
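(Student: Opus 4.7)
The strategy is to lift the module-level decomposition $Y \simeq Y_1 \oplus Y_2$ to the $\alpha$-cover presented by $M_\B$, align it with the given indecomposable decomposition $\B$ via Krull-Remak-Schmidt, lift to the minimal presentation using \autoref{minimal_iso}, and finally read off $U$ and $T$ from the transformed relation matrix. Since no column of $M_\B$ has degree in $\up\alpha$, the cokernel $X := \coker M_\B$ has no relations there and is therefore $\alpha$-projective by \autoref{alpha_projective}. Minimality of $[M_\B \ N]$ implies that the induced surjection $p \colon X \to Y$ is essential and an isomorphism outside $\up\alpha$, so it is an $\alpha$-cover (\autoref{U_cover}). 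Because $\Pc_{\up\alpha}$ is a covering class (\autoref{cover_minimality}), \autoref{decomp_resolution} lifts $Y \simeq Y_1 \oplus Y_2$ to a decomposition $X \simeq X_1 \oplus X_2$ with each $X_i \to Y_i$ an $\alpha$-cover. Decomposing each $X_i$ further into indecomposable summands indexed by a set $\B_i'$ yields a second indecomposable decomposition of $X$ which, by \autoref{krull-remak-schmidt} together with \autoref{permutation} and \autoref{iso_perm}, can be matched to the original $\B$ via a bijection $\sigma \colon \B \iso \B_1' \sqcup \B_2'$ and blockwise isomorphisms whose diagonal components $\psi_{b,b}$ are automorphisms of $X_b$. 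Setting $\B_i := \sigma^{-1}(\B_i')$ gives the required partition $\B = \B_1 \sqcup \B_2$, and the matching assembles into an automorphism $\psi \in \Aut(X)$ carrying $X_{\B_i} := \bigoplus_{b \in \B_i} X_b$ onto $X_i$. Pre-composing $\psi$ with the block-diagonal automorphism $\bigoplus_b \psi_{b,b}^{-1}$ then normalises it so that $\psi_{b,b} = \Id_{X_b}$ for every $b$, without altering either the partition or the images of the $X_{\B_i}$.

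Lifting $\psi$ to the minimal presentation $M_\B$ by \autoref{minimal_iso} produces invertible graded matrices $Q, P$ satisfying $QM_\B = M_\B P^{-1}$, namely $(\ast)$, and the freedom to lift each diagonal block $\psi_{b,b} = \Id$ by the trivial lift $(Q_{b,b}, P^{-1}_{b,b}) = (\Id, \Id)$ delivers $(\ast\ast)$. Computing $Q[M_\B \ N]\left[\begin{smallmatrix} P & 0 \\ 0 & \Id \end{smallmatrix}\right] = [M_\B \ QN]$, we are reduced to transforming $QN$ alone. The columns of $QN$ project to a minimal generating set of $\ker p$ in $X$ by \autoref{prop:minimal_pres}, and $\psi$ identifies $\ker p$ with $\ker(X_{\B_1} \to Y_1) \oplus \ker(X_{\B_2} \to Y_2)$. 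Choose a minimal generating set of $\ker p$ adapted to this splitting, lift its representatives to elements of $A^{G_{\B_1}} \oplus A^{G_{\B_2}} = A^G$ whose support is confined to the respective $\B_i$-rows, and express the columns of $QN$ in this new basis. The resulting change of basis at degree $\alpha$ is the invertible matrix $T = [T_1 \ T_2]$, while the $M_\B$-multiples required to pass between the two lifts yield $V = [V_1 \ V_2]$; setting $U := PV$ produces the claimed block form $[M_\B \ N_1 \ N_2]$ with $(N_1)_{\B_2} = 0 = (N_2)_{\B_1}$.

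The main technical obstacle is the normalisation making every diagonal block $\psi_{b,b}$ literally equal to $\Id_{X_b}$: the Krull-Schmidt freedom to permute within each isomorphism class of indecomposable summands may shuffle blocks between $\B_1$ and $\B_2$, so the partition is not determined by $\varphi$ alone but only after carefully fixing the bijection $\sigma$ produced by \autoref{iso_perm}. Once this alignment is in place, the remaining steps are routine, amounting to lifting an isomorphism of minimal presentations and rewriting a basis of $\ker p$ adapted to a direct sum decomposition.
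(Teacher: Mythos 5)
Your proposal is correct and follows essentially the same approach as the paper: lift $\varphi$ along the $\Pc_{\up\alpha}$-resolution using \autoref{decomp_resolution}, align the resulting decomposition of $X = \coker M_\B$ with $\B$ via Krull–Remak–Schmidt–Azumaya and \autoref{permutation}, normalise so that each diagonal block is the identity, and lift to minimal presentations via \autoref{minimal_iso}. The only difference is organisational: you lift the $\alpha$-cover automorphism to $(Q,P)$ first and then extract $T$ and $U$ by rewriting a minimal generating set of $\psi(\ker p)$, whereas the paper builds the decomposed presentation $[M_\B\ N_1\ N_2]$ directly from the decomposed short exact sequence via \autoref{pres_cd} and lifts $\varphi$ as a single morphism of minimal presentations, producing all four matrices at once.
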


\begin{proof}

Let 
\[0 \to A^k(- \alpha) \oto{\tilde N} \bigoplus_{b \in \B} X_b \oto{p} Y \to 0 \] 
be the induced $\mathcal{P}_\alpha$-resolution of $Y$. The decomposition $\varphi$ of $Y$ lifts to a decomposition of resolutions by \autoref{decomp_resolution}. \\
The right hand summands of the decomposition  $ \varphi_0 \colon \,  \bigoplus_{b \in \B} X_b \iso X_1 \oplus X_2$ can be decomposed further, and by Krull-Remak-Schmidt \autoref{krull-remak-schmidt} this induces an automorphism
\[ \varphi' \colon \bigoplus_{b \in \B} X_b \oto{\varphi_0} X_1 \oplus X_2 \oto{\psi_1 \oplus \psi_2} {\bigoplus\limits_{b \in \B_1} X_b \oplus \bigoplus\limits_{b \in \B_2} X_b}. \]
Where $\B_1 \sqcup \B_2$ is a partition of $\B$. Moreover by post-composition with the map $\sigma_*$ from \autoref{permutation} we can relabel the summands $X_b$ to ensure that for every $b\in \B$ the diagonal element $(\sigma_* \circ \varphi')_{b,b}$ is an isomorphism as in the following diagram.

\[\begin{tikzcd}[ampersand replacement=\&]
	0 \& {A^k( - \omega)} \& {\bigoplus\limits_{b \in \B} X_b} \& Y \& 0 \\
	0 \& {A^{k_1}( - \omega) \oplus A^{k_1}( - \omega)} \& {\bigoplus\limits_{b \in \B_1} X_b \oplus \bigoplus\limits_{b \in \B_2} X_b} \& {Y_1 \oplus Y_2} \& 0 \\
	\&\& {\bigoplus\limits_{b \in \B} X_b}
	\arrow[from=1-1, to=1-2]
	\arrow["{\widetilde N}", from=1-2, to=1-3]
	\arrow["{\varphi''}"', from=1-2, to=2-2]
	\arrow["p", from=1-3, to=1-4]
	\arrow["{\varphi'}", from=1-3, to=2-3]
	\arrow[from=1-4, to=1-5]
	\arrow["\varphi", from=1-4, to=2-4]
	\arrow[from=2-1, to=2-2]
	\arrow["{\widetilde N_1 \oplus \widetilde N_2}", from=2-2, to=2-3]
	\arrow["{\sigma_* \circ \left( \widetilde N_1 \oplus \widetilde N_2 \right) }"', from=2-2, to=3-3]
	\arrow["{p_1 \oplus p_2}", from=2-3, to=2-4]
	\arrow["{\sigma_*}", from=2-3, to=3-3]
	\arrow[from=2-4, to=2-5]
	\arrow["{ \left(p_1 \oplus p_2\right) \circ \sigma_*^{-1}}"', from=3-3, to=2-4]
\end{tikzcd}\]

We choose lifts $N_1$ and $N_2$ of $\sigma_* \circ \widetilde N_1$ and $\sigma_* \widetilde N_2$ to the minimal cover of $\bigoplus_{b \in \B} X_b$. Notice that this can be chosen in a way such that for every $b \in \sigma(\B_1)$ we have $\left(N_2\right)_b = 0$ and analogously for $\B_2$, since $\sigma_*$ is just a permutation of the blocks. \\
We can now use \autoref{pres_cd} on the ses at the bottom to construct a presentation $\left[ M_\B \ N_1 \ N'_2 \right]$ for $Y_1 \oplus Y_2$ that is block-decomposed and which is also minimal because it has the same size as the one we started with. Lift $\varphi$ to a morphism of minimal presentations, where both matrices making up the morphism are invertible by (\autoref{minimal_iso}).
\[ \left( Q, \ \begin{bmatrix}
                       P & U_1 & U_2 \\
                       0 & T_1 & T_2 
                      \end{bmatrix}^{-1} \right) \colon \ [ M_\B \ N ] \to [ M_\B \ N_1 \ N_2 ] \]

The $0$ entry in the bottom left comes directly from the matrix being graded.
It follows that $Q$ and $P$ must be invertible, too. In particular $(\ast)$ is satisfied. 

Considering again the construction of a presentation from a short exact sequence in \autoref{pres_cd}, we deduce that the cokernel of the pair $(Q, P^{-1})$ is the isomorphism $\sigma_* \circ \varphi'$ which was an isomorphism on its diagonal. We may wlog assume that $\sigma_* \circ \varphi'$ is the identity on its diagonal by multiplying with the inverses of these automorphisms. Therefore we can choose $Q,P$ to have the same property and $(\ast \ast)$ holds.

\end{proof}

\begin{proposition}\label{prop:alpha_correct}
 The exhaustive $\alpha$-decomposition algorithm finds an indecomposable decomposition.
\end{proposition}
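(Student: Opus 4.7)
The plan is to prove the claim by strong induction on $|G|+|R|+k$, showing that every outer call of \autoref{exhaustive_full} either exhibits a nontrivial split into two $\alpha$-decomposed sub-presentations (which are then handled inductively) or correctly returns its input as an indecomposable. The base case $k=0$ is immediate, since then $M_\B$ is already an indecomposable decomposition. For the inductive step it suffices to show: \emph{if} the module $Y$ presented by $[M_\B\ N]$ admits a nontrivial decomposition $\varphi \colon Y \iso Y_1 \oplus Y_2$, then at least one iteration of the loop succeeds.

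First I would extract the linear-algebraic data witnessing $\varphi$. By \autoref{matrices_cd} there exist an invertible $Q$ and an invertible block matrix $\bigl[\begin{smallmatrix} P & U \\ 0 & T\end{smallmatrix}\bigr]$ with $U=[U_1\ U_2]$, $T=[T_1\ T_2]$, transporting the presentation to the block-decomposed form $[M_\B\ N_1\ N_2]$ in which for every $b\in\B$ at least one of $(N_1)_b, (N_2)_b$ vanishes. Because the $k$ relations at $\alpha$ all have the same degree, $T$ is an ordinary invertible matrix in $\GL_k(\F_q)$, and the columns of $T_1$ and $T_2$ span complementary subspaces $V_1, V_2 \subset \F_q^k$. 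Thus the decomposition $\varphi$ determines an ordered decomposition $\F_q^k = V_1 \oplus V_2$ into proper subspaces.

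Next I would argue that $\Dec_q(k)$, as enumerated in \autoref{sub:generate_decompositions} and trimmed in \autoref{sub:double_count}, contains a pair $[T_1^*\ T_2^*]$ with $\Ima T_i^* = V_i$ for one of the two orderings of $\{V_1,V_2\}$ (the symmetry reduction only discards pairs in which both orderings have been considered, so one survives). Write $T_i = T_i^* S_i$ with $S_i$ invertible. The crucial observation is that whether $(NT_1^*)_b$ can be cleared by admissible operations leaving $M_\B$ unchanged depends \emph{only} on the column space $V_1$: right-multiplication by $S_1$ is a change of basis among the columns of degree $\alpha$ and can be realised by admissible operations, so the solvability of the \hyperref[hom_computation]{($\ast\ast$)} system for $(NT_1^*)_b$ is equivalent to its solvability for $(NT_1)_b$. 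By construction the latter is solvable for every $b\in\B_2$ (where $(N_1)_b = 0$), and \autoref{lemma:principle} guarantees that \hyperref[subparagraph:hom_variant]{\texttt{BlockReduce}} will actually find such a solution. Hence at iteration $[T_1^*\ T_2^*]$ the algorithm clears exactly the blocks in $\B_2$, gathers $\B_1 = \B\setminus\B_2$ as the blocks where clearance failed, and then clears $(N_2^*)_{\B_1}$ by the symmetric argument. The final block matrix produced is a genuine direct sum decomposition, and each summand $[M_{\B_i}\ N_{\B_i}]$ is itself $\alpha$-decomposed and strictly smaller.

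The main obstacle in this plan is the equivalence ``clearability depends only on the column space,'' which one must justify carefully: right-multiplication by $S_1$ mixes columns of $N T_1^*$ among themselves, and one must check that the corresponding change of basis on $A^{k_1}(-\alpha)$ preserves both the class of admissible operations and the sub-presentation $M_\B$ (it does, since all these columns share the same degree $\alpha$ and the change of basis commutes with the operations appearing in \hyperref[hom_computation]{($\ast$)} and \hyperref[hom_computation]{($\ast\ast$)}). Once this is in place, induction yields an indecomposable decomposition of each summand, and concatenation produces the desired decomposition of $[M_\B\ N]$. Conversely, if no iteration succeeds, then by the contrapositive of the argument above no decomposition of $Y$ into two nonzero summands exists, and the algorithm correctly returns $[M_\B\ N]$ as indecomposable.
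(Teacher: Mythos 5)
The heart of the argument goes wrong at the sentence ``Hence at iteration $[T_1^*\ T_2^*]$ the algorithm clears exactly the blocks in $\B_2$.'' \texttt{BlockReduce} is greedy: in step 2 of \autoref{exhaustive_full} it clears $(NT_1^*)_b$ for \emph{every} block where this is possible, and nothing in your argument rules out that this set $\B_2'$ strictly contains the $\B_2$ coming from $\varphi$. You have shown solvability for $b\in\B_2$; you have not shown unsolvability for $b\in\B_1$, so the set $\B_1$ the algorithm actually collects in step 3 may be a proper subset of yours. This is precisely the subtlety the paper's proof is built around: one first reduces (WLOG) to the case that neither $Y_1$ nor $Y_2$ contains a direct summand of the form $X_b$ (i.e.\ one with no relations above $\alpha$), and then applies Krull--Remak--Schmidt (\autoref{krull-remak-schmidt}, \autoref{permutation}) to the isomorphism $Y_1\oplus X_2 \iso Y_1'\oplus X_2'$ induced by the two change-of-basis isomorphisms $\psi$ and $\eta$, concluding that after a suitable permutation of isomorphic blocks the algorithm's partition $\B_1',\B_2'$ agrees with $\B_1,\B_2$. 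Without that reduction and that argument you cannot conclude anything about what the algorithm actually gathers.

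The second gap is the phrase ``clears $(N_2^*)_{\B_1}$ by the symmetric argument.'' There is no symmetry to appeal to: after the greedy first step the matrix sitting over $\B_1$ is $NT_2^*$, \emph{post-composed with whatever operations the greedy step happened to perform}, not the matrix $N_2$ produced by $\varphi$. To show that the system \hyperref[hom_computation]{$(\ast\ast)$} for step 4 is solvable one must combine the isomorphism $\psi$ coming from $\varphi$ with the isomorphism $\eta$ coming from the algorithm's first step, and extract from the product $\psi\circ\eta^{-1}$ admissible operations that clear the top-right corner. This is the (lengthy) explicit matrix computation that closes out the paper's proof; it is the part of the proposition that actually requires work, and it cannot be waved away. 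Your observation that solvability depends only on the column space $V_1$ is correct and matches the paper's use of the triangular factor $R$, but it only gets you as far as aligning the algorithm's $T_1^*$ with the target $T_1$ --- it says nothing about the downstream solvability of the second block of clearings.
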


\begin{proof}
 Let $(M_\B \ N)$ be an $\alpha$-decomposed presentation of a module $Y$ 
 which we want to decompose. It will be enough to show that if there is a decomposition 
 into two summands, then we will find it. 
 
  So let $\varphi \colon Y \iso Y_1 \oplus Y_2$ be such a decomposition. 
 Use \autoref{matrices_cd} to produce matrices $Q, \, P, \, U, \, [T_1 \ T_2] \in \K^{k \times \left( k_1 + k_2 \right)}$ such that after application we reach a presentation $[M_\B \ N_1 \ N_2]$ and a partition $\B = \B_1 \cup \B_2$ such that $(N_1)_{\B_2}=0$ and $(N_2)_{\B_1}=0$. Also we denote again by $X_b$ the module presented by $M_b$, $X_1, X_2$ the modules presented by $M_{\B_1}$, $M_{\B_2}$. 
 
 We can assume that neither $Y_1$ nor $Y_2$ contain a summand without relations above $\alpha$. Otherwise we can use \ref{matrices_cd} on the resulting decomposition to get, again, matrices which would delete a submatrix $N_b$, but then {\texttt{BlockReduce}} \hyperref[hom_computation]{($\ast \ast $)} would have earlier deleted this already. \\

 The loop over ${\Dec}_q(k)$ in the algorithm will then at some point have chosen the subspace spanned by $T_1$, but maybe a different basis, say given by $T_1'$ such that $T_1' R_1  = T_1$ for some invertible matrix $R_1$. Then it will have chosen some arbitrary complement $T_2'$ and we can complete $R_1$ to a larger invertible matrix satisfying
  \[ \begin{bmatrix} T'_1 & T'_2 \end{bmatrix} \cdot \begin{bmatrix} R_1 & R_{1,2} \\ 0 & R_2 \end{bmatrix} =  \begin{bmatrix} T_1 & T_2\end{bmatrix}  \] 

 The next step in the algorithm then sets $N'_1 \coloneqq NT_1'$ and $N'_2 \coloneqq NT_2'$ and by using again the matrices which lift the decomposition $\varphi$, the new matrix now satisfies the equation

 \begin{align}\label{eq_1} Q \begin{bmatrix}
    M_{\B} & N'_1 & N'_2 \\
   \end{bmatrix} 
   \begin{bmatrix}
                       P & U_1 & U_2   \\
                       0 & R_1 & R_{1,2} \\
                       0 & 0 & R_2
                      \end{bmatrix} = 
   \begin{bmatrix}
    M_{\B}  & N_1 & N_2 \\
   \end{bmatrix} \end{align}
and we denote this isomorphism of presentations by $\psi$. We notice that since the third matrix is triangular, after ignoring the third column on both sides, this induces another isomorphism $\psi_1$ of the truncated presentations. 
Multiplying on both sides of the equation the second and third column with $R_1^{-1}$ and $R_2^{-1}$ from the right will also not change the block, structure of the result on the right, so we can assume wlog. that $R_1 = \Id_{k_1}$ and $R_2 = \Id_{k_2}$. 
   
    After the first step the algorithm will have zeroed out $\left(N'_1\right)_b$ for a maximal set $\B_2' \subset \B$ with $\B_1' \coloneqq \left( \B_2' \right)^{\complement}$ its complement. 
    Denote the isomorphism of presentations induced by this first step $\eta$ and again its restriction to the matrix above $\eta_1$. Concretely it has the following components:

    \[ \eta = \left( \begin{bmatrix}
        \Id & 0 \\
        Q_{2,1}' & Q_{2,2}
    \end{bmatrix},  \
    \begin{bmatrix}
    \Id & 0 & 0  \\
    P'_{2,1} & P'_{2,2} & U_{2,1}'  \\
    0 & 0 & \Id \\
    \end{bmatrix}^{-1} \right)
    \colon \
    \begin{bmatrix}
    M_{\B'_1} & 0 & {N_{1}'}_{\B_1}  \\
    0 & M_{\B'_2} & {N_{1}'}_{\B_2}
    \end{bmatrix}
     \iso 
    \begin{bmatrix}
    M_{\B'_1} & 0 & N'_{1} \\
    0 & M_{\B'_2} & 0
    \end{bmatrix}.
    \]
   
    Using the analogous names for the presented modules, the isomorphisms $\psi$ and $\eta$ induce an isomorphism 
    \[ Y_1 \oplus X_2 \oto{\left( \coker \eta_1 \right) \circ  \left( \coker \psi_1^{-1} \right) } Y_1' \oplus X'_2 \]
    with the right side also having the property that $Y_1'$ contains no direct summand of the form $X_b$ for the same reason as $Y_1$. Therefore $X_2 \iso X_2'$ and $Y_1 \iso Y'_1$ by Krull-Remak-Schmidt-Azumaya \autoref{krull-remak-schmidt}, so after an appropriate permutation of isomorphic blocks we can assume $\B_1' = \B_1$ and $\B_2' = \B_2$. \\
     Notice that in the components of $\eta$, the matrices $(Q_{2,2}, P^{'-1}_{2,2})$ form an automorphism of $M_{\B'_2}$, so that we may post-compose with it to assume wlog. that all diagonal blocks of $\eta$ are identity matrices. \\
    The pair of matrices making up $\psi \circ \eta^{-1} $ which bring us from were we are now in the algorithm to the decomposition from the beginning is then given by the compositions
    \[ \left( \begin{bmatrix}
    Q_{1,1} & Q_{1,2} \\
    Q_{2,1} & Q_{2,1}
    \end{bmatrix}
    \begin{bmatrix}
        \Id & 0 \\
        -Q_{2,1}' & \Id
    \end{bmatrix}, \
    \begin{bmatrix}
                       P_{1,1} & P_{1,2} & U_{1,1} & U_{1,2}   \\
                       P_{2,1} & P_{2,2} & U_{2,1} & U_{2,2} \\
                       0 & 0 & \Id & R_{1,2} \\
                       0 & 0 & 0 & \Id
                      \end{bmatrix}^{-1}
     \begin{bmatrix}
    \Id & 0 & 0  & 0 \\
    P'_{2,1} & \Id & U_{2,1}' & 0  \\
    0 & 0 & \Id & 0 \\
    0 & 0 & 0 & \Id  \\
    \end{bmatrix}^{-1}
         \right)
    \]
    Restricting to the top right corner ( $ \left(N'_2 \right)_{\B_1'}$) of the presentations this isomorphism implies the following equation.
    \begin{align*} 0 =  & \begin{bmatrix}
    Q_{1,1} - Q_{1,2}Q'_{2,1} & Q_{1,2} 
    \end{bmatrix} 
    \begin{bmatrix}
    M_{\B'_1} & 0 & N_{1}' &  {N_{2}'}_{\B_1}\\
    0 & M_{\B'_2} & 0 & {N_{2}'}_{\B_2}
    \end{bmatrix}
    \begin{bmatrix}
                        U_{1,2}   \\
                        P'_{2,1}U_{1,2} + U_{2,2} + U'_{2,1}R_{1,2}\\
                        R_{1,2} \\
                         \Id
                      \end{bmatrix} \\
    = & \underbrace{\left(Q_{1,1} - Q_{1,2}Q'_{2,1}\right)}_{\eqqcolon \widetilde Q}\left(M_{\B_1'}U_{1,2} + N'_{1} R_{1,2}+ {N_2'}_{\B'_1} \right) +  \underbrace{Q_{1,2} M_{\B'_2}}_{ = M_{\B'_1} P_{1,2}}  \underbrace{ \left(P'_{2,1}U_{1,2} + U_{2,2} + U'_{2,1}R_{1,2}\right)}_{\eqqcolon \widetilde P}  + Q_{1,2} {N'_{2}}_{\B'_2} \\
    & \Rightarrow  -{N_2'}_{\B'_1}  =  M_{\B_1'}U_{1,2} + N'_{1} R_{1,2} + \underbrace{ \widetilde Q^{-1} M_{\B'_1} }_{ = M_{\B'_1} P^{-1}_{1,1} } P_{1,2} \widetilde P + \widetilde Q^{-1} Q_{1,2} {N'_{2}}_{\B'_2} \\
     = & \  M_{\B_1'} \left( U_{1,2} + P_{1,1}^{-1} P_{1,2} \widetilde P \right) + N_1' R_{1,2} + \left( Q^{-1} Q_{1,2} \right) {N'_{2}}_{\B'_2}
    \end{align*}
We have used that $\psi \circ \eta$ fixes $M_{\B_1}$, so that $\widetilde Q M_{\B_1} = M_{\B_2} P_{1,1}$ by inspection of the top left corner of the presentations. We also saw that because we applied the permutation $\sigma$ and with the permutation used in \autoref{matrices_cd} to ensure the property $(\ast \ast)$, these matrices must be invertible, so that $\widetilde Q$ is invertible and it follows that $\widetilde Q^{-1} M_{\B_1} = M_{\B_2} P^{-1}_{1,1}$. This supplies the matrices for {\texttt{BlockReduce}} \hyperref[hom_computation]{($\ast \ast $)}.
\end{proof}

\section{An Automorphism-Invariant Decomposition Algorithm}\label{sec:aida}

 While exhaustive $\alpha$-decomposition terminates within reasonable time for small $k$, say, $k \leq 7$, we encounter presentations where this approach becomes unsustainable. We will see in the next example that iteration over subspaces as in \autoref{subparagraph:exhaustive} can often be avoided partially.

\subsection{Automorphism invariant submodules.}\label{sec:submodules}

\begin{example}\label{example:digraph}
 Consider the $(1,3)$-decomposed presentation over $\F_2$ extending \autoref{example:blockstructure}. We observe:  $\Hom\left(M_b, M_c\right) \simeq \K$, $\Hom\left(M_c, M_d\right) \simeq \K$, and $\Hom\left(M_b, M_d\right) \simeq \K^2$ and all other $\Hom$-sets are trivial.
 We depict this information in a digraph structure on $\B$.

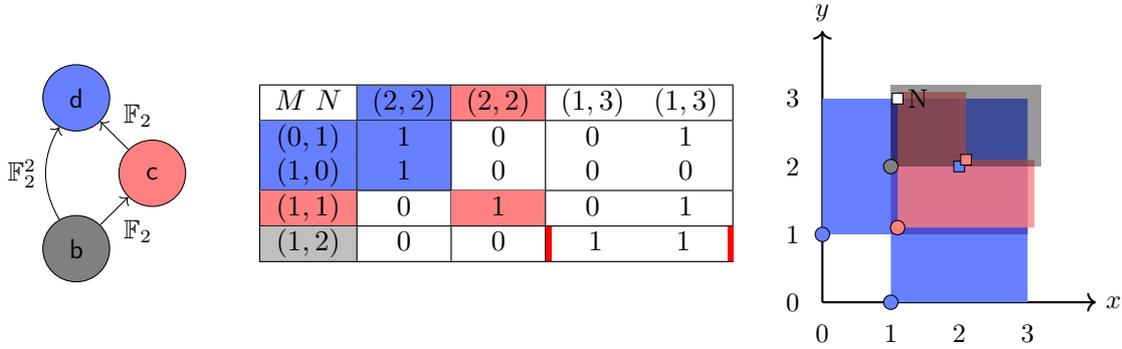
\begin{figure}[H]
    \centering
    \begin{minipage}[c]{0.2\textwidth}
        \begin{tikzpicture}
            \node[circled blue] (d) at (0,2) {d};
            \node[circled red] (c) at (1,1) {c};
            \node[circled gray] (b) at (0,0) {b};

            \draw[->] (b) -- node[below right] {$\F_2$} (c);
            \draw[->] (c) -- node[above right] {$\F_2$} (d);
            \draw[->] (b) to[bend left] node[left] {$\F_2^2$} (d);
        \end{tikzpicture}
    \end{minipage}
    \begin{minipage}[c]{0.4\textwidth}
        \resizebox{\textwidth}{!}{
        $
        \begin{array}{| c | c | c | c  c |}
            \hline 
             M \ N & \lbcell (2,2) &  \lrcell  (2,2) & (1,3) &  (1,3) \\
            \hline
            \lbcell (0,1) & \lbcell 1 & 0 &  0 & 1 \\
            \lbcell (1,0) & \lbcell 1 & 0 & 0 & 0 \\ 
            \hline
             \lrcell (1,1) & 0 &  \lrcell 1 & 0 & 1 \\ 
             \hline
             \lgcell (1,2) & 0 & 0 & \multicolumn{1}{!{\color{red}\vrule width 2pt}c!{}}{ 1} & \multicolumn{1}{!{}c!{\color{red}\vrule width 2pt}}{ 1} \\
         \hline
        \end{array}
        $}
    \end{minipage} \hspace{1em}
    \begin{minipage}[c]{0.35\textwidth}
    \begin{tikzpicture}[scale = 0.9]
            \draw[->, thick] (0,0) -- (4,0) node[right] {$x$};
            \draw[->, thick] (0,0) -- (0,4) node[above] {$y$};
        
            \draw[fill=lightblue, draw=none, opacity=1, thick] (0,3) -- (0,1) -- (1,1) -- (1,0) -- (3,0) -- (3,3);
            \draw[fill = darkblue, draw=none, opacity=1, thick]
            (1,3) -- (1,1) -- (3,1) -- (3,2) -- (2,2) -- (2,3);
    
            \draw[fill = lightred, draw= none, opacity = 0.8, thick]
            (1.1,3.1) -- (1.1,1.1) -- (3.1,1.1) -- (3.1,2.1) -- (2.1,2.1) -- (2.1,3.1);

            \draw[fill = Gray!200, draw=none, opacity = 0.4, thick]
            (1.0,3.2) -- (1.0,2.0) -- (3.2, 2.0) -- (3.2, 3.2);
            
            \foreach \x/\y in {0/1, 1/0} {
                \fill[lightblue, draw = black] (\x,\y) circle (3pt);
            }
        
            \foreach \x/\y in {2/2} {
                \node[rectangle, draw=black, fill=lightblue, inner sep=2pt] at (\x,\y) {};
            }

            \foreach \x/\y in {1.1/1.1} {
                \fill[lightred, draw = black] (\x,\y) circle (3pt);
            }

            \foreach \x/\y in {2.1/2.1} {
                \node[rectangle, draw=black, fill=lightred, inner sep=2pt] at (\x,\y) {};
            }

            \foreach \x/\y in {1/2} {
                \fill[Gray, draw = black] (\x,\y) circle (3pt);
            }

            \node[rectangle, draw = black, fill=white, inner sep=2pt] at (1.1,3) {};
            \node at (1.4, 3) {N};
            
            \foreach \x in {0,1,2,3} {
                \node at (\x, -0.2) [below] {\x};
            }
            \foreach \y in {0,1,2,3} {
                \node at (-0.2, \y) [left] {\y};
            }
        \end{tikzpicture} \hfill
    \end{minipage}
    \caption{The digraph on $\B$ tells us which sub-modules are \emph{invariant} under change of basis. If $\B$ is acyclic, then the group of automorphisms of $X$ is \emph{unipotent} as a subgroup of $\GL_{|G|}(\K)$.}
    \label{fig:tikz_and_array}
\end{figure}
There are no maps to $b$. So, if we want to change $N_b$ then we can only use admissible column operations. If we consider the last row as a presentation itself, it is not decomposed maximally: The two $1$s form a block, which can be made smaller by adding the 4th column to the 3rd column. That is, we are \emph{minimising} this sub-presentation, arriving at the presentation on left. The $1$ at the bottom of the last column index can now never be deleted through column or row-operations, so it must become a fixed part of the block $b$.
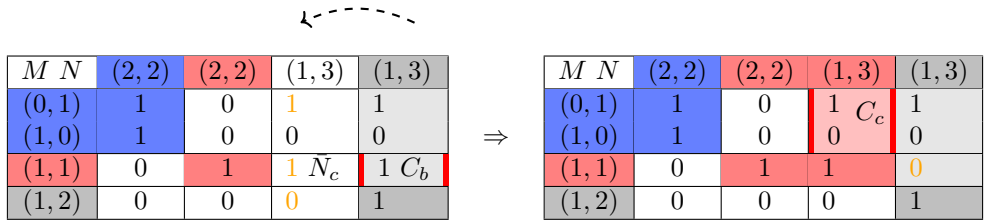
\begin{figure}[H]
\centering
 \[ 
\begin{array}{| c | c | c | l | l |}
            \hline 
             M \ N & \lbcell (2,2) &  \lrcell  (2,2) & (1,3) &  \lgcell (1,3) \\
            \hline
            \lbcell (0,1) & \lbcell 1 & 0 &  \otext{1} & \llgcell 1 \\
            \lbcell (1,0) & \lbcell 1 & 0 & 0 & \llgcell 0 \\ 
            \hline
             \lrcell (1,1) & 0 &  \lrcell 1  & \otext{1} \ \bar N_{c}  & \multicolumn{1}{!{\color{red}\vrule width 2pt}c!{\color{red}\vrule width 2pt}}{\llgcell 1 \ C_b}  \\ 
             \hline
             \lgcell (1,2) & 0 & 0 & \otext{0} & \lgcell  1 \\
         \hline
        \end{array} \quad \Rightarrow \quad 
        \begin{array}{| c | c | c | l | l |}
            \hline 
             M \ N & \lbcell (2,2) &  \lrcell  (2,2) & \lrcell (1,3) &  \lgcell (1,3) \\
            \hline
            \lbcell (0,1) & \lbcell 1 & 0 &  \multicolumn{1}{!{\color{red}\vrule width 2pt}l!{\color{red}\vrule width 2pt}}{\llrcell 1} & \llgcell  1 \\
            \lbcell (1,0) & \lbcell 1 & 0 &  \multicolumn{1}{!{\color{red}\vrule width 2pt}l!{\color{red}\vrule width 2pt}}{\llrcell 0}   & \llgcell  0 \\ 
            \hline
             \lrcell (1,1) & 0 &  \lrcell 1 & \lrcell 1 & \llgcell  \otext{0} \\ 
             \hline
             \lgcell (1,2) & 0 & 0 & 0 & \lgcell 1 \\
         \hline
        \end{array}
 \]
 \begin{tikzpicture}[overlay, remember picture]

        \draw[->, thick, black, bend right=25, dashed] (-1, 3) to (-2.5, 3); 

        \node at (5.0, 1.8) {$C_c$ };
        
    \end{tikzpicture}
    \caption{On the right side we can observe how the digraph on $\B$ also controls the column-operations between the columns of $N$ which we can use}
    \label{fig:alpha_decomp}
\end{figure}

Next we turn to the second block. The subbatch $N_c$ has a part $\bar N_{c}$ sitting above the cleared part of $N_b$ and a part $C_b$ in the new column of $b$. Now if we were again to add the 4th column to the 3rd, then we could not delete the 1 that would appear at $M_{4,3}$ with any row-operations, because there are no maps to $M_b$! So the non-existence of maps \emph{to} a block restricts the column operations on $N$ away \emph{from} the block. \\
 This implies that we can not use any operations from outside of $c$ to decompose $\left[ M_c \ \bar N_c \right]$ and since we cannot reduce $\bar N_c$ further, the 3rd column must become part of the block $c$. Then $C_b$ can be zeroed out using again {\texttt{BlockReduce}} \hyperref[hom_computation]{($\ast \ast$)} and we reach the presentation above on the right. Next we try to reduce $N_d$ and start with the part $C_c$ in the new column of $c$, because the column-operation from 4th to 3rd is still impossible. We cannot zero out $C_c$ with admissible operations, so the two blocks $c$ and $d$ need to \emph{merge}:
 \begin{figure}[H]
\centering
 \begin{minipage}[c]{0.5\textwidth}
 \[ \Rightarrow \quad 
\begin{array}{| c  c  c  l | l |}
            \hline 
             M \ N & \lpcell (2,2) &  \lpcell  (2,2) & \lpcell (1,3) &  \lgcell (1,3) \\
            \hline
            \lpcell (0,1) & \lpcell 1 & \lpcell  0 & \lpcell 1 & \multicolumn{1}{!{\color{red}\vrule width 2pt}l!{\color{red}\vrule width 2pt}}{\llgcell 1 } \\
            \lpcell (1,0) & \lpcell 1 & \lpcell  0 & \lpcell 0 & \multicolumn{1}{!{\color{red}\vrule width 2pt}l!{\color{red}\vrule width 2pt}}{\llgcell 0 } \\ 
             \lpcell (1,1) & \lpcell 0 &  \lpcell 1 & \lpcell 1 & \multicolumn{1}{!{\color{red}\vrule width 2pt}l!{\color{red}\vrule width 2pt}}{\llgcell 0 } \\ 
             \hline
             \lgcell (1,2) & 0 & 0 & 0 & \lgcell 1 \\
         \hline
        \end{array}
 \]
 \begin{tikzpicture}[overlay, remember picture]


        \node at (6.8, 1.5) {$C_b$ };
        
    \end{tikzpicture}
\end{minipage}
\hspace{1em}
\begin{minipage}[c]{0.4\textwidth}
    \begin{tikzpicture}
            \node[circled, fill = lightpurple] (d) at (0,2) {$c \cup d$};
            \node[circled gray] (b) at (0,0) {b};
            \draw[->] (b) to[bend right] node[right] {$\F_2^3$} (d);
        \end{tikzpicture}
\end{minipage}
    \label{fig:alpha_merge}
\end{figure}
We update all $\Hom(-,-)$ spaces and the resulting digraph. Then we can continue processing the new cocycle $C_b$.
 \end{example}
 
 \subparagraph{Extensions with cocycle at $\alpha$.}
 We have seen in \autoref{example:digraph}, that if $\B$ is acyclic with respect to $\Hom \neq 0$, it is possible to process $N$ in a way so that all admissible operations we need to consider only flow in the direction of the graph. Consequently, the change-of-basis matrices we need to find are block-triangular, and the system we need to solve is linear in every step so that there is no iteration over subspaces needed. In particular, the algorithm would then also work for any field $\K$, not only a finite one. \\
Let us formalise the structure of the matrix we need to decompose:

\begin{definition}\label{def:extension_presentation}
An $\alpha$-decomposed presentation 
$\begin{bmatrix}
         M_\Cs & 0 & N_\Cs &  C \\
         0 &  M_{\Ds} & 0 &  N_{\Ds}  
\end{bmatrix} 
$
is called  
\emph{$\alpha$-decomposed extension}, if $\left[M_\Cs \ N_\Cs\right]$ and 
$\left[M_{\Ds} \ N_{\Ds}\right]$ are minimal indecomposable block-decompositions. We denote by $\kappa$ the number of columns of $C$.
\end{definition}

\begin{remark}
Note that here $C$ trivially satisfies the cocycle condition (\ref{cocycle}) of \autoref{pres_ext}: $N_\Ds$ only has columns at $\alpha$, so if these column vectors were linearly dependent, then $\left[M_{\Ds} \ N_{\Ds}\right]$ would not be minimal.
\end{remark}

Let $X, Y$ be the persistence modules presented by the two submatrices $\left[M_{\Cs} \ N_{\Cs} \right]$ and $\left[M_\Ds \ N_\Ds\right]$ in \autoref{def:extension_presentation} and $Z$ the module presented by the entire matrix $\left[ M \ N \right]$. Since $C$ satisfiey the cocycle condition, the inclusion of $\left[ M_\Cs \ N_\Cs \right]$ into $\left[ M \ N \right]$ induces a short exact sequence 
$0 \to X \to Z \to Y \to 0$ classified by $[C] \in \Ext^1(Y, X)$. The map $C \colon A^{\kappa}(-\alpha)\to A^{G_\Cs}$
is supported only on the upper set $\up{\alpha}$, so it becomes $0$ after restricting to its complement.

\begin{definition}\label{def:concentrated}
Let $U \subset \Z^d$ be an upper set and $V \coloneqq U^\complement$. A ses. $0 \to X \oto{f} Y \oto{g} Z \to 0$ is called $U$-concentrated or $V$-split if it splits after restricting the modules from $\Z^n$ to $V$. \\
The same terminology applies to an injection $f$ or a surjection $g$ which induces such a sequence.
\end{definition}

With this definition, an $\alpha$-decomposed presentation corresponds exactly to a $U$-concentrated ses where both $X$ and $Z$ are indecomposably decomposed.

\subsection{Decomposition Algorithms}\label{sec:decomp_algo}

\subparagraph{Decomposing $\alpha$-decomposed extensions.} 
Assume that in an $\alpha$-decomposed extension matrix (\autoref{def:extension_presentation}) the set of blocks $\Ds$ contains only a single block. Then to see if we can split off $\Ds$ is equivalent to zeroeing out all of $C$. If we virtually merge all blocks in $\Cs$ and add the columns in $N_\Cs$ to these blocks, too, then the linear system which tests if this is possible is basically equivalent to {\texttt{BlockReduce}} \hyperref[hom_computation]{($\ast \ast$)}.

\subparagraph{Decomposition for acyclic $\B$.}

In \autoref{example:digraph} we have outlined a strategy how to decompose an $\alpha$-decomposed presentation if its associated digraph $\B$ is acyclic. We will first outline a slightly different version which does not yet care about the column-operations at $\alpha$ between the blocks.

\begin{algorithm}[Automorphism-invariant \texorpdfstring{$\alpha$}{alpha}-decomposition, acyclic version]\label{stable_decomp_acyclic}  \ \newline
\texttt{Input:}  $\alpha$-decomposed presentation $\left[M_\B \ N \right]$ \\
Endow $\B$ with a digraph structure: $b \rightarrow c$ iff $\Hom(M_b, \, M_c) \neq 0$ and assume $\B$ to be acyclic. 

$\B_p \subset \B$ will store those blocks in $\B$ which have been \emph{processed}.  \\
For each $b \in \B$ in an order compatible with the digraph $\B$: 
\begin{enumerate}
     \item Let $\bar N_b$ be the sub-matrix spanned by the columns where $N_{\B_p}$ is zero.
     \item  Minimise $\left[M_b  \ \bar N_b\right]$.
    \item For each $c \in \B_p$, define $C_{c}$ to be the  part of $N_b$ occupying the same columns as $c$.
    \item For $c \in \B^p$ in \emph{any} order:
     \begin{enumerate}
          \item Define $\Cs \coloneqq \{ d \in \B_p \, | \ C_d \neq 0 \}$.
          \item Treat all blocks in $\Cs$ and $b$ as one large block (without merging yet) and pass the $\alpha$-decomposed extension \\
          $\left[
            \begin{array}{ccccc|c}
            M_{b} & 0 & 0 & \bar N_b & C_{\Cs} & C_c \\
            0 & M_\Cs & 0 & 0     &   N_\Cs & 0 \\
            \hline
            0 & 0 &  M_c & 0 & 0 & N_c
            \end{array}
            \right]$
          to {\texttt{BlockReduce}} \hyperref[hom_computation]{($\ast \ast$)}. 
          
    \end{enumerate}
    \item Merge the blocks by setting $b \coloneqq b \cup \Cs$ and update the $\Hom$-spaces.
\end{enumerate}
Return $[M_\B \ N ]$.
\end{algorithm}

\subparagraph{Column-operations in $\alpha$-decomposed extensions.}
 In \autoref{example:digraph} we presented a strategy which was actually a bit different. Here we traversed the set $\B_p$ in an order opposite to the one in $\B$.

\begin{example}\label{column_operations}
Consider the $\alpha$-decomposed extensions on the bottom left. We assume that the relation $\Hom \neq 0$ turns the set of blocks $\{b, c, d\}$ into a graph without any maps in the directions $b \to c \to d$. The standing assumption is still that any change of basis should not change parts of the matrix we have already decomposed, in this case the red and blue blocks.  In \autoref{fig:alpha_decomp} we observed that we could then not use any column operations from $N_d$ to $N_c$, because there are no row-operations from $c$ to $d$ which form a morphism to reverse the change to the gray cell. Instead, we can compute for any pair of blocks, if there are column-operations whose effect on the whole matrix can be reverted and in this way endow $\B_p$ with another digraph structure.
\end{example}

\begin{construction}\label{con_col_ops}
For \emph{minimal} presentations $[ M_b \ N_b] \in \K^{\left( R_b \cup k_b \cdot \alpha \right) \times G_b}$, $[ M_c \ N_c] \in \K^{\left( R_c \cup k_c \cdot \alpha \right) \times G_c}$ with $\alpha$ maximal, the induced column operations at $\alpha$ are given by the image of the map
\[ \pi_\alpha \colon \, \Hom\left([ M_b \ N_b], \, [ M_c \ N_c] \right) \to \Hom\left( A^{k_b}(-\alpha), \, A^{k_c}(-\alpha) \right) \]
sending a pair $(Q, \, \begin{bmatrix} 
        P & U \\
        0 & T
       \end{bmatrix})$ to $T$.
\end{construction}

\begin{figure}[H]
\centering
\large\[
        \begin{bmatrix}
              M_b & 0 & 0 & 0    &   {N_{b,c}} &  {N_{b,d}}  \\[4pt]
             0 & \lrcell \rn{4}{M_c} & 0 & 0     &  \lrcell \rn{1}{ N_c}   & 0 \\[4pt]
             0 & \rn{5}{0} &  \lbcell M_d & 0  &  \llgcell \rn{2}{0} & \lbcell  \rn{3}{N_d} \\ 
        \end{bmatrix}
        \quad \quad 
        \begin{bmatrix}
             M_b & 0 & 0 & 0    &   N_{b,c} &  N_{b,d} & N_{b,e} \\[4pt]
             0 & \lrcell M_c & \rn{6}{0} & 0     &  \lrcell  N_c   & \rn{10}{0}  & \lrcell C \\[4pt]
             0 & 0 &  \lbcell \rn{7}{M_d} & \rn{8}{0}  &   0 & \lbcell \rn{11}{N_d} & \rn{12}{0} \\[4pt] 
             0 & 0 & 0 &  \lrcell \rn{9}{M_e} & 0 & 0 & \lrcell \rn{13}{N_e}
        \end{bmatrix}
\]
        \begin{tikzpicture}[overlay, remember picture]

        \draw [->] (1) -- (2) node[pos=0.5,red,sloped] {\tiny$/$};
        \draw [->] (3) -- (2) node[pos=0.5,red,sloped] {\tiny$/$};
        \draw [->] (4) -- (5) node[pos=0.5,red,sloped] {\tiny$/$};
        
        \draw [->, green] (7) -- (6);
        \draw [->, green] (9) -- (8);
        \draw [->, green] (11) -- (10);
        \draw [->, green] (13) -- (12);

        \draw[->, thick, black, bend right=25, dashed] (-1.8, 2.7) to (-2.8, 2.7);  
        \node[red] at (-2.3, 2.8)  {\small$/$};

        \draw[->, thick, black, bend left=25, dashed] (4, 2.9) to (4.9, 2.9);  
        \draw[->, thick, black, bend left=25, dashed] (5.1, 2.9) to (6.1, 2.9);  
        
        \end{tikzpicture}
\end{figure}

\begin{example}\label{cycle_creation}
Turn to the $\alpha$-decomposed extensions on the right. Again we assume that the relation $\Hom \neq 0$ turns the set of blocks $\{b, c,d,e\}$ into a graph without any maps in the directions $b \to c \to d \to e$. Assume also that using \autoref{con_col_ops} we found column operations in the order $c \to d \to e$.

A cocycle $C$ above $e$ was not deleted. In \autoref{example:digraph} we suggested that we should then merge the blocks $c$ and $e$. But if we did so, there would be both column operations at $\alpha$ from $\{c,e\}$ to $d$ and $d$ to $\{c,e \}$ creating a cycle in $\B_p$.
\end{example}

In practice we will compute the column-operations not by first computing all homomorphisms between the extended blocks, as in \autoref{con_col_ops}, but try to see which of the already computed homomorphisms will extend to the newly added columns ( \autoref{compute_col} Appendix).

In the full algorithm, if we have created a cycle on $\B_p$, then in the next step there is no consistent order in which we can traverse the processed blocks in this manner. It is possible to just not merge the blocks and keep track of the individual sub-matrices and while we employ this strategy in our implementation, it is tedious to write down explicitly. Instead, we shall explain how to deal with cycles, both on $\B$ and $\B_p$.

\subparagraph{Decomposition for general $\B$.}
In general there is no order in which we can traverse the blocks, such that maps to the current block come only from parts of the matrix which already form an indecomposable decomposition. The solution here is to contract all strongly connected components of $\B$ forming the \emph{condensation} $\T$ of this digraph and process all blocks in a component together. Then we can do the same with the already processed blocks $\B_p$. We also give this set a digraph structure by tracking in which direction the column operations go (\autoref{con_col_ops}). Again, if there are cycles on this graph, we contract the strongly connected components.

Following the strategy of \autoref{example:digraph}, in each step we will now need to decompose an \hyperref[def:extension_presentation]{$\alpha$-decomposed extension} where $\Cs$ and $\Ds$ may contain many blocks and so the question if it can be decomposed is no longer equivalent to being able to zero out $C$. We employ another algorithm \hyperref[extensiondecomp]{$\alpha$-\texttt{ExtensionDecomp}} (Appendix) which loops over subspaces like \autoref{exhaustive_full}, but of $\K^\kappa$, the column-space of $C$. The resulting algorithm is FPT in $\kappa_{\max}$, the largest occurrence of $\kappa$, and called the Automorphism-invariant Iterative Decomposition Algorithm.

\begin{algorithm}[Automorphism-Invariant $\alpha$-Decomposition]\label{stable_decomp} \
\newline
\texttt{Input:}  $\alpha$-decomposed presentation $\left[M_\B \ N \right]$ \\
Give $\B$ a digraph structure: $b \rightarrow c$ iff $\Hom(M_b, \, M_c) \neq 0$. \\
$\B_p$ will store those blocks in $\B$ which have been \emph{processed}. \\
Compute the condensation $\T$ of $\B$ and a topological order of $\T$. \\
In this order, let $\Cs \in \T$ be a set of blocks in $\B$: 
\begin{enumerate}
     \item Denote by $\bar N_\Cs$ be the submatrix spanned by the columns for which $N_{\B_p}$ is zero. Pass $[M_\Cs \ \bar N_\Cs]$ to \autoref{exhaustive_full}
     \item Compute the column operations (\autoref{con_col_ops}) between all $d \neq c \in \B_p$ via \autoref{compute_col}, thereby defining another digraph structure on $\B_p$ with  orientation opposite to $\B$.
     \item Compute the condensation $\Us$ of $\B_p$ and a topological order. For $\Ds \in \Us$: 
    \begin{itemize}
        \item Pass $\begin{bmatrix} 
      M_\Cs  & 0 & \bar N_\Cs  & C \\
      0 &  M_\Ds & 0 &  N_\Ds
      \end{bmatrix}$  to \hyperref[extensiondecomp]{$\alpha$-\texttt{ExtensionDecomp}} (Appendix)
      \item Where $C$ could not be deleted, merge blocks from $\Ds$ into the blocks in $\Cs$. 
    \end{itemize}
    \item Add $\Cs$ to $\B_p$.
\end{enumerate}
\item Return $[M_\B \ N_\B]$.
\end{algorithm}

\subsection{Reducing $\B$ via Localisation}\label{sec:localisation}

If we could somehow reduce the maps between the blocks, then this would either make the strongly connected components of $\B$ smaller and thus reduce $\kappa$ or even avoid iteration all together.

 \begin{observation}\label{obs:alpha_hom}
 Let $(Q, \, P) \colon \, M_b \to M_c$ be a morphism of presentations computed in the \hyperref[main_routine]{\textsc{aida} main routine} when processing the columns of degree $\alpha$. The induced map of persistence modules $\widetilde Q \colon X_c \rightarrow X_b$ itself restricts to a linear map of vector spaces $ \left(\widetilde Q \right)_\alpha \colon \left(X_c \right)_\alpha \rightarrow \left(X_b \right)_\alpha $. If this map is zero, it cannot contribute to the decomposition of the $\alpha$-decomposed presentation $( M_\B \ N)$. \\
Concretely since $ \Ima N_c$ is generated at $\alpha$, $\Ima QN_c \into A^{G_b} \onto X_b$ must be zero, so each column of $QN_c$ is a linear combination of the columns of $M_b^{\leq \alpha}$ (cf. \hyperref[colsweeps]{Column-Sweeps}).
 \end{observation}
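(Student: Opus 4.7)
My approach exploits the standard commutative square
\[
p_b \circ Q \;=\; \widetilde Q \circ p_c,
\]
relating the free covers $p_c\colon A^{G_c}\onto X_c$ and $p_b\colon A^{G_b}\onto X_b$ to their modular quotients; this square is built into the definition of $\widetilde Q$ as the cokernel map induced by $(Q,P)$ (cf.~\autoref{morphism_presentation} and the discussion of the surjection $\Hom(M_c, M_b) \onto \Hom(X_c, X_b)$ in \autoref{sec:modifications}, whose direction matches the stated $\widetilde Q\colon X_c \to X_b$).

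First I would note that each column $n$ of $N_c$ is a homogeneous element of $A^{G_c}$ of degree $\alpha$, so $p_c(n) \in (X_c)_\alpha$. The hypothesis $\left(\widetilde Q\right)_\alpha = 0$ then yields $\widetilde Q(p_c(n)) = 0$, and commutativity gives $p_b(Q n) = 0$. Hence every column of $Q N_c$ lies in $\ker p_b = \Ima M_b$. More conceptually, $\Ima N_c \subset A^{G_c}$ is a submodule generated at $\alpha$; since $\widetilde Q$ is graded and vanishes in degree $\alpha$, it vanishes on $p_c(\Ima N_c) \subset X_c$ altogether, giving the same conclusion via the square.

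Next, since each column of $Q N_c$ is homogeneous of degree $\alpha$, I would identify $(\Ima M_b)_\alpha$ explicitly as the $\K$-span of the vectors $x^{\alpha - R(j)} m_j$, where $m_j$ ranges over the columns of $M_b$ with $R(j) \leq \alpha$; that is, over the columns of $M_b^{\leq \alpha}$ lifted up to degree $\alpha$ by the appropriate monomials. This $\K$-subspace of $A^{G_b}_\alpha$ is precisely what the \hyperref[colsweeps]{column-sweep} step records and factors out of $N_b$, so the prospective update $N_b \leftarrow N_b + Q N_c$ produces nothing outside what the sweeps have already accounted for, confirming the claim.

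The only friction I foresee is bookkeeping: reconciling the arrow-direction conventions between presentation morphisms and induced module morphisms (the statement writes $(Q,P)\colon M_b \to M_c$, whereas the cited Hom-surjection acts in the opposite direction), and verifying that the $\K$-subspace maintained by the column-sweep data structure genuinely coincides with all of $(\Ima M_b)_\alpha$ rather than with some strictly smaller already-reduced span. Both amount to routine checks once the commutative square above is pinned down.
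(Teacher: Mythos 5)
Your proof is correct and takes essentially the same approach as the paper's brief inline justification, merely making explicit the commutative square $p_b\circ Q=\widetilde Q\circ p_c$, the degree-$\alpha$ argument, and the identification of $(\Ima M_b)_\alpha$ with the $\K$-span of the (appropriately lifted) columns of $M_b^{\leq\alpha}$. Both concerns you flag resolve in your favour: the arrow-direction mismatch is a typo in the statement (the convention $\Hom(M_c,M_b)\onto\Hom(X_c,X_b)$ from the "Morphisms of Presentations" subparagraph applies, so $(Q,P)$ is really a morphism $M_c\to M_b$, consistent with $Q\in\K^{G_b\times G_c}$ and the formula $QN_c$ landing in $A^{G_b}$), and since column reduction preserves the column space the column-sweep data structure does record all of $(\Ima M_b)_\alpha$, not a proper subspace.
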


\begin{definition}\label{def:U_hom}
Define for any $X, Y \in \grA$ and $U \subset \Z^d$ the $\K$-vector space 
\[ \Ic_U\left(X, Y\right) \coloneqq \{f \colon X \to Y \ |  \ \forall \alpha \in U \ f_\alpha = 0 \}.\] 
$V_U$ forms an ideal in $\grA$ and we form its quotient category $\grA^{U}$ whose $\Hom$-sets are
\[\Hom^U(X,Y) = \Hom(X, Z) / \Ic_U\left(X, Y\right).\]
If $U= \{\alpha\}$, then we write $\Ic_\alpha$ and $\Hom^\alpha$ instead.
\end{definition}

Thus, instead of finding a basis for the whole solution space of  {\texttt{BlockReduce}} \hyperref[hom_computation]{($\ast$)} we actually only need to compute a set of solutions which span $\Hom^\alpha \left(X_c, X_b \right)$.
This will decrease the size of the system \hyperref[hom_computation]{($\ast \ast$)}, but more importantly it will sometimes reduce the space of homomorphisms to zero. If that happens, the digraph $\B$ loses an edge and in the best case maybe even a cycle. In the $\alpha$-decomposed extensions which we then need to decompose, we can then only guarantee that all maps from $\Cs$ to $\Ds$ are $0$ at $\alpha$. Let us define this more generally on the algebraic level.

\begin{definition}\label{def:filtration}
Let $0 \to X \oto{f} Y \oto{g} Z \to 0$ be concentrated in $U$. Denote by $ W \coloneqq \supp{\beta_{*,1}(Y)}$ the support of the relations of $Y$. Then the short exact sequence ($f$, $g$) is called \emph{$U$-invariant} (monomorphism, epimorphism) if the $U$-projective covers (\autoref{U_cover}) satisfy $\Hom^{U\cap W}\left(X^{U}, Z^{U}\right) = 0$.
\end{definition}

Notice that the condition means that $X$ is "almost" invariant under the action of $\Aut(Z)$: No automorphism could alter $X_{W \cap U}$. The cocycle $[C]$ which classifies the sequence is generated at most in $W$ as a map from the relations of $Z$ and also in $U$ by definition. \\

To see that the algorithm works even with $\Hom^\alpha$-sets, we will need the following result. The existence of the necessary change-of-basis matrices is guaranteed by lifting the isomorphisms it produces.

\subparagraph{Computing $\Hom^\alpha$.}

 For graded matrices $M \in \K^{G \times R}, \ N \in \K^{G' \times R'}$, denote by $M^{\leq \alpha}, \, N^{\leq \alpha}$ the submatrices formed from column and rows of degree $\leq \alpha$. Given a morphism of minimal presentations $(Q,P)\colon \ M \to N$ the induced map at $\alpha$ is computed by the following diagram of \emph{vector spaces} just by forgetting all generators and relations not in the lower set $\down{\alpha}$:

\begin{align}\label{alpha_map}
\begin{tikzcd}[scale cd = 1.2, ampersand replacement=\&]
	{\K^{|R_{ \leq \alpha}| }} \&\& {\K^{|G_{ \leq \alpha}| }} \&\& {(\coker M)_\alpha} \& 0 \\
	{\K^{|R'_{ \leq \alpha}| }} \&\& {\K^{|G'_{ \leq \alpha}| }} \&\& {(\coker N)_\alpha} \& 0
	\arrow["{M^{\leq \alpha}}", from=1-1, to=1-3]
	\arrow["{P^{\leq \alpha}}", from=1-1, to=2-1]
	\arrow["{p_\alpha}", from=1-3, to=1-5]
	\arrow["{Q^{\leq \alpha}}", from=1-3, to=2-3]
	\arrow["{B^\alpha}"', shift right, curve={height=12pt}, dotted, from=1-5, to=1-3]
	\arrow[from=1-5, to=1-6]
	\arrow["{\widetilde Q^{\leq \alpha}}", from=1-5, to=2-5]
	\arrow["{N^{\leq \alpha}}", from=2-1, to=2-3]
	\arrow["{q_\alpha}", from=2-3, to=2-5]
	\arrow["{B^{'\alpha}}"', shift right, curve={height=12pt}, dotted, from=2-5, to=2-3]
	\arrow[from=2-5, to=2-6]
\end{tikzcd}
\end{align}

Then the diagram implies that, $\Hom^\alpha(X,Y)$ is isomorphic to the quotient vector space
\[ \bigslant{\left\{ (Q,P) \colon \, M \to N \right\}}
{\{ (Q,P) \ | \
q_\alpha \cdot \left(Q^{\leq \alpha}\right) \cdot B^\alpha = 0
\}}\]

Computing these cokernels is fast in our case, because during the algorithm, for every block $b \in \B$ we have already column-reduced the matrices $M_b^{\leq \alpha}$ in \hyperref[colsweeps]{Column-Sweeps}. Since the cokernel is the solution to $\left( M_b^{\leq \alpha} \right)^{T}y = 0$ we can copy it from the matrix directly and $B_b^\alpha$ is given by the standard basis vectors on the set of non-pivot rows of $M_b^{\leq \alpha}$.

\subsection{Fast Decomposition for Interval Decomposable Modules with \textsc{aida}}\label{sec:interval_decomp}

We show that, for interval-modules, the $\Hom^\alpha$-sets reduce the digraph of blocks sufficiently in a way that allows us to avoid the iteration over subspaces.


\begin{proposition}[Presentations of interval modules]\label{prop:pres_interval}
    Every interval module has a minimal presentation in which every column contains exactly one $1$ or two non-zero entries $1$ and $-1$.
\end{proposition}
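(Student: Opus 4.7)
The plan is to exhibit an explicit minimal presentation of $\K_U$ whose columns have the prescribed form, by choosing generators at the local minima of $U$ and then building up the relations degree by degree.

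For generators, set $G \coloneqq \{\alpha \in U : \alpha - e_i \notin U \text{ for all } i \in [d]\}$ and take $F_0 \coloneqq \bigoplus_{\alpha \in G} A(-\alpha)$ with the surjection $p \colon F_0 \onto \K_U$ sending each basis generator $e_\alpha$ to $1 \in (\K_U)_\alpha$. Convexity of $U$ implies that the elements of $G$ are pairwise incomparable and that every $\gamma \in U$ admits some $\alpha \in G$ with $\alpha \leq \gamma$; hence $p$ is an essential surjection by \autoref{prop:essential}. To analyse the kernel $K \coloneqq \ker p$ piece by piece, set $G_\beta \coloneqq \{\alpha \in G : \alpha \leq \beta\}$, so $(F_0)_\beta = \K^{G_\beta}$. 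Convexity gives $[\alpha, \beta] \subset U$ whenever $\beta \in U$ and $\alpha \in G_\beta$, so $p_\beta$ sends every standard basis vector to $1$. Therefore
\[ K_\beta = \begin{cases} H_\beta \coloneqq \bigl\{(c_\alpha)_{\alpha \in G_\beta} \in \K^{G_\beta} : \textstyle\sum_\alpha c_\alpha = 0\bigr\} & \text{if } \beta \in U, \\ \K^{G_\beta} & \text{if } \beta \notin U. \end{cases} \]

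Next I process the (finitely many relevant) degrees $\beta$ in a linear extension of $\leq$, maintaining a submodule $K' \subset K$ generated by the relations chosen so far. At step $\beta$ the inductive hypothesis ensures $K'_\gamma = K_\gamma$ for all $\gamma < \beta$, whence $(\mathfrak{m}K')_\beta = (\mathfrak{m}K)_\beta$, and I add to the generating set a basis of the quotient $K_\beta/(\mathfrak{m}K')_\beta$. Such a basis can always be chosen from the unary vectors $e_\alpha$ (when $\beta \notin U$) or from the binary vectors $e_\alpha - e_{\alpha'}$ (when $\beta \in U$), because these sets respectively span $K_\beta$. The resulting presentation matrix has columns exclusively of the form stated in the proposition.

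Minimality follows from \autoref{prop:minimal_pres}: in every nonzero entry the row degree $\alpha$ is strictly less than the column degree $\beta$, because for a unary relation $\alpha \in U$ and $\beta \notin U$, and for a binary relation the two involved generators are distinct elements of $G$, hence incomparable, hence both strictly below their common upper bound $\beta$. Thus every entry lies in $\mathfrak{m}$ and the matrix vanishes under $-\otimes A/\mathfrak{m}$. The main technical point is verifying that the inductive procedure actually produces a generating set for all of $K$; this is a Nakayama-style argument exploiting the fact that $(\mathfrak{m}K)_\beta$ depends only on $K_\gamma$ for $\gamma < \beta$, which together with our choice of basis at every step preserves the inductive hypothesis $K'_\gamma = K_\gamma$ for $\gamma \leq \beta$.
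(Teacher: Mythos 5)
Your proof is correct, but it takes a genuinely different route from the paper's. The paper's argument is bottom-up and fully explicit: it names a candidate set of relations (one binary relation $e_i - e_j$ at the join $\alpha_{i,j}$ of every pair of generators, plus one unary relation at every minimal element of $\langle U\rangle \setminus U$), verifies exactness directly including a short computation to check $\ker p_0 \subset \Ima p_1$, and then remarks at the end that the resulting presentation may have too many columns and must be trimmed to become minimal. You instead proceed top-down: compute $K = \ker p$ explicitly at every degree, then construct a minimal generating set of $K$ by the standard Nakayama degree-by-degree procedure, observing that at each $\beta$ the quotient $K_\beta / (\mathfrak{m}K)_\beta$ has a basis inside the unary vectors (if $\beta \notin U$) or binary vectors (if $\beta \in U$) because those sets span $K_\beta$. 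The advantage of your route is that minimality is built in from the start; no post hoc column deletion is needed, and the cases of unary versus binary relations fall out cleanly from whether $\beta \in U$ or not rather than from a separate enumeration of joins and of $\langle U\rangle \setminus U$. The paper's route in exchange gives a concrete, named collection of candidate relations, which is closer to something one would implement.

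One small gap in exposition: when you invoke \autoref{prop:minimal_pres} you only verify the first of its two conditions, namely that every nonzero entry lies in $\mathfrak{m}$ so that $f \otimes A/\mathfrak{m} = 0$. The proposition also requires $i \otimes A/\mathfrak{m} = 0$ for the inclusion $i \colon \ker f \into F_1$, i.e.\ that the chosen relations are a \emph{minimal} generating set of $K$ and not merely a generating set. This second condition does hold in your construction — picking a basis of $K_\beta / (\mathfrak{m}K)_\beta$ at each step is precisely what makes $F_1 \onto K$ an essential surjection, hence $\ker(F_1 \to K) \subset \mathfrak{m}F_1$ — but it should be stated, since the sentence as written attributes minimality entirely to the first condition.
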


\begin{proof}
Let $\varphi \colon \K_u \iso M$ for an interval $U \subset \Z^d$. Let $G$ be the graded set of minimal elements $\alpha_i \in U$. The canonical maps $\colon A(-\alpha_i) \to \K_U$ sending $1 \in A$ to $1 \in \left(\K_U\right)_{\alpha_i}$ together form a minimal cover $p_0 \colon A^G \to \K_U$. 
Choose a total order on $G$. For every join $\alpha_{i,j}$ where $i <_G j$ of two generators the analogous map $A(-\alpha_{i,j}) \to A^G$ sending $1$ to $(1 -1) \in A(-\alpha_i) \oplus A(-\alpha_j)$ is in the kernel of $p_0$. Gather all joins $\alpha_{i,j}$ in a graded set $R_1$ and use the aforementioned maps to build a map $A^{R_1} \to A^G$. 
Let $(U)$ be the upper set generated by $U$ and $R_2$ the graded set of minimal elements of $(U) \setminus U$. For each $\beta_k \in R_2$ choose an $\alpha_{i_k}$ such that $\alpha_{i_k} < \beta_k$. These choices induce another map $A^{R_2} \to A^G$ and we paste them together to a map $A^{R_1} \oplus A^{R_2} \oto{p_1} A^G \oto{\varphi \circ p_0} M \to 0$. This sequence is exact:

$p_0$ is surjective: For every $\beta \in U$ consider a homogeneous element $c \in (\K_U)_\beta$. There is a minimal element $\alpha_i < \beta$ and $ cx^{\beta - \alpha_i} \in A(-\alpha_i)$ is a preimage.

$\Ima p_1 \subset \ker p_0$: Since $M_{\beta} = 0$ for all $\beta (U) \setminus U$ we have $p_1\left( A^{R_2} \right) \subset \ker \varphi \circ p_u $ and for $A^{R_1}$ this follows by construction.

$\ker p_0 \subset \Ima p_1$: Let $\beta \in \Z^d$, $I \coloneqq \{i \, | \, \alpha_i \leq \beta \}$ and $v \in A^G_{\beta} \simeq \bigoplus_I \K$ be any homogeneous element such that $ p_0(v) = 0$. choose any re-indexing $[n] \simeq I$ compatible with the previously chosen order on $G$.
Notice that $(p_0)_\beta(v) = \sum_{i \in I} v_i$. We construct the vector
\[ w \coloneqq \left( \sum\limits_{j = 1}^{i} v_j \right)_{i = 1}^{n-1} \in \K^{n-1} \simeq \bigoplus\limits_{i = 1}^{n-1} A(-\alpha_{i,i+1})_\beta .\]
Since for each $i \in [n-1]$ the map $p_1$ sends $A(-\alpha_{i,i+1})$ only to $A(-\alpha_i) \oplus A(-\alpha_{i+1})$
\[ \text{ we have } \ p_1(w)_i =  \sum\limits_{j = 1}^{i}  v_j -   \sum\limits_{j = 1}^{i-1} v_j = v_i \ \text{ for } \ i < n \ \text{ and } \ p_1(w)_n = - \sum\limits_{j = 1}^{n-1} v_j = v_n. \]

This presentation may not yet be minimal, because there might be too many joins in $R_1$ of the same degree. Delete columns until it is. 
\end{proof}

\begin{remark}
The preceding proposition also gives an algorithm to determine interval-decomposability during the algorithm: Whenever blocks need to be merged after completing the decomposition with a new set of relations we can check if the new relations satisfies the condition and sits at the join of two generators.
\end{remark}

\subparagraph{Homomorphisms of interval modules.}\label{sub:interval_hom}
For $X, \, Y$ interval modules we can describe the reduction given by considering $\Hom^{\alpha}$ precisely. Notice that $\Hom\left(X, Y \right)$ has a basis consisting of "valid" overlaps (\cite[Proposition 16]{dey_xin}):
\begin{figure}[H]
\begin{minipage}[c]{0.4\textwidth}
\begin{tikzpicture}[scale=0.85]
    \draw[->, thick] (0,0) -- (5.6,0) ;
    \draw[->, thick] (0,0) -- (0,4.4) ;

    \draw[fill = lightblue] 
        (0.5, 3.7) -- (0.5,1) -- (1.5,1) -- (1.5,0.1) -- (4.7,0.1 ) -- (4.7,1) -- (3.5,1) -- (3.5,2) -- (1.5,2) --  (1.5, 3.7) -- cycle;

    \draw[fill = red, opacity=0.5] 
        (0.1,4.1) -- (0.1,3.1) -- (2.6, 3.1) -- (2.6, 1.5) -- (3.9, 1.5) -- (3.9,0.2) -- (5.1,0.2) -- (5.1,2.1) -- (4.6,2.1) -- (4.6,4.1) -- cycle;

    \node[rectangle, draw = black, fill=white, inner sep=1pt] at (4.3, 0.5) {};
    \node at (4.3, 0.7) {$\alpha$};

    \node at (3.7, 2.8) {$X$};
    \node at (2.3, 0.9) {$Y$};
            
    \draw[->, thick] (2.1, 3.8) to[out=170, in=20] (1, 3.4);
    \node at (2.1, 3.8) [right] {invalid};
    
    \foreach \x in {0,1,2,3,4,5} {
        \node at (\x, -0.2) [below] {\x};
    }
    \foreach \y in {0,1,2,3,4} {
        \node at (-0.2, \y) [left] {\y};
    }
\end{tikzpicture}
    \end{minipage}
        \caption{ $\Hom\left(X, Y\right) \simeq \K^2$, $\Hom^\alpha\left(X, Y\right) \simeq \K$}
    \label{fig:alpha_hom}
\end{figure}
 At most one of the valid overlaps can contain $\alpha$ and so $\dim_K\Hom^\alpha(X,Y) \leq 1$.  

\begin{lemma}\label{lem:alpha_interval}
For any $\alpha \in \N$, the full subcategory of $\grA^\alpha$ spanned by the interval-modules is the $\K$-linearisation of a thin category.
\end{lemma}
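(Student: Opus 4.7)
The plan is to exploit the explicit basis of $\Hom(X,Y)$ for interval modules by valid overlaps, observe that these overlaps are pairwise disjoint subsets of $\Z^d$, and hence at most one of them can contain $\alpha$ so at most one basis element survives in the quotient $\Hom^\alpha$.

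First, I would recall from \cite[Proposition 16]{dey_xin} that if $X \simeq \K_U$ and $Y \simeq \K_V$ are interval modules, then $\Hom(X,Y)$ admits a basis indexed by the valid overlaps $O \subseteq U \cap V$: each basis element $f_O$ is a morphism for which $(f_O)_\beta \neq 0$ precisely when $\beta \in O$, and distinct overlaps are disjoint. From the preceding picture and the definition of $\Ic_\alpha$ it is immediate that $f_O \in \Ic_\alpha(X,Y)$ iff $\alpha \notin O$. Since the overlaps are pairwise disjoint, at most one of them contains $\alpha$, and therefore $\dim_\K \Hom^\alpha(X,Y) \leq 1$.

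Second, to upgrade this dimension bound to the statement that the subcategory is the $\K$-linearisation of a thin category, I need to check that composition in $\grA^\alpha$ is compatible with the induced thin relation, namely $X \to Y$ whenever $\dim_\K \Hom^\alpha(X,Y) = 1$. If $[f]\colon X \to Y$ and $[g]\colon Y \to Z$ are nonzero in $\grA^\alpha$ and represented by basis morphisms $f_{O_f}, f_{O_g}$ with $\alpha \in O_f \cap O_g$, then $f_\alpha$ and $g_\alpha$ are nonzero linear maps of one-dimensional $\K$-vector spaces, so $(gf)_\alpha = g_\alpha \circ f_\alpha \neq 0$ and hence $[gf] \neq 0$. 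Conversely, if $[f]$ or $[g]$ is zero then trivially $(gf)_\alpha = 0$. Thus composition descends cleanly to the corresponding thin structure.

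The main obstacle is really just the bookkeeping of how arbitrary morphisms $f, g$ decompose into their overlap basis and how these contributions interact under composition. Fortunately, modulo $\Ic_\alpha$ everything collapses to elementary linear algebra at the single point $\alpha$, where all spaces involved are at most one-dimensional, so the verification of both the dimension bound and the composition-compatibility reduces to noting that a nonzero composition of nonzero scalars is nonzero. Together these give the $\K$-linearised thin category structure.
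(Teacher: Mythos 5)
Your proof is correct for the lemma as literally stated, but it takes a different route from the paper's. You argue pointwise at $\alpha$: since $\Ic_\alpha(X,Y)$ is exactly the kernel of $f\mapsto f_\alpha$, the quotient $\Hom^\alpha(X,Y)$ embeds into $\Hom_\K(X_\alpha,Y_\alpha)$, which is at most one-dimensional for intervals, and a composite of nonzero maps between one-dimensional spaces is nonzero; this cleanly gives both the dimension bound and the compatibility of composition (i.e.\ transitivity of $\rightarrow_\alpha$), the latter of which the paper dismisses with ``transitivity follows directly.'' (Minor quibble: a nonzero class need not be represented by a single overlap basis morphism, but your argument only uses $f_\alpha\neq 0$, so this is harmless.) The paper instead works with the support-theoretic characterisation of $\Hom^\alpha(X,Y)\neq 0$ from Dey--Xin's Proposition 16 and spends most of its effort on a point you do not address: \emph{antisymmetry}, namely that $X\rightarrow_\alpha Y$ and $Y\rightarrow_\alpha X$ force the supports to coincide, so the relation is a partial order on isomorphism classes. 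Antisymmetry is not required for a thin category (a preorder is thin), so your proof suffices for the statement; but it is exactly what the remark following the lemma asserts and what the interval-decomposable algorithm uses (strongly connected components of the digraph on $\B$ consist of \emph{isomorphic} intervals), so if the lemma is read as carrying that content your argument would need to be supplemented by the paper's connectedness argument on supports.
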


Define a relation $X \rightarrow_\alpha Y \colon\Leftrightarrow \Hom^\alpha(X, Y) \neq 0$. The Lemma says that this is a preorder on any set of interval modules and a partial order on a set of isomorphism classes.

\begin{proof}
 Let $X$ and $Y$ have supports $I,J$ which contain $\alpha$.
 From poposition 16 in \cite{dey_xin} we deduce that $\Hom^\alpha(X, Y) \simeq \K \neq 0$ precisely if for each $\gamma \in I \cap J$
 \[ \left( \eta \leq \gamma \text{ and } \eta \in I \right) \Rightarrow \eta \in J \text{ and } \left( \lambda \geq \gamma \text{ and } \lambda \in J \Rightarrow \lambda \in I \right).\]
 Assume first that both $X_b \rightarrow_\alpha X_c$ and $X_c \rightarrow_\alpha X_b$ and assume there was a$\delta \in J \setminus I$. By connectedness there is a path from $\delta$ to a point $\xi$ in $I \cap J$, but any downwards part must belong to $I$ by the first assumption and every upwards path must then belong to $I$, too giving a contradiction and by the same reasoning $I \setminus J$ is also empty. Transitivity follows directly.
\end{proof}

\subparagraph{Interval-Decomposables.}
Assume that the input to \hyperref[main_routine]{\textsc{aida}} is an interval-decomposable module. 
 
 When we consider a set of new relations $N$ in the algorithm, $M_b$ now presents an interval module and each sub-column $N_{b,i}$ encodes a map to it at $\alpha$. An interval module can have at most dimension $1$ at $\alpha$ and so $N_{b,i}$ can be reduced to only have a single non-zero entry indicating the rank of this map and so we treat $N_{b,i}$ as being this entry. This, together with the transitivity of $\alpha$-maps (\autoref{lem:alpha_interval}) allows Gaussian (row-)elimination on $N$ (Appendix~\ref{sub:interval_blockreduce}) avoiding the first occurrence of {\texttt{BlockReduce}} \hyperref[hom_computation]{($\ast \ast$)} in the \hyperref[main_routine]{\textsc{aida} main routine} (2b) to delete the whole sub-matrices $N_b$ where possible.

 In the following application of \hyperref[stable_decomp]{Automorphism-invariant $\alpha$-decomposition} we need to decompose $\alpha$-decomposed extension where $\Cs$ and $\Ds$ are strongly connected components of the digraph on $\B$. But $\B$ is transitive in the interval-decomposable case, so $\left[M_{\Cs} \ N_{\Cs} \right]$ and $\left[M_\Ds \ N_\Ds\right]$ must each present a set of isomorphic intervals. Then all row and column operations on $C$ are possible and we can use normal matrix reduction (\autoref{sub:interval_algorithm}).

\subparagraph{Automorphism-invariant \texorpdfstring{$\alpha$}{alpha}-decomposition for interval decomposable modules.}\label{sub:interval_algorithm} \ \\
\texttt{Input:} 
$\alpha$-decomposed$\left(M_\B \ N \right)$ with each $M_b$ presenting an interval-module. \\
The digraph structure on $\B$ is a preorder. \\
Whenever blocks merge it is easy to check if they still form an interval. If they do, then $\B_p$ will also carry a preorder. \\
Each strongly connected component of $\B$ and $\B_p$ is a complete graph. Let $\Cs \subset \B$ such a component (in a compatible order):
\begin{enumerate}
     \item $\bar N_\Cs$ is a equivalent to a matrix with $|\Cs|$ rows. Perform normal matrix reduction.
     \item For each strongly connected component $\Ds \subset \B^p$ (in a compatible order):
     \begin{enumerate}
     \item The matrix $(M_\Ds \ N_\Ds)$ must still present a set of isomorphic intervals, so each $N_d$, $d \in \Ds$ can only have a single non-zero column.
      \item Define $C$ as before. It must be equivalent to a $|\Cs| \times |\Ds|$-matrix where all operations are possible. Perform normal matrix reduction.
     \end{enumerate}
\end{enumerate}

 \begin{theorem}\label{theorem:interval_decomposition}
 The method outlined above decomposes interval decomposable modules in time $\Oc(n^3)$ where $n$ is the number of generators and relations.
 \end{theorem}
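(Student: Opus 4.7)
The plan is to show that every step of the algorithm, summed over all batches of relations, costs $\Oc(n^3)$. Since $n$ bounds both the number of generators and relations, it also bounds the number of distinct degrees, hence the number of batches processed by the \hyperref[main_routine]{\textsc{aida} main routine}. It therefore suffices to prove that the work done while processing one batch of $k_i$ relations at some degree $\alpha$ is $\Oc(n^2 k_i + n^2)$, because summing this bound with $\sum_i k_i \le n$ yields $\Oc(n^3)$.

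First I would argue that the blockwise Hom-information needed at a batch has constant-size encoding per pair. By \autoref{prop:pres_interval}, each block $M_b$ presents an interval module $X_b$ with a standard form, and by the interval-Hom proposition of Dey--Xin the space $\Hom(X_b, X_c)$ (and a fortiori $\Hom^{\alpha}(X_b, X_c)$, which has dimension $\le 1$ by \autoref{lem:alpha_interval}) can be computed in $\Oc(n)$ time; moreover only the scalar rank of the induced map at $\alpha$ needs to be stored. Computing or refreshing this scalar for all $\Oc(|\B|)$ pairs affected by the new batch costs $\Oc(n^2)$ per batch. This both gives the digraph on $\B$ and, via \autoref{lem:alpha_interval}, the preorder needed to recognize the strongly connected components; standard condensation algorithms compute $\T$ and a topological order in $\Oc(|\B|^2) = \Oc(n^2)$.

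Next, with the rank-$1$ encoding, each $N_{b,i}$ collapses to a single scalar entry, so $\bar N_{\Cs}$ for a component $\Cs$ is represented by an $|\Cs|\times k_i$ matrix, and the cocycle $C$ connecting $\Cs$ with a processed component $\Ds \subset \B_p$ is represented by an $|\Cs|\times k_i$ matrix as well. Because every strongly connected component of $\B$ or $\B_p$ in the interval case is transitively connected, row- and column-operations are fully available, so ordinary Gaussian elimination suffices. A reduction of an $m\times k_i$ matrix costs $\Oc(mk_i \min(m,k_i))\le \Oc(n \cdot k_i \cdot n) = \Oc(n^2 k_i)$; over all components $\Cs$ (which partition the rows, so their sizes sum to $\le n$) this aggregates to $\Oc(n^2 k_i)$ per batch. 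The outer loop over $\Ds \subset \B_p$ is handled by the same amortization: the row-dimensions $|\Cs|$ and $|\Ds|$ of the reductions are disjoint subsets of the blocks, so the total cost across all pairs in one batch is again $\Oc(n^2 k_i)$.

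Adding the cost of the \hyperref[colsweeps]{column-sweep} (a standard one-parameter reduction on the $\le \alpha$ part of each block, $\Oc(n^2)$ per batch by the classical persistence algorithm), the Hom-update, and the condensation/reduction bounds yields $\Oc(n^2 k_i + n^2)$ per batch and $\Oc(n^3)$ overall. The main obstacle I anticipate is not any single step but the bookkeeping that makes the amortization tight: one must be careful that the sizes $|\Cs|, |\Ds|, k_i$ are not all multiplied together in the same term, which is ensured by the transitivity of $\to_\alpha$ from \autoref{lem:alpha_interval} (making iteration over subspaces unnecessary) and by the fact that only the rank scalars, not full Hom-bases, are ever stored or manipulated.
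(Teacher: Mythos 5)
Your proposal is correct and follows essentially the same route as the paper's proof: collapse each $N_{b,i}$ to a single rank scalar using the interval normal form, use $\dim\Hom^\alpha\le 1$ and the transitivity from \autoref{lem:alpha_interval} to replace \texttt{BlockReduce} by Gaussian elimination along the preorder, bound the condensation and Hom-updates by $\Oc(n^2)$ per batch, and amortize the reductions over the entries of the collapsed $|\B|\times k_i$ matrix. The only loose point is attributing the $\Oc(n^2)$ column-sweep bound to ``the classical persistence algorithm'' (which is cubic in general); the paper instead gets this from the fact that each column of an interval presentation has at most two nonzero entries, but an $\Oc(n^2 k_i)$ per-batch bound would suffice in any case.
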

 
\begin{proof}\label{proof:interval}
Let $m$ be the number of generators and $n$ the number of relations of the input presentation.

Consider a new set of $k_i$ columns of degree $\alpha_i$.
Deciding if $\alpha_i$ is contained in an interval can be done in linear time, so wlog. assume that $\alpha$ is contained in every interval presented by an $M_b$.

We first do a column sweep to reduce the sub-matrices $N_b$. Since every column in $M_b$ has only $1$ or $2$ entries (\autoref{def:interval}), this whole process for each $b$ is linear in $n$ and $m$. For every $b$ there is a generator of largest row-index $b_i$ of degree $\leq \alpha_i$. After the reduction, for each $j \in [k]$ the entry $N_{b_i, j}$ is the only non-zero entry of $N_{b, j}$ and signals its rank. We restrict all $N_{b,j}$ to this entry and get a matrix $\widetilde N$ of dimensions $|\B| \times k$.

Next we compute all $\Hom^\alpha$ sets between pairs of interval modules. For every $b \in \B$ let $m_b$ be the number of generators and relations of the interval indexed by $b$, so that $\sum_i m_i \leq m+n$. By \autoref{prop:interval_hom} (from \cite{dey_xin}) the run time for this step is in $\sum_{j \neq i} \Oc(m_i m_j) \subset  \Oc \left( \left( \sum_i m_i \right) \left( \sum_j m_j \right) \right) =  \Oc((m+n)^2)$. Notice that each non-zero $\Hom^\alpha$-map we have computed this way corresponds to a single elementary row-operation on $\widetilde N$.

The Gaussian elimination replacing \texttt{BlockReduce} will then take $\Oc(m^2 k )$ time. 

For the Automorphism-invariant decomposition, we need to compute a condensation $\T$ which takes time at most $\Oc(m^2)$ and a topological order, which is in linear time.

Minimising each $(M_\Cs \ \bar N_\Cs)$ is just column elimination and for every $C$ we can use normal matrix reduction as explained above. Every entry of $\widetilde N$ can only appear in a single $C$ or $\bar N_\Cs$ so that in total we need another $\Oc(m^2 k )$ steps for the matrix reductions.
\end{proof}

\section{Restriction and Localisation}\label{sec:restriction_and_localisation}

\subsection{Exact Structures from Restriction}

Let again $U \subset \Z^d$ be an upper set and $V$ its complement. Recall that a $U$-concentrated/$V$-split sequence is one that splits after applying $\iota_V$.

\ignore{
\begin{proposition}
Consider two monomorphisms $X \ito{f} Y \ito{g} Z$. If $f, g$ are $U$-concentrated, then so is $g \circ f$. If $g$ and $g \circ f$ are $U$-concentrated, then so is $f$.
\end{proposition}

\begin{proof}
Apply $\iota_V \colon \grA \to \fun(V, \vect_\K)$ to replace "$U$-concentrated" by "split".
\end{proof}}

\begin{proposition}\label{concentrated}
 Let $0 \to X \oto{f} Y \oto{g} Z \to 0$ be a short exact sequence of persistence modules. The following are equivalent.
\begin{enumerate}[label=(\alph*)]
  \item If the graded matrices $ [ M_1 \ N_1 ]$, $[M_2 \ N_2]$ present $X,Z$ with the matrices $N_1, N_2$ containing all relations in $U$, then there is a presentation of $Y$ of the form
  \[ \begin{bmatrix}
      M_1 & 0 & N_1 & C \\
      0 & M_2 & 0 & N_2
     \end{bmatrix} \colon A^{R_1} \oplus A^{R_2} \oplus A^{R'_1} \oplus A^{R'_2} \to A^{G_1} \oplus A^{G_2}
\]
where $C$ is a cocycle.
\item If $[ C ] \in \Ext^1(Z, X)$ classifies the short exact sequence and $p \colon P \to Z$ is any map from a $U$-projective module, then $p^*[C]=0 \in \Ext^1(P, X)$.

\item For any choice of $U$-covers $p_X \colon X^U \to X$, $p_Z \colon Z^{U} \to Z$ we can construct maps $p, C$ which induce a short exact sequence of $U$-projective presentations as follows
\[\begin{tikzcd}[ampersand replacement=\&]
	0 \& {A^{R_X}} \& {A^{R_X} \oplus A^{R_Z}} \& {A^{R_Z}} \& 0 \\
	0 \& {X^{U}} \& {X^{U} \oplus Z^{U}} \& {Z^{U}} \& 0 \\
	0 \& X \& Y \& Z \& 0
	\arrow[from=1-1, to=1-2]
	\arrow["{i_1}", from=1-2, to=1-3]
	\arrow["{d_X}", from=1-2, to=2-2]
	\arrow["{p_2}", from=1-3, to=1-4]
	\arrow["{ \begin{bmatrix} d_X & C \\ 0 & d_Z \end{bmatrix} }", from=1-3, to=2-3]
	\arrow[from=1-4, to=1-5]
	\arrow["{d_Z}", from=1-4, to=2-4]
	\arrow[from=2-1, to=2-2]
	\arrow["{i_1}", from=2-2, to=2-3]
	\arrow["{p_X}", from=2-2, to=3-2]
	\arrow["{p_2}", from=2-3, to=2-4]
	\arrow["p", from=2-3, to=3-3]
	\arrow[from=2-4, to=2-5]
	\arrow["{p_Z}", from=2-4, to=3-4]
	\arrow[from=3-1, to=3-2]
	\arrow["{^f}", from=3-2, to=3-3]
	\arrow["g", from=3-3, to=3-4]
	\arrow[from=3-4, to=3-5]
\end{tikzcd}\]
where $R_X, R_Z$ are multi-sets in $U$ and $C$ is a cocycle.
\item The sequence is $U$-concentrated (\autoref{def:concentrated}).
 \end{enumerate}

\end{proposition}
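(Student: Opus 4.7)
The plan is to establish the four-way equivalence via the cycle $(a)\Rightarrow(d)\Rightarrow(b)\Rightarrow(c)\Rightarrow(a)$. The implication $(a)\Rightarrow(d)$ is direct: applying $\iota_V^*$ (with $V=U^\complement$) to the given block matrix kills every column of $N_1$, $N_2$, and $C$, since all their degrees lie in $U$ and $\iota_V^* A(-\alpha)=0$ whenever $\alpha\in U$. What remains is the block-diagonal $\iota_V^*\begin{bmatrix}M_1 & 0\\ 0 & M_2\end{bmatrix}$, whose cokernel is $\iota_V^* X\oplus \iota_V^* Z$, so the restricted SES splits.

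For $(d)\Rightarrow(b)$: given $p\colon P\to Z$ with $P$ being $U$-projective, one uses that $p_Z\colon Z^U\to Z$ is a $V$-isomorphism (\autoref{alpha_cover}) to invoke \autoref{alpha_projective} and obtain a factorisation $p=p_Z\circ \tilde p$, reducing the task to $p_Z^*[Y]=0$; the pullback $0\to X\to p_Z^* Y\to Z^U\to 0$ is still $U$-concentrated because $\iota_V^*$ is exact and $\iota_V^* p_Z$ is an iso. It thus suffices to prove the claim that any $U$-concentrated SES $0\to X\to A\to Q\to 0$ with $U$-projective $Q$ splits. The key structural observation will be that every $U$-projective $Q$ decomposes as $Q=L\oplus Q'$ where $L$ is a free module generated in $U$ and $Q'$ has both generators and relations in $V$: any relation at $\beta\in V$ of $Q$ involves only generators at $\gamma\leq\beta$, forcing $\gamma\in V$. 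Then $\Ext^1(L,X)=0$ since $L$ is free, and for $Q'$ a minimal presentation has rows and columns in $V$, so the restriction of cochain complexes computing $\Ext$ is an iso in degrees $0$ and $1$; this makes $\Ext^1(Q',X)\into \Ext^1_V(\iota_V^* Q',\iota_V^* X)$ injective, and the Ext class (which restricts to zero on $V$ by assumption) must therefore vanish globally.

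For $(b)\Rightarrow(c)$: take $U$-covers $p_X,p_Z$ (\autoref{alpha_cover}) and apply $(b)$ with $P=Z^U$ to produce a lift $r_Z\colon Z^U\to Y$ of $p_Z$. Then $p\coloneqq [f\circ p_X,\,r_Z]\colon X^U\oplus Z^U\to Y$ is a surjection by direct diagram chase, the snake lemma identifies $\ker p$ as an extension of $\ker p_Z$ by $\ker p_X$, and a horseshoe-style argument (\autoref{pres_ext}) lifts this to a surjection $A^{R_X}\oplus A^{R_Z}\to \ker p$ of the block-triangular form $\bigl[\begin{smallmatrix}d_X & C\\ 0 & d_Z\end{smallmatrix}\bigr]$, completing (c). The final $(c)\Rightarrow(a)$ follows by combining minimal presentations $M_1,M_2$ of $X^U,Z^U$ (whose columns lie in $V$ by $U$-projectivity) with the extra relations $N_1\coloneqq d_X$, $N_2\coloneqq d_Z$ and the cocycle $C$ to read off the matrix in (a).

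The main obstacle will be the Ext comparison in the second paragraph. The decomposition $Q=L\oplus Q'$ must be argued carefully because the counit $(\iota_V)_!\iota_V^* Q\to Q$ is not generally an isomorphism when $Q$ has generators in $U$. Moreover higher syzygies of $Q'$ may introduce generators in $U$, so the restriction of cochain complexes fails to be an iso beyond degree one; injectivity at $H^1$---sufficient for our purposes---survives only because the restriction is an iso in degrees $0$ and $1$.
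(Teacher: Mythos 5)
Your proof is correct but routes the four-way equivalence through the reverse cycle $(a)\Rightarrow(d)\Rightarrow(b)\Rightarrow(c)\Rightarrow(a)$, whereas the paper runs $(a)\Rightarrow(b)\Rightarrow(c)\Rightarrow(d)\Rightarrow(a)$. The step $(b)\Rightarrow(c)$ coincides with the paper's, and your $(a)\Rightarrow(d)$ is the trivially easy direction. The genuine departure is $(d)\Rightarrow(b)$, which the paper never proves directly: it instead gets $(a)\Rightarrow(b)$ from a short cocycle computation---the cocycle $q\circ C$ representing $[Y]$ is supported only on the relations of $Z$ in $U$, and any lift of a map from a $U$-projective $P$ to the presentation of $Z$ must land in the $V$-relations for degree reasons, so the pullback class vanishes. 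Your $(d)\Rightarrow(b)$ is more structural: after factoring $p$ through $p_Z$ via \autoref{alpha_projective}, you split the $U$-projective source as $L\oplus Q'$ (a free part generated in $U$, plus a part presented entirely in $V$), kill $\Ext^1(L,-)$ by freeness, and for $Q'$ argue that the restriction $\Ext^1(Q',X)\to\Ext^1_V(\iota_V^*Q',\iota_V^*X)$ is injective because a minimal free resolution of $Q'$ lies in $V$ in homological degrees $0$ and $1$, so a class whose restriction splits must vanish. This is valid---your closing paragraph correctly identifies why the argument lives or dies by the placement of the $0$th and $1$st syzygies---but it amounts to inlining a proof of \autoref{proj_exact}, which the paper instead derives afterward \emph{from} $\autoref{concentrated}$(b); your route therefore pays in homological machinery what it saves in not having the cocycle argument available yet. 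One small care point on $(c)\Rightarrow(a)$: statement $(a)$ quantifies over arbitrary presentations $[M_i\ N_i]$ with all $U$-relations separated, while $(c)$ is phrased only for (essential) $U$-covers, and the map $\coker M_i\to X$ induced by an arbitrary such presentation need not be essential. You would either need to observe that your $(b)\Rightarrow(c)$ argument goes through verbatim for any $U$-projective surjection that is a $V$-isomorphism, or argue separately that the non-minimal case reduces to the minimal one; the paper sidesteps this by proving $(d)\Rightarrow(a)$ directly with the flattening functor and a snake-lemma argument.
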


\begin{proof}
 (a) $\Rightarrow$ (b): 
 
  The composition $C' \colon A^{R_2} \oto{C} A^{G_1} \oto{q} X$ is a cocycle in $\Hom(A^{R_2}, X)$ and the class $[C'] \in \Ext^1(Z, X)$ represents the given short exact sequence (\autoref{pres_ext}). 
  Consider a minimal presentation $ A^R \to A^G$ of $P$ and let $p_1 \colon A^R \to A^{R_2}$ be a lift of $p$. Then $p^*[C'] = [ C' \circ p] = [q \circ C \circ p] = 0$ since $P$ has no relations in $U$. \\
 (b) $\Rightarrow$ (c): 
 
 Choose $U$-covers $p_X \colon X^U \to X, p_Z \colon Z^U \to Z$. Form the pullback of $g$ along $p_Z$, then (b) says that the short exact sequence in the middle is split and we can compose with $p_X \oplus \Id$. The rest is analogous to \autoref{pres_ext}.
\[\begin{tikzcd}
	0 & {X^U} & {X^U \oplus Z^U} & {Z^U} & 0 \\
	0 & X & {X \oplus Z^U} & {Z^U} & 0 \\
	0 & X & Y & Z & 0
	\arrow[from=3-4, to=3-5]
	\arrow["g", from=3-3, to=3-4]
	\arrow["f", from=3-2, to=3-3]
	\arrow[from=3-1, to=3-2]
	\arrow["\Id", from=2-2, to=3-2]
	\arrow["{(f, p)}", from=2-3, to=3-3]
	\arrow["{p_2}", from=2-3, to=2-4]
	\arrow[from=2-4, to=2-5]
	\arrow["{p_Z}", from=2-4, to=3-4]
	\arrow["\lrcorner"{anchor=center, pos=0.125}, draw=none, from=2-3, to=3-4]
	\arrow[from=2-1, to=2-2]
	\arrow["{i_1}", from=2-2, to=2-3]
	\arrow["\Id", from=1-4, to=2-4]
	\arrow["{p_X}", from=1-2, to=2-2]
	\arrow["{i_1}", from=1-2, to=1-3]
	\arrow["{p_2}", from=1-3, to=1-4]
	\arrow[from=1-4, to=1-5]
	\arrow[from=1-1, to=1-2]
	\arrow["{p_X \oplus \Id}", from=1-3, to=2-3]
\end{tikzcd}\]
(c) ``$\Rightarrow$'' (d): 

Apply $\iota_V^*$ to the diagram, then $p_X$ and $p_Z$ become isomorphisms, so by the five lemma also $\iota^*_V p$ is an isomorphism. \\
(d) ``$\Rightarrow$'' (a): 

Construct, as in \autoref{pres_ext}, a diagram of presentations for $X, Y, Z$ with $A^{R_1'}$, $A^{R_2'}$, $A^{G_1'}$, $A^{G_2'}$ containing all free summands whose degree is in $U$. Again $C \colon A^{R_2 + R_2'} \to A^{G_1 + G_1'}$ is a cocycle representing the short exact sequence. Apply the (right exact) flattening functor to get a surjection of presentations and use that by \autoref{planing} this is the same as dropping all free summands whose degree is in $U$. Denote by $C'$ the restriction of $C$ to $A^{R_2}$ and $A^{G_2}$.
\[\begin{tikzcd}[ampersand replacement=\&]
	0 \& {A^{R_1}} \& { A^{R_1}\oplus A^{R_2} } \& {   A^{R_2} } \& 0 \\
	0 \& {A^{G_1}} \& { A^{G_1}\oplus A^{G_2}} \& {  A^{G_2} } \& 0 \\
	0 \& {\left(\iota_{V}\right)_!\iota_{V}^* X} \& {\left(\iota_{V}\right)_!\iota_{V}^* Y} \& {\left(\iota_{V}\right)_!\iota_{V}^* Z} \& 0
	\arrow[from=1-1, to=1-2]
	\arrow["{i_1}", from=1-2, to=1-3]
	\arrow["{M_1}"', from=1-2, to=2-2]
	\arrow["{p_2}", from=1-3, to=1-4]
	\arrow["{\begin{bmatrix} M_1 & C' \\ 0 & M_2 \end{bmatrix}} ", from=1-3, to=2-3]
	\arrow[from=1-4, to=1-5]
	\arrow["{ M_2 }", from=1-4, to=2-4]
	\arrow[from=2-1, to=2-2]
	\arrow["{i_1}", from=2-2, to=2-3]
	\arrow[from=2-2, to=3-2]
	\arrow["{p_2}", from=2-3, to=2-4]
	\arrow[from=2-3, to=3-3]
	\arrow[from=2-4, to=2-5]
	\arrow[from=2-4, to=3-4]
	\arrow[from=3-1, to=3-2]
	\arrow["{\left(\iota_{V}\right)_!\iota_{V}^* f}", from=3-2, to=3-3]
	\arrow["{\left(\iota_{V}\right)_!\iota_{V}^* g}", from=3-3, to=3-4]
	\arrow[from=3-4, to=3-5]
\end{tikzcd}\]
Now by (c) $\iota_V^*$ made the sequence split, and $(\iota_V)_!$ preserves direct products, so the bottom row is still not only split but also exact. Therefore $C'$ must be zero using the snake lemma.
\end{proof}

 Using \autoref{concentrated} (a) we see that the $U$-concentrated short exact sequences form a subgroup $\Ext^1_{\Pc_U}(Z, X) \subset \Ext^1(Z, X)$ and \autoref{concentrated} (b) together with the naturality of $\Ext^1$ shows that $\Ext^1_{\Pc_U}$ is a sub-bifunctor of $\Ext^1$.

\begin{remark}
A short exact sequence $0 \to X \to Y \to Z \to 0$ does not have a maximal lower set over which it splits. Consider the module $Y$ presented by \\
 \begin{minipage}[c]{0.3\textwidth}
 \[ 
\begin{array}{| c | c  c | }
        \hline 
         M  & \lgcell (1,2) & \lgcell  (2,1)  \\
        \hline
         \lbcell (0,0)    &   1   &   0       \\
         \lrcell (1,1)    &   1   &   1       \\
        \hline
    \end{array}
 \]
\end{minipage}
\hspace{3em}
 \begin{minipage}[c]{0.5\textwidth}
        \begin{tikzpicture}[scale = 0.9]
            \draw[->, thick] (0,0) -- (3.5,0) node[right] {$x$};
            \draw[->, thick] (0,0) -- (0,3.5) node[above] {$y$};
        
            
            \draw[fill=lightblue, draw=none, opacity=1, thick] (0,3) -- (0,0) -- (3,0) -- (3,2) -- (2,2) -- (2,3);
            
            \draw[fill = lightpurple, draw=none, opacity=0.8, thick]
            (1,3) -- (1,2) -- (2,2) -- (2,3);
    
            \draw[fill = lightred, draw= none, thick]
            (1,1) -- (2,1) -- (2,2) -- (1,2);

            \foreach \x/\y in {0/0} {
                \fill[lightblue, draw = black] (\x,\y) circle (3pt);
            }

            \foreach \x/\y in {1/1} {
                \fill[lightred, draw = black] (\x,\y) circle (3pt);
            }

            \foreach \x/\y in {1/2, 2/1} {
                \fill[Gray, draw = black] (\x,\y) circle (3pt);
            }

            \foreach \x in {0,1,2,3} {
                \node at (\x, -0.2) [below] {\x};
            }
            \foreach \y in {0,1,2,3} {
                \node at (-0.2, \y) [left] {\y};
            }
        \end{tikzpicture}  \hfill
\end{minipage} \hfill \\
and the injection of the free module $i \colon A \into Y$. The map $i$ splits over both $\stcomp{\langle (1,2) \rangle}$ and $\stcomp{\langle (2,1) \rangle}$ but not over any lower set strictly larger than either of the two.
\end{remark}

\begin{proposition}\label{proj_exact}
A surjection $g \colon Y \onto Z$ is $V$-split if and only if the pushforward $\Hom(P,Y) \oto{g_*} \Hom(P,Z)$ is surjective for every $U$-projectve module $P$.
\end{proposition}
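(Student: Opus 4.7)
The plan is to use \autoref{concentrated} to reduce $V$-splitting to the vanishing of an $\Ext^1$-class, and then invoke the standard fact that such a class measures the obstruction to lifting maps through $g$.

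For the forward direction $(\Rightarrow)$, let $X \coloneqq \ker g$ so that we have a short exact sequence $0 \to X \to Y \oto{g} Z \to 0$. By assumption this sequence is $V$-split, i.e.\ $U$-concentrated in the sense of \autoref{def:concentrated}, so by \autoref{concentrated}(d)$\Rightarrow$(b) its classifying class $[C] \in \Ext^1(Z,X)$ satisfies $p^*[C] = 0$ in $\Ext^1(P,X)$ for every $U$-projective $P$ and every $p \colon P \to Z$. The vanishing of $p^*[C]$ means that the pullback short exact sequence $0 \to X \to p^*Y \to P \to 0$ splits; composing any section with the canonical map $p^*Y \to Y$ produces a lift $\tilde p \colon P \to Y$ with $g \circ \tilde p = p$. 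This shows that $g_* \colon \Hom(P,Y) \to \Hom(P,Z)$ is surjective.

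For the converse $(\Leftarrow)$, choose a $U$-cover $p_Z \colon Z^U \to Z$ of $Z$, which exists by \autoref{alpha_cover}. By definition of a $U$-cover (\autoref{U_cover}), $Z^U$ is $U$-projective and $\iota_V^* p_Z$ is an isomorphism. By hypothesis, $p_Z$ lifts through $g$, giving $\tilde p \colon Z^U \to Y$ with $g \circ \tilde p = p_Z$. Apply the restriction $\iota_V^*$ and define
\[
s \;\coloneqq\; \iota_V^* \tilde p \,\circ\, (\iota_V^* p_Z)^{-1} \colon \iota_V^* Z \to \iota_V^* Y.
\]
Then $\iota_V^* g \circ s = \iota_V^*(g \circ \tilde p) \circ (\iota_V^* p_Z)^{-1} = \iota_V^* p_Z \circ (\iota_V^* p_Z)^{-1} = \Id$, so $\iota_V^* g$ is split, i.e.\ $g$ is $V$-split.

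There is no real obstacle here, since the essential work has been done in \autoref{concentrated} and \autoref{alpha_cover}. The only minor care is in the forward direction, where one must remember that the pullback of a short exact sequence along $p$ represents $p^*[C]$; this is the standard interpretation of $\Ext^1$ as Yoneda extensions and requires no further argument.
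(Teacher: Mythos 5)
Your proof is correct. For the forward direction you take essentially the same route as the paper — both arguments reduce to \autoref{concentrated}(b), namely that $p^*[C]=0$ for every map $p$ from a $U$-projective module — though you express the conclusion via the pullback extension splitting rather than through the naturality of the connecting map $\delta$ in the long exact sequence for $\Ext^1(-,\ker g)$, which is of course the same fact. For the converse, the paper's written proof in fact only establishes the implication you prove first and leaves the reverse implication unstated, so you have supplied a missing argument: lifting the $U$-cover $p_Z \colon Z^U \to Z$ through $g$ (using the hypothesis with $P = Z^U$) and post-composing with $(\iota_V^* p_Z)^{-1}$ to obtain a section of $\iota_V^* g$ is exactly the right move, and it uses only that a $U$-cover is by definition a $V$-isomorphism and that $\Pc_U$ is a covering class (\autoref{alpha_cover}, \autoref{cover_minimality}).
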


In other words the $\Hom(\mathcal{P}_U, -)$ exact sequences are precisely the $V$-split sequences. 
We could have also defined $V$-splitness like this and then used the horseshoe lemma for a general covering class \cite[8.2.1]{EnochsJenda} to get \autoref{concentrated}.

\begin{proof}
Let $p \colon P\to Z$ be any homomorphism. The pullback $p^*\colon \Hom(Z, -) \to \Hom(P, -)$ induces a commutative diagram.
\[\begin{tikzcd}[ampersand replacement=\&]
	{ \Hom(P, Y)} \& { \Hom(P, Z)} \& {\Ext^1(P, \ker g)} \\
	{ \Hom(Z, Y)} \& { \Hom(Z, Z)} \& {\Ext^1(Z, \ker g)}
	\arrow["\delta", from=2-2, to=2-3]
	\arrow["{g_*}", from=2-1, to=2-2]
	\arrow["{g_*}", from=1-1, to=1-2]
	\arrow["{p^*}"', from=2-2, to=1-2]
	\arrow["{p^*}"', from=2-1, to=1-1]
	\arrow["{p^*}"', from=2-3, to=1-3]
	\arrow["\delta", from=1-2, to=1-3]
\end{tikzcd}\]
Consider $\Id_Z$ as an element in $\Hom(Z,Z)$. Its image under $\delta$ classifies the short exact sequence $0 \to \ker g \to Y \to Z \to 0$ and by commutativity
\[ p^* \circ \delta \left( \Id_Z \right) = \delta \circ p^* \left( \Id_Z \right) = \delta \left( p \right). \] 

The map $g$ is $V$-split if and only if  $0 = p^* \circ \delta \left( \Id_Z \right)$ by \autoref{concentrated} b) and by exactness the last term is zero if and only if $p \in \Ima g_*$ . 
\end{proof}

\begin{corollary}
    The $V$-split sequences form an exact structure for $\grA$ and $\Pc_U$ is the class of projective modules for this exact structure.
\end{corollary}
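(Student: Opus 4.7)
The plan is to verify the Quillen exact-category axioms for the class $\mathcal{E}$ of $V$-split short exact sequences, and then identify its projective objects using \autoref{proj_exact} together with the existence of $U$-covers.

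First, I will reduce every axiom to a statement about split sequences after applying $\iota_V^*$. Since $\iota_V^*$ is exact (being pointwise restriction to $\vect_\K$) and additive, it preserves pullbacks, pushouts and compositions. Closure under isomorphism and containment of all split ses. are then immediate from the equivalent descriptions of $\mathcal{E}$ given in \autoref{concentrated}. Closure of admissible monics (resp.\ epics) under composition follows because $\iota_V^*$ sends them to ordinary split monics (resp.\ epics), whose compositions remain split. For the pushout axiom, given an admissible monic $i$ and an arbitrary morphism $f$, applying $\iota_V^*$ to the pushout square produces the pushout of $\iota_V^* i$ along $\iota_V^* f$; pushouts of split monics along arbitrary maps are split, so the new monic lies in $\mathcal{E}$. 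The dual argument handles pullbacks of admissible epics.

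Second, I will show $\Pc_U$ is exactly the class of $\mathcal{E}$-projective objects. One direction is a direct translation: if $P \in \Pc_U$, \autoref{proj_exact} says $\Hom(P,-)$ sends $V$-split epics to surjections, which is precisely $\mathcal{E}$-projectivity. For the converse, let $X$ be $\mathcal{E}$-projective. Using \autoref{alpha_cover} (applied to the upper set $U$ as in the discussion preceding it), I produce a $U$-cover $c \colon X^U \onto X$ with $X^U \in \Pc_U$. Because $c$ is a $V$-isomorphism, its kernel sequence $0 \to \ker c \to X^U \oto{c} X \to 0$ lies in $\mathcal{E}$, so $\mathcal{E}$-projectivity of $X$ lifts $\Id_X$ to a section $X \to X^U$ and exhibits $X$ as a direct summand of $X^U$. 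Since $\Pc_U$ is defined by the vanishing of the additive Betti numbers $b_{1,\beta}$ for $\beta \in U$, it is closed under direct summands, and therefore $X \in \Pc_U$.

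The main obstacle is the converse direction of the projectivity characterisation: it is the only step that is not a formal consequence of exactness of $\iota_V^*$ and the preceding propositions, and it relies critically on having a $U$-cover that is simultaneously an essential surjection and a $V$-isomorphism. Everything else is a routine translation, once one observes that applying $\iota_V^*$ turns admissible monics and epics into literally split maps in a functor category to $\vect_\K$, where pullback/pushout closure of split sequences is classical.
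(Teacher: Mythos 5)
Your proof is correct and follows the second of the two routes the paper itself suggests: verifying Keller's axioms for the $V$-split sequences (your reduction via exactness and colimit-preservation of $\iota_V^*$, turning everything into a statement about genuinely split sequences, is a clean way to do what the paper delegates to \autoref{concentrated}~(c)), and using \autoref{proj_exact} for one direction of the projectivity claim. The only substantive material you add beyond the paper's citation-style proof is the converse identification of the projectives — realising an $\mathcal{E}$-projective $X$ as a retract of its $U$-cover $X^U$ via the $V$-split kernel sequence of $c$, and using additivity of $b_{1,\beta}$ to close $\Pc_U$ under summands — which is exactly the standard argument the cited propositions of \cite{exactcats} encapsulate, so the two proofs agree in substance.
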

    
 \begin{proof}
     Either follows from \autoref{proj_exact} using (for example) \cite{exactcats} Proposition 1.4 and 1.7. or notice that Keller's axioms (\cite[1.1]{exactcats}) must hold using \autoref{concentrated} c).
 \end{proof}
 
\begin{corollary}
$\text{gl left-} \Pc_{\langle \alpha \rangle} \text{-} \dim (\grA) \leq d+1$.
\end{corollary}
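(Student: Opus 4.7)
The plan is to take any finitely presented $X \in \grA$, first replace it by its $U$-cover (with $U = \up{\alpha}$), and then resolve the resulting kernel by a classical graded free resolution. Together this will yield a $\Pc_U$-resolution of length at most $d+1$.

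First I would apply the construction of \autoref{alpha_cover} to obtain a $U$-cover $p \colon X^U \onto X$ with kernel $K$. Since $p$ is a $V$-isomorphism, $K$ vanishes on $V$, so the short exact sequence
\[ 0 \to K \to X^U \to X \to 0 \]
becomes the trivially split zero sequence after applying $\iota_V^*$; hence it is $V$-split and belongs to our exact structure. Moreover $X^U$ is $\Pc_U$-projective by definition of a $U$-cover.

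Next I would resolve $K$. As a submodule of the finitely generated module $X^U$ over the Noetherian ring $A$, $K$ is itself finitely generated, and by construction it is supported entirely on $U$. Hilbert's syzygy theorem then produces a minimal graded free resolution
\[ 0 \to F_d \to F_{d-1} \to \cdots \to F_0 \to K \to 0 \]
of length at most $d$. By minimality all generators of each $F_i$ lie in $U$, so each $F_i$ is itself supported on $U$. Every short exact sequence arising in this resolution therefore has all three of its terms supported on $U$, restricts to the zero sequence on $V$, and is thus $V$-split. The free modules $F_i$ are $\Pc_U$-projective since they have no relations at all. Splicing with the first step gives a $\Pc_U$-resolution of $X$ of total length $d+1$.

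The only thing that requires care is verifying $V$-splitness of every short exact sequence in the resolution. This is handled once and for all by the initial $U$-cover: once we have passed to modules supported on $U$, every short exact sequence among them is automatically $V$-split, and then Hilbert's bound on the number of syzygies closes out the argument. A naive free resolution of $X$ itself would in general fail to be $V$-split at the very first step, which is what forces the detour through $X^U$.
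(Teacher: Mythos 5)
Your proof is correct and follows essentially the same route as the paper: form the $U$-cover $p\colon X^U\onto X$, observe the resulting kernel sequence is $V$-split, and splice on a minimal graded free resolution of $\ker p$, whose length is bounded by $d$ via Hilbert's syzygy theorem. The paper cites \cite{EnochsJenda} 8.1.2 for the attachment step where you instead spell out directly that the intermediate sequences are $V$-split because every module involved is supported on the upper set $U$; this is a valid and self-contained substitute for the citation.
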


\begin{proof}
Let $Z$ be a persistence module and $p \colon P \to Z$ be a $U$-cover. By construction this is a $V$-split surjection. We can choose any free resolution of $\ker p$ and attach this to get a left $\mathcal{P}_U$-resolution (\cite{EnochsJenda} 8.1.2.) and this is a minimal $\mathcal{P}_U$-resolution if the free resolution is chosen minimal. Since $\ker p$ has a resolution of length $d$ this means that the maximal length of a minimal $\mathcal{P}_U$-resolution is $d+1$.
\end{proof}

\begin{proposition}
In general this bound is best possible. For every $ \alpha \in \Z^d$ we have \\
$\text{gl left-} \Pc_{\langle \alpha \rangle} \text{-} \dim (\grA) = d+1$.
\end{proposition}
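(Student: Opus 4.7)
The plan is to exhibit, for each $\alpha \in \Z^d$, a persistence module $Z$ whose minimal $\Pc_{\langle\alpha\rangle}$-resolution has length exactly $d+1$. The guiding principle is already visible in the upper-bound proof: such a resolution splits into one $U$-cover step plus a free resolution of the kernel of the cover. Matching the bound $d+1$ therefore requires the kernel of the cover to have projective dimension exactly $d$, and over $A$ the only modules achieving this maximum are shifts of the residue field $\K = A/\m$ (via the Koszul complex of length $d$). So I will look for $Z$ whose $U$-cover has kernel isomorphic to $\K$ shifted to a degree in $U$.

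Concretely, writing $\mathbf{1} = (1, \ldots, 1)$, I would take
\[
Z \coloneqq A(\mathbf{1} - \alpha) \big/ \bigl(x_1^2, \ldots, x_d^2,\ x_1 x_2 \cdots x_d\bigr).
\]
Its minimal free presentation has one generator at $\alpha - \mathbf{1}$ and relations at the degrees $\alpha - \mathbf{1} + 2 e_i$ (from each $x_i^2$) together with one at $\alpha$ (from $x_1 \cdots x_d$). Since $\alpha - \mathbf{1} + 2 e_i$ matches $\alpha$ in coordinate $i$ but falls short by one in every other coordinate, each such degree lies in $V = \stcomp{\langle\alpha\rangle}$; the remaining relation sits precisely at $\alpha \in U$. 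Splitting $F_1 = F_1^V \oplus F_1^U$ therefore gives $Z^U = A(\mathbf{1} - \alpha)/(x_1^2, \ldots, x_d^2)$, whose underlying vector space is spanned by square-free monomials. The canonical surjection $Z^U \to Z$ kills the class of $x_1 \cdots x_d$ at degree $\alpha$; since every variable already appears squared in the defining ideal, each $x_i$ annihilates this class in $Z^U$, so it spans an isolated $\K$, giving $\ker(Z^U \to Z) \cong \K(-\alpha)$.

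To finish the resolution, I would splice the minimal (Koszul) free resolution of $\K(-\alpha)$, which has length $d$ and whose terms are free, beneath the $U$-cover; the result is a $\Pc_{\langle\alpha\rangle}$-resolution of $Z$ of total length $d + 1$. Minimality follows from \autoref{prop:minimal_pres}: the $U$-cover is minimal by construction, and every Koszul differential lands in $\m$ times the next free module, so $-\otimes A/\m$ annihilates each differential and no stage can be shortened.

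The main obstacle is pinning down $\ker(Z^U \to Z)$ as exactly $\K(-\alpha)$ rather than something larger (which could drop the projective dimension below $d$). This reduces to the combinatorial fact that $x_1 \cdots x_d$ is, up to scalar, the unique nonzero square-free monomial annihilated by every variable in $A/(x_1^2, \ldots, x_d^2)$, which is clear from the square-free monomial basis. A minor subtlety arises when $d = 1$, where $x_1^2$ and $x_1 \cdots x_d = x_1$ collapse into non-independent relations; in that case one must verify the claim directly or replace the witness by a slightly different choice, but the general template above handles every $\alpha$ as soon as the variables are truly distinct.
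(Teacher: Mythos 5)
Your construction coincides in intent with the paper's ``cube with missing corner'': the $\alpha$-cover of $Z = A(\mathbf{1}-\alpha)/(x_1^2,\dots,x_d^2,\,x_1\cdots x_d)$ is the cube $A(\mathbf{1}-\alpha)/(x_1^2,\dots,x_d^2)$ supported on $[\alpha-\mathbf{1},\alpha]$, its kernel is $\K(-\alpha)$, and the Koszul resolution of $\K(-\alpha)$ splices on to give a $\Pc_{\langle\alpha\rangle}$-resolution of length $d+1$. For $d\geq 2$ this is correct, and your verification that each $x_i$ kills the class of $x_1\cdots x_d$ modulo the squares, so the kernel is a single copy of $\K$ sitting at degree $\alpha$, is exactly the point; your displayed module is in fact stated more carefully than the witness in the paper, whose presentation has a slip in the relation degrees.

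The $d=1$ caveat, however, is not a patchable subtlety, and no alternative witness will repair it: the proposition is false for $d=1$. Over $\K[x]$ any $U$-projective module decomposes into a graded-free summand plus torsion summands $A(-\gamma)/(x^n)$ with $\gamma+n<\alpha$, so the torsion part is supported strictly below $\alpha$. The kernel of a $U$-cover is supported in $U=[\alpha,\infty)$, hence meets the torsion part trivially, embeds into the free part, and is therefore itself graded free. Consequently every module over $\K[x]$ has $\Pc_{\langle\alpha\rangle}$-dimension at most $1 < d+1$. The honest fix is to add the hypothesis $d\geq 2$ (an issue with the proposition itself, as the paper's own example degenerates at $d=1$ for the same reason), rather than to defer to an unspecified replacement witness.
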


\begin{proof}
Assume wlog $\alpha = (0, \dots, 1)$ and define $c_j \coloneqq (-1, \dots, -1 , 1, -1, \dots, -1)$. Consider the "cube with missing corner" $X$ presented by the map
\[ \bigoplus_{j \in [d]} A(\minus e_j) \oplus A( \minus \alpha ) \oto{\left( 1 \ \dots \ 1\right)} A( \pcoord{1, \dots, 1}) \to X \to 0. \]

Its minimal $\alpha$-cover is the projection from the cube $p \colon \K [-1,1]^d \onto X$ and the kernel of this map is again a "cube" $\ker p \simeq \K [0,1]^d \simeq A/\m$. For $d=3$ a graphical representation of these modules is the following:

\begin{minipage}[t]{0.33\textwidth}
\begin{tikzpicture}[tdplot_main_coords]

\coordinate (A) at (-1,-1,-1);
\coordinate (B) at ( 1,-1,-1);
\coordinate (C) at ( 1, 1,-1);
\coordinate (D) at (-1, 1,-1);
\coordinate (E) at (-1,-1, 1);
\coordinate (F) at ( 1,-1, 1);
\coordinate (G) at ( 1, 1, 1);
\coordinate (H) at (-1, 1, 1);

\coordinate (P) at (0,0,0);
\coordinate (Q) at (1,0,0);
\coordinate (R) at (1,1,0);
\coordinate (S) at (0,1,0);
\coordinate (T) at (0,0,1);
\coordinate (U) at (1,0,1);
\coordinate (V) at (1,1,1);
\coordinate (W) at (0,1,1);

\draw[thick,->] (0,0,0) -- (2.3,0,0) node[anchor=north east]{$x$}; 
\draw[thick,->] (0,0,0) -- (0,2.3,0) node[anchor=north west]{$y$}; 
\draw[thick,->] (0,0,0) -- (0,0,2.3) node[anchor=south]{$z$}; 

\draw[fill=lightred,opacity=0.7] (P) -- (Q) -- (R) -- (S) -- cycle;
\draw[fill=lightred,opacity=0.7] (P) -- (T) -- (W) -- (S) -- cycle;
\draw[fill=lightred,opacity=0.7] (P) -- (Q) -- (U) -- (T) -- cycle;
\draw[fill=lightred,opacity=0.7] (T) -- (U) -- (V) -- (W) -- cycle;
\draw[fill=lightred,opacity=0.7] (Q) -- (U) -- (V) -- (R) -- cycle;
\draw[fill=lightred,opacity=0.7] (S) -- (R) -- (V) -- (W) -- cycle;

\end{tikzpicture}
\end{minipage}
\begin{minipage}[t]{0.33\textwidth}
\begin{tikzpicture}[tdplot_main_coords]

\coordinate (A) at (-1,-1,-1);
\coordinate (B) at ( 1,-1,-1);
\coordinate (C) at ( 1, 1,-1);
\coordinate (D) at (-1, 1,-1);
\coordinate (E) at (-1,-1, 1);
\coordinate (F) at ( 1,-1, 1);
\coordinate (G) at ( 1, 1, 1);
\coordinate (H) at (-1, 1, 1);

\coordinate (P) at (0,0,0);
\coordinate (Q) at (1,0,0);
\coordinate (R) at (1,1,0);
\coordinate (S) at (0,1,0);
\coordinate (T) at (0,0,1);
\coordinate (U) at (1,0,1);
\coordinate (V) at (1,1,1);
\coordinate (W) at (0,1,1);

\draw[fill=lightblue,opacity=0.7] (A) -- (B) -- (C) -- (D) -- cycle; 
\draw[fill=lightblue,opacity=0.7] (A) -- (B) -- (F) -- (E) -- cycle; 
\draw[fill=lightblue,opacity=0.7] (A) -- (E) -- (H) -- (D) -- cycle; 

\draw[fill=lightblue,opacity=0.7] (E) -- (F) -- (G) -- (H) -- cycle; 
\draw[fill=lightblue,opacity=0.7] (B) -- (F) -- (G) -- (C) -- cycle; 
\draw[fill=lightblue,opacity=0.7] (D) -- (H) -- (G) -- (C) -- cycle; 


\draw[thick,->] (1,0,0) -- (2.3,0,0) node[anchor=north east]{$x$}; 
\draw[thick,->] (0,1,0) -- (0,2.3,0) node[anchor=north west]{$y$}; 
\draw[thick,->] (0,0,1) -- (0,0,2.3) node[anchor=south]{$z$}; 

\end{tikzpicture}
\end{minipage}
\begin{minipage}[t]{0.33\textwidth}
\begin{tikzpicture}[tdplot_main_coords]

\coordinate (A) at (-1,-1,-1);
\coordinate (B) at ( 1,-1,-1);
\coordinate (C) at ( 1, 1,-1);
\coordinate (D) at (-1, 1,-1);
\coordinate (E) at (-1,-1, 1);
\coordinate (F) at ( 1,-1, 1);
\coordinate (G) at ( 1, 1, 1);
\coordinate (H) at (-1, 1, 1);

\coordinate (P) at (0,0,0);
\coordinate (Q) at (1,0,0);
\coordinate (R) at (1,1,0);
\coordinate (S) at (0,1,0);
\coordinate (T) at (0,0,1);
\coordinate (U) at (1,0,1);
\coordinate (V) at (1,1,1);
\coordinate (W) at (0,1,1);

\draw[fill=lightblue,opacity=0.7] (A) -- (B) -- (C) -- (D) -- cycle; 
\draw[fill=lightblue,opacity=0.7] (A) -- (B) -- (F) -- (E) -- cycle; 
\draw[fill=lightblue,opacity=0.7] (A) -- (E) -- (H) -- (D) -- cycle; 

\draw[fill=lightblue,opacity=0.7] (E) -- (F) -- (U) -- (T) -- (W) -- (H) -- cycle; 
\draw[fill=lightblue,opacity=0.7] (B) -- (F) -- (U) -- (Q) -- (R) -- (C) -- cycle; 
\draw[fill=lightblue,opacity=0.7] (D) -- (H) -- (W) -- (S) -- (R) -- (C) -- cycle; 


\draw[thick,->] (0,0,0) -- (2.3,0,0) node[anchor=north east]{$x$}; 
\draw[thick,->] (0,0,0) -- (0,2.3,0) node[anchor=north west]{$y$}; 
\draw[thick,->] (0,0,0) -- (0,0,2.3) node[anchor=south]{$z$}; 

\end{tikzpicture}
\end{minipage}
The minimal resolution of this module is just the (multi-graded) Koszul complex of $A$ and of length $d$. Concretely for every $J \subset [d]$ there is a corner of the cube given by $\sum_{j \in J} e_j$ and a corresponding syzygy in the minimal resolution at the $|J|$-th position.
\end{proof}

\begin{proposition}
Let $X$ be a persistence module. Then with respect to $\mathcal{P}_U$ the contravariant functor $\Hom(-,X)$ has right derived functors $R_U^*\Hom(-,X)$ which can be computed with $\mathcal{P}_U$-resolutions and any $U$-concentrated short exact sequence induces a long exact sequence.
\end{proposition}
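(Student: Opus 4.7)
The plan is to invoke the standard machinery of relative homological algebra in an exact category, all preconditions of which have just been verified: $\Pc_U$ is a covering class in $\grA$ (by \autoref{alpha_cover} and the subsequent remark), the $U$-concentrated short exact sequences form an exact structure (by the corollary preceding this proposition), and $\Pc_U$ is precisely its class of projectives. So the strategy is to define $R_U^n\Hom(-, X)$ by cohomology of $\Hom(P_\bullet, X)$ for any $\Pc_U$-resolution $P_\bullet \to Y$, then verify independence of the chosen resolution and construct the long exact sequence.

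First I would iterate $U$-covers to produce, for every $Y$, a resolution $\cdots \to P_1 \to P_0 \to Y \to 0$ in which each syzygy map is a $V$-split surjection between $U$-projectives. For well-definedness, the key input is a comparison theorem: given two such resolutions $P_\bullet, P_\bullet'$ of $Y$ (or a map $Y \to Y'$ covered by resolutions), successive lifts exist because, by \autoref{proj_exact}, $\Hom(\Pc_U, -)$ sends $V$-split surjections to surjections; the standard acyclic-models argument then shows any two lifts are chain-homotopic. Applying the exact functor $\Hom(-, X)$ produces homotopy-equivalent cochain complexes, so $R_U^*\Hom(-, X)(Y)$ is determined up to canonical isomorphism.

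For the long exact sequence attached to a $U$-concentrated sequence $0 \to X' \to Y \to Z \to 0$, I would run the horseshoe lemma in the exact category: choose $\Pc_U$-resolutions $P'_\bullet \to X'$ and $P''_\bullet \to Z$, and use \autoref{concentrated}(c) degreewise to assemble a resolution of $Y$ whose $n$-th term is $P'_n \oplus P''_n$, fitting into a short exact sequence of resolutions that is \emph{split in each degree}. Because this sequence is split in every degree, applying the contravariant $\Hom(-, X)$ yields a short exact sequence of cochain complexes, and its long exact cohomology sequence is the desired sequence for $R_U^*\Hom(-, X)$.

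The main subtlety is the inductive step in the horseshoe construction: one must check that the induced short exact sequence on syzygies is again $U$-concentrated, so that the next layer of $U$-covers can be glued in the same way. This follows from the equivalent characterization \autoref{concentrated}(d) together with the snake lemma applied to the horseshoe diagram, using that $\iota_V^*$ is exact and preserves the splitting produced at the previous level. Once this is in hand, both the well-definedness and the long exact sequence follow formally from the exact-category analogues of the classical arguments.
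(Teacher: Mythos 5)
Your proposal is correct and takes essentially the same approach as the paper, which simply cites \cite{EnochsJenda} 8.2.3 for the standard machinery of derived functors relative to a (pre)covering class: you are unpacking exactly what that citation invokes, using \autoref{proj_exact} for the comparison theorem and \autoref{concentrated}(c) together with the $V$-split exact structure for the horseshoe lemma, including the correct observation that the syzygy sequences stay $U$-concentrated because $\iota_V^*$ kills the kernel of a $U$-cover. One minor imprecision: $\Hom(-,X)$ is only left-exact, not exact, but what you actually use --- that any additive functor sends chain homotopy equivalences to cochain homotopy equivalences and degreewise-split short exact sequences of complexes to short exact sequences --- does not require exactness, so the argument is unaffected.
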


\begin{proof}
 \cite{EnochsJenda} 8.2.3.
\end{proof}

\begin{remark}
 Consider the (non-unital) path-algebra $ \mathcal{P}(\Z^d)$ and a basis given by all finite paths in $\Z^d$. For the upper set $U \subset \Z^d$ every finite path $U$ corresponds to a basis element and the sub-algebra which they span is an ideal $I_U$. This ideal induces a surjective (non-unital) algebra homomorphism of the path algebras $p_V \colon \mathcal{P}(\Z^d) \to \mathcal{P}(\Z^d)/ I_U \iso \mathcal{P}(V)$. If we defined persistence modules as modules over this algebra, then the functors $ - \otimes_{\Pc(\Z^d)} \Pc (V) $ and $\iota_V^*$ are naturally isomorphic. \\
The constructions in this subsection basically describe the relative homological algebra \cite{Hochschild} for the projection $p_V$.
\end{remark}

\begin{proposition}\label{ext_fun}
 Let $Z$ be any persistence module. Consider a free resolution and a $\mathcal{P}_U$-resolution of $Z$, then the identity map $\Id_Z$ lifts to a map from the free resolution to the latter. For any $X$ this induces natural transformations $i^*_{\Pc_U} \colon R_U^* \Hom(Z, X) \to \Ext^*(Z, X)$, where $i^0_{\Pc_U} \colon R^0\Hom(Z, C) \iso \Hom(Z, X)$ is a natural isomorphism and $i^{1}_{\Pc_U}$ a natural inclusion which identifies $R_U^1 \Hom(Z, X)$ with the sub-bifunctor $\Ext_{\Pc_U}^1(Z,X)$.
\end{proposition}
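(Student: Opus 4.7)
The plan is to mirror the classical comparison between different resolutions, adapted to the exact structure on $\grA$ whose projectives are $\Pc_U$.

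First, fix a free resolution $F_\bullet \to Z$ and a $\Pc_U$-resolution $P_\bullet \to Z$. Each $F_i$ is free, hence $U$-projective, and the resolution $P_\bullet$ is $V$-split exact at every stage, so by \autoref{proj_exact} the complex $P_\bullet$ is $\Hom(F_i,-)$-acyclic for each $i$. Hence the usual comparison theorem produces a chain map lifting $\Id_Z$, unique up to chain homotopy. Applying $\Hom(-,X)$ and taking cohomology yields the claimed natural transformations $i^*_{\Pc_U}\colon R^*_U\Hom(Z,X) \to \Ext^*(Z,X)$.

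Degree zero is routine: both $F_1 \to F_0 \to Z \to 0$ and $P_1 \to P_0 \to Z \to 0$ are right exact and $\Hom(-,X)$ is left exact, so both $R^0_U\Hom(Z,X)$ and $\Ext^0(Z,X)$ are naturally identified with $\Hom(Z,X)$; under these identifications $i^0_{\Pc_U}$ is the identity.

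For degree one I will show separately that $i^1_{\Pc_U}$ is injective and that its image is $\Ext^1_{\Pc_U}(Z,X)$. A $1$-cocycle on $P_\bullet$ is a homomorphism $c\colon P_1 \to X$ vanishing on the boundaries coming from $P_2$. Pushing out $P_1 \to P_0$ along $c$ produces a short exact sequence $0 \to X \to Y \to Z \to 0$, and this is precisely the Yoneda extension associated to $c$, exactly as in the construction of \autoref{pres_ext} with $P_0, P_1$ playing the role of generators and relations. The resulting extension fits into a commutative square of $\Pc_U$-resolutions of the shape (c) in \autoref{concentrated} with respect to the $U$-covers $P_0 \to Z$ and $X^U \to X$, hence it is $V$-split, so the image of $i^1_{\Pc_U}$ is contained in $\Ext^1_{\Pc_U}(Z,X)$. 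Conversely, a $V$-split extension lifts to a morphism of $\Pc_U$-resolutions by \autoref{concentrated}(c), from which a representing cocycle $c$ is read off; so $i^1_{\Pc_U}$ hits every class of $\Ext^1_{\Pc_U}$. Injectivity follows because if $c$ extends to a map $P_0 \to X$ then the pushout extension splits, i.e.\ is trivial in $\Ext^1$.

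The main obstacle I expect is verifying that the pushout/Yoneda recipe on $P_\bullet$ produces the same $\Ext^1$-class as the one obtained by pulling $c$ back along the comparison map $F_\bullet \to P_\bullet$ and then performing the corresponding pushout on $F_\bullet$. This is a standard but slightly delicate diagram chase: one uses uniqueness up to chain homotopy of the lift together with naturality of the pushout construction, together with the observation that homotopic lifts differ by a coboundary, so they induce the same class in $\Ext^1$.
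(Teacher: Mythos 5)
Your proposal takes a genuinely different route from the paper. The paper's proof is short and very concrete: it lifts $\Id_Z$ by using the specific structure of the $\Pc_U$-resolution (the $U$-cover $P_0=Z^U$ has the same generators as $Z$ and $P_1$ is supported on the $U$-relations), so the induced map $\Hom(P_1,X)\to\Hom(F_1,X)$ is literally the inclusion of the summand of $\Hom(A^R,X)$ of cocycles supported on $U$; both injectivity of $i^1$ and the identification with $\Ext^1_{\Pc_U}$ then follow by inspecting this subquotient directly. You instead go through the standard abstract machinery --- the comparison theorem plus the pushout/Yoneda dictionary between cocycles and extensions --- and characterize the image via \autoref{concentrated}. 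That route is sound in outline, and the tradeoff is clear: yours is independent of the explicit matrix form of the resolutions and extends more easily to other covering classes, while the paper's is shorter because it never has to confront the Yoneda--cohomology compatibility you flag.

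Two genuine gaps remain, though. First, your injectivity argument is stated with the implication reversed: you show that if $c$ extends to $P_0\to X$ then the pushout splits, but injectivity requires the converse, namely that a splitting of the pushout $0\to X\to Y\to Z\to 0$ forces $c$ to extend. This direction is also true (post-compose a retraction $r\colon Y\to X$ with the structural map $P_0\to Y$ of the pushout to obtain the extending map), but as written you have only proved well-definedness, which is already guaranteed by the chain-map construction. Second, the ``main obstacle'' you name --- that the pushout of $P_1\to P_0$ along $c$ represents the same class in $\Ext^1(Z,X)$ as $i^1([c])=[c\circ(F_1\to P_1)]$ computed via the free resolution --- is not a side remark but the heart of the surjectivity/injectivity argument: without it you have shown that the $V$-split classes are exactly the ones arising from the pushout recipe, not that they coincide with the image of $i^1$. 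This compatibility is standard, but in a proof it must be established (or cited precisely), e.g.\ by observing that the first two terms of the pushout sequence themselves extend to a $\Pc_U$-resolution of $Z$, whose comparison with $F_\bullet$ recovers $c\circ(F_1\to P_1)$ up to coboundary. Filling these two points in would make your argument a complete and valid alternative to the paper's.
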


\begin{proof}
 The first claim follows from left-exactness of $\Hom(-,X)$. For the second claim, let $A^R$ be the second term in the free resolution and notice that $R_U^* \Hom(Z, X)$ is precisely the subquotient of $\Hom(A^R, X)$ whose image is supported in $U$.
\end{proof}

\begin{remark}
In the decomposition algorithm we considered short exact sequences whose defining cocycle only lives in a single degree $\alpha$. If $U \subset \Z^d$ is not an upper set anymore then we can still define a class of $U$-concentrated exact sequences. The problem with this definition is that there is no corresponding class of projectives and pullbacks along arbitrary maps do not again produce a $U$-concentrated sequences. In particular this means that this does not produce an exact structure or a sub-bifunctor. 
\end{remark}

\begin{definition}
Let $0 \to X \to Y \to Z \to 0$ be a short exact sequence and $U \subset \Z^d$ any subset. We call it $U$-concentrated if it is $\langle U \rangle$-exact and additionally there is a presentation as in \autoref{concentrated} (a) or (c) where the cocycle can be chosen so that as a matrix it is only non-zero in columns with degree in $U$. \\
By abuse of notation we still denote by $\Ext_{\Pc_U}^1(Z, X)$ the corresponding subgroup.
\end{definition}

In the second entry functoriality is preserved.

\begin{proposition}\label{ext_alpha}
 For $U \subset \Z^d$ arbitrary, every homomorphism $f\colon X \to X'$ induces a pushforward 
 $\Ext^1_{\Pc_U}(Z,X) \oto{f_*} \Ext^1_{\Pc_U}(Z,X')$ and 
 if $f_U= 0$, then $f_*=0$.
 If $Z$ has no relations in $\langle U \rangle \setminus U$ then the inclusion $\Ext^1_{\Pc_U}(Z,-) \to \Ext^1_{\Pc_{\langle U \rangle}}(Z,-)$ is a natural isomorphism.
\end{proposition}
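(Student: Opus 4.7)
The plan is to verify the three assertions in turn, using the characterisation of $\Ext^1_{\Pc_U}(Z,-)$ as the subgroup of $\Ext^1(Z,-)$ consisting of classes representable by a cocycle whose non-zero columns all sit in degrees belonging to $U$.

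For the functorial pushforward, I would start with a class in $\Ext^1_{\Pc_U}(Z,X)$ and pick a representative cocycle $C \colon A^{R_Z} \to X$ with columns supported in $U$, as guaranteed by the definition. The pushforward class $f_*[C] \in \Ext^1(Z,X')$ is represented by $f \circ C$, and because $f$ is a graded homomorphism, the columns of $f \circ C$ are non-zero only where those of $C$ are; so $f \circ C$ is again column-supported in $U$, showing that $f_*$ sends $\Ext^1_{\Pc_U}(Z,X)$ into $\Ext^1_{\Pc_U}(Z,X')$. Well-definedness on cohomology classes is inherited from ordinary $\Ext^1$, since composition with $f$ carries coboundaries to coboundaries. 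If additionally $f_U = 0$, then each non-zero column of $C$ at some $\alpha \in U$ sends the corresponding generator into $X_\alpha$, which $f_\alpha$ annihilates; hence $f \circ C = 0$ already as a map, so $f_*[C] = 0$.

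For the natural isomorphism under the hypothesis that $Z$ has no relations in $\langle U \rangle \setminus U$, the inclusion $\Ext^1_{\Pc_U}(Z,-) \hookrightarrow \Ext^1_{\Pc_{\langle U \rangle}}(Z,-)$ is immediate since a $U$-concentrated sequence is in particular $\langle U \rangle$-concentrated. For surjectivity I would invoke \autoref{concentrated}(c) to represent any class in $\Ext^1_{\Pc_{\langle U \rangle}}(Z,X)$ by a cocycle $C$ whose domain $A^{R_Z}$ has generators in degrees indexed by the relations of $Z$ lying in $\langle U \rangle$. By the hypothesis, these degrees are all in $U$, so $C$ is automatically column-supported in $U$ and the class already lies in $\Ext^1_{\Pc_U}(Z,X)$. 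Naturality in $X$ follows from the pushforward construction above.

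The main obstacle I expect is careful bookkeeping: since $U$ need not be an upper set there is no corresponding exact structure to appeal to, and $\Ext^1_{\Pc_U}$ is defined by an existence statement about representatives rather than by a derived functor. I would handle this by always working with the canonical cocycles produced by \autoref{concentrated}(c) from fixed $\langle U \rangle$-covers; with this choice the column-support condition is transparent, and the two claims of the proposition reduce to the fact that composition with a graded map $f$ cannot move the support of a cocycle between columns of different degrees.
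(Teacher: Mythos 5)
Your proposal is correct and follows essentially the same route as the paper: represent a class by a cocycle $C$ whose non-zero columns have degree in $U$, observe that post-composition with a graded map $f$ preserves this support condition (and kills $C$ entirely when $f_U=0$), and note that under the hypothesis on the relations of $Z$ the cocycle produced by \autoref{concentrated} is automatically column-supported in $U$. The paper's proof is just a terser version of the same argument.
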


\begin{proof}
For the first statement, let $C \colon A^{R_Z} \to X$ be a cocycle classifying the exact sequence then $f \circ C$ represents the image in $\Ext^1(Z,X)$ but still satisfies the condition. For the second statement, if $f_U = 0$ then necessarily $f \circ C = 0$. \\
 The third statement follows directly from the definition, because all columns of the cocycle $C$ automatically have their degree in $U$.
\end{proof}

\subsection{Localisation}\label{sub:localisation}

For an upper set $U \subset \Z^d$ we have seen two constructions which let us study persistence modules through the lense of the lower set $V = U^\complement$: the functor $\left(\iota_{V}\right)_!\iota_{V}^*$ and $U$-covers. The former forgets everything in $U$, whereas the latter only forgets the relations. In \autoref{def:U_hom} we restricted the morphisms of the category $\grA$ to those which have an effect on an (now arbitrary) subset $U$ which should be seen as a localisation at $U$.

\begin{remark}
By \autoref{ext_alpha}, for any $Z \in \grA$ the functor $\Ext^1_{\Pc_U}(Z, -) \colon \grA \rightarrow \grA$ descends to a functor on $\grA^{U}$. These two constructions are also related in other ways, one of which we have exploited in \autoref{sec:aida}.
\end{remark}

\begin{question}
Which properties does $\grA^{U}$ have.
\end{question}

\begin{remark}
The assignments $U \mapsto \Hom^{U}(X, Y)$ and $U \mapsto \Ic_U(X,Y)$  make the two constructions functors
$\mathcal{P}(\Z^n)^{op} \to \grA $, because if $U \subset W$, then $\Ic_W \subset \Ic_U$. We can endow $\mathcal{P}(\Z^n)^{op}$ with the discrete topology and $\Ic_U$ becomes a sheaf by definition. $\Hom^{U}$ on the other hand does not, since it is the quotient of the constant sheaf $\Hom(X, Y)$ by $\Ic_{(-)}$ and would need to be sheafified.
\end{remark}

\subparagraph{Derived Functors}
In the next two propositions, we will briefly investigate the homological properties of the bi-functors we have introduced. This has no relation to the algorithm.

\begin{proposition}
The bifunctors $\Ic_U$ and $\Hom^U$ are additive and can be upgraded to functors $\grA \times \grA^{op} \to \grA$ analogous to the $\grA$-enriched $\Hom$-functor by setting for every $\alpha \leq \beta \in \Z^d$:
\[ \Ic_U\left(X, Y\right)_\alpha \coloneqq \Ic_U\left(X, Y(\alpha) \right) \ \text{ and } \ \Ic_U\left(X, Y\right)_{\alpha \to \beta} \coloneqq Y \left(\alpha \to \beta \right)_*.\]
The same construction works for $\Hom^{U}$.
In addition, $\Ic_U$ is left-exact.
\end{proposition}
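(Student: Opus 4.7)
My strategy is to reduce everything to the analogous statements about the ordinary $\grA$-enriched $\Hom$-functor, which I will take as known, and then check that the sub/quotient structure plays well with the grading, biadditivity, and left exactness. The statement breaks naturally into three parts: (i) additivity of $\Ic_U(-,-)$ and $\Hom^U(-,-)$ as $\K$-bifunctors; (ii) the upgrade to graded-module-valued bifunctors; and (iii) left exactness of $\Ic_U$.

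\textbf{Step 1 (additivity).} Additivity is essentially automatic. For fixed $X$, the assignment $Y\mapsto \Ic_U(X,Y)$ is a $\K$-linear subfunctor of $\Hom(X,-)$: if $f,g\colon X\to Y$ both vanish on $U$ pointwise, then $f+g$ and $\lambda f$ do too. The same argument in the first variable (precomposition preserves pointwise vanishing) gives additivity in $X$. Then $\Hom^U$ is additive because it is the quotient of additive bifunctors by an additive subbifunctor.

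\textbf{Step 2 (graded structure).} The key verification is that setting $\Ic_U(X,Y)_\alpha\coloneqq\Ic_U(X,Y(\alpha))$, with the structure map induced by $Y(\alpha\to\beta)_\ast$, in fact defines a graded submodule of the enriched $\Hom(X,Y)$. Concretely, given $f\in\Ic_U(X,Y(\alpha))$, I need $(Y(\alpha\to\beta)_\ast f)_\gamma=0$ for every $\gamma\in U$. But this map factors as $f_\gamma\colon X_\gamma\to Y_{\gamma+\alpha}$ followed by the structure map $Y_{\gamma+\alpha}\to Y_{\gamma+\beta}$, and the first factor is $0$ by hypothesis. So $\Ic_U(X,Y)$ is a graded submodule of $\Hom(X,Y)$, and the quotient $\Hom^U(X,Y)=\Hom(X,Y)/\Ic_U(X,Y)$ inherits the graded module structure termwise. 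Functoriality in both variables is inherited from the enriched $\Hom$ by restricting to the subfunctor or passing to the quotient.

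\textbf{Step 3 (left exactness).} The cleanest way is to describe $\Ic_U(X,-)$ as a kernel. Restriction along the inclusion $\iota_U\colon U\hookrightarrow\Z^d$ gives a natural transformation
\[
\eta_X\colon \Hom(X,-)\;\Longrightarrow\;\Hom\bigl(\iota_U^\ast X,\,\iota_U^\ast(-)\bigr),
\]
between functors $\grA\to\vect_\K$ (or to $\grA$ after enriching). By construction $\Ic_U(X,-)=\ker\eta_X$ pointwise. Both $\Hom(X,-)$ and $\Hom(\iota_U^\ast X,\iota_U^\ast(-))$ are left exact (the former because it is a $\Hom$-functor, the latter because $\iota_U^\ast$ is exact and then $\Hom$ is left exact). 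A standard diagram chase shows that the kernel of a natural transformation of left exact functors is again left exact: if $0\to A\to B\to C$ is exact, then the three $\ker\eta$-terms embed in the $F$-row, injectivity of the left map and exactness at the middle term both follow from snake/four-lemma type chases using injectivity of $G(A)\to G(B)$. This yields left exactness of $\Ic_U(X,-)$, and the analogous argument in the first variable works by precomposition.

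\textbf{Expected obstacle.} None of the individual steps are deep; the only point requiring care is Step~2, where one has to track that the shift-functor $Y\mapsto Y(\alpha)$ commutes with the pointwise vanishing condition and that the induced maps indeed land in the subspace $\Ic_U(X,Y(\beta))$. Once this is in place, the additivity and left exactness statements reduce to formal consequences of the enriched $\Hom$ being additive and left exact.
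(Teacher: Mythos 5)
Your proof is correct and, at bottom, is the same argument the paper gives: the paper chases elements directly (if $\varphi\in\Ic_U(T,Y)$ vanishes in $\Ic_U(T,Z)$, factor it as $f\circ\psi$ and use that $f$ monic implies $\iota_U^*f$ monic to get $\psi_{|U}=0$), while you package the identical chase as the general fact that $\Ic_U(X,-)$ is the kernel of the restriction transformation $\Hom(X,-)\Rightarrow\Hom(\iota_U^*X,\iota_U^*(-))$ between left exact functors, and kernels of such transformations are left exact. Your Step~2 spells out the verification that $\Ic_U(X,Y)$ is a graded submodule, which the paper treats as immediate ("only left exactness is not completely direct from the definition"); this is a welcome addition but not a different route.
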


\begin{proof}
Only left exactness is not completely direct from the definition. Let $0 \to X \oto{f} Y \oto{g} Z \to 0$ be a short exact sequence and $T \in \grA$. The only non-trivial part is exactness in the middle. Let $T \in \grA$ and consider $\Ic_U(T, -)$. Let $\varphi \colon T \to Y$ be such that $\varphi_{|U}= 0$ and $g \circ \varphi=0$. Then there is a map $\psi\colon T \to X$ such that $(f \circ \psi) = \varphi$, so $(f \circ \psi)_{|U} = 0$. As $f$ is injective, it must also hold that $\psi_{|U} = 0$. For $\Ic_U(-, T)$ the argument is dual. 
\end{proof}

\begin{construction}
The natural inclusion $\Ic_U(-,-) \into \Hom(-,-)$ induces a natural transformation of derived functors $\iota \colon R^*\Ic_U(-,-) \to \Ext^*(-,-)$. \\
Let $X, Y \in \grA$ and $P_* \to X$ be a projective resolution. Applying $\iota$ to $P_*$ we get a short exact sequence of chain complexes 
\[0 \to \Ic_U(P_*,Y) \to \Hom(P_*, Y) \to \Hom^{U}(P_*, Y) \to 0.\]
This induces a long exact sequence of persistence modules, natural in both entries
\begin{align*} 0 \to \Ic_U(X, Y) \oto{\iota} \Hom(X, Y) \oto{\pi} \Hom[U](X, Y) \oto{\delta} \\
R^1\Ic_U(X, Y) \oto{\iota} \Ext^1(X, Y) \oto{\pi} \Ext[U]^1(X, Y) \oto{\delta} \dots
\end{align*}
That is, we can define a bifunctor $\Hom[U](-,-)$ and its derived functors $\Ext[U]^*(-,-)$ in this way. They are computed by the (non-left-exact) functor $\Hom^{U}(-,-)$ and the same construction works, of course, for injective resolutions.

\end{construction}

\begin{proposition}\label{comparison_map}
The natural map $\Ext^1_{\Pc(U)}(X, Y) \into \Ext^1(X, Y) \oto{\pi} \Ext[U]^1(X,Y)$ is an injection.
\end{proposition}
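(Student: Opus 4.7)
The plan is to exploit the long exact sequence constructed just before the statement,
\[
\cdots \to R^1\Ic_U(X,Y)\oto{\iota}\Ext^1(X,Y)\oto{\pi}\Ext[U]^1(X,Y)\to\cdots,
\]
which gives $\ker(\pi)=\Ima(\iota)$. Injectivity of $i=\pi\circ\mathrm{incl}$ thus reduces to showing $\Ext^1_{\Pc_U}(X,Y)\cap \Ima(\iota)=0$ as subgroups of $\Ext^1(X,Y)$.

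Fix a projective resolution $P_*\to X$ and decompose $P_1=P_1^U\oplus P_1^{U^\complement}$ by splitting the free summands of $P_1$ according to whether the degree of the corresponding generator lies in $U$. For a class $[\xi]$ in the intersection I would work with two simultaneous representatives: a cocycle $C_0\in\Hom(P_1,Y)$ supported on $P_1^U$, which exists by the definition of $\Ext^1_{\Pc_U}$, and a cocycle $c\in\Ic_U(P_1,Y)$, related by $C_0-c=\phi\circ d_1$ for some $\phi\in\Hom(P_0,Y)$. A short argument first shows that $c$ must actually vanish on $P_1^U$: for each summand $A(-\alpha)\subset P_1^U$ the generator lies in degree $\alpha\in U$, where $c_\alpha=0$, and $A$-linearity propagates the vanishing to the whole summand. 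Splitting the relation $C_0=c+\phi\circ d_1$ into the two blocks of $P_1$ then yields $C_0|_{P_1^U}=\phi\circ d_1|_{P_1^U}$ and $c|_{P_1^{U^\complement}}=-\phi\circ d_1|_{P_1^{U^\complement}}$.

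To conclude $[\xi]=0$ in $\Ext^1(X,Y)$ I will produce a single $\psi\in\Hom(P_0,Y)$ with $\psi\circ d_1=C_0$. Writing $\psi=\phi-\psi'$, this becomes the task of constructing $\psi'$ which vanishes on $\Ima(d_1|_{P_1^U})$ and equals $-\phi$ on $\Ima(d_1|_{P_1^{U^\complement}})$. My plan is to first reduce to a minimal $\Pc_U$-presentation via the horseshoe diagram of \autoref{concentrated}(c): there the decomposition $P_1=P_1^U\oplus P_1^{U^\complement}$ matches the splitting of $P_1$ coming from the short exact sequence of $\Pc_U$-covers, and the cocycle conditions $C_0\circ d_2=0$ together with $d_1\circ d_2=0$ should then force $\phi$ to restrict to an element of $\Ic_U(P_0,Y)$ on the overlap, while \autoref{ext_alpha} guarantees such a correction does not alter the class $[\xi]$. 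With this normalisation the two prescriptions for $\psi'$ become compatible, $\psi'$ extends to $P_0$ by graded projectivity of the free summands, and $C_0$ is realised as a coboundary.

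The hard part will be the overlap-compatibility step above: in general $\Ima(d_1|_{P_1^U})$ and $\Ima(d_1|_{P_1^{U^\complement}})$ can share common elements at degrees that are neither clearly in $U$ nor in $U^\complement$, and showing that $\phi$ can be chosen to respect the decomposition on that overlap is the crucial technical point. I expect the argument there to mirror the proof of \autoref{ext_alpha}, where an analogous $U$- versus $U^\complement$-dichotomy on morphisms is controlled precisely by the fact that cocycles representing $\Pc_U$-concentrated classes can always be normalised to live on $U$-columns.
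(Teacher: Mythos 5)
Your reduction via the long exact sequence and the bookkeeping with the two representatives are sound and match the paper's starting point: writing $C_0-c=\phi\circ d_1$ with $C_0$ supported on $P_1^U$ and $c\in\Ic_U(P_1,Y)$, and observing that $c$ vanishes on $P_1^U$ because each free summand of $P_1^U$ is generated in a degree lying in $U$, is the right local computation. The divergence --- and the gap --- is in how you close. The paper never builds a nullhomotopy for $C_0$; it argues inside $R^1\Ic_U(X,Y)$: the class $\xi$ with $\iota(\xi)=\eta$ inherits from $\eta$ a representative whose image is supported in $U$, and a map that is simultaneously supported in $U$ and zero at every degree of $U$ (being in $\Ic_U$) is the zero map, so $\xi=0$ and hence $\eta=\iota(\xi)=0$. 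Your plan instead tries to exhibit $C_0$ as a coboundary by patching $\phi$ over $\Ima(d_1|_{P_1^U})$ and $\Ima(d_1|_{P_1^{U^\complement}})$, and you yourself flag the overlap compatibility as unresolved. As written this reduces the problem to itself: since $C_0=\phi\circ d_1+c$ with $c$ supported on $P_1^{U^\complement}$, producing your $\psi$ is exactly equivalent to showing that the cocycle $c$ bounds, i.e.\ to the statement $\eta=0$ you are trying to prove, and nothing in the proposal supplies the input that breaks this circle.

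A second, unacknowledged problem: even granting the overlap compatibility, the step ``$\psi'$ extends to $P_0$ by graded projectivity of the free summands'' is not valid. Your $\psi'$ is prescribed on the submodule $\Ima(d_1|_{P_1^U})+\Ima(d_1|_{P_1^{U^\complement}})\subset P_0$, and extending a homomorphism from a submodule of a free module to the whole module requires injectivity of the target $Y$, not projectivity of the source; persistence modules are essentially never injective, so this extension can fail. The workable route is the paper's: stay in the subcomplex $\Ic_U(P_*,Y)$, where the question is not whether $c$ bounds in $\Hom(P_*,Y)$ but whether the class $\xi$ it defines vanishes, and kill $\xi$ by the support argument above (equivalently, pass to the $\Pc_{\langle U\rangle}$-resolution via \autoref{ext_fun}, where $U$-concentration of $\eta$ gives a representative living only on columns of degree in $U$, on which any element of $\Ic_U$ necessarily vanishes). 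Finally, a small point: \autoref{ext_alpha} concerns pushforwards along maps vanishing on $U$ and does not justify modifying $\phi$ within its coboundary class; that freedom is already built into the definition of $R^1\Ic_U$ and needs no citation.
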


\begin{proof}
Let $\eta \in \Ext^1_{\Pc(U)}(X, Y)$, if $\pi(\eta)=0$, then there is $\xi \in R^1\Ic_U(X, Y)$ st. $\iota(\xi) = \eta$, but $\eta$ is represented by a map whose image is in $U$ and so $\xi$ is, too. But if this map is $0$ in $U$, then $\xi = 0$ and so $\eta = 0$.
\end{proof}

\begin{question}
When is the map from \autoref{comparison_map} an isomorphism?
\end{question}

\subparagraph{Rigidity of $U$-invariant subspaces.}

Consider a short exact sequence $0 \to X \oto{f} Y \oto{g} Z \to 0$. If $\Hom(X, Z)= 0$, then we have argued in \autoref{sec:aida} that a decomposition of $Y$ can be found by first decomposing $X$ and $Z$. In fact, all maps of $X$ into $Y$ are isomorphic: If $f' \colon X \to Y$ is any other injection, then $g \circ f' = 0$, so $f'$ factors through $f$ via a map $\phi \colon Y \to Y$. Then $\phi$ must be an injection, too, so it is an isomorphism. 

If the exact sequence is $V$-split, then we saw that the condition on $\Hom(X, Z)$ can be relaxed. We recall the definition
\newtheorem*{definition*}{Definition}
\begin{definition*}[6.12] 
Let $0 \to X \oto{f} Y \oto{g} Z \to 0$ be concentrated in $U$. Denote by $ W \coloneqq \supp{\beta_{*,1}(Y)}$ the support of the relations of $Y$. Then the short exact sequence (or $f$, $g$) is called \emph{$U$-invariant} (or monomorphism, epimorphism) if $\Hom^{U\cap W}\left(X^{U}, Z^{U}\right) = 0$.
\end{definition*}

\begin{proposition}\label{invariant_minimal}
If $0 \to X \oto{f} Y \oto{g} Z \to 0$ is $U$-invariant, and if we choose minimal covers and minimal $U$-covers for $X$ and $Z$, then both the presentation and the $\mathcal{P}_U$-presentation of $Y$ constructed from these, as in \autoref{concentrated} (a) and (c), are minimal. 
\end{proposition}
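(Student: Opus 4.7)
The plan is to reduce both minimality statements to a vanishing condition on the cocycle $C$ and then derive that condition from the $U$-invariance hypothesis. For the presentation in (a), \autoref{min_pres_criterion} says that (since $M_1, M_2$ already have $M \otimes A/\m = 0$ by minimality of the presentations of $X$ and $Z$) the extended presentation is minimal iff $C \otimes A/\m = 0$, equivalently iff $f \otimes A/\m$ is injective. For the $\Pc_U$-presentation in (c) we use the analog of \autoref{min_pres_criterion} for the $\Pc_U$-exact structure of \autoref{sec:projectivity}: the cover $X^U\oplus Z^U \to Y$ is essential (hence a minimal $\Pc_U$-cover, cf.\ \autoref{cover_minimality}) exactly when the same cocycle $C$ has trivial image modulo $\m$, since $\ker p_X \subset \m X^U$, $\ker p_Z \subset \m Z^U$ by the minimality of the chosen $U$-covers. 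Thus both statements reduce to the single assertion $C \otimes A/\m = 0$.

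Next I would invoke the long exact sequence obtained by applying $-\otimes A/\m$ to $0 \to X \to Y \to Z \to 0$. Here $f \otimes A/\m$ is injective iff the connecting homomorphism $\delta\colon \Tor_1(Z, A/\m) \to X\otimes A/\m$ vanishes. By \autoref{concentrated} the sequence is $V$-concentrated, so $\delta$ automatically vanishes on $V$; and $\delta$ is supported on the relation support $W$ of $Y$ by construction. Hence it remains to show $\delta|_{U\cap W} = 0$, and this is precisely where the hypothesis $\Hom^{U\cap W}(X^U, Z^U) = 0$ must be used.

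The heart of the argument is to produce, from any hypothetical non-zero entry of $\delta$ at a degree $\alpha \in U \cap W$, a non-zero class in $\Hom^{U\cap W}(X^U, Z^U)$. Such a non-zero entry corresponds to a relation $e_j$ of $Z$ at $\alpha \in R_Z \subset U$ whose cocycle value $C(e_j) \in X^U_\alpha$ represents a minimal generator $e_i$ of $X^U$ at $\alpha$ modulo $\m X^U$. Using \autoref{free}, the generator $e_i$ gives a canonical split injection $A(-\alpha) \hookrightarrow X^U \otimes A/\m$, while the relation $e_j$ determines an element $d_Z(e_j) \in Z^U_\alpha$ and hence a map $A(-\alpha) \to Z^U$. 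The goal is to assemble these into a module homomorphism $\phi\colon X^U \to Z^U$ whose value at $\alpha$ is non-zero in $Z^U_\alpha/\m$. Once constructed, $\phi$ witnesses a non-zero class in $\Hom^{U\cap W}(X^U, Z^U)$, contradicting $U$-invariance.

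The main technical obstacle is the coherent construction of $\phi$ on all of $X^U$: we must extend $\phi$ from the single generator $e_i$ to all generators, while respecting the relations of $X^U$, which live in $V$. My plan for this is to first define $\phi$ on the restricted category of $U$-local data via the identifications of \autoref{free} and \autoref{matrix_representation}, then globalize using the adjunction $(\iota_V)_! \dashv \iota_V^*$ of \autoref{finitisation} together with the fact that $V$-splitness makes the relevant obstruction classes vanish on $V$. This will force consistency of $\phi$ on the relations, and the $\alpha$-component will remain non-zero by construction.
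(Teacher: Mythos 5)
Your reduction to the single condition $C\otimes A/\m=0$ is correct and matches the paper's opening step (via \autoref{min_pres_criterion}).  The genuine gap is in what you call the ``heart of the argument.''  You correctly identify that you want to produce from $C\otimes A/\m\neq 0$ a nonzero element of $\Hom^{U\cap W}(X^U,Z^U)$, and you correctly isolate the technical obstacle — defining a module map $\phi\colon X^U\to Z^U$ on all of $X^U$, not just on one generator — but your plan to resolve it (``globalize using the adjunction $(\iota_V)_!\dashv \iota_V^*$ together with the fact that $V$-splitness makes the relevant obstruction classes vanish'') is not a workable construction.  The adjunction and $V$-splitness control behaviour on the \emph{lower} set $V$, but the generator $e_i$ you need to extend from lives at $\alpha\in U$, and there is no mechanism in your plan that forces your partial definition to respect the relations of $X^U$.

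The missing idea, and the one the paper uses (implicitly, in the phrase ``$C\circ i\colon A(-\beta)\to X^U$ is a split injection''), is that the $U$-cover $X^U$ is \emph{$U$-projective}: it has no relations at any degree in $U$.  Since $\beta\in U$ and $U$ is an upper set, a minimal generator of $X^U$ at degree $\beta$ therefore spans a \emph{free} summand $A(-\beta)\subset X^U$, so $X^U\cong A(-\beta)\oplus X'$.  This is exactly what produces the retraction $r\colon X^U\to A(-\beta)$; your map is then $\phi = \tilde z\circ r$ where $\tilde z\colon A(-\beta)\to Z^U$ corresponds to the nonzero element $d_Z(i(1))\in Z^U_\beta$.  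Equivalently (and this is the paper's cleaner, contravariant formulation) the split injection $C\circ i\colon A(-\beta)\hookrightarrow X^U$ pulls back to a split \emph{surjection} $\Hom^\beta(X^U,Z^U)\twoheadrightarrow \Hom^\beta(A(-\beta),Z^U)\cong Z^U_\beta\neq 0$, so the source is nonzero, contradicting $U$-invariance.  Without invoking $U$-projectivity of $X^U$ to get the free summand, your construction of $\phi$ does not go through.

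Two small further remarks: (i) your appeal to ``the connecting homomorphism $\delta$ in Tor'' is a longer route than needed — the paper just works directly with $C\otimes A/\m$; and (ii) the nonvanishing of $Z^U_\beta$ needs an argument (in the paper, via the cocycle condition $C\circ i\in\m X^U$ if $d_Z\circ i=0$, contradicting $(C\circ i)\otimes A/\m\neq 0$), which your sketch also elides.
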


\begin{proof}
     In the setting of \autoref{concentrated}, first define the map $\rho \colon A^{G_1} \to X^{U}$, from the projective cover of $X$ to the $U$-projective cover of $X$. Denote by $C$ the cocycle from (a) and by $C'$ the cocycle from (c), then $C' = \rho \circ C \colon A^{R_Z}\to X^{U}$.
     If the presentation from (a) was not minimal, then by
     \autoref{min_pres_criterion} we have $C \otimes A/ \m = 0 \Rightarrow C' \otimes A / \m = \rho \circ C \otimes A/ \m = 0$. If the $\mathcal{P}_U$-presentation from (c) was not minimal, then also $C' \otimes A / \m = 0$, since the $\mathcal{P}_U$-resolution is just a $U$-cover together with a projective resolution of its kernel.
     
      Since $R_Z$ contains only elements in $U$ and $X^{U}$ is $U$-projective, this implies that $C'\colon A^{R_Z}\to X^{U}$ contains a direct summand of the form $A{\small(-\beta)} \oto{\Id} A{\small(-\beta)}$. In particular, $X^{U}$ has a direct summand isomorphic to $A{\small(-\beta)}$.  
     
      Also $A^{R_Z}$ then contains such a summand and by minimality of the $\mathcal{P}_U$-resolution of $Z$, this summand cannot be sent to $0$ by $d_Z$, so $Z^{U}$ is nonzero at $\beta$. Then $\Hom^{U \cap W}(X^{U}, Z^{U})$ contains a direct summand $\Hom^{U \cap W}(A{\small(-\beta)}  , Z^{U}) \simeq Z^{U}_\beta \neq 0$, contradicting $U$-invariance.
\end{proof}

\begin{construction}\label{shear_map}
Let $0 \to X \oto{f} Y \oto{g} Z \to 0$ be $U$-invariant and $h\colon X^{U} \to Z^{U}$. Consider the map 
\[\tilde h \coloneqq 
\begin{bmatrix}
    \Id & 0 \\
    h & \Id 
\end{bmatrix}
\colon X^{U} \oplus Z^{U} \to X^{U} \oplus Z^{U}\]
Given a $\Pc_U$-resolution $A^{R_Y} \oto{d_0} X^{U} \oplus Z^{U} \oto{p_Y} Y$ we have $h \circ d_0 = 0$ by $U$-invariance, so $\left(\tilde h, \Id_{A^{R_Z}}\right)$ induces a \emph{shear} isomorphism $\sh{h} \colon Y \iso Y$.
\end{construction}

\begin{remark}
This defines a group-homomorphism $\sh{(-)} \colon \left( \Hom(X^{U}, Z^{U}), \ + \right) \to \left( \End(Y), \ \circ \right)$ and maps which induce the zero map in $\Hom(X,Z)$ do not have to be mapped to $\Id$.
\end{remark}

\begin{lemma}[Rigidity Lemma]\label{rigidity}
Let $0 \to X \oto{f} Y \oto{g} Z \to 0$ and $0 \to X' \oto{f'} Y \oto{g'} Z'\to 0 $ be $U$-invariant short exact sequences with $X^{U} \simeq {X'}^{U}$, then these sequences are isomorphic.
\end{lemma}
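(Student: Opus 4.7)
The strategy is to leverage the uniqueness of $U$-covers together with the shear group from Construction~\ref{shear_map}. Since both sequences are $U$-invariant, and in particular $U$-concentrated, Proposition~\ref{concentrated}(c) furnishes $\Pc_U$-resolutions with $U$-covers $p\colon X^{U} \oplus Z^{U} \to Y$ and $p'\colon X'^{U} \oplus Z'^{U} \to Y$. By the uniqueness of covers (Remark~\ref{cover_minimality}), there is an isomorphism $\Phi\colon X^{U} \oplus Z^{U} \iso X'^{U} \oplus Z'^{U}$ with $p' \circ \Phi = p$. Combining this isomorphism with the given $X^{U} \simeq X'^{U}$ and applying cancellation via Krull-Remak-Schmidt-Azumaya (\autoref{krull-remak-schmidt}) yields $Z^{U} \simeq Z'^{U}$.

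The next move is to reduce to the case that $\Phi$ is block upper-triangular with respect to the direct-sum decompositions, because such a $\Phi$ automatically descends to isomorphisms $\bar a \colon X \iso X'$, $\bar d \colon Z \iso Z'$, and an automorphism of $Y$, which together constitute an isomorphism of short exact sequences. Writing $\Phi = \begin{pmatrix} a & b \\ c & d \end{pmatrix}$, the task is to kill the off-diagonal block $c \colon X^{U} \to Z'^{U}$. Here is where $U$-invariance enters: Construction~\ref{shear_map} together with $\Hom^{U \cap W}(X'^{U}, Z'^{U}) = 0$ from $U$-invariance of the primed sequence ensures that every $h \in \Hom(X'^{U}, Z'^{U})$ determines a shear automorphism $\sh{h} \in \Aut(Y)$, realised on covers by $\tilde h = \begin{pmatrix} \Id & 0 \\ h & \Id \end{pmatrix}$. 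Replacing $\Phi$ by $\tilde h \circ \Phi$ transforms $c \mapsto h a + c$, so if $a$ is invertible the choice $h = -c a^{-1}$ produces an upper-triangular iso, completing the argument.

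The chief obstacle is therefore arranging that the diagonal block $a \colon X^{U} \to X'^{U}$ be an isomorphism. The natural approach is to apply \autoref{permutation} to the iso $\Phi$ on indecomposable decompositions of $X^{U} \oplus Z^{U}$ and $X'^{U} \oplus Z'^{U}$, pairing up summands so that the diagonal blocks become invertible. The bad case is an indecomposable summand of $X^{U}$ that is matched with a summand of $Z'^{U}$: this would require a split injection between them, hence a nonzero element of $\Hom(X^{U}, Z'^{U}) \simeq \Hom(X'^{U}, Z'^{U})$ which, on the relevant part of $U \cap W$, is excluded by $U$-invariance. The summands that could still be mismatched must therefore live outside $U \cap W$, and these can be realigned by first pre-composing $\Phi$ with an automorphism of the cover $X^{U} \oplus Z^{U}$ that permutes summands; this leaves the covering map $p$ intact since no $U \cap W$-support is affected. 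Once $a$ is invertible, the shear correction goes through and the iso of sequences follows.
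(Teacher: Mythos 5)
Your proof follows the same overall strategy as the paper -- lift $\Id_Y$ to the $\Pc_U$-covers, make the resulting isomorphism upper-triangular via the shear map, and then read off an isomorphism of short exact sequences -- but there is a genuine gap at the step where you arrange the diagonal block $a\colon X^U\to X'^U$ to be invertible.

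Your diagnosis of the bad case is right: by \autoref{permutation}, if an indecomposable summand $P$ of $X^U$ is paired with an isomorphic summand $Q$ of $Z'^U$, then $P\simeq Q$, and since the identity of such a summand would be a nonzero element of $\Hom^{U\cap W}(X'^U,Z'^U)$, $U$-invariance forces $P$ to vanish on $U\cap W$. The problem is your proposed fix: you claim that pre-composing $\Phi$ with a permutation $\Sigma$ swapping $P\subset X^U$ with $Q\subset Z^U$ ``leaves the covering map $p$ intact since no $U\cap W$-support is affected.'' This is false. The restrictions $p|_P$ and $p|_Q$ are fundamentally different maps: $p|_P$ factors through $f\colon X\into Y$ so $p(P)\subset\Ima f=\ker g$, whereas $p|_Q$ is a lift of $p_Z|_Q$ through $g$, and since $p_Z|_Q$ is injective (the kernel of $p_Z$ is generated on $U\cap W$, where $Q$ vanishes), $g(p(Q))\neq 0$, i.e.\ $p(Q)\not\subset\ker g$. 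Hence $p\circ\Sigma\neq p$, and the new map $\Phi\circ\Sigma$ no longer lifts $\Id_Y$; the careful block structure that let you identify the $X$-part inside the cover is destroyed. The paper avoids this by a \emph{reduction} rather than a \emph{conjugation}: any summand $P$ of $X^U$ or $Z^U$ that is zero on $U\cap W$ descends to a common free summand of $X$, $X'$, and $Y$ (respectively $Z$, $Z'$, and $Y$), because the kernel of the relevant cover is generated in $U\cap W$ and hence misses $P$ entirely. Splitting these summands off from \emph{all} of the modules reduces to the case where no such summand exists; then $\Hom^{U\cap W}(X^U,Z^U)=0$ forces $X^U$ and $Z^U$ to share no indecomposable summand at all, and \autoref{iso_block} gives invertibility of $a$ directly. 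Your final descent step (once $\Phi'$ is upper-triangular and lifts $\sh{h}$, reading off $\sh{h}(\Ima f)=\Ima f'$ from the block structure and its inverse) is a clean alternative to the paper's explicit diagram chase and does work; the gap is only in the realignment.
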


Informally: If we choose any two submodules of $Y$ with the same $U$-cover and if these inclusions are also $U$-invariant, then they are isomorphic.

\begin{proof}
Using \autoref{concentrated} and \autoref{invariant_minimal} both $f$ and $f'$ induce minimal $\Pc_U$-presentations 

\[ A^{R_X} \oplus A^{R_Z} \oto{\begin{bmatrix} M & C \\ 0 & N \end{bmatrix}} X^{U} \oplus Z^{U} \oto{p_Y} Y  \quad \text{ and } \quad A^{R'_X} \oplus A^{R'_Z} \oto{\begin{bmatrix} M' & C' \\ 0 & N' \end{bmatrix}} X^{U} \oplus {Z'}^{U} \oto{p_Y'} Y.\]

and it follows that $Z^{U'} \simeq Z^{U}$ because both covers are minimal. \\
We can lift the identity $\Id_Y$ to a map of $\Pc_U$-resolutions, with the first two components being
\[ \varphi \ =  \begin{bmatrix} \varphi_{1,1} & \varphi_{1,2} \\ \varphi_{2,1} & \varphi_{2,2} \end{bmatrix} \colon \ X^{U} \oplus Z^{U} \to  X^{U} \oplus Z^{U} \quad \text{ and } \quad \psi  \colon A^{R_X} \oplus A^{R_Z} \to A^{R'_X} \oplus A^{R'_Z}\]
 The map $\varphi_{2,1}$ tells us how far $\Ima(f')$ is from being in $\Ima(f)$ and for the sequences to be isomorphic, we must find an automorphism of $Y$ which makes the two overlap.\\
To this end we will now, without loss of generality, assume that no indecomposable direct summand of $X^{U}$ or $Z^{U}$ is zero in $U\cap W$, since such a summand would descend directly to a summand of $X$, $X'$ and $Y$ or $Z$, $Z'$ and $Y$ respectively. Then we could just split these isomorphic components off from each sequence. \\
Then, since $\Hom^{U\cap W}\left(X^{U}, Z^{U}\right) = 0$ we know that $X^{U}$ and $Z^{U}$ have no indecomposable direct summands in common, because the identity map on a component would be non-zero at $U \cap W$. \\
By \autoref{iso_block} we can infer that the restriction $\varphi_{1,1} \colon X^{U} \to X^{U}$ must be an isomorphism.
We can now use the shear map \autoref{shear_map} to perform a "row-operation" to delete $\varphi_{2,1}$ by pasting the maps of presentations together as seen on the right side of the following diagram. 
\[\begin{tikzcd}[ampersand replacement=\&]
	{A^{R_X}} \&\&\& {A^{R_X} \oplus A^{R_Z}} \\
	\&\&\&\& {A^{R_X} \oplus A^{R_Z}} \\
	{X^{U}} \&\& {\quad \quad \quad} \& {\quad X^{U} \oplus Z^{U}} \&\& {A^{R'_X} \oplus A^{R'_Z}} \\
	X \&\& {\quad \quad \quad} \& Y \& {\quad X^{U} \oplus Z^{U}} \\
	\&\& {X^{U}} \&\& Y \& {\quad X^{U} \oplus Z^{U}} \\
	\&\& {X'} \&\&\& Y
	\arrow["{i_1}", from=1-1, to=1-4]
	\arrow["M"', from=1-1, to=3-1]
	\arrow["\psi", from=1-4, to=2-5]
	\arrow[" {\begin{bmatrix} M & C \\ 0 & N \end{bmatrix}}"', from=1-4, to=3-4]
	\arrow["\Id", from=2-5, to=3-6]
	\arrow["{ \begin{bmatrix} M' & C' \\ 0 & N' \end{bmatrix} }", from=2-5, to=4-5]
	\arrow["{\quad i_1}"{pos=0.6}, from=3-1, to=3-4]
	\arrow["{p_X}"', from=3-1, to=4-1]
	\arrow["{\varphi_{1,1}}"'{pos=0.7}, from=3-1, to=5-3]
	\arrow["{p_Y}", from=3-4, to=4-4]
	\arrow[" {\begin{bmatrix} \varphi_{1,1} & \varphi_{1,2} \\ \varphi_{2,1} & \varphi_{2,2} \end{bmatrix}} "{pos=0.3}, from=3-4, to=4-5]
	\arrow[" {\begin{bmatrix} M' & C' \\ 0 & N' \end{bmatrix}} ", shift left=2, from=3-6, to=5-6]
	\arrow["{ \quad f}", from=4-1, to=4-4]
	\arrow["\Id", from=4-4, to=5-5]
	\arrow["{p'_Y}", from=4-5, to=5-5]
	\arrow[" {\begin{bmatrix} \Id & 0 \\ \varphi_{2,1} \circ \varphi_{1,1}^{-1} & \Id \end{bmatrix}} "{pos=0.1}, from=4-5, to=5-6]
	\arrow["{i_1}"'{pos=0.4}, shift right=2, from=5-3, to=5-6]
	\arrow["{p'_X}", from=5-3, to=6-3]
	\arrow["{\sh{\varphi_{2,1} \circ \varphi_{1,1}^{-1}}}"{description, pos=0.4}, from=5-5, to=6-6]
	\arrow["{p'_Y}", shift left=2, from=5-6, to=6-6]
	\arrow["{f'}"'{pos=0.4}, from=6-3, to=6-6]
    \arrow[dashed, from=4-1, to=6-3]
\end{tikzcd}\]

First, observe that with $\varphi_{1,1}$ as a map on the left, thanks to the shear map, the square in the middle layer of the diagram actually commutes. We are left with showing that there are maps $A^{R_X} \to A^{R'_X}$ and $A^{R'_X} \to A^{R_X}$ which commute with $\varphi_{1,1}$ and its inverse to see that the diagram can be completed with an isomorphism between $X$ and $X'$. \\

We will find them with a diagram chase as follows. Observe that we need the map $p'_X \circ \varphi_{1,1} \circ M$ to be $0$, then $\varphi_{1,1} \circ M$ factors through $\ker p'_X$ and by projectivity through $A^{R'_X} \oto{M'} X^{U}$. We underline those maps which are part of a square whose commutativity we use.

\begin{align*} \underline{ f' \circ p'_X } \circ \varphi_{1,1} \circ M = p'_Y \circ \underline{ i_1 \circ \varphi_{1,1} } \circ M = p'_Y \circ \begin{bmatrix} \varphi_{1,1} & \varphi_{1,2} \\ 0 & \varphi_{2,2} - \varphi_{2,1} \varphi_{1,1}^{-1} \varphi_{1,2} \end{bmatrix} \circ  \underline{ i_1 \circ M } \\
= p'_Y \circ \underline{ \begin{bmatrix} \varphi_{1,1} & \varphi_{1,2} \\ 0 & \varphi_{2,2} - \varphi_{2,1} \varphi_{1,1}^{-1} \varphi_{1,2} \end{bmatrix} \circ \begin{bmatrix} M & C \\ 0 & N \end{bmatrix}} \circ i_1  = \underbrace{ p'_Y \circ \begin{bmatrix} M' & C' \\ 0 & N' \end{bmatrix}}_{= 0} \circ \psi \circ i_1 = 0
\end{align*}

The map $f'$ is mono, so $p'_X  \circ \varphi_{1,1} \circ M = 0$ as desired. Since the diagram is symmetric, we find that $p_X  \circ \varphi_{1,1}^{-1} \circ M' = 0$ with the mirrored diagram chase.
\end{proof}

\begin{question}
Is there a proof of (parts of) \autoref{rigidity} without using \autoref{shear_map}?
\end{question}

We can now connect the notion of $U$-invariance more closely to \textsc{AIDA}.

Let $Y \in grA$ be the input module. The condensation $\Us$ of the graph $\left(\B, \, \Hom^\alpha \neq 0 \right)$ on the indecomposable summands of $Y^{U}$, as computed in \autoref{stable_decomp}, finds $U$-invariant monomorphisms with target $Y$. In fact, there are no more than these.

\begin{corollary}\label{u_invariant_char}
Let $Y \in \grA$ be finitely presented and $Y^{U} \iso \bigoplus_{i \in I} Y^{U}_i$ an indecomposable decomposition of its $\mathcal{P}_U$-cover $p \colon Y^{U} \to Y$. Every $U$-invariant monomorphism $X \into Y$ corresponds, up to isomorphism, exactly to one subset $J \subset I$ such that $\Hom^{U \cap W}\left(\bigoplus_{i \in J} Y^{U}_i,  \, \bigoplus_{i \not\in J} Y^{U}_i \right) = 0$.
\end{corollary}

\begin{proof}
 Let $J \subset I$ be any subset. We can factor the map $\bigoplus_{i \in J} Y^{U}_i \into Y^{U} \oto{p} Y$ through its image to get a diagram
\[\begin{tikzcd}[ampersand replacement=\&]
	0 \& {\bigoplus_{i \in J} Y^{U}_i} \& {Y^{U}} \& {\bigoplus_{i \not\in J} Y^{U}_i} \& 0 \\
	0 \& X \& Y \& Z \& 0
	\arrow[from=1-1, to=1-2]
	\arrow[from=1-2, to=1-3]
	\arrow[dashed, two heads, from=1-2, to=2-2]
	\arrow[from=1-3, to=1-4]
	\arrow["p"', two heads, from=1-3, to=2-3]
	\arrow[from=1-4, to=1-5]
	\arrow[dashed, two heads, from=1-4, to=2-4]
	\arrow[from=2-1, to=2-2]
	\arrow[dashed, hook, from=2-2, to=2-3]
	\arrow[dashed, two heads, from=2-3, to=2-4]
	\arrow[from=2-4, to=2-5]
\end{tikzcd}.\]
The bottom row is now $U$-invariant. 

Given a $U$-invariant monomorphism $f \colon X \into Y$, we can lift it to a map on minimal covers $X^{U} \to Y^{U}$. By \autoref{invariant_minimal} this map is split and by \autoref{rigidity} the isomorphism type of $f$ was only dependent on $X^{U}$.
\end{proof}

In \textsc{AIDA}, every topological order on the condensation $\Us$ induced a sequence of $U$-invariant subspaces. 
\[0 = Y_0 \ito{f_1} Y_1 \ito{f_2} \dots \ito{f_{n-1}} Y_{n-1} \ito{f_n} Y_n = Y\]
consisting only of $U$-invariant non-bijective monomorphisms. By the preceding corollary, every maximal filtration of this sort comes, up to isomorphism, from exactly one topological order on $\Us$.

\subsection{Proofs of Correctness}

Assume we are given a short exact sequence $0 \to X \oto{f} Y \oto{g} Z \to 0$ and a decomposition of $Y$. Can we extend it to the whole short exact sequence?
If $\Hom(X, Z) = 0$ then this always works. \\
Let $e$ be an idempotent in $\End(Y)$, then $g \circ e \circ f = 0$ so $e \circ f$ factors through $X$ and we can extend $e$ to an idempotent of the whole short exact sequence. \autoref{rigidity} tells us that if $f$ is $U$-invariant for some $U$ then we can proceed similarly.

\begin{theorem}\label{decomp_concentrated}
 Let $U \subset \Z^d$ and $0 \to X \oto{f} Y \oto{g} Z \to 0$ be a $U$-invariant short exact sequence of persistence modules. If $Y$ is decomposable, then so is the short exact sequence.
\end{theorem}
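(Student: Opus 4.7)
The plan is to lift the decomposition $Y \simeq Y_1 \oplus Y_2$ to the minimal $\Pc_U$-resolution of $Y$ and then use the $U$-invariance hypothesis, together with the shear construction of \autoref{shear_map}, to align the lift with the $X^U$--$Z^U$ block structure of the resolution.

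To set up, I fix the $\Pc_U$-presentation $d_0 = \begin{bmatrix} M & C \\ 0 & N \end{bmatrix} \colon A^{R_X} \oplus A^{R_Z} \to X^U \oplus Z^U$ of $Y$ from \autoref{concentrated}(c), with $R_X, R_Z \subset U$; this is minimal by \autoref{invariant_minimal}, and $R_X \cup R_Z \subset U \cap W$ where $W$ is the support of the relations of $Y$. The first step is to dispose of common summands: if $K$ is an indecomposable summand of both $X^U$ and $Z^U$, then the projection--inclusion map $X^U \onto K \into Z^U$ is a non-zero element of $\Hom(X^U, Z^U)$ which by $U$-invariance vanishes on $U \cap W$, so $K$ itself vanishes at all relation degrees and the $K$-components of $M, C, N$ are zero. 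In that case one copy of $K$ in $X^U$ and one in $Z^U$ split off an entire sub-sequence $0 \to K \to K \oplus K \to K \to 0$, which is already decomposable as the direct sum of two trivial ses's, proving the theorem. Hence I may and will assume that $X^U$ and $Z^U$ share no indecomposable summand.

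Next, I apply \autoref{decomp_resolution} to the covering class $\Pc_U$ to lift $Y = Y_1 \oplus Y_2$ to minimal $\Pc_U$-resolutions $F_i \to P_i \to Y_i$ with $F = F_1 \oplus F_2$ and $P = P_1 \oplus P_2$. Krull--Remak--Schmidt (\autoref{krull-remak-schmidt}) together with the no-common-summand assumption label each indecomposable summand of $P_i$ unambiguously as ``$X$-type'' or ``$Z$-type'', giving $P_i \simeq X_i^U \oplus Z_i^U$ with $X^U \simeq X_1^U \oplus X_2^U$ and $Z^U \simeq Z_1^U \oplus Z_2^U$, together with an analogous partition $F_i \simeq A^{R_{X,i}} \oplus A^{R_{Z,i}}$. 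Let $\varphi \colon P_1 \oplus P_2 \iso P$ implement this identification; by \autoref{iso_block} its diagonal components $\varphi_{XX}, \varphi_{ZZ}$ are automatically isomorphisms, while the off-diagonal block $\varphi_{ZX} \colon X^U \to Z^U$ is a homomorphism and hence, by $U$-invariance, vanishes on $U \cap W \supset R_X \cup R_Z$. In particular both $\varphi_{ZX} \circ M$ and $\varphi_{ZX} \circ C$ are zero, so $h \coloneqq -\varphi_{ZX} \varphi_{XX}^{-1}$ satisfies the condition $h \circ d_0 = 0$ required for the shear $\sh{h}$ of \autoref{shear_map} to be a well-defined automorphism of $Y$. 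Composing $\varphi$ on the left by $\sh{h}$ produces a new implementing isomorphism whose $\varphi_{ZX}$ is $0$, so each $X_i^U$ now sits inside $X^U \subset P$ and the decomposition $P = P_1 \oplus P_2$ is aligned with the $X^U$--$Z^U$ split.

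With the alignment in place, the decomposed differentials $d_{0,i}$ inherit the block-triangular form $\begin{bmatrix} M_i & C_i \\ 0 & N_i \end{bmatrix}$, so \autoref{concentrated}(c) identifies each with the $\Pc_U$-presentation of a $U$-concentrated short exact sequence $0 \to X_i \to Y_i \to Z_i \to 0$, and the original sequence is the direct sum of these two. The main obstacle is the alignment step: the one-sided nature of $U$-invariance (controlling $\Hom^{U \cap W}(X^U, Z^U)$ but not the opposite direction) permits me to kill $\varphi_{ZX}$ via a shear but provides no symmetric tool for $\varphi_{XZ}$. As in the proof of the Rigidity Lemma \autoref{rigidity}, however, it turns out that this one-sided correction is exactly what is required to preserve the block-triangular (rather than block-diagonal) form of $d_0$ after the change of basis, since the original differential has a zero in position $(Z^U, A^{R_X})$ and left-composition by a block upper-triangular matrix preserves this zero precisely when $\varphi_{ZX} \circ M = \varphi_{ZX} \circ C = 0$ --- a condition guaranteed by $U$-invariance.
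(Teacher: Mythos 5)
Your proof is correct and follows essentially the same route as the paper's: lift the decomposition to the minimal $\Pc_U$-resolution via \autoref{decomp_resolution}, sort indecomposable summands into $X$-type and $Z$-type by Krull--Remak--Schmidt and \autoref{iso_block}, and use the shear of \autoref{shear_map} to kill the off-diagonal component of the lifted isomorphism --- you have simply inlined the Rigidity Lemma (\autoref{rigidity}), whose proof is exactly this shear argument, rather than citing it, and you perform the common-summand reduction up front instead of inside that lemma. The one under-justified assertion is the final one, that the $d_{0,i}$ inherit block-triangular form with respect to an ``analogous partition'' $F_i \simeq A^{R_{X,i}} \oplus A^{R_{Z,i}}$: no such partition of the free relation modules has been constructed, and the lifted map $\psi$ on relations need not respect one without a further (unargued) basis change. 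Fortunately that step is not needed for the theorem: once the shear makes $\varphi_{ZX}=0$, the resulting isomorphism $\sh{h}\circ\Phi \colon Y_1 \oplus Y_2 \iso Y$ (where $\Phi$ is the given decomposition) carries $X_1 \oplus X_2$, with $X_i \coloneqq p_{Y_i}\left(X_i^{U}\right) \subset Y_i$, onto $p_Y\left(X^{U}\right) = \Ima f$, which already exhibits the sequence as the direct sum of the two sequences $0 \to X_i \to Y_i \to Y_i/X_i \to 0$.
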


\begin{proof}
Choose minimal presentations of $X$ and $Z$ and again use \autoref{concentrated} (c) to construct a minimal $\Pc_U$-resolution of $Y$. By \autoref{decomp_resolution} the decomposition $\varphi$ lifts to a decomposition of the entire resolution.
\[\begin{tikzcd}[ampersand replacement=\&]
	\dots \& {A^{R_X} \oplus A^{R_Z}} \&\& {X^U\oplus Z^U} \&\& Y \& 0 \\
	\dots \& {A^{R_{Y_1}} \oplus A^{R_{Y_2}}} \&\& {{Y^U_1} \oplus {Y^U_2}} \&\& {Y_1 \oplus Y_2} \& 0
	\arrow[from=1-1, to=1-2]
	\arrow["N", from=1-2, to=1-4]
	\arrow["\psi", from=1-2, to=2-2]
	\arrow["{p_Y}", from=1-4, to=1-6]
	\arrow["{\varphi^U}", from=1-4, to=2-4]
	\arrow[from=1-6, to=1-7]
	\arrow["\varphi", from=1-6, to=2-6]
	\arrow[from=2-1, to=2-2]
	\arrow["{N_1 \oplus N_2}", from=2-2, to=2-4]
	\arrow["{p_{Y_1} \oplus p_{Y_2}}", from=2-4, to=2-6]
	\arrow[from=2-6, to=2-7]
\end{tikzcd}\]
Consider indecomposable decompositions 
\[X^U \iso \bigoplus_{i \in I} X^U_i \quad \text{, } \quad Z^U \iso \bigoplus_{j \in J} Z^U_j  \quad \text{ and } \quad  Y^{U}_1 \iso \bigoplus_{l \in L_1} Y^{U}_{l} \quad \text{, } \quad Y^{U}_2 \iso \bigoplus_{l \in L_2} Y^{U}_{l}.\]
By Krull-Remak-Schmidt-Azumaya the two sides contain the same indecomposables. That is, for every  $i$ we find $l_i$ in either $L_1$ or $L_2$ such that $X_i^{U} \simeq Y^{U}_{l_i}$. This choice defines a partition $I = I_1 \sqcup I_2$ and split injections
\[ \iota_1 \colon \ \bigoplus\limits_{i \in I_1} X_i \into Y_1^{U} \quad \text{ and } \quad \iota_2 \colon \ \bigoplus\limits_{i \in I_2} X_i \into Y_2^{U}. \]
We can then factor $p_{Y_1} \circ \iota_1 $ and $p_{Y_2} \circ \iota_2 $ through their images to get an injective map 
\[ \Ima \left( p_{Y_1} \circ \iota_1\right)\oplus \Ima \left( p_{Y_2} \circ \iota_2\right) \oto{f_1 \oplus f_2} Y_1 \oplus Y_2. \]
It must be $U$-invariant, because the $U$-covers of 
$\Ima \left( p_{Y_1} \circ \iota_1\right)\oplus \Ima \left( p_{Y_2} \circ \iota_2\right) $ and the quotient are just again $X^{U}$ and $Z^{U}$. Also $\varphi \circ f$ and $f_1 \oplus f_2$ satisfy the premises of \autoref{rigidity} and this constructed an isomorphism of short exact sequences
\begin{align}\label{ses_decomp}\begin{tikzcd}[ampersand replacement=\&]
	0 \& X \&\& {{Y_1} \oplus {Y_2}} \&\& Z \& 0 \\
	0 \& {\Ima \left( p_{Y_1} \circ \iota_1\right)\oplus \Ima \left( p_{Y_2} \circ \iota_2\right)} \&\& {{Y_1} \oplus {Y_2}} \&\& {\coker{f_1} \oplus \coker f_2} \& 0
	\arrow[from=1-1, to=1-2]
	\arrow["{\varphi \circ f}", from=1-2, to=1-4]
	\arrow["\wr", from=1-2, to=2-2]
	\arrow["{g \circ \varphi^{-1}}", from=1-4, to=1-6]
	\arrow["\wr", from=1-4, to=2-4]
	\arrow[from=1-6, to=1-7]
	\arrow["\wr", from=1-6, to=2-6]
	\arrow[from=2-1, to=2-2]
	\arrow["{f_1 \oplus f_2}", from=2-2, to=2-4]
	\arrow["{g_1 \oplus g_2}", from=2-4, to=2-6]
	\arrow[from=2-6, to=2-7]
\end{tikzcd}\end{align}
which supplies the decomposition.
\end{proof}

\subparagraph{Finding Change-of-Basis Matrices.}

\begin{proposition}\label{double_ext_algorithm}
 The algorithm \hyperref[extensiondecomp]{$\alpha$-\texttt{ExtensionDecomp}} (Appendix) finds an indecomposable decomposition.
\end{proposition}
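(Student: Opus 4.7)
The plan is to mirror the strategy of \autoref{prop:alpha_correct}, but exploiting the additional structure of the $\alpha$-decomposed extension. As there, it suffices to show that if the module $Y$ presented by the extension matrix admits any non-trivial decomposition, then some iteration of $\alpha$-\texttt{ExtensionDecomp} will find it. By construction of \autoref{def:extension_presentation}, the inclusion of $\left[M_\Cs \ N_\Cs\right]$ into the whole matrix induces a short exact sequence $0 \to X \to Y \to Z \to 0$ whose cocycle $C$ is supported at $\alpha$, so the sequence is $\alpha$-concentrated in the sense of \autoref{def:concentrated}. Moreover, by the way the condensation $\Us$ of $(\B_p, \Hom^\alpha \neq 0)$ is used in \autoref{stable_decomp}, there are no maps $X^\alpha \to Z^\alpha$ at $\alpha$, so the sequence is in fact $\alpha$-invariant (\autoref{def:filtration}).

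Given a decomposition $Y \iso Y_1 \oplus Y_2$, I first invoke \autoref{decomp_concentrated} to lift it to a compatible decomposition of the entire short exact sequence into two parallel $\alpha$-invariant sequences $0 \to X_i \to Y_i \to Z_i \to 0$, $i=1,2$. Applying \autoref{matrices_cd} separately to the $\alpha$-decomposed sub-presentations $\left[M_\Cs \ N_\Cs\right]$ and $\left[M_\Ds \ N_\Ds\right]$ yields graded change-of-basis matrices fixing the already-decomposed parts and partitioning $\Cs = \Cs_1 \sqcup \Cs_2$ and $\Ds = \Ds_1 \sqcup \Ds_2$. The only remaining task is to zero out the ``off-diagonal'' parts of $C$ relating $\Cs_i$ to $\Ds_{3-i}$; by \autoref{matrices_cd} applied to the whole extension, this is realised by invertible matrices $Q, P, T$ with $T \in \K^{\kappa \times \kappa}$ acting on the columns of $C$, such that $T$ induces a direct sum decomposition $\K^\kappa = \K^{\kappa_1} \oplus \K^{\kappa_2}$.

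The algorithm enumerates precisely over such decompositions of $\K^\kappa$ via $\Dec_q(\kappa)$, and for each candidate subspace it attempts to zero out the two off-diagonal blocks of $C$ using the stored $\Hom$-bases together with admissible row/column operations that preserve $M_\Cs$ and $M_\Ds$. Once the subspace spanning $T_1$ is reached (up to column-space equivalence and complementation, as justified in \autoref{sub:double_count}), the same diagram-chasing calculation at the end of the proof of \autoref{prop:alpha_correct} -- producing coefficients that feed into \hyperref[subparagraph:hom_variant]{\texttt{BlockReduce}} \hyperref[hom_computation]{($\ast\ast$)} -- applies verbatim and the decomposition is realised.

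The main obstacle is justifying that the iteration can be restricted to subspaces of $\K^\kappa$ rather than all of $\K^k$ (where $k$ is the total number of columns of degree $\alpha$). This requires knowing that any valid change-of-basis $T'$ acting on all columns at $\alpha$ can be decomposed as $T' = T \oplus T''$ where $T''$ acts on the columns of $N_\Cs$ and $N_\Ds$ and can be absorbed into the ``interior'' admissible operations of the two already-decomposed sub-presentations. This in turn rests on \autoref{rigidity}: the $\alpha$-invariance of the SES forces any automorphism of $Y$ to respect, up to isomorphism, the filtration $X \subset Y$, so that base changes on the columns of $N_\Cs$ and $N_\Ds$ lift to automorphisms of $\left[M_\Cs \ N_\Cs\right]$ and $\left[M_\Ds \ N_\Ds\right]$ individually and are thus invisible to the decomposition search. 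With this reduction in hand, the correctness argument reduces to the already-established case handled by \autoref{exhaustive_full}.
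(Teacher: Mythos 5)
Your proposal follows the same overall route as the paper: invoke \autoref{decomp_concentrated} to lift the decomposition of $Y$ to a decomposition of the $\alpha$-concentrated short exact sequence, and then argue that \hyperref[extensiondecomp]{$\alpha$-\texttt{ExtensionDecomp}}'s iteration will hit the right subspace and \texttt{BlockReduce} will close it out. The ingredients are the right ones, and the obstacle you flag — justifying that the enumeration can live in $\K^\kappa$ rather than $\K^k$ — is precisely the crux. However, the way you dispose of it does not hold up.

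Two specific problems. First, you assert that ``\autoref{matrices_cd} applied to the whole extension'' produces a change of basis $T\in\K^{\kappa\times\kappa}$. That is not what \autoref{matrices_cd} gives: applied to the full $\alpha$-decomposed presentation it yields an invertible $T\in\K^{k\times k}$ acting on \emph{all} columns of degree $\alpha$, i.e.\ on the columns of $N_\Cs$, $C$, and $N_\Ds$ jointly. There is no a priori reason why this $T$ should preserve the partition into those three blocks. Second, your appeal to \autoref{rigidity} — ``base changes on the columns of $N_\Cs$ and $N_\Ds$ lift to automorphisms of $[M_\Cs\ N_\Cs]$ and $[M_\Ds\ N_\Ds]$ individually and are thus invisible to the decomposition search'' — is the conclusion you want, not an argument for it. \autoref{rigidity} lives one level up (at the level of modules and their $\Pc_U$-resolutions) and does not by itself say anything about the shape of the lifted change-of-basis matrix.

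The paper closes this gap by a more constructive route: after \autoref{decomp_concentrated} and \autoref{permutation}, it uses \autoref{pres_ext} to build \emph{minimal} presentations of $Y_1$ and $Y_2$ that retain the extension block structure, then lifts $\varphi$ to a morphism of these minimal presentations (invoking \autoref{minimal_iso} to get invertibility). The resulting matrices $Q$ and $\begin{bmatrix} P & U \\ 0 & T\end{bmatrix}$ are block upper-triangular in the $(\Cs,\Ds)$ grading \emph{because the source and target presentations share the same triangular extension shape and the lift is compatible with the $\Pc_\alpha$-resolution decomposition}; the lower-right corner $T_{\Ds,\Ds}$ is then a genuine $\kappa\times\kappa$ matrix, and this is what $\Dec_q(\kappa)$ enumerates. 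The $T_{\Cs,\Cs}$ and $T_{\Cs,\Ds}$ parts are absorbed as internal column operations on $N_\Cs$ and from $N_\Cs$ to $C$, which is why they do not enter the subspace search. Your sketch needs this concrete construction of the lifted morphism; without it, the reduction from $\K^k$ to $\K^\kappa$ is not established. Also, the closing claim that the linear-algebra calculation from \autoref{prop:alpha_correct} applies ``verbatim'' is too strong: the paper derives a \emph{different} equation involving additional $\Ds$-terms (and a cancellation using $Q_{c,\Ds}M_\Ds + M_cP_{c,\Ds}=0$) before handing the system to \texttt{BlockReduce}.
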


\begin{proof}
Consider an $\alpha$-decomposed extension of a module $Y$.
\[ \left[ M \ N \right] \ = \  \begin{blockarray}{c c c c c}
           &  R_\Cs & R_\Ds  & k_\Cs \cdot \alpha & k_\Ds \cdot \alpha \\
        \begin{block}{c [ c c c c ]}
            G_\Cs & M_\Cs & 0       & N_\Cs & C \\
            G_\Ds &  0    & M_\Ds   & 0     & N_\Ds  \\
        \end{block}
    \end{blockarray} 
    \]
 If $Z$ is decomposable and 
 $\Hom^\alpha\left(M_\Cs, M_\Ds \right) = 0$
 then we can use
 \autoref{decomp_concentrated}
 to find a decomposition (\autoref{ses_decomp} above) of the entire induced short exact sequence. We also find indecomposable decompositions of the first and third terms of the decomposed ses and by Krull-Remak-Schmidt they agree with the decomposition induced by $M_\Cs$ and $M_\Ds$. We end up with the following diagram.
\[\begin{tikzcd}[ampersand replacement=\&]
	0 \& {\bigoplus\limits_{c \in \Cs} X_c  } \&\& Y \&\& {\bigoplus\limits_{d \in \Ds} Z_d } \& 0 \\
	0 \& {\bigoplus\limits_{c \in \Cs_1} X_c  \oplus \bigoplus\limits_{c \in \Cs_2} X_c } \&\& {{Y_1} \oplus {Y_2}} \&\& {\bigoplus\limits_{d \in \Ds_1} Z_d  \oplus  \bigoplus\limits_{d \in \Ds_2} Z_d  } \& 0
	\arrow[from=1-1, to=1-2]
	\arrow["{ f}", from=1-2, to=1-4]
	\arrow["\wr", from=1-2, to=2-2]
	\arrow["{g }", from=1-4, to=1-6]
	\arrow["\varphi", from=1-4, to=2-4]
	\arrow[from=1-6, to=1-7]
	\arrow["\wr", from=1-6, to=2-6]
	\arrow[from=2-1, to=2-2]
	\arrow["{f_1 \oplus f_2}", from=2-2, to=2-4]
	\arrow["{g_1 \oplus g_2}", from=2-4, to=2-6]
	\arrow[from=2-6, to=2-7]
\end{tikzcd}\]
 
Using \autoref{permutation} we rectify the first and third vertical isomorphism so that diagonal maps are isomorphisms. 
To get presentations of the two modules $Y_1$ and $Y_2$ we use \autoref{pres_ext} for each. To see that the two resulting presentations are again minimal observe that together they have the same size as the presentation of $Y$ above which was minimal. Alternatively for each ses this is a special case of \autoref{invariant_minimal} for $U = \Z^d$ \\.
 
 We can then lift $\varphi$ to morphism of presentation to get graded matrices 

\begin{align*}  Q =  
        \begin{bmatrix}
             Q_{\Cs, \Cs} & Q_{\Cs, \Ds} \\
             0 & Q_{\Ds, \Ds} 
        \end{bmatrix}
 \quad  \quad \text{ and } \quad \quad 
    \begin{bmatrix}
     P & U \\
     0 & T
    \end{bmatrix} =
     \begin{bmatrix}
            P_{\Cs, \Cs} & P_{\Cs, \Ds} & U_{\Cs, \Cs} & U_{\Cs, \Ds} \\
            0 & P_{\Ds,\Ds} & 0 &  U_{\Ds, \Ds}  \\
            0 & 0 & T_{\Cs, \Cs} & T_{\Cs, \Ds}  \\
            0 & 0 & 0 & T_{\Ds, \Ds}
    \end{bmatrix} 
    \end{align*}
    such that 
    \[Q \left[ M \ N \right] \begin{bmatrix}
     P & U \\
     0 & T
    \end{bmatrix} = \begin{bmatrix}
           M_\Cs & 0       & N_\Cs & C' \oplus C'' \\
           0    & M_\Ds   & 0     & N_\Ds  \\
        \end{bmatrix} \]
    where $C' \oplus C''$ is compatible with the block-structure. Again $T_{\Ds, \Ds}$ is partitioned into two matrices $T_1, \, T_2$ with the same columns as $C', \, C''$ respectively and additionally we will also get corresponding partitions $\Cs_1, \Cs_2, \Ds_1, \Ds_2$.

We can assume that the outer loop of the algorithm will at some point choose this $T_{\Ds, \Ds}$.

From here the proof is analogous to \autoref{prop:alpha_correct}. We can argue again that the first step of the algorithm will up to permutation of blocks delete $(C_1)_{\Cs_2}$ by considering the restricted isomorphisms. We avoid the tedious matrix-multiplication this time and just assume that the matrices above transform the presentation from the result of the first step to the decomposed presentation on the first. For the second step, let $c \in \Cs_1$ and consider the sub matrix $(C'')_{\Cs_1}$ $(=0)$ in the equation above and deduce that

 \begin{align*} 
(C_1)_c + \sum\limits_{b \neq c \in \Cs} Q_{c,b} (C_1)_b + M_c U_{c, \Ds_1} + Q_{c, \Ds} M_\Ds  U_{\Ds, \Ds_1} + Q_{c, \Ds_1} N_{\Ds_1} 
= 0.
\end{align*}

But $Q_{c, \Ds} M_\Ds + M_c P_{c, \Ds} = 0$, because $Q, P$ must form a morphism of presentations, so 
\begin{align*}
    M_c U_{c, \Ds_1} + Q_{c, \Ds} M_\Ds  U_{\Ds, \Ds_1} = M_c U_{c, \Ds_1} - M_c P_{c, \Ds} U_{\Ds, \Ds_1} = M_c \left( U_{c, \Ds_1} - P_{c, \Ds} U_{\Ds, \Ds_1} \right).
\end{align*} 

Again, \texttt{BlockReduce} will find a solution to the linear system.
\end{proof}

\begin{corollary}
    The Automorphism-invariant Iterative Decomposition Algorithm (\textsc{AIDA}) finds an indecomposable decomposition.
\end{corollary}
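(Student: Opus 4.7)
The plan is to reduce the claim by induction to the correctness of a single invocation of the Automorphism-invariant $\alpha$-Decomposition subroutine (\autoref{stable_decomp}), and then to assemble its correctness from the building blocks already verified: the exhaustive decomposition (\autoref{prop:alpha_correct}), the extension decomposition (\autoref{double_ext_algorithm}), and the rigidity/invariance theory of Subsections~\ref{sub:localisation} and onward.

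First I would set up the outer induction. Running the \hyperref[main_routine]{\textsc{aida} main routine}, I would show that after processing the $i$-th batch of columns of degree $\alpha_i$, the matrix is block-decomposed into indecomposables and, together with the next batch, is $\alpha_{i+1}$-decomposed in the sense of \autoref{def:alpha_decomposed}. The base case is vacuous. For the inductive step, by \autoref{iso_lifting} any indecomposable decomposition of the module presented by $(M_{<i+1} \ M_{i+1})$ can be reached by admissible operations from the $\alpha_{i+1}$-decomposed input, so it suffices to prove that one invocation of \autoref{stable_decomp} transforms an $\alpha$-decomposed presentation into an indecomposable block decomposition.

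Next I would analyse the inner loop of \autoref{stable_decomp}. It traverses the condensation $\T$ of $(\B, \Hom^\alpha \neq 0)$ in topological order and, for each strongly connected component $\Cs\in\T$, first minimises $[M_\Cs \ \bar N_\Cs]$ via exhaustive decomposition and then, for each already-processed component $\Ds$, decomposes the $\alpha$-decomposed extension built from $\Cs$ and $\Ds$. The key observation is that the digraph structure on $\B$ defined via $\Hom^\alpha$ captures exactly the automorphisms of $M_\B$ relevant to the decomposition at $\alpha$: by \autoref{obs:alpha_hom} and the construction following \eqref{alpha_map}, any morphism $(Q,P)$ with $\pi_\alpha(Q,P)=0$ cannot contribute to zeroing $N_b$ or to the cocycle $C$. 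Consequently, writing $U = \up{\alpha}$, the short exact sequence produced at each step from the partial decomposition of $\Cs$-blocks sitting above the $\Ds$-blocks is automatically $U$-invariant in the sense of \autoref{def:filtration}. The Rigidity Lemma~\ref{rigidity} together with \autoref{decomp_concentrated} then guarantees that any decomposition of the total module restricts compatibly with this split, which is precisely what \autoref{double_ext_algorithm} requires in order to certify that $\alpha$-\texttt{ExtensionDecomp} finds all possible splittings between $\Cs$ and $\Ds$.

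Assembling: within a single $\Cs\in\T$, correctness of the exhaustive step is \autoref{prop:alpha_correct}; across two processed components $\Cs,\Ds\in\Us$, correctness of $\alpha$-\texttt{ExtensionDecomp} is \autoref{double_ext_algorithm}; the topological orders on $\T$ and on $\Us$ ensure that no later operation can produce a decomposition that an earlier step would have missed, since any such decomposition would violate the $U$-invariance of the underlying filtration and hence, by Rigidity, would already factor through the decomposition found earlier. The main obstacle I expect is this last bookkeeping step: one has to argue carefully that contracting strongly connected components and processing them atomically does not overlook a decomposition that would require cutting across a component. This is handled by noting that inside an SCC of $(\B,\Hom^\alpha\neq 0)$ the exhaustive algorithm already tries every $\Dec_q$-splitting, and between SCCs any splitting of the ambient module induces, by \autoref{decomp_resolution} and \autoref{permutation}, a splitting of the resolution compatible with the block structure, which $\alpha$-\texttt{ExtensionDecomp} will locate. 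Iterating the induction across all batches yields the indecomposable decomposition of the original presentation.
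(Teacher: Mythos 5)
Your argument lands on the same key facts as the paper's proof: that the condensations of $\B$ and $\B_p$ ensure the short exact sequence handed to $\alpha$-\texttt{ExtensionDecomp} is $\alpha$-invariant in the sense of \autoref{def:filtration}, and that \autoref{double_ext_algorithm} certifies the subroutine is correct under exactly that hypothesis. The only real difference is organisational: you run a direct double induction (over batches of columns in the main routine, and over components of the condensation inside \autoref{stable_decomp}), whereas the paper argues by minimal counterexample, picking the least degree $\alpha$ at which a non-indecomposable merge could have occurred and localising the failure to one bad call of $\alpha$-\texttt{ExtensionDecomp}; both reduce to the same invocation of \autoref{double_ext_algorithm}, so this is a stylistic rather than substantive divergence. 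One small caution: you attribute the digraph on $\B$ to $\Hom^\alpha$, while \autoref{stable_decomp} as stated uses $\Hom$; the $\Hom^\alpha$ refinement only enters via \autoref{sec:localisation}, and the paper's own proof is equally loose on this point, so it does not undermine your argument, but a fully rigorous write-up of either version would need to pin down which $\Hom$-sets are meant and verify the $\Hom^{U\cap W}(X^U,Z^U)=0$ condition of \autoref{def:filtration} from the topological orders on $\T$ and $\Us$.
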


\begin{proof}
Assume that the algorithm did not find an indecomposable decomposition. Then there is a block which can be further split up, and there is a minimal degree $\alpha$ where a part of this block was formed by merging two blocks which represent indecomposable summands, but where this merge is not indecomposable. This must have been the result of $\alpha$-\texttt{ExtensionDecomp} (Appendix), but
by construction of the condensations of $\B$ and $\B_p$ we pass the presentation of an $\alpha$-invariant short exact sequence to this subroutine and we have just proved that it runs correctly in this case. 
\end{proof}

\section{Experiments}
\label{sec:experiments}
We have implemented our decomposition algorithm in a stand-alone C\texttt{++} library
\textsc{aida}.\footnote{\url{https://github.com/JanJend/AIDA}}
\textsc{aida} takes as input a minimal presentation in the form of a \textsc{scc2020} file%
\footnote{\url{https://bitbucket.org/mkerber/chain_complex_format/src/master/}}
which can be generated, for instance, using the \textsc{mpfree} library~\cite{mpfree}. 
All experiments were performed on a workstation with an Intel(R) Xeon(R)
CPU E5-1650 v3 CPU (6 cores, 12 threads, 3.5GHz) and 64 GB RAM,
running GNU/Linux (Ubuntu 20.04.2) and gcc 9.4.0.

\subparagraph{Test instances.}
We considered a large collection of data files~\cite{benchmark_repo}
including chain complex data generated by various bifiltrations,
including function-Rips, lower-star bifiltrations, and multi-covers~-- see \cite{fkr-compression}.
Also included are chain complexes of function-alpha bifiltrations
for various choices of point clouds and density functions as described in the paper of Alonso et al.~\cite{akll-delaunay}.
We additionally generated test cases which are not based on bifiltration data
by simple random processes that can create interval-decomposable or
random presentation inspired by the Erd\"os-Renyi-model of random graphs (\autoref{app:random_presentations}).
In total, the benchmark set contains around 1300 different minimal
presentation files. Our benchmarks scripts, the version of \textsc{aida} that
we have used, and the results of our experiments are available at~\cite{benchmarks_aida}.

Most instances in the above collection are distinctly graded, that is $k_{\max}=1$. This is perhaps surprising because, for instance, function-alpha bifiltrations
are generally not distinctly graded simplex-wise. Yet apparently their homology classes have unique degrees.
A few instances also have $k=2$ or $k=3$, but the only instances with large values of $k$ are the one coming from multi-cover bilfiltrations
as described in~\cite{cklo-computing}.

\subparagraph{Version of the Algorithm.}
For the following experiments we have only used the exhaustive algorithm from \autoref{sec:brute_force}. The full version of \textsc{AIDA} from \autoref{sec:decomp_algo} is only partially implemented at the time of writing. This partial implementation did not show any speed-up on our instances and will be completed, if necessary, in the future, for example if instances with large $k$ from Bjerkevik's work \cite{Bjerkevik} are no longer accessible with the exhaustive approach.

\subparagraph{Effectiveness of the speed-ups.}
We implemented our code such that some optimizations can be disabled
to gauge their importance.
Our first finding is that the column sweep optimization
as described at the beginning of Section~\ref{sec:deyxin}
has the most visible effect on practical performance.
In Table~\ref{tbl:distinctly_graded}, we display the running time
for some representative instances. We observe
that the improvement factor depends on the instance type
and even varies a lot for instances of the same type
(as the third and fourth row of the table indicate).
However, the improvement was consistent over all considered
examples. 

\begin{table}[h]
    \begin{tabular}{c|cc|ccc}
      type & \#Gens & \#Rels  & vanilla & +sweep & +homset  \\
      \hline
      fun-alpha (kde) & 10000 & 14014 & 118 & 0.39 & 0.42\\
      fun-alpha (random) & 6460 & 11154 & 7.64 & 0.37 & 0.33\\
      random & 800 & 782 & 6.85 & 2.12 & 3.31\\
      random & 800 & 776 & 54.0 & 2.54 & 3.03\\
      \hline
      off (hand)   &  595 &  596 &  508  &  0.96 & 0.02\\
      off (eros)   & 2819 & 2820 & >3600 & 22.7  & 0.40\\
      off (dragon) & 8229 & 8230 & >3600 & 833   & 9.4\\
      off (raptor) & 7203 & 7186 & >3600 & 1588  & 32.7\\
    \end{tabular}
    \caption{Running times (in seconds) for the algorithm in different configurations: ``vanilla'' means that no modification is enabaled. ``+sweep'' means that column sweep is enabled, ``+homset'' means that the separate and optimized
      computation of ($\ast$) is enabled.}
    \label{tbl:distinctly_graded}
\end{table}

We next consider the effect of the optimized computation of the system ($\ast$). 
In many instances, this technique does not speed up the computation or even slightly slows it down,
due to the technical overhead of solving ($\ast$) first. However, the slow-down
was never more than a factor of $2$ in the experiments. On the other hand, for some instances,
a speed-up is achieved; most significantly, this is the case for the off-datasets, which are triangular meshes
in 3D bi-filtered with two of the coordinate axis (bottom half of Table~\ref{tbl:distinctly_graded}). This happens because this filtration tends to create the large but pointwise flat summands which we have optimized the computation for. 

\subparagraph{Empirical complexity.}
We demonstrate in Table~\ref{tbl:progression}
that \textsc{aida} is able to decompose presentations
with hundreds of thousands of generators and relations within seconds in maany practical cases.
This is also true for non-distinctly graded cases:
the bottom half of the table displays multi-cover instances
where $k$ is larger than $1$ ($k=4$ on the left, $k=6$ on the right of the table).
We also display the progression of \textsc{aida}'s time and memory consumption when the
input sizes (roughly) doubles. We can see that in some cases, \textsc{aida} scales roughly linearly with
the size of the input presentation (the upper-left and lower-right part of the table)
whereas in other cases the time grows rather quadratically (the upper-right and lower-left part).
We also observed instances with a super-quadratic growth in our experiments; nevertheless,
the empirical behavior is much better than what the asymptotic bound predicts. This is not surprising for realistic sparse input matrices, in analogy to the persistence algorithm
for one parameter, because matrix reduction is generally faster. Also the summands typically stay small, so that \hyperref[hom_computation]{($\ast$)} is fast and a partial decomposition is found after a small number of iterations over subspaces. 

\begin{table}[h]
    \begin{tabular}{c|cc|cc||c|cc|cc}
      dim & \#Gens & \#Rels &  time & mem & dim & \#Gens & \#Rels  & time & mem \\
      \hline
      \multirow{4}{*}{0}
      &  5000 &  7431 &  0.11 &  17 MB & \multirow{4}{*}{1} & 12410  &  19890 &  22.3 &  108 MB\\
      & 10000 & 14701 &  0.22 &  29 MB &                    & 25681  &  40288 &  80.3 &  277 MB\\
      & 20000 & 29030 &  0.47 &  53 MB &                    & 53250  & 81093 &  562 & 928 MB\\
      & 40000 & 57691 &  1.23 & 101 MB &                    & 112229 & 165769 & 1952  & 1.55 GB \\ 
      \hline
      \multirow{4}{*}{2}
      &  15707 &  31414 &  0.47 &  50 MB & \multirow{4}{*}{1}
      &   6856 &  13711 & 0.12  &  23 MB\\
      &  32251 &  64503 &  1.25 &  97 MB & & 14570 &  29140 & 0.27 &  46 MB\\
      &  71166 & 142332 &  6.36 & 209 MB & & 33303 &  66606 & 0.76 &  99 MB\\
      & 154400 & 308800 & 17.6 &  447 MB & & 74462 & 148924 & 1.95 & 218 MB\\
      \hline
    \end{tabular}
    \caption{Running time and memory of \textsc{aida} for large data sets. The top half are function-alpha bifiltrations
      for point clouds of a torus with kde function value, in homology dimension $0$ and $1$.
      The bottom half are multi-cover bifiltration of a random point sample, with presentations in homology dimension $2$ and $1$. All numbers are averaged over $5$ instances.}
    \label{tbl:progression}
\end{table}

\subparagraph{Comparison with other software.}

We are not aware of any other implementation for the special case of multi-graded
persistence modules, but there exist some libraries that include a decomposition
algorithm for modules over finite dimensional algebras, for example the algorithm by Lux and Sz\"{o}ke \cite{LuxSzoke} in the C-MeatAxe library. Since using this algorithm would require too many preprocessing steps, we instead used the decomposition algorithm for quiver representations in the \textsc{QPA} pacakge \cite{QPA} for \textsc{GAP} \cite{GAP4} and implemented (also in the \textsc{aida} package) an algorithm to convert MPM to quiver representations.
Our tests show that this algorithm already needs several minutes for presentations
with around $100$ generators and relations and times out for all presentations we have considered for testing $\textsc{aida}$.

\subparagraph{Speeding up barcode template computation.}\label{sub:barcode_template}
We demonstrate how decomposing a module can speed up computational tasks
in multi-parameter persistence: barcode templates are a common tool
to capture all combinatorial barcodes that arise from a $2$-parameter
module through restrictions to one parameter. Such a barcode
template is computed via the arrangement of lines in the plane,
arising from the joins of generators and relations of a module
via point-line duality. It is evident that this construction
is compatible with decomposition, meaning that if a modules
$M=M_1\oplus M_2$ decomposes, the arrangements of $M_1$ and $M_2$
can be computed separately, and the arrangements can be overlaid.

We implemented a small test program that computes the barcode template
using the arrangement package of \textsc{Cgal}~\cite{arrangments}
once for the undecomposed modules and once after applying \textsc{aida}.
Table~\ref{tbl:barcode_template} shows that the arrangement size
as well as the computation time drastically reduces when using \textsc{aida}.
We remark that the variance of the aida arrangement in size was very large,
leading to the effect that the average arrangment size even decreases
in our sample.
In either case, we speculate that incorporating \textsc{aida} would speed-up
the visualization of persistence modules in \textsc{Rivet}
and is a necessary preprocessing step in the exact computation of matching distances
in practice.

\begin{table}[H]
  \begin{tabular}{ccc|ccc|ccc}
    & & & \multicolumn{3}{c}{With \textsc{aida}} & \multicolumn{3}{c}{Without \textsc{aida}} \\
      \#points & \#Gens & \#Rels &  \#lines & V & time & \#lines & V & time\\
      \hline
      20 & 18.4 & 28    & 24.6  & 210   & 0.002 & 176   & 6639      & 0.034\\
      40 & 59   & 97    & 144   &  6295 & 0.026 & 1877  & 692K      & 3.50\\
      60 & 99   & 166   & 382   & 39996 & 0.19  & 5096  & 5.076M    & 27.1\\
      80 & 137  & 224   & 346   & 33301 & 0.16  & 8801  & 14.3M     & 81.8\\
    \end{tabular}
  \caption{Running time for computing the barcode template of function-alpha bifiltrations
    of randomly sampled point in the unit square using random function values. The columns
    show the number of lines generated that define the arrangement, the number of vertices
    in the final arrangement and the time to compute it. All number are averages of $5$ randomly
    generated instances.}
    \label{tbl:barcode_template}
\end{table}

\section{Conclusion}
\label{sec:conclusion}
\textsc{aida} is the first practical even implemented algorithm to decompose
multi-parameter persistence modules. We emphasise that it
works for any number of parameters (although all experiments were on $2$ parameters)
and its strategy should be adaptable to any other poset or acyclic quiver.

Our improvements are a consequence of a better understanding of MPM on the algebraic level. More speed-ups are possible; for instance, the faster
computation of $\Hom(X,Y)$ that we employ is far from best possible and 
we plan to improve it further. We also believe that the algorithm from Section~\ref{sec:aida} should show improvement over the exhaustive algorithm of 
Section~\ref{sec:brute_force_decomposer} for sparse input with larger $k_{\max}$ and non-sparse input also for smaller $k_{\max}$.

There is also a connection with graphcodes~\cite{KerberRussold24}: first of all, decomposing a module with \textsc{aida}
can reduce the ambiguity of graphcodes in graph neural-network settings.
Moreover, it has been shown
recently~\cite{graphcode-socg} that graphcodes yield a partial decomposition of a module and can be combined
with \textsc{aida} to speed up the decomposition in some cases.

 \bibliographystyle{plainurl}
\bibliography{decomp}

\appendix

\section{Generating random presentations}
\label{app:random_presentations}

We briefly describe three processes to obtain random presentations
with different properties. The script to generate them is contained
in the code repository for this submission.

\begin{description}

\item [Intervals] Given a parameter $n$,
we generated interval-decomposable modules by generating
$n$ intervals of simple shape: With probability $0.1$, the interval is free
(with a single generator chosen uniformly in the unit square). With probability
$0.9$, the interval is an ``infinite L-shape'' with one generator and one
relation randomly chosen in the unit square.
Then, the direct sum of these interval is ``mixed''
by a random sequence of admissible row and column operations.
These instances were mostly created for validation as the decomposition
is known in advance. In general, \textsc{aida} decomposes them very fast.

\item [Random]
We created more challenging modules inspired by the Erd\"os-Renyi-model of random graphs: Pick $m$ degrees for the generators uniformly at random in $[0,1]^2$.
To generate the $n$ relations, pick a degree $\alpha$ uniformly at random in the unit square; now pick a subset of generators with degree $\leq\alpha$ and include
each generator in the subset independently with some probability $p\in(0,1)$. Now add the relation at degree $\alpha$ relating the
generators in the subset picked in this way (repeat the process if
the relation if the subset of generators is empty).
This results in a presentation of size $m\times n$ which is subsequently minimized using \textsc{mpfree}.

\item [Random on grid]
As we wanted to measure the performance of \textsc{aida} for large values of $k$
with more data sets,
we generated further test cases as a variation of the Erd\"os-Renyi construction above: instead of choosing degrees in $[0,1]^2$, we put them
on a square grid of size $N$ uniformly at random. This typically leads to a smaller minimal presentation as many generator-relation pairs cancel out but with enough generators and relations
and a sufficiently small grid, we can generate test cases with large values of $k$.
\end{description}

\section{Missing Algorithms}

\begin{algorithm}[Gaussian Elimination for row-operations along a preordered set.]\label{sub:elimination_row} \
\newline
\texttt{Input}: A matrix $M \in \K^{m \times n}$ and a preorder $P$ on $[m]$. \\
Compute any linear extension of $P$ and reorder $[m]$ accordingly. For $i \in [m]$: 
\begin{enumerate}
\item Let $p_i \in [n]$ be the position of the first non-zero entry of $M_{i, \cdot}$. 
\item For $j >_P i \in [m]$: \\
If $M_{j, p_i} \neq 0$ add the $i$-th to the $j$-th column. 
\end{enumerate}
\texttt{Effect}: For every principal upper set $(i) \subset [m]$ wrt. $P$ the submatrix $M_{U, \cdot}$ has unique pivots. In particular the number of non-zero rows is minimal. 
\end{algorithm}

\begin{algorithm}[\texttt{BlockReduce} for interval decomposable modules.]\label{sub:interval_blockreduce}
\texttt{Input}: $\alpha$-decomposed presentation $\left( M_\B \, N \right)$ where every block $M_b$ presents an interval in normal form \autoref{prop:pres_interval} \\
It holds that $\dim \left( \Ima N_{b,i} \right)_\alpha \leq \left( \coker M_b \right)_\alpha \leq 1$, so that up to column operations we can replace $N_{b,i}$ with a single entry in $\K$. These column operations can be performed in time linear wrt. the number of relations of $M_b$, since $M_b$ is in normal form. \\
After this reduction we are left with a $|\B| \times k$ Matrix and by \autoref{lem:alpha_interval} $\dim \Hom^\alpha\left(-,-\right) = 1$ is a poset relation on $\B$. This allows \hyperref[sub:elimination_row]{Gaussian Elimination along a poset}.
\end{algorithm}

\begin{algorithm}[Internal Column Operations]\label{compute_col} \ \newline
\texttt{Input:} $b, c \in \B$, $[ M_b \ N_b], \,  [ M_c \ N_c]$, a basis $(Q_i, \, P_i)_{i \in I}$ for $\Hom\left( M_b, M_c \right)$ \\
\texttt{Output:} A basis for $\Ima \pi_\alpha$
For each $i \in I$, solve
\[ Q_i N_b = N_c T_i + M_c U_i \text{ for } T_i, \, U_i \in \K^{k_c \times k_b}.\]
By viewing $T_i$ as vectors, reduce the matrix $\left( T_i \right)$. \\
For each cocycle $C$ in the same rows as $N_c$ merging $c$ with another block $d$ we need to enlarge the system by solving also
$C T_i = Q N_b + M_d U$ for $Q \in \Hom(M_b, M_d)$.
\end{algorithm}

\begin{algorithm}[$\alpha$-ExtensionDecomp]\label{extensiondecomp} \ \\
 \textbf{Input:} An $\alpha$-decomposed extension (\autoref{def:extension_presentation}) \\
 Use \autoref{compute_col} to compute all possible column-operations at $\alpha$.
Analogously to \autoref{subparagraph:exhaustive}, compute all decompositions $(T_1 \ T_2)$ of $\K^{\kappa}$, but only those which come from possible column-operations.\\
For each such pair $(T_1 \ T_2)$: 

 \begin{enumerate}
 \item There is a corresponding partition $\Ds = \Ds_1 \cup \Ds_2$.
  \item Set $C_1 \coloneqq CT_1$ and $C_2 \coloneqq CT_2$. For $c \in \Cs \colon$
  \begin{itemize}
   \item Use {\texttt{BlockReduce}} \hyperref[hom_computation]{($\ast \ast $)} to zero out $(C_1)_c$. If successful add $c$ to $\Cs_2$ otherwise to $\Cs_1$.
    \end{itemize}
  \item Use {\texttt{BlockReduce}} \hyperref[hom_computation]{($\ast \ast $)} to zero out $(C_2)_{\Cs_2}$. If successful, pass 
  the two $\alpha$-decomposed extension induced by $\Cs_1$, $\Ds_1$ and $\Cs_2$, $\Ds_2$ recursively to the algorithm. If not continue.
\end{enumerate}
After having tried all $(T_1 \ T_2)$ without success merge all blocks in $\Cs$ and $\Ds$.
\end{algorithm}

\end{document}